\definecolor{fore}{RGB}{249,242,215}
\definecolor{back}{RGB}{51,51,51}
\definecolor{title}{RGB}{255,0,90}
\definecolor{dgreen}{rgb}{0.,0.6,0.}
\definecolor{gold}{rgb}{1.,0.84,0.}
\definecolor{JungleGreen}{cmyk}{0.99,0,0.52,0}
\definecolor{BlueGreen}{cmyk}{0.85,0,0.33,0}
\definecolor{RawSienna}{cmyk}{0,0.72,1,0.45}
\definecolor{Magenta}{cmyk}{0,1,0,0}
\newtheorem{proposition}{Proposition}[section]
\newtheorem{theorem}{Theorem}[section]
\newtheorem{definition}{Definition}[section]
\newtheorem{corollary}{Corollary}[section]
\newtheorem{lemma}{Lemma}[section]
\newtheorem{remark}{Remark}[section]
\newtheorem{example}{Example}[section]
\numberwithin{equation}{section}
\title[Rough paths and symmetric-Stratonovich
 integrals]{Rough paths and
  symmetric-Stratonovich integrals driven by singular
  covariance Gaussian processes}
\author{Alberto OHASHI$^1$}
\author{Francesco RUSSO$^2$}
\address{$1$ Departamento de Matem\'atica, Universidade de Bras\'ilia, 13560-970, Bras\'ilia - Distrito Federal, Brazil}\email{amfohashi@gmail.com}
\address{$2$ ENSTA Paris, Institut Polytechnique de Paris,
 Unit\'e de Math\'ematiques appliqu\'ees, 828, boulevard des Mar\'echaux, F-91120 Palaiseau, France}
 \email{francesco.russo@ensta-paris.fr}
\begin{document}

\begin{abstract}
We examine the relation between a stochastic version of the rough integral with the symmetric-Stratonovich integral in the sense of regularization. Under mild regularity conditions in the sense of Malliavin calculus, we establish equality between stochastic rough and symmetric-Stratonovich integrals driven by a class of Gaussian processes. As a by-product, we show that solutions of multi-dimensional rough differential equations driven by a large class of Gaussian rough paths they are actually solutions to Stratonovich stochastic differential equations. We obtain almost sure convergence rates of the first-order Stratonovich scheme to rough integrals in the sense of Gubinelli. In case the time-increment of the Malliavin derivative of the integrands is regular enough, the rates are essentially sharp. The framework applies to a large class of Gaussian processes whose the second-order derivative of the covariance function is a sigma-finite non-positive measure on $\mathbb{R}^2_+$ off diagonal.
\end{abstract}

\maketitle

\section{Introduction}



Let $X$ be a $d$-dimensional continuous Gaussian process over a bounded interval $[0,T]$ and equipped with a second-order process $\mathbb{X}$ so that $\mathbf{X} = (X,\mathbb{X})$ is a $\theta$-H\"{o}lder rough path for $\frac{1}{3} < \theta < \frac{1}{2}$ (see e.g. \cite{lyons} and \cite{gubinelli2004}). Let $\mathcal{D}^{2\theta}_X(\mathbb{R}^d)$ be the space of controlled rough paths of pairs $(Y,Y')$ satisfying

\begin{equation}\label{conintr}
Y_{t} - Y_s = Y'_s (X_{t} - X_s) + R^Y_{s,t}; 0\le s\le t\le T,
\end{equation}
where $R^Y_{s,t} = O(|t-s|^{2\theta})$ a.s. and $Y'$ is an $\mathbb{R}^{d\times d}$-valued $\theta$-H\"older continuous process.

The Sewing lemma (see e.g. \cite{feyel2006,gubinelli2004}) plays a fundamental role in the construction of the so-called rough integral $(Y,Y')\mapsto \big(\int Y d\mathbf{X},Y\big)$ which is described by

\begin{equation}\label{riemann}
\int_0^t Y_sd\mathbf{X}_s = \lim_{\|\Pi\|\rightarrow 0} \sum_{t_i \in \Pi} \Big\{ \langle Y_{t_i},X_{t_{i+1}} - X_{t_i}\rangle + Y'_{t_i}\mathbb{X}_{t_i,t_{i+1}}\Big\},
\end{equation}
almost surely, as the mesh of partitions $\|\Pi\|\rightarrow 0$, for $0\le t\le T$. The role of the underlying probability measure is totally restricted to the construction of the second-order process $\mathbb{X}$ and the Sewing lemma is applied pathwisely. See also \cite{le2020} for a stochastic version of Sewing lemma and other extensions by \cite{matsuda2022}, \cite{butkovsky2021} and \cite{friz2021}.

For a given $d$-dimensional process $Y$, let $I^0(\epsilon,Y,dX)$ be the first-order symmetric Stratonovich scheme given by

\begin{equation}\label{rs}
I^0(\epsilon,Y,dX)(t):=\frac{1}{2\epsilon}\int_0^t \Big\langle Y_s, X_{s+\epsilon} - X_{s-\epsilon} \Big\rangle ds;~0\le t\le T,
\end{equation}
where by convention, we set $X_t=X_0$ for $t\le 0$ and $X_t=X_T$ for $t\ge T$.
The symmetric-Stratonovich integral (in the sense of stochastic calculus via regularizations, see e.g. \cite{russo1993forward, Russo_Vallois_Book}) is defined by

\begin{equation}\label{Sintr}
\int_0^t Y_s d^0 X_s:= \lim_{\epsilon\downarrow 0}I^0(\epsilon,Y,dX)(t)~\quad (\text{in probability}),
\end{equation}
when it exists (see also Remark \ref{russorem}). In the language of regularization, the rough integral (\ref{riemann}) can be naturally formulated as

\begin{equation}\label{stint}
\int_0^t Y_s d\mathbf{X}_s=\lim_{\epsilon\downarrow 0}\frac{1}{\epsilon}\int_0^t \Big( \langle Y_s, X_{s+\epsilon} - X_s\rangle + Y'_s\mathbb{X}_{s,s+\epsilon}\Big)ds\quad (\text{in probability}),
\end{equation}
where the area process $\mathbb{X}$ is given by

\begin{equation}\label{secin}
\mathbb{X}_{u,v} = \int_u^v (X_r-X_u)\otimes d^0X_r; ~0\le u\le v\le T.
\end{equation}
The goal of this paper is to establish equality of the rough integral (\ref{stint}) with the symmetric-Stratonovich integral (\ref{Sintr}) for a given \textit{stochastically controlled} process $Y$ w.r.t. $X$ in the sense that there exists a $\mathbb{R}^{d\times d}$-valued process $Y'$ such that
\begin{equation}\label{taylor}
Y_t-Y_s = Y'_s (X_t-X_s) + R^Y_{s,t},
\end{equation}
where a two-parameter process $R^Y$ implicitly defined by (\ref{taylor}) satisfies
\begin{equation}\label{ortcond}
\lim_{\epsilon\downarrow 0}\frac{1}{\epsilon}\int_0^t \langle R^Y_{s,s+\epsilon}, X_{s+\epsilon}-X_s \rangle ds=0~\quad (\text{in probability}),
\end{equation}
for each $t >0$. This class of processes was recently introduced by \cite{gor} when the reference driving noise $X$ is a continuous semimartingale. A typical example of a pair $(Y,Y')$ satisfying (\ref{taylor}) and (\ref{ortcond}) is a controlled rough path in $\mathcal{D}^{2\theta}_X(\mathbb{R}^d)$ as described in (\ref{conintr}) (see Example 3.4 in \cite{gor}). In particular, we study the problem of the (almost sure) convergence rate of the first-order Stratonovich approximation scheme $I^0(\epsilon,Y,dX)(T)$ to rough integrals $\int_0^T Y_sd\mathbf{X}_s$ driven by $\mathbf{X}$ and $(Y,Y') \in \mathcal{D}^{2\theta}_X(\mathbb{R}^d)$ as described in (\ref{conintr}).

Stratonovich integrals play a prominent role in stochastic analysis. Since the pioneering work of \cite{wzakai}, we know that the Stratonovich formulation of stochastic differential equations (SDEs)

\begin{equation}\label{sdeint}
dY_t = f(Y_t)dX_t
\end{equation}
has the important interpretation of being approximated by a sequence of ordinary differential equations driven by smooth approximations $X^n$ for a continuous semimartingale driving noise $X$. In his seminal work, \cite{lyons} uses a Wong-Zakai-type argument to establish well-posedness of SDEs (\ref{sdeint}) driven by rather general noises. Rough path theory provides a robust pathwise solution which is
continuous with respect to the driving path $X$. Lyons's deep insight was to realize that what
really controls the dynamics in (\ref{sdeint}) is not just the path of $X$ but rather a ``natural'' lift of $X$ to a random rough path $\mathbf{X}$. \cite{gubinelli2004} observes that a consistent integration theory can be formulated by fixing $\mathbf{X}$ which results in (\ref{riemann}).

Another approach of stochastic calculus for irregular noises is via regularization (\cite{russo1993forward}) which is based on integral-type approximations of the form

\begin{equation}\label{regin}
\int_0^t Yd^*X_s=\lim_{\epsilon\rightarrow 0^+}\frac{1}{\epsilon}\int_0^t \langle Y_s, X(\epsilon,s)\rangle ds, \quad *=+,-,0,
\end{equation}
where $\frac{1}{\epsilon}X(\epsilon,s)$ encodes a sort of ``derivative approximation'' of $X$ and convergence (\ref{regin}) should be interpreted in probability. This gives rise to three different types of stochastic integrals called backward $(+)$, forward $(-)$ and symmetric-Stratonovich $(0)$ integrals. In this approach, none higher-order approximation scheme is employed. The connection with semimartingale theory, Young and Skorohod integrals has been studied over the years by many authors (see e.g. \cite{russo2007elements}, \cite{leon2022}, \cite{alosleon}, \cite{cheridito2005}, \cite{gradno}). When the driving noise has very low regularity, it turns out that symmetric-Stratonovich integral is the correct choice (see e.g. \cite{cheridito2005}, \cite{gnrv}, \cite{grv}). A one-dimensional theory of symmetric-Stratonovich SDEs is constructed by \cite{coviello1}, where the driving noise is
a combination of a general finite-cubic variation process (in the sense of \cite{er2}) with a semimartingale. We refer the reader to \cite{Russo_Vallois_Book} for a complete list of references.

In order to study the relation between symmetric-Stratonovich and rough integrals, we make use of the set $\mathcal{D}_X(\mathbb{R}^d)$ of all stochastically controlled processes $(Y,Y')$ realizing (\ref{taylor}) and (\ref{ortcond}). In the case the driving noise $X$ is a continuous local martingale, \cite{gor} show that $\mathcal{D}_X\big(\mathbb{R}^d)$ coincides with the space of weak Dirichlet processes. Motivated by the study of rough SDEs driven by Brownian motion on a given filtration $(\mathcal{F}_t)$ and a deterministic $\theta$-H\"older rough path, \cite{friz2021} have recently introduced a different notion of stochastic controllability w.r.t. a deterministic rough path, where remainders $R^Y_{s,t}$ satisfy a higher-order $2\theta$-H\"older-type condition based on the two-increment process $\mathbb{E}[R^Y_{s,t}|\mathcal{F}_s]$.








In the case the reference driving noise $X$ is a continuous semimartingale and the class of integrands is $\mathcal{D}_X(\mathbb{R}^d)$, \cite{gor} show that the classical Stratonovich stochastic integral coincides with the stochastic rough integral (\ref{stint}) driven by a Stratonovich second-order process $\mathbb{X}$. We also drive attention to  \cite{tindelliu} where the authors produce first-order trapezoidal approximations for (\ref{riemann}) in case $X$ belongs to a rather general class of Gaussian processes and $Y'$ is also controllable in the sense of \cite{gubinelli2004}. \cite{perkowski} show that the presence of $\mathbb{X}$ in (\ref{riemann}) can be neglected in case $X$ is a ``typical price path'' with finite quadratic variation, which confirms earlier considerations given by \cite{coviello2bis}. In the case $Y$ is a gradient system or a solution of a rough differential equation driven by a class of Gaussian geometric rough paths, then it is known that Skorohod correction terms can be derived. In this direction, see e.g. \cite{tindelhu} and \cite{cass2019,cass2021}, respectively.

The above results suggest that the rough integral (\ref{stint}) of any pair $(Y,Y') \in \mathcal{D}_X\big(\mathbb{R}^d)$ driven by Gaussian geometric rough paths $\mathbf{X}=(X,\mathbb{X})$ can be recast as a purely first-order symmetric-Stratonovich stochastic integral in the sense of regularization. The main result of this paper demonstrates that this is almost the case, at least for a large class of Gaussian driving noises and pairs $(Y,Y') \in \mathcal{D}_X(\mathbb{R}^d)$ whose derivative $Y'$ satisfies weak regularity conditions in the sense of Malliavin calculus.


\subsection{Summary of the main results}
In this article, we show \textit{equivalence} between (\ref{stint}) and (\ref{Sintr}) in the following sense. For a given Hilbert space $E$ and $1\le p < \infty$, let $\big(\mathbf{D}, \mathbb{D}^{1,p}(E)\big)$ be the Malliavin derivative defined on the Malliavin-Watanabe space $\mathbb{D}^{1,p}(E)$ of $E$-valued random elements supported by a probability measure $\mathbb{P}$ (see e.g. \cite{nualart2006}). The equivalence is stated below in an informal way.


\begin{theorem}\label{thintr0}
Let $X$ be a $d$-dimensional continuous Gaussian process with covariance kernel $R$ whose Schwartz second-order derivative is a non-positive sigma-finite measure $d\mu = \partial^2Rdx$ which is absolutely continuous w.r.t. Lebesgue on $[0,T]^2$ off diagonal. Assume $(Y,Y') \in \mathcal{D}_X(\mathbb{R}^d)$ and there exist $p,q>2$ such that $t\mapsto Y^{'}_t$ is a $\mathbb{D}^{1,p}(\mathbb{R}^{d\times d})$-valued continuous function (see section \ref{gsubsection}) and

 \begin{equation}\label{intr1}
 \int_0^T \int_{v_2}^T \sup_{s\ge v_1~\text{or}~s < v_2}\| \mathbf{D}_{v_1}Y^{'}_s - \mathbf{D}_{v_2}Y^{'}_s\|^{q}_{L^q(\mathbb{P})}|\partial^2 R(v_1,v_2)\big|^{\frac{q}{2}} dv_1dv_2 < \infty.
\end{equation}
Then, (\ref{Sintr}) exists if and only if (\ref{stint}) exists. Moreover, when $(Y,Y') \in \mathcal{D}_X(\mathbb{R}^d)$ is integrable (either in the sense of (\ref{Sintr}) or in the sense of (\ref{stint})), then for each $ t\in[0,T]$, we have
\begin{eqnarray}
\nonumber\int_0^t Y_s d\mathbf{X}_s &=& \int_0^t Y_s d^0X_s\\
\label{r=stra}&=& \lim_{\epsilon\rightarrow 0^+}\frac{1}{\epsilon}\int_0^t \Big( \langle Y_s, X_{s+\epsilon} - X_s\rangle + Y'_s\text{Sym}(\mathbb{X}_{s,s+\epsilon})\Big)ds,
\end{eqnarray}
in probability, where $\text{Sym}(\mathbb{X})$ is the symmetric part of $\mathbb{X}$ given by (\ref{secin}).
\end{theorem}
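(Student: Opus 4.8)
The plan is to route both integrals through the intermediate ``symmetric rough scheme'' on the right-hand side of \eqref{r=stra}, showing separately that it coincides with the symmetric-Stratonovich integral \eqref{Sintr} and with the full rough integral \eqref{stint}. The first key simplification is that, since the area \eqref{secin} is defined through symmetric-Stratonovich integrals, the integration-by-parts rule for such integrals gives $\mathbb{X}^{ij}_{u,v}+\mathbb{X}^{ji}_{u,v}=(X^i_v-X^i_u)(X^j_v-X^j_u)$, whence
\[
\mathrm{Sym}(\mathbb{X}_{s,s+\epsilon})=\tfrac12\,(X_{s+\epsilon}-X_s)\otimes(X_{s+\epsilon}-X_s).
\]
This turns the correction term in \eqref{r=stra} into an explicit quadratic Gaussian increment and will serve as the bridge to the symmetric-Stratonovich scheme.

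First I would prove that \eqref{Sintr} equals the symmetric rough scheme. Writing the symmetric increment as the average $\tfrac{1}{2\epsilon}(X_{s+\epsilon}-X_{s-\epsilon})=\tfrac{1}{2\epsilon}[(X_{s+\epsilon}-X_s)+(X_s-X_{s-\epsilon})]$ splits \eqref{rs} into half a forward scheme and half a backward scheme. In the backward half I would substitute $u=s-\epsilon$ and insert the expansion \eqref{taylor}, $Y_{u+\epsilon}=Y_u+Y'_u(X_{u+\epsilon}-X_u)+R^Y_{u,u+\epsilon}$, producing, up to boundary terms of order $\epsilon$ handled through the convention $X_r=X_0$ for $r\le0$, a forward scheme, the quadratic term $\tfrac{1}{\epsilon}\int_0^t\langle Y'_s(X_{s+\epsilon}-X_s),X_{s+\epsilon}-X_s\rangle\,ds$, and a remainder of exactly the form \eqref{ortcond} that vanishes in probability by assumption. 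Since half of the quadratic term equals $\tfrac{1}{\epsilon}\int_0^t Y'_s\,\mathrm{Sym}(\mathbb{X}_{s,s+\epsilon})\,ds$ by the displayed identity, recombining the two forward halves shows that the symmetric scheme equals the symmetric rough scheme; in particular each limit exists precisely when the other does, establishing the equivalence between \eqref{Sintr} and this intermediate object.

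It then remains to identify the full rough integral \eqref{stint} with the symmetric rough scheme, equivalently to prove the vanishing of the antisymmetric (L\'evy) area contribution,
\[
\lim_{\epsilon\downarrow0}\frac{1}{\epsilon}\int_0^t Y'_s\,\big(\mathbb{X}_{s,s+\epsilon}-\mathrm{Sym}(\mathbb{X}_{s,s+\epsilon})\big)\,ds=0\quad(\text{in probability}),
\]
and this is where the Malliavin hypotheses and the covariance structure enter decisively. I would expand the symmetric-Stratonovich integral defining the L\'evy area as a Skorohod (divergence) integral plus a Malliavin trace term. The Skorohod part is a centered element of the second Wiener chaos; after the $\tfrac1\epsilon$ scaling its $L^2(\mathbb{P})$ norm is estimated through the Wiener--It\^o isometry and collapses because the overlapping windows pair against the singular covariance $d\mu=\partial^2R\,dx$, whose non-positivity off diagonal and absolute continuity control the relevant kernels. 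The trace term is governed precisely by the time-increments $\mathbf{D}_{v_1}Y'_s-\mathbf{D}_{v_2}Y'_s$ of the Malliavin derivative, weighted by $|\partial^2R(v_1,v_2)|^{q/2}$, and hypothesis \eqref{intr1} is exactly the integrability needed to force it to $0$. The main obstacle is this final step: one must show that both the second-chaos and the trace components of the antisymmetric area are $o(\epsilon)$ in the appropriate topology, uniformly enough to survive division by $\epsilon$, exploiting the sign of $\mu$ together with \eqref{intr1}. Combining the two identifications yields \eqref{r=stra} and the stated equivalence of existence.
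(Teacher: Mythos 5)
Your first half is sound and coincides with the paper's own reduction: using the trapezoidal form of the symmetric integral (Remark \ref{russorem}) together with the expansion (\ref{taylor}), the orthogonality (\ref{ortcond}) and the geometric identity $\text{Sym}(\mathbb{X}_{s,s+\epsilon})=\tfrac12 X_{s,s+\epsilon}\otimes X_{s,s+\epsilon}$ is exactly the computation (\ref{redu}), and you correctly isolate the crux as the vanishing of the antisymmetric-area contribution (\ref{I1}), to be attacked through the Skorohod representation (\ref{skrepX}) and the multiplication rule (\ref{multfor}), giving the divergence-plus-trace decomposition (\ref{alSYM}). The genuine gap is in your treatment of the divergence term. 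After pulling $Y'_s$ inside $\boldsymbol{\delta}$ via (\ref{multfor}), the Skorohod integrand is $\epsilon^{-1}\int_{r-\epsilon}^{r}Y^{',i\ell}_{s}\{X^j_{s,r}e_\ell-X^\ell_{s,r}e_j\}\,ds$, a \emph{product} of the random derivative $Y'$ with Gaussian increments; this is an element of the second Wiener chaos only when $Y'$ is deterministic, which is not the generic situation in $\mathcal{D}_X(\mathbb{R}^d)$. The Wiener--It\^o isometry you invoke is therefore unavailable, and there is no ``overlapping windows pair against $d\mu$'' computation that closes the estimate by itself.

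What the paper actually does at this point is bound the divergence by the Meyer-type inequality (\ref{cs1}), $\|\boldsymbol{\delta}(u)\|_{L^2(\mathbb{P})}\lesssim\|\mathbb{E}[u]\|_{L_R(\mathbb{R}^d)}+\|\mathbf{D}u\|_{L^2(\Omega;L_{2,R}(\mathbb{R}^{d\times d}))}$, which forces one to compute $\mathbf{D}_v$ of the integrand (Lemma \ref{rm}); this produces both the increments $\mathbf{D}_{v}Y'_s$ \emph{and} indicator terms $Y^{',i\ell}_s\mathds{1}_{[s,r]}(v)e_j$, and the entire Subsection \ref{skorohodsection} --- the mean part $J_1(\epsilon,t)$ handled by mean-value arguments, and $J_2(\epsilon,t)$ split into $L_1$--$L_4$ over the partitions $E_i$, $F_{1,z}$, $G_{z}$ --- consists of pointwise a.s.\ convergence (Lemmas \ref{pointwiseconv1}, \ref{punctual1}, \ref{punctual2}) combined with uniform integrability against $|\mu|$ and $|\mu|\times|\mu|$, concluded by Vitali's theorem. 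Note also that you misplace where hypothesis (\ref{intr1}) is consumed: in the paper it enters precisely in this divergence analysis (the $E_1,E_2$ estimates of $L_2(\epsilon)$ and the $a(\mathbf{r},\mathbf{v},\epsilon)$ part of $L_4(\epsilon)$, cf.\ (\ref{mod1})), whereas the trace term $I^1$--$I^4$ of (\ref{alSYM})---where you locate (\ref{intr1})---is handled by the moment bound (\ref{intassump}) together with Assumption C(iii, iv). Finally, the restriction $-\tfrac43<\alpha<-1$, invisible in the informal statement but essential in Theorem \ref{gaussiancase}, is used quantitatively (e.g.\ $2\alpha+3>0$ in the $K_1$ estimate and the choices of integrability exponents $p,q$), and your sketch never exploits it. So the skeleton is right, but the decisive step --- that the rescaled divergence vanishes --- rests on an inapplicable device, and filling it requires the full Malliavin-derivative and uniform-integrability analysis above.
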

Theorem \ref{thintr0} is the loose summary of Theorem \ref{gaussiancase}. In particular, the second-order process $\mathbb{X}$ in (\ref{r=stra}) is given by the symmetric-Stratonovich integral (\ref{secin}) whose existence is guaranteed by the assumption $d\mu = \partial^2Rdx$ up to some technical conditions on the growth of the Radon-Nikodym derivative $\partial^2R$. See Proposition \ref{liftingTH} for details. We point out the choice of the symmetric-Stratonovich integral in (\ref{secin}) is fully dictated by the singularity of the covariance of $X$. See Remark \ref{Rforward}.


The Gaussian techniques employed in the proof of Theorem \ref{thintr0} relies on \cite{krukrusso} who develop a Malliavin calculus directly associated with the sigma-finite measure $d\mu = \partial^2Rdx$. In this direction, we also mention \cite{friz2016} who explore $d\mu$ and give ``complementary Young regularity'' for a large class of Gaussian processes. In the present work, we explore regularity of the covariance away the diagonal to connect the rough integral with first-order Stratonovich schemes. In general, complementary Young regularity for singular covariance structures does not imply equivalence of Stratonovich calculus with rough path integration. We give a precise condition for such equivalence in Theorem \ref{thintr0}. We stress that equality (\ref{r=stra}) provided by Theorem \ref{gaussiancase} can fail outside the class of integrands $(Y,Y') \in \mathcal{D}_X(\mathbb{R}^d)$. Indeed, in general, the existence of $\lim_{\epsilon\downarrow 0}I^0(\epsilon,Y,dX)(t)$ does not require the structure (\ref{taylor}) and (\ref{ortcond}) imposed on the set $\mathcal{D}_X(\mathbb{R}^d)$. See Lemma \ref{TRPlemma} for a concrete example.

From a qualitative point of view, Theorem \ref{thintr0} implies (see Proposition \ref{rdeex}) that solutions of rough differential equations driven by $\mathbf{X} = (X,\mathbb{X})$ are also solutions to multi-dimensional Stratonovich SDEs of the form

\begin{equation}\label{stsdei}
Y_t = Y_0 + \int_0^t V(Y_s) d^0X_s,
\end{equation}
for smooth coefficients $V$. A first-order Stratonovich-scheme for one-dimensional rough differential equations was studied by \cite{nourdin2007}.



The equivalence presented in Theorem \ref{thintr0} yields the investigation of the ($L^2(\mathbb{P})$ and almost sure) rate of convergence of the first-order Stratonovich approximation scheme $I^0(\epsilon,Y,dX)(T)$ to a rough integral. For sake of conciseness, the present article presents the convergence rates in the case of the fractional Brownian motion driving noise with $\frac{1}{3} < H < \frac{1}{2}$. In the sequel, we give a loose summary of Theorems \ref{thintr0} and \ref{mainTH}.


\begin{theorem}\label{rateINTRO}
Let $X$ be a $d$-dimensional fractional Brownian motion with exponent $\frac{1}{3} < H < \frac{1}{2}$. Let $(Y,Y') \in \mathcal{D}^{2\theta}_X(\mathbb{R}^d)$ be a controlled rough path, where $\frac{1}{3} < \theta < H$. Assume that $Y$ is adapted w.r.t. $X$ and $Y'$ satisfies the assumptions of Theorem \ref{thintr0}. Then,

\begin{equation}\label{rINT}
\mathbb{E}\Bigg| \int_0^T Y_sd\mathbf{X}_s - I^0(\epsilon,Y,dX)(T)  \Bigg|^2\lesssim \epsilon^{2\gamma + 2H-1} + \epsilon^{2(\eta + 2H -1)},
\end{equation}
as $\epsilon \downarrow 0$, where $(\gamma,\eta) \in (0,H]\times (0,1]$ are parameters such that $2\gamma + 2H-1>0$ and $\eta+2H-1>0$.
\end{theorem}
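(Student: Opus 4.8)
The plan is to first reduce the statement to a genuine rate estimate: by Theorem~\ref{thintr0} the three quantities $\int_0^T Y_s\,d\mathbf{X}_s$, $\int_0^T Y_s\,d^0X_s$ and $\lim_{\epsilon\downarrow0}I^0(\epsilon,Y,dX)(T)$ coincide, so it only remains to bound $\mathbb{E}\big|I^0(\epsilon,Y,dX)(T)-\int_0^TY_s\,d\mathbf{X}_s\big|^2$. The entry point is an exact algebraic decomposition of the symmetric scheme. Splitting (\ref{rs}) into its forward and backward halves and performing the change of variables $s\mapsto u+\epsilon$ in the backward half, I would insert the controlled expansion (\ref{taylor}), namely $Y_{u+\epsilon}=Y_u+Y'_u(X_{u+\epsilon}-X_u)+R^Y_{u,u+\epsilon}$, together with the geometric identity $\text{Sym}(\mathbb{X}_{u,u+\epsilon})=\tfrac12(X_{u+\epsilon}-X_u)^{\otimes2}$ which holds for the Stratonovich area (\ref{secin}). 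This recasts the scheme as
\begin{equation*}
I^0(\epsilon,Y,dX)(T)=\frac1\epsilon\int_0^T\Big(\langle Y_u,X_{u+\epsilon}-X_u\rangle+Y'_u\,\text{Sym}(\mathbb{X}_{u,u+\epsilon})\Big)du+\frac{1}{2\epsilon}\int_0^T\langle R^Y_{u,u+\epsilon},X_{u+\epsilon}-X_u\rangle\,du+\mathcal B_\epsilon,
\end{equation*}
where $\mathcal B_\epsilon$ gathers the contributions on the two length-$\epsilon$ intervals near $0$ and $T$ produced by the shift and the conventions on $X$.

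The first integral above is precisely the symmetric regularization appearing in (\ref{r=stra}), which converges to $\int_0^TY_s\,d\mathbf{X}_s$. To control its rate I would write $\mathbb{X}=\text{Sym}(\mathbb{X})+\text{Antisym}(\mathbb{X})$ and invoke the Sewing estimate $|\int_s^tY\,d\mathbf{X}-\langle Y_s,X_t-X_s\rangle-Y'_s\mathbb{X}_{s,t}|\lesssim|t-s|^{3\theta}$; then, up to a summable Sewing defect of higher mean-square order, the difference between this term and the rough integral reduces to the L\'evy-area regularization $\frac1\epsilon\int_0^TY'_u\,\text{Antisym}(\mathbb{X}_{u,u+\epsilon})\,du$. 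Its second moment is computed from the covariance of the fractional Brownian area over the boxes $[u,u+\epsilon]^2$ and $[w,w+\epsilon]^2$, modulated by the increments of $Y'$; using $\mathbb{E}|\text{Antisym}(\mathbb{X}_{u,u+\epsilon})|^2\asymp\epsilon^{4H}$, the decorrelation of the area on scale $\epsilon$, and the $\gamma$-H\"older regularity of $u\mapsto Y'_u$, a diagonal/off-diagonal split yields the mean-square bound $\epsilon^{2\gamma+2H-1}$.

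The genuinely delicate piece is the orthogonality remainder $C_\epsilon:=\frac{1}{2\epsilon}\int_0^T\langle R^Y_{u,u+\epsilon},X_{u+\epsilon}-X_u\rangle\,du$, which vanishes in the limit by (\ref{ortcond}) but whose rate must be extracted directly. I would expand $\mathbb{E}|C_\epsilon|^2$ as a double integral over $(u,w)$ and, on each factor, integrate by parts in the Malliavin sense using the duality between the first-chaos element $X_{u+\epsilon}-X_u$ and $\mathbf{D}$. Since $Y$ is adapted to $X$ one has $\mathbf{D}_vY'_s=0$ for $v>s$, which restricts the support of the resulting kernels, and the leading term is governed by the time-increments $\mathbf{D}_vY'_s-\mathbf{D}_vY'_{s'}$ of the Malliavin derivative integrated against the singular covariance measure $d\mu=\partial^2R\,dx$, with $\partial^2R(v_1,v_2)\asymp|v_1-v_2|^{2H-2}$ off diagonal for fractional Brownian motion. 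Quantifying the near-diagonal behaviour through the exponent $\eta$ of the time-increment regularity of $\mathbf{D}Y'$ makes $C_\epsilon$ of typical size $\epsilon^{\eta+2H-1}$, hence $\mathbb{E}|C_\epsilon|^2\lesssim\epsilon^{2(\eta+2H-1)}$; the finiteness of every integral entering this step is exactly what hypothesis (\ref{intr1}) provides, and the Kruk--Russo Malliavin calculus attached to $d\mu$ is the natural framework to perform the integration by parts against the non-positive measure $\partial^2R\,dx$.

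Finally I would verify that $\mathcal B_\epsilon$ and the residual Sewing defect are of higher mean-square order: the boundary terms live on intervals of length $\epsilon$ and contribute $O(\epsilon^{2H})$, while $2H>2\gamma+2H-1$ because $\gamma\le H<\tfrac12$, so they do not affect the stated rate. Collecting the three estimates and applying the triangle inequality in $L^2(\mathbb{P})$ gives (\ref{rINT}). The main obstacle is the control of $C_\epsilon$: since $\partial^2R$ is non-positive and, for $H<\tfrac12$, strongly concentrated near the diagonal, the pathwise size of $R^Y$ is far from sufficient and one must exploit the exact Malliavin structure of $Y'$ to see the cancellations. This is also the step responsible for sharpness, the rate becoming essentially optimal once the time-increment of $\mathbf{D}Y'$ is regular enough, as announced in the abstract.
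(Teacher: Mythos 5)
Your route is genuinely different from the paper's, and it contains two concrete gaps. The first is a misreading of the parameters: in Theorem \ref{rateINTRO}, $(\gamma,\eta)$ are the exponents of Assumptions S1 and S2, i.e.\ they quantify $\|Y_t-Y_s\|_{\mathbb{D}^{1,2}(\mathbb{R}^d)}$ and the time-increments $\mathbf{D}_{r_1}Y_s-\mathbf{D}_{r_2}Y_s$ of the \emph{integrand} $Y$, not the H\"older regularity of $Y'$ nor the increments of $\mathbf{D}Y'$. The hypothesis (\ref{intr1}) on $\mathbf{D}Y'$ is a pure integrability condition; in the paper it enters only through Vitali/dominated convergence in the proof of Theorem \ref{gaussiancase}, killing the antisymmetric area term \emph{qualitatively}, with no rate. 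Consequently neither your bound $\epsilon^{2\gamma+2H-1}$ for the antisymmetric term (attributed to $\gamma$-H\"older regularity of $Y'$) nor your bound $\epsilon^{2(\eta+2H-1)}$ for the orthogonality remainder $C_\epsilon$ (attributed to an $\eta$-modulus of $\mathbf{D}Y'$) can be extracted from the stated hypotheses: no quantitative version of (\ref{ortcond}) is assumed anywhere, $R^Y$ is not assumed Malliavin differentiable, so the integration by parts you propose for $C_\epsilon$ has nothing to act on, and (\ref{intr1}) supplies finiteness of certain integrals, not a rate.

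The second gap is that the pathwise Sewing step genuinely breaks the claimed rate. The local defect over $[u,u+\epsilon]$ is $O(\epsilon^{3\theta})$ with a random constant (whose $L^2$-integrability, involving $\|R^Y\|_{2\theta}$, $\|Y'\|_\theta$, $\|\mathbb{X}\|_{2\theta}$, is moreover not assumed), so after averaging it contributes $O(\epsilon^{3\theta-1})$, i.e.\ mean-square order $\epsilon^{6\theta-2}$; the boundary mismatch between the averaged rough integral and $\int_0^T Y\,d\mathbf{X}_s$ similarly costs $\epsilon^{2\theta}$. Since $\theta<H<\tfrac12$, one has $6\theta-2<6H-2<4H-1$, so even for $\theta$ close to $H$ and $\gamma=H$ your Sewing defect strictly dominates the target exponent $2\gamma+2H-1$, and for $\theta$ near $\tfrac13$ it degenerates to order $\epsilon^{0+}$; the claim that it is ``of higher mean-square order'' is therefore false in general. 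This is exactly the point the introduction stresses: the paper's rate is obtained \emph{without} any pathwise rough-path stability. The actual proof (Section \ref{proofTh2}) never invokes the Sewing lemma for the rate: via the multiplication formula (\ref{multfor}) and Fubini for Skorohod integrals it writes the whole scheme as $\int_0^T\bar{Y}^\epsilon_s\,\boldsymbol{\delta}X_s+\textit{Tr}(\mathbf{D}Y)_\epsilon+O_{L^2(\mathbb{P})}(\epsilon^{2H})$, obtains $\epsilon^{2\gamma+2H-1}$ from Proposition \ref{epsilonlemma} (shift-regularity of $Y$ in $\mathbb{D}^{1,2}(L_R(\mathbb{R}^d))$, via Lemma \ref{skolemma}) and $\epsilon^{2(\eta+2H-1)}$ together with a dominated term $\epsilon^{(6H-1)/2}$ from the trace analysis (Lemmas \ref{Tr1lemma} and \ref{ratepr2}), and only then uses Theorem \ref{gaussiancase} to identify the limit with the rough integral. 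Your decomposition through (\ref{taylor}), $\mathrm{Sym}(\mathbb{X})$, and $C_\epsilon$ mirrors the identity (\ref{redu}) used in the \emph{qualitative} equality result, but it is not the mechanism producing the rate, and as a rate argument it does not close.
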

For the precise meaning of the parameters $(\gamma,\eta)$ in Theorem \ref{rateINTRO}, we refer the reader to the statement of Theorem \ref{mainTH} and, in particular, Assumptions S1 and S2 related to the integrand $Y$. At this point, we only stress $(\gamma,\eta)$ in Theorem \ref{rateINTRO} are related to the regularity of the increments of $Y$ in $\mathbb{D}^{1,2}(\mathbb{R}^d)$ and the increments of the Malliavin derivative of $Y$ on the simplex of $[0,T]^2$, respectively. If $\eta \ge \gamma +\frac{1}{2}-H$, then the leading term in the right-hand side of (\ref{rINT}) is $\epsilon^{2\gamma+ 2H-1}$ and the rate becomes $\epsilon^{(4H-1)-}$ as long as $\gamma \uparrow H$. In this case, the rate is essentially sharp considering that the L\'evy area diverges when $H=\frac{1}{4}$, see e.g. \cite{coutin2002}. Unfortunately, in case $\gamma +\frac{1}{2}-H > \eta$, the leading term in the right-hand side of (\ref{rINT}) is $\epsilon^{2(\eta + 2H-1)}$ and then the $L^2(\mathbb{P})$-rate becomes $\epsilon^{(6H-2)-}$ as long as $\eta \uparrow H$. In this case, it may not be sharp.

At this point, it is important to stress one fundamental difference between the rates derived in the present work and the previous literature on rough path theory. All approximations schemes reminiscent from rough path theory rely either on the Lipschitz continuity of the It\^o-Lyons map (see \cite{lyons}) or the stability of the rough integral (in the sense of \cite{gubinelli2004}) w.r.t. smooth approximations (typically Wong-Zakai-type) of the driving noises via Gaussian rough path lifts. In this direction, we refer the reader to e.g. \cite{friz,friz2010,deya2012,friz2014,coutin2007,hairerbook,gubinelli2004}. Inspired by \cite{hu2016}, one notable exception is \cite{liu2019} which constructs convergence rates of a simplified Euler scheme for rough differential equations based on the continuity of an ``augmented'' rough path lift associated to a triple of processes involving the driving noise, the scheme process and the normalized error process.

Theorem \ref{rateINTRO} establishes convergence rates to rough integrals solely through the increments of the driving Gaussian noise
$$\frac{1}{2 \epsilon} \big\{X_{(s+\epsilon)} - X_{(s-\epsilon)}\big\},$$
without relying on any sort of continuity of random rough path lifts w.r.t. approximations. This is possible because the equality of rough path and symmetric-Stratonovich integrals as described in Theorem \ref{thintr0} only depends on the (stochastic) regularity (see (\ref{intr1})) of the derivative $Y'$ of a pair $(Y,Y') \in \mathcal{D}_X(\mathbb{R}^d)$ and \textit{not} on continuity properties of random rough path lifts w.r.t. smooth approximations.


\subsection{Idea of the proofs}
In this section, we discuss the proofs of Theorems \ref{thintr0} and \ref{rateINTRO}. In the sequel, we fix a Gaussian process $X$ with covariance kernel $R$ satisfying the assumptions of Theorem \ref{thintr0}. We fix a parameter $-\frac{4}{3} < \alpha < -1$ which encodes the singularity of the Radon-Nikodym derivative $\partial^2R$ on the diagonal of $[0,T]^2$. One typical example is $\alpha=2HK-2$, where $R$ is the covariance of the bifractional Brownian motion (see Example \ref{biFBMex}) with exponents $H \in (0,1)$ and $K\in (0,1]$ such that $\frac{1}{3} < HK < \frac{1}{2}$. See e.g. \cite{russo2006bifractional} for basic properties of the bifractional Brownian motion.

Under the regularity condition (\ref{intr1}), Theorem \ref{thintr0} implies that if one relax almost sure convergence to convergence in probability, the anti-symmetric part $\text{Anti}(\mathbb{X})$ plays no role in the convergence of the integral in (\ref{r=stra}). Moreover, one can compute the stochastic rough integral (\ref{stint}) through a first-order symmetric-Stratonovich scheme $I^0(\epsilon,Y,dX)$ without involving the higher-order term $\mathbb{X}$. Let $\mathbf{X} = (X,\mathbb{X})$ be the geometric process defined by (\ref{secin}) and let $\text{Anti}(\mathbb{X})$ be the antisymmetric part of $\mathbb{X}$. The main argument in the proof of Theorem \ref{thintr0} is the verification that the convergence

\begin{equation}\label{keyqintr}
\lim_{\epsilon \downarrow 0}\frac{1}{\epsilon} \int_0^t \Big\langle Y'_s, \text{Anti}(\mathbb{X}_{s,s+\epsilon})\Big\rangle_{\mathbf{F}} ds=0\quad (\text{in probability})
\end{equation}
holds true in typical situations for $(Y,Y') \in \mathcal{D}_X(\mathbb{R}^d)$, where $\langle \cdot, \cdot \rangle_{\mathbf{F}}$ denotes the Frobenius inner product on the space of $d\times d$-matrices. The analysis of (\ref{keyqintr}) starts with the representation of $\text{Anti}(\mathbb{X})$ in terms of the divergence operator. In a second step, we provide delicate estimates on Skorohod integrals involving $Y'$ and components of $\text{Anti}(\mathbb{X})$. Convergence (\ref{keyqintr}) (in the sense of Riemann sum) is analyzed by \cite{tindelliu}, where the authors assume a pathwise second-order additional decomposition for $Y$, where $Y'$ follows (\ref{conintr}) equipped with a second Gubinelli's derivative $Y{''}$. On the one hand, in contrast to \cite{tindelliu}, none second-order pathwise expansion of $Y$ is employed in our framework. On the other hand, we assume Malliavin-type regularity on $Y'$. We believe (\ref{intr1}) is the natural stochastic regularity condition to insure (\ref{keyqintr}) and, indeed, (\ref{intr1}) it is fulfilled for a large class of examples.

We stress the simplest possible case takes place when $(Y,Y') \in \mathcal{D}_X(\mathbb{R}^d)$ is a controlled rough path in the Gubinelli's sense and $Y'$ is symmetric. This case is examined by \cite{hairerbook} and one can reduce the relevant information to the \textit{reduced rough path} $\mathbf{X} = (X, \text{Sym}(\mathbb{X}))$. In this work, we show that this phenomenon takes place in typical situations much beyond the symmetric case. For instance, when $Y'$ is deterministic, condition (\ref{intr1}) requires only continuity of $t\mapsto Y'_t$. We emphasize Theorem \ref{thintr0} can be extended to a less regular case $-\frac{3}{2} < \alpha \le - \frac{4}{3}$ by working with a corresponding level-3 Stratonovich geometric process. We postpone this analysis to a future work.


Theorem \ref{mainTH} presents the precise limiting behavior of $I^0(\epsilon,Y,dX)(T)$ to a symmetric-Stratonovich integral in a broader regime $-\frac{3}{2} < \alpha < -1$. The rate of convergence to a rough integral given by (\ref{rINT}) in Theorem \ref{rateINTRO} is then obtained by Theorem \ref{mainTH} and restricting to the case $-\frac{4}{3} < \alpha < -1$, where $\alpha=2H-2$. In the proof of (\ref{rINT}), we exploit a decomposition of (\ref{rs}) in terms of ``Skorohod component plus a trace term''. This type of approximate decomposition has already appeared in the seminal work of \cite{np1988} in the Brownian motion context and also in the fractional Brownian motion context in \cite{alos2003,alosleon,leon2022}. They both exploited undirect density-type arguments of simple processes which do not allow the obtention of convergence rates. Recently, in the particular case of rough differential equations, \cite{cass2019,cass2021} also exploit such type of decomposition without convergence rates.

In this work, under some natural conditions (see Theorem \ref{gaussiancase} and Assumptions S1 and S2) on $(Y,Y') \in \mathcal{D}_X(\mathbb{R}^d)$, we decompose

\begin{eqnarray}
\nonumber\int_0^t Y_s d\mathbf{X}_s &=& \int_0^t Y_s \boldsymbol{\delta}X_s + \frac{1}{2}\int_0^t \textit{tr}[\mathbf{D}_{s-}Y_s]dR(s,s)\\
\label{decin}&+& \int_{0 \le r_1 < r_2 \le t}\textit{tr}[\mathbf{D}_{r_1}Y_{r_2} - \mathbf{D}_{r_2-}Y_{r_2}]\partial^2R(r_1,r_2)dr_1dr_2,
\end{eqnarray}
where $\boldsymbol{\delta}X$ denotes the Skorohod integral. In the present work, the convergence rate (\ref{rINT}) is derived by means of representation (\ref{decin}), the regularity of the shifted process $Y_{\cdot + r}$ when $r\rightarrow 0$ (see Proposition \ref{epsilonlemma}) and a detailed analysis on $\textit{Tr}~(\mathbf{D}Y)_\epsilon$ (see (\ref{secs})) in terms of Assumptions S1 and S2. We stress that \cite{tindelsong} obtains a different representation of the rough integral for a non-anticipative second-order controlled integrand process, where the Stratonovich-Skorohod correction term mixes the Malliavin derivative trace with Gubinelli's derivative.

The paper is organized as follows. In Section \ref{prel}, we fix some notation and we define some basic objects. Section \ref{gaussiansection} presents the basic elements of the Gaussian space of the driving noise and some important tools from Malliavin calculus. Section \ref{resultsection} presents the main results of this article, namely Theorems \ref{gaussiancase} and \ref{mainTH}. Sections \ref{proofTh1} and \ref{proofTh2} present the proofs of Theorems \ref{gaussiancase} and \ref{mainTH}, respectively. Several technical lemmas are presented in Section \ref{appendixsec}.

\section{Preliminaries}\label{prel}
At first, we introduce some notation. In the sequel, finite-dimensional spaces will be equipped with a norm $|\cdot|$ and $T$ is a finite terminal time. The notation $\mathcal{C}^\alpha$ is reserved for $\alpha$-H\"older continuous paths defined on $[0,T]$ for $\alpha \in (0,1]$, with values in some finite-dimensional space. For $f\in \mathcal{C}^\alpha$, the usual seminorm is given by

$$\|f\|_{\alpha}:=\sup_{s,t\in [0,T]}\frac{|f_{t} - f_s|}{|t-s|^\alpha}.$$
The sup-norm on the space of continuous functions will be denoted by $\|\cdot\|_\infty$. For a two-parameter function $g$, we write $g\in \mathcal{C}^\beta_2$ if

$$\|g\|_{\mathcal{C}^\beta_2}:=\sup_{s,t\in [0,T]}\frac{|g_{s,t}|}{|t-s|^\beta} < \infty,$$
for $\beta >0$. The set $\mathcal{L}(\mathbb{R}^d,\mathbb{R}^n)$ denotes the space of all linear operators from $\mathbb{R}^d$ to $\mathbb{R}^n$ and \textit{tr} $Q$ denotes the trace of a matrix $Q$. We further write $a\lesssim b$ for two positive quantities to express an estimate of the form $a \le C b$, where $C$ is a generic constant which may differ from line to line. By convention, any continuous function $f$ defined on $[0,T]$ will be extended to the real line $\mathbb{R}$ as

$$
f(t):=\left\{
\begin{array}{rl}
f(0); & \hbox{if} \ t\le 0 \\
f(T);& \hbox{if} \ t \ge T.\\
\end{array}
\right.
$$

Throughout this article, we are given a reference continuous $\mathbb{R}^d$-valued stochastic process $X$ equipped with a second-order $\mathbb{R}^{d\times d}$-valued stochastic process $\mathbb{X}$ which satisfies the Chen's relation (see e.g. \cite{hairerbook})

\begin{equation}\label{chen}
\mathbb{X}_{s,t} - \mathbb{X}_{s,u} - \mathbb{X}_{u,t} = \big(X^i_{u} - X^i_s\big)\big( X^j_{t} - X^j_u\big) ; 1\le i,j\le d,
\end{equation}
for every $(s,u,t) \in [0,T]^3$. We then write $\mathbf{X} = (X,\mathbb{X})$. Let us consider

\begin{eqnarray*}
\mathbb{X}_{s,t} &=& \frac{1}{2}\Big(\mathbb{X}^{i,j}_{s,t}  +  \mathbb{X}^{j,i}_{s,t} \Big) + \frac{1}{2}\Big(\mathbb{X}^{i,j}_{s,t}  -  \mathbb{X}^{j,i}_{s,t} \Big); 1\le i,j\le d,\\
& &\\
&=:&\text{Sym}(\mathbb{X}_{s,t})+ \text{Anti}(\mathbb{X}_{s,t}).
\end{eqnarray*}
Throughout this paper, all stochastic processes are defined on a given probability space $\big(\Omega, \mathcal{F},\mathbb{P}\big)$.

\begin{definition}
We say that a pair $\mathbf{X} = (X,\mathbb{X})$ is a \textbf{geometric process} if
\begin{eqnarray}\label{geom}
\nonumber\text{Sym}(\mathbb{X}_{s,t}) &=& \frac{1}{2}[(X_t - X_s)\otimes (X_t-X_s)]\\
&:=&\frac{1}{2}(X^i_t- X^i_s) (X^j_t-X^j_s)~; 1\le i,j\le d,~s,t\in [0,T].
\end{eqnarray}
\end{definition}

\begin{definition}\label{controldef}

Given a reference process $X$, we say that an $\mathcal{L}(\mathbb{R}^d,\mathbb{R}^n)$-valued stochastic process $Y$ is \textbf{stochastically controlled} by $X$ if there exists an $\mathcal{L}(\mathbb{R}^d,\mathcal{L}(\mathbb{R}^d,\mathbb{R}^n))$-valued stochastic process $Y'$ so that the remainder term $R^Y$ given implicitly by the relation

\begin{equation}\label{lineardep}
Y_t - Y_s = Y'_s \big(X_{t} - X_s\big) + R^Y_{s ,t},
\end{equation}
is orthogonal to $X$, in the sense that

\begin{equation}\label{orth}
\lim_{\epsilon\rightarrow 0^+}\frac{1}{\epsilon}\int_0^t R^{Y}_{s,s+\epsilon}\big( X_{s+\epsilon} - X_s\big)ds=\mathbf{0},
\end{equation}
in probability for each $t \in [0,T]$.
\end{definition}
This defines the set $\mathcal{D}_X(\mathcal{L}(\mathbb{R}^d,\mathbb{R}^n))$ of all stochastically controlled processes $(Y,Y')$ satisfying (\ref{lineardep}) and (\ref{orth}). When $n=1$, we write $\mathcal{D}_X(\mathbb{R}^d):=\mathcal{D}_X(\mathcal{L}(\mathbb{R}^d,\mathbb{R}))$.

\begin{remark}
Clearly, the concept of stochastically controlled processes does not depend on a Gaussian structure for the driving noise. In fact, if $\mathbb{F}$ is a filtration and $X$ is a continuous $\mathbb{F}$-local martingale, then the class of stochastically controlled processes coincides with the class of continuous $\mathbb{F}$-weak Dirichlet processes, see Prop 3.7 in \cite{gor}.

\end{remark}



Inspired by \cite{gubinelli2004}, let us now give the definition of the integral in the sense of regularization.

\begin{definition}\label{roughstochDEF}
For a given pair $\mathbf{X}=(X,\mathbb{X})$, we say that $(Y,Y')\in \mathcal{D}_X(\mathbb{R}^d)$ is \textbf{rough stochastically integrable} if

\begin{equation}\label{stochroughint}
\int_0^t Y_s d\mathbf{X}_s:=\lim_{\epsilon\downarrow 0}\frac{1}{\epsilon}\int_0^t \Big( Y_s \big(X_{s+\epsilon} - X_s\big) + Y'_s\mathbb{X}_{s,s+\epsilon}\Big)ds
\end{equation}
exists in probability for each $t\in [0,T]$.
\end{definition}
We observe $Y'$ can be viewed as an $\mathcal{L}(\mathbb{R}^{d\times d}, \mathbb{R})$-valued process via the canonical injection $\mathcal{L}(\mathbb{R}^d,\mathcal{L}(\mathbb{R}^d,\mathbb{R}))\hookrightarrow \mathcal{L}(\mathbb{R}^{d\times d}, \mathbb{R})$. Moreover, we make an abuse of notation: we omit the dependence of the integral on $Y'$.

The next result is a simple consequence of the Sewing Lemma in the context of geometric rough paths (see e.g. \cite{gubinelli2004,hairerbook,lyons,friz}). For a proof of Lemma \ref{existenceGUB}, see Example 3.4 and Proposition 5.3 in \cite{gor}.

\begin{lemma}\label{existenceGUB}
Let $\mathbf{X} = (X,\mathbb{X})$ be a random $\gamma$-geometric rough path in the sense of \cite{gubinelli2004}, where $ X\in \mathcal{C}^\gamma$ and $\mathbb{X} \in \mathcal{C}^{2\gamma}_2$ a.s. with $\frac{1}{3} < \gamma < \frac{1}{2}$. Let $(Y,Y')$ be a controlled rough path in sense of \cite{gubinelli2004}, i.e., $Y$ is an $\mathbb{R}^d$-valued process with $\gamma$-H\"older continuous paths, $Y'$ is an $\mathcal{L}(\mathbb{R}^d, \mathcal{L}(\mathbb{R}^d,\mathbb{R}))$-valued process with $\gamma$-H\"older continuous paths so that the remainder term $R^Y$ given implicitly by relation

\begin{equation}\label{firstorder}
Y_t - Y_s = Y'_s\big(X_{t} - X_s\big) + R^Y_{s,t}
\end{equation}
satisfies $R^Y\in \mathcal{C}^{2\gamma}_2$ a.s. Then, $(Y,Y') \in \mathcal{D}_X(\mathbb{R}^d)$ and the limit

$$\lim_{\epsilon\rightarrow 0^+}\frac{1}{\epsilon}\int_0^\cdot \Big( Y_s (X_{s+\epsilon} - X_s) + Y'_s\mathbb{X}_{s,s+\epsilon}\Big)ds$$
exists almost surely and uniformly on $[0,T]$. Moreover, it coincides with the rough integral as described in \cite{gubinelli2004}.
\end{lemma}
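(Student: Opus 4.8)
The plan is to show that the regularization limit in Lemma \ref{existenceGUB} exists and coincides with the Sewing-lemma rough integral \eqref{riemann}, under the stated $\gamma$-H\"older hypotheses with $\frac13<\gamma<\frac12$. The natural bridge between the two formulations is the local approximation germ
\begin{equation*}
\Xi_{s,t} := Y_s\big(X_t - X_s\big) + Y'_s\,\mathbb{X}_{s,t},
\end{equation*}
which is exactly the summand appearing both in the Riemann sum \eqref{riemann} and, with $t=s+\epsilon$, inside the integral whose limit we must compute. First I would recall the Sewing Lemma output: since $X\in\mathcal C^\gamma$, $\mathbb{X}\in\mathcal C^{2\gamma}_2$, $Y'\in\mathcal C^\gamma$ and $R^Y\in\mathcal C^{2\gamma}_2$, a direct computation using Chen's relation \eqref{chen} shows the increment of the germ satisfies
\begin{equation*}
\delta\Xi_{s,u,t} := \Xi_{s,t}-\Xi_{s,u}-\Xi_{u,t} = -\,R^Y_{s,u}\big(X_t-X_u\big) - \big(Y'_u-Y'_s\big)\mathbb{X}_{u,t},
\end{equation*}
so that $\|\delta\Xi_{s,u,t}\|\lesssim |t-s|^{3\gamma}$ with $3\gamma>1$. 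The Sewing Lemma then produces a unique additive functional $I_t=\int_0^t Y\,d\mathbf X$ with $|I_t-I_s-\Xi_{s,t}|\lesssim|t-s|^{3\gamma}$, and this is precisely the rough integral of \cite{gubinelli2004}; the almost-sure uniform convergence of the Riemann sums \eqref{riemann} is the standard conclusion of the Sewing Lemma. This part is essentially a citation of Example 3.4 and Proposition 5.3 in \cite{gor} together with the Sewing Lemma, and yields both $(Y,Y')\in\mathcal D_X(\mathbb R^d)$ (the orthogonality \eqref{orth} is verified by the $\mathcal C^{2\gamma}_2$ bound on $R^Y$, since $3\gamma>1$) and the existence of the rough integral.

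The substantive step is to upgrade the discrete Riemann-sum convergence to the \emph{continuous} regularization limit
\begin{equation*}
J^\epsilon_t := \frac1\epsilon\int_0^t \Xi_{s,s+\epsilon}\,ds \xrightarrow[\epsilon\to0]{} I_t,
\end{equation*}
uniformly in $t$, almost surely. The key idea is that the Sewing bound $|I_{s+\epsilon}-I_s-\Xi_{s,s+\epsilon}|\lesssim \epsilon^{3\gamma}$ controls the \emph{pointwise} discrepancy between the germ and the genuine increment of $I$. I would therefore write
\begin{equation*}
J^\epsilon_t - \frac1\epsilon\int_0^t\big(I_{s+\epsilon}-I_s\big)\,ds = \frac1\epsilon\int_0^t\Big(\Xi_{s,s+\epsilon}-(I_{s+\epsilon}-I_s)\Big)\,ds,
\end{equation*}
whose absolute value is bounded by $\tfrac1\epsilon\int_0^t C\epsilon^{3\gamma}\,ds = C\,t\,\epsilon^{3\gamma-1}\to0$ since $3\gamma-1>0$. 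It then remains to observe that the averaged-increment term $\tfrac1\epsilon\int_0^t (I_{s+\epsilon}-I_s)\,ds$ converges to $I_t$; this is a standard mollification fact, following from continuity of $s\mapsto I_s$ (indeed $I\in\mathcal C^\gamma$) and the telescoping identity
\begin{equation*}
\frac1\epsilon\int_0^t\big(I_{s+\epsilon}-I_s\big)\,ds = \frac1\epsilon\int_t^{t+\epsilon}I_s\,ds - \frac1\epsilon\int_0^\epsilon I_s\,ds \longrightarrow I_t - I_0 = I_t,
\end{equation*}
using $I_0=0$ and the convention that $I$ is extended constantly outside $[0,T]$. Combining the two displays gives $J^\epsilon_t\to I_t$, and the bounds are uniform in $t$, delivering uniform a.s. convergence.

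The main obstacle, and the place requiring care, is the passage from pointwise control to the \emph{uniform-in-}$t$ and \emph{almost sure} statement, because the implied constants in the Sewing bound are random (they depend on the $\mathcal C^\gamma$ and $\mathcal C^{2\gamma}_2$ seminorms of $X,\mathbb X,Y',R^Y$, which are finite a.s. but not uniformly). The remedy is that all the estimates above hold pathwise on the full-measure event where those seminorms are finite: on that event every constant $C$ is a fixed (random) number, the bound $C\,t\,\epsilon^{3\gamma-1}$ is uniform over $t\in[0,T]$, and the mollification limit is deterministic path-by-path. A secondary subtlety is the boundary convention: the regularization uses the extension $X_t=X_T$ for $t\ge T$, so for $s$ near $T$ the germ $\Xi_{s,s+\epsilon}$ involves frozen increments; one checks this contributes only an $O(\epsilon)$ boundary layer to $J^\epsilon_t$ that vanishes in the limit, consistent with the telescoping computation above. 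With these points addressed, the regularization limit exists a.s. uniformly and equals the Sewing/Gubinelli rough integral, completing the proof.
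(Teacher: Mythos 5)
Your proposal is correct, and it is essentially the argument the paper points to: the paper gives no in-text proof but cites Example 3.4 and Proposition 5.3 of \cite{gor}, which rest on exactly the ingredients you use — the Chen-relation computation giving $|\delta\Xi_{s,u,t}|\lesssim|t-s|^{3\gamma}$ with $3\gamma>1$, the Sewing bound $|I_{s+\epsilon}-I_s-\Xi_{s,s+\epsilon}|\lesssim\epsilon^{3\gamma}$, and the telescoping average $\frac1\epsilon\int_0^t(I_{s+\epsilon}-I_s)\,ds\to I_t$. Your handling of the two genuine subtleties (pathwise random constants on the full-measure event where the H\"older seminorms are finite, and the constant extension of $X$, $\mathbb{X}$, $I$ beyond $T$) is also sound, so the proof stands as a correct self-contained version of the cited argument.
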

\begin{remark}
We recall the Gubinelli's derivative introduced in \cite{gubinelli2004} may not be unique and uniqueness holds under the so-called true roughness
property for the driving noise (see e.g. Section 6.2 in \cite{hairerbook}). Therefore, in general, the process $Y'$ for a given $(Y,Y') \in \mathcal{D}_X(\mathbb{R}^d)$ is not unique. We postpone the investigation of the uniqueness of $Y'$ to a future project. In case $X$ is a continuous local-martingale, then $Y'$ is unique. See Propositions 3.7 and 3.9 in \cite{gor}.
\end{remark}
\section{The Gaussian space and some tools from Malliavin calculus}\label{gaussiansection}

Recall that any zero-mean continuous Gaussian process carries an abstract Wiener space structure which allows us to construct a Gross-Sobolev-Malliavin derivative and its associated adjoint. In general, this construction is abstract and a common strategy is to find a kernel representation for the covariance (see e.g. \cite{alos2001stochastic}) or make use of the two-dimensional $\rho$-variation with $1\le \rho<2$ (see e.g. \cite{cassfrizvic}) for the underlying covariance kernel. We follow the Gaussian analysis developed by \cite{kruk2007} and \cite{krukrusso} in terms of Schwartz distributions associated with the underlying covariance kernel. In an unpublished work, \cite{krukrusso} extend \cite{kruk2007} and they treat covariance structures admitting singularities on the diagonal of $[0,T]^2$. This section presents a brief account of \cite{krukrusso}.

Next, we describe the class of the Gaussian driving noises that we will consider in this article. In the sequel, $W$ is a (zero mean) real-valued Gaussian continuous process such that $W_0=0$ a.s. Let us denote

$$R(s_1,s_2):=\mathbb{E}[W_{s_1}W_{s_2}]; (s_1,s_2)\in [0,T]^2.$$

By recalling our convention that $W_t=W_T$ for $t\ge T$, we observe $R$ can be naturally extended to $\mathbb{R}^2_+$. A priori, $R$ is only continuous on $\mathbb{R}^2_+$ and hence $\partial^2 R:=\frac{\partial^2 R}{\partial s_1\partial s_2}$ will be interpreted in the sense of distributions. We denote

$$D :=\{(s_1,s_2)\in \mathbb{R}^2_+; s_1=s_2\}.$$
A priori, $\partial_u R(u,v), \partial_v R(u,v)$ and $\partial^2 R(u,v)$ are Schwartz distributions. We explore regularity of $R$ outside the diagonal $D$. Throughout the paper, the following assumptions will be in force.

\

\noindent\textbf{Assumption A}. For every $s\in [0,T]$, $R(dx, s) := \partial_xR(x,s)dx$ is a finite non-negative measure with compact support on $[0,T]$.



\

\noindent\textbf{Assumption B}. We suppose the product of the distribution $\partial^2 R$ with the smooth function $(s_1-s_2)$

$$\partial^2 R(s_1,s_2)(s_1-s_2)$$
is a regular distribution on $\mathbb{R}^2_+$ which is a real Radon measure that we denote by $\bar{\mu}$.

\

\noindent \textbf{Assumption $\textbf{C}$}.

\

\noindent (i) $\partial^2 R$ is a sigma-finite non-positive measure and absolutely continuous w.r.t. Lebesgue on $\mathbb{R}^2_+\setminus D$. With a slight abuse of notation, we denote it by $d\mu = \partial^2R dx$ on $\mathbb{R}^2_+\setminus D$. We assume that the Radon-Nikodym derivative satisfies

\begin{equation}\label{dagrowth}
\big|\partial^2 R(s_1,s_2)\big|\lesssim |s_1-s_2|^\alpha + \phi(s_1,s_2),
\end{equation}
for $(s_1,s_2) \in [0,T]^2\setminus D$, where $-\frac{3}{2} < \alpha < -1$ and there exists $L>1$ such that $\phi:[0,T]^2\setminus D \rightarrow \mathbb{R}_+$ is a symmetric $p$-integrable function over $[0,T]^2\setminus D$ for every $p  \in (1, L)$.

\

\noindent (ii) $\text{Var}\big(W_t-W_s\big)\lesssim |t-s|^{\alpha+2}$, for $s,t\in [0,T]$.

\

In addition, for the exponent $-\frac{3}{2}< \alpha < -1$ given in Assumption C (i), (ii), we assume:

\

\noindent (iii)

$$\big| R(v_1,T) - R(v_2,T) \big|\lesssim |v_1-v_2|^{\alpha+2}$$
for every $v_1,v_2 \in [0,T]^2\setminus D$.

\

\noindent (iv) There exists a non-increasing integrable function $\psi:[0,T]\rightarrow \mathbb{R}_+$ such that


\begin{enumerate}
  \item $\int_a^b|\phi(r_1,r_2)|dr_1\lesssim |b-a|^{\frac{\alpha+2}{2}}\psi(r_2)$
  \item $\int_{c}^d \psi(y)dy\lesssim |d-c|^{\frac{\alpha+2}{2}}$, for every $a,b,c,d$ in $[0,T]$.
  \item $s^{\frac{\alpha+2}{2}} \psi(s) \in L^1[0,T]$.



\end{enumerate}
Under Assumption B, one can check the total variation measure $|\mu|$ is absolutely continuous w.r.t. the total variation measure $|\bar{\mu}|$ with Radon-Nikodym derivative given by $\frac{1}{|y-x|}$. Of course, Assumption C(ii) and the Gaussian property imply that $W$ has $\gamma$-H\"older continuous paths for any $\frac{1}{4} < \gamma  < \frac{\alpha}{2} +1$. Assumption C (iii) and (iv) are technical hypotheses which will play a role in the proofs of Proposition \ref{epsilonlemma} and Theorem \ref{gaussiancase}.

\begin{example}\label{FBMexample}
Let $W$ be a fractional Brownian motion with exponent $0 < H < \frac{1}{2}$. Then,

$$R(s_1,s_2)= \frac{1}{2}\Big(\tilde{s}^{2H}_1 + \tilde{s}^{2H}_2 - |\tilde{s}_2 - \tilde{s}_1|^{2H}\Big); (s_1,s_2)\in \mathbb{R}^2_+,$$
where $\tilde{s}_i = s_i \wedge T$ for $i=1, 2$. Assumptions A, B and C are fulfilled. Indeed, $s_1\mapsto R(s_1, s)$ is absolutely continuous for each $s\in \mathbb{R}_+$, where

$$
\partial_{s_1} R(s_1,T)=\left\{
\begin{array}{rl}
H[s_1^{2H-1}+ (T-s_1)^{2H-1}]; & \quad \hbox{if} \ s_1 < T\\
0 \quad \quad \quad \quad \quad \quad \quad \quad \quad; & \quad \hbox{if} \ s_1 > T.
\end{array}
\right.
$$
Moreover,

$$\bar{\mu}(ds_1ds_2) = H(2H-1)|s_1-s_2|^{2H-1}\text{sgn}(s_1-s_2)\mathds{1}_{[0,T]^2 \setminus D}(s_1,s_2)ds_1ds_2$$
and

$$\partial^2 R(s_1,s_2) = H(2H-1)|s_1-s_2|^{2H-2}\mathds{1}_{[0,T]^2 \setminus D}(s_1,s_2),$$
for $(s_1,s_2)\in \mathbb{R}^2_+\setminus D$. Assumption C is fulfilled for $\alpha=2H-2$,  $\frac{1}{4} < H < \frac{1}{2}$ and $\phi=0$.

\end{example}

\begin{example}\label{biFBMex}
Let $W = B^{H,K}$ be a bifractional Brownian motion with parameters $H \in (0,1), K \in (0,1]$. It is known (see e.g. \cite{russo2006bifractional})

$$R(s_1,s_2) = 2^{-K} \big[ (\tilde{s}_1^{2H} + \tilde{s}_2^{2H})^K -  |\tilde{s}_1 - \tilde{s}_2|^{2HK}\big],$$
where $\tilde{s}_i = s_i\wedge T$. One can easily check

$$\partial_{s_1} R(s_1,s_2) = 2HK2^{-K}\Big[ (s^{2H}_1 + s^{2H}_2)^{K-1}s_1^{2H-1} - |s_1-s_2|^{2HK-1}\text{sign}(s_1-s_2)   \Big]$$
for $s_1,s_2 \in (0,T)$. Then,

$$\partial^2 R(s_1,s_2) = 2^{-K}\Big[ (4H^2K(K-1))(s^{2H}_1+s^{2H}_2)^{K-2}(s_1s_2)^{2H-1} + 2HK(2HK-1)|s_1-s_2|^{2HK-2}\Big],$$
for $(s_1,s_2)\in [0,T]^2\setminus D$,

$$
\partial_{s_1} R(s_1,\infty)=\left\{
\begin{array}{rl}
2HK2^{-K}\Big[(s^{2H}_1 + T^{2H})^{K-1}s_1^{2H-1} + (T-s_1)^{2HK-1}\Big]; & \hbox{if} \ s_1 \in (0,T) \\
0;& \hbox{if} \ s_1 > T, \\
\end{array}
\right.
$$
and

\begin{eqnarray*}
\bar{\mu}(ds_1ds_2) &=& \mathds{1}_{[0,T]^2}(s_1,s_2)2^{-K}\Big[ 4H^2K(K-1)(s^{2H}_1 +s^{2H}_2)^{K-2}(s_1s_2)^{2H-1}(s_1-s_2)^2    \\\
&+& 2HK(2HK-1)|s_1-s_2|^{2HK}\Big]ds_1ds_2.
\end{eqnarray*}
Since $2^{K-2}(s_1s_2)^{H(K-2)}\ge (s^{2H}_1+s^{2H}_2)^{K-2}$, we notice the existence of a positive constant $C(H,K,T)$ such that

\begin{equation}\label{bf1}
\partial_{s_1} R(s_1,T)\le C(H,K,T) \Big\{ s_1^{2H-1} + (T-s_1)^{2HK-1} \Big\}
\end{equation}
for every $s_1>0$,
\begin{equation}\label{bf2}
\big|\partial^2 R(s_1,s_2)\big| \le C(H,K,T) \Big\{ (s_1s_2)^{HK-1} + |s_1-s_2|^{2HK-2}\Big\},
\end{equation}
for every $(s_1,s_2)\in [0,T]^2\setminus D$. The function, $\phi(s_1,s_2) = (s_1s_2)^{HK-1}$ is $p$-integrable over $[0,T]^2\setminus D$ for every $1 < p < \frac{1}{1-HK}$. Therefore,  Assumptions A, B and C are fulfilled for $\frac{1}{4} < HK  < \frac{1}{2}$ and $\psi(t) = t^{HK-1}$. We observe bifractional Brownian motion does not have stationary increments for $K < 1$, it is $HK$-self similar with $\gamma$-H\"older continuous paths for $\gamma < HK$. See e.g. \cite{russo2006bifractional} for details.
\end{example}

\subsection{The ``reproducing kernel'' Hilbert space and related operators}\label{gsubsection}
In this section, we set the basic elements of the reproducing kernel Hilbert space associated with $R$ as introduced by \cite{krukrusso}. Throughout this paper, $X= (X^1, \ldots, X^d)$ is a $d$-dimensional centered process with iid components satisfying Assumptions A, B and C. In the sequel, let $C^1_0(\mathbb{R}_+,\mathbb{R}^d)$ be the space of $\mathbb{R}^d$-valued $C^1$-functions with compact support in $\mathbb{R}_+$ and $\langle \cdot, \cdot \rangle$ is the standard inner product on $\mathbb{R}^d$. We also denote $e_j; j=1,\ldots,d$ as the canonical basis of $\mathbb{R}^d$ and $\mathbf{1} = \sum_{\ell=1}^d e_\ell$.

Let $I:C^1_0(\mathbb{R}_+,\mathbb{R}^d)\rightarrow L^2(\mathbb{P})$ be the linear mapping defined by

$$I(f):= \int_0^\infty f_s dX_s:= \langle f(+\infty), X_\infty\rangle - \int_0^{+\infty} \langle X_s, d f(s)\rangle, $$
where $\langle f(+\infty), X_\infty\rangle:= \lim_{t\rightarrow +\infty}\langle f(t), X_T\rangle=0$.

Let $\tilde{L}_R(\mathbb{R}^d)$ be the linear space of all Borel functions $f:\mathbb{R}_+\rightarrow \mathbb{R}^d$ such that

\begin{description}
  \item[i] $\int_0^\infty |f|^2(s)|R|(ds,\infty) < \infty$,
  \item[ii] $\int_{\mathbb{R}^2_+\setminus D} |f(s_1) - f(s_2)|^2|\mu|(ds_1 ds_2) < \infty$.
\end{description}
For $f\in \tilde{L}_R(\mathbb{R}^d)$, we define

\begin{equation}\label{Vinner}
\|f\|^2_{L_R(\mathbb{R}^d)}:=\int_0^\infty |f(s)|^2R(ds,T) - \frac{1}{2}\int_{\mathbb{R}^2_+\setminus D} |f(s_1) - f(s_2)|^2 \mu(ds_1 ds_2).
\end{equation}
It is possible to show $\tilde{L}_R(\mathbb{R}^d)$ is a Hilbert space w.r.t. the inner-product associated with (\ref{Vinner}) and

\begin{equation}\label{iso1}
\mathbb{E}|I(f)|^2 = \|f\|^2_{L_R(\mathbb{R}^d)},
\end{equation}
for every $f\in C^1_0(\mathbb{R}_+,\mathbb{R}^d)$. Let $L_R(\mathbb{R}^d)$ be the closure of $C^1_0(\mathbb{R}_+,\mathbb{R}^d)$ w.r.t. $\|\cdot\|_{L_R(\mathbb{R}^d)}$ as a subset of $\tilde{L}_R(\mathbb{R}^d)$. If $d=1$, we will write $L_R = L_R(\mathbb{R})$. Then, $I:C^1_0(\mathbb{R}_+,\mathbb{R}^d)\rightarrow L^2(\mathbb{P})$ can be uniquely extended to a linear isometry

\begin{equation}\label{paleyop}
I:L_R(\mathbb{R}^d)\rightarrow L^2(\mathbb{P}).
\end{equation}

One can check $L_R(\mathbb{R}^d)$ is a real separable Hilbert space and

\begin{equation}\label{BVcase}
\int_0^\infty \varphi dX  = -\int_0^\infty \langle X, d\varphi\rangle,
\end{equation}
for every bounded variation function $\varphi$ with compact support. This implies that

\begin{equation}\label{cov1}
R(s,t) = \langle \mathds{1}_{[0,t]}, \mathds{1}_{[0,s]}\rangle_{L_R}; s,t \in [0,T].
\end{equation}

See Propositions 6.18, 6.14, 6.22, 6.32 and 6.33 in \cite{krukrusso} for the proof of these results. The Paley - Wiener integral associated with $X$ is given by

$$I(f):= \int_0^\infty fdX; f \in L_R(\mathbb{R}^d).$$

With the Paley-Wiener integral (\ref{paleyop}) at hand, one can construct a Malliavin calculus based on the Gaussian Hilbert space $L_R(\mathbb{R}^d)$ (see e.g. \cite{nualart2006}). Let $\mathcal{S}$ be the set of cylindrical real-valued random variables of the form

\begin{equation}\label{cylinder}
F = f \Bigg( \int_0^\infty \phi_1 dX, \ldots, \int_0^\infty \phi_m dX    \Bigg),
\end{equation}
where $f\in C^\infty_b(\mathbb{R}^m)$ (here $f$ is a smooth real-valued function on $\mathbb{R}^m$, where $f$ and all its partial derivatives are bounded), $\phi_1, \ldots, \phi_m \in C^1_0(\mathbb{R}_+,\mathbb{R}^d)$ and $m\ge 1$. For a cylinder random variable $F$ of the form (\ref{cylinder}), we then define

$$\mathbf{D}_t F = \sum_{i=1}^m\partial _i f \Bigg( \int_0^\infty \phi_1 dX, \ldots, \int_0^\infty \phi_mdX \Bigg)\phi_i(t); t\ge 0.$$
 We recall $\mathbf{D}:\mathcal{S}\rightarrow L^2(\Omega,L_R(\mathbb{R}^d))$ is a densely defined and closable operator satisfying the classical properties of the Gross-Sobolev-Malliavin derivative on the Gaussian space $\Big((\Omega, \mathcal{F}, \mathbb{P}); L_R (\mathbb{R}^d)\Big)$. For details, we refer the reader to \cite{nualart2006}.



In this article, we will frequently work with Hilbert space-valued smooth random elements in the sense of Malliavin calculus. Let $V$ be a real separable Hilbert space with a norm $\|\cdot \|_V$. Let $\mathcal{S}_{V}$ be the set of smooth $V$-valued stochastic processes of the form

$$F = \sum_{j=1}^n F_j v_j, v_j\in V, F_j \in \mathcal{S}.$$

We recall $\mathbf{D}$ can also be viewed a closable operator from $\mathcal{S}_{V}\subset L^p(\Omega;V)$ into $L^p(\Omega; V\otimes L_{R}(\mathbb{R}^{d})$, where $V\otimes L_R(\mathbb{R}^d)$ is the Hilbert tensor product of the pair $(V, L_R(\mathbb{R}^d))$ equipped with its standard norm $\|\cdot\|_{V\otimes L_{R}(\mathbb{R}^{d})}$. Let $\mathbb{D}^{1,p}(V)$ be the completion of $\mathcal{S}_{V}$ w.r.t.

$$\|F\|_{1,p,V}:=\Bigg[ \mathbb{E}\|F\|^p_{V} + \mathbb{E}\|\mathbf{D}F\|^p_{V\otimes L_{R}(\mathbb{R}^{d})}\Bigg]^{\frac{1}{p}},$$
for $p\ge 1$. To keep notation simple, we simply write $\mathbb{D}^{1,p}  = \mathbb{D}^{1,p}(\mathbb{R})$.

Throughout this article, $L_{2,R}(\mathbb{R}^{d\times d}):=L_R(\mathbb{R}^d) \otimes L_R(\mathbb{R}^d)$ is the Hilbert tensor product of $L_R(\mathbb{R}^d)$. The space $L_{2,R}(\mathbb{R}^{d\times d})$ can be identified as the closure of the algebraic tensor product $L_R(\mathbb{R}^d) \otimes^a L_R(\mathbb{R}^d)$ w.r.t. the norm

$$\|g\|^2_{2,R}:=\int_0^\infty \| g(t,\cdot)\|^2_{L_R(\mathbb{R}^d)}R(dt,T) -\frac{1}{2}\int_{\mathbb{R}^2_+\setminus D}\| g(t,\cdot) - g(s,\cdot)  \|^2_{L_R(\mathbb{R}^d)}\partial^2R(s,t)dsdt,$$
for an elementary tensor product $g = g_1\otimes g_2$, where $g_1,g_2 \in L_{R}(\mathbb{R}^{d})$.

The Gross-Sobolev-Malliavin derivative operator $\big(\mathbf{D},\mathbb{D}^{1,2}\big)$ admits an adjoint which is a densely defined closable linear operator $\big(\boldsymbol{\delta}, \text{dom}~\boldsymbol{\delta}\big)$, where $\mathbb{D}^{1,2}(L_R(\mathbb{R}^d))\subset \text{dom}~\boldsymbol{\delta}$. We recall the classical inequalities
\begin{equation}\label{cs1}
\|\boldsymbol{\delta}(u)\|_{L^2(\mathbb{P})}\lesssim \big\| \mathbb{E}[u] \big\|_{L_R(\mathbb{R}^d)} + \big\| \mathbf{D}u \big\|_{{L^2(\Omega; L_{2,R}(\mathbb{R}^{d\times d}))}}
\end{equation}
and
\begin{equation}\label{cs2}
\|\boldsymbol{\delta}(u)\|_{L^2(\mathbb{P})}\lesssim \| u\|_{\mathbb{D}^{1,2}(L_R(\mathbb{R}^d))},
\end{equation}
for $u \in \mathbb{D}^{1,2}(L_R(\mathbb{R}^d))$. See e.g. Prop. 1.5.8 in \cite{nualart2006}. We observe that $\mathbf{D}u$ is identified as a two-parameter matrix-valued process for $u \in \mathbb{D}^{1,2}(L_R(\mathbb{R}^d))$. We also make use of the well-known multiplication rule of smooth random variables with Skorohod integrals: Let $u \in \text{dom} ~\boldsymbol{\delta}$, $F \in \mathbb{D}^{1,2}$ such that $F\int_0^\infty u_s\boldsymbol{\delta}X_s \in L^2(\mathbb{P})$. Then, $Fu \in \text{dom} ~\boldsymbol{\delta}$ and

\begin{equation}\label{multfor}
\int_0^\infty Fu_s\boldsymbol{\delta}X_s  = F\int_0^\infty u_s\boldsymbol{\delta}X_s  - \langle \mathbf{D}F, u\rangle_{L_R(\mathbb{R}^d)}.
\end{equation}


\begin{definition}
If $u \mathds{1}_{[0,t]} \in \text{dom}~\boldsymbol{\delta}$ for every $t\ge 0$, then we define

$$\int_0^t u_s\boldsymbol{\delta} X_s: = \boldsymbol{\delta}(u \mathds{1}_{[0,t]}); t\ge 0.$$
\end{definition}
Of course, if $u \in L_R(\mathbb{R}^d)$, then $\int_0^t u_s\boldsymbol{\delta} X_s = I(u\mathds{1}_{[0,t]})$ for every $t\ge 0$.

Next, we recall the concept of the symmetric stochastic integral via regularization in the sense of \cite{russo1993forward}.
\begin{definition}
Let $Y$ be an $\mathbb{R}^d$-valued process with locally integrable paths. Let

$$I^0(\epsilon, Y,dX)(t) := \frac{1}{2\epsilon}\int_0^t \langle Y_s, X_{s+\epsilon} - X_{s-\epsilon}\rangle ds; 0\le t\le T.$$
We set

$$\int_0^t Yd^0 X := \lim_{\epsilon\downarrow 0}I^0(\epsilon, Y,dX)(t)\quad (\mathbb{P}-\text{probability});~0\le t\le T,$$
when it exists. The random variable $\int_0^t Yd^0X$ is called the \textbf{symmetric-Stratonovich integral} of $Y$ w.r.t. $X$.
\end{definition}
\begin{remark}\label{russorem}
We observe the symmetric-Stratonovich integral (if it exists) is the limit in probability of

$$
\int_0^t Yd^0 X = \lim_{\epsilon\downarrow 0} \frac{1}{2\epsilon}\int_0^t \langle Y_{u+\epsilon}  + Y_u, X_{u+\epsilon } - X_{u}\rangle du;~0\le t\le T.
$$
See Remark 3.2 in \cite{er2}.
\end{remark}



Next, we present two technical lemmas which will play a key role in this work.


\begin{lemma}\label{XL}
Under Assumptions A, B, C (i,ii), we have $X \in \mathbb{D}^{1,2}(L_R(\mathbb{R}^d))$.
\end{lemma}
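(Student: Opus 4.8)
The plan is to exhibit the path $s\mapsto X_s$ as an $L^2(\mathbb{P};L_R(\mathbb{R}^d))$-limit of smooth $L_R(\mathbb{R}^d)$-valued random variables whose Malliavin derivatives also converge, and then to invoke the closability of $\mathbf{D}$ recorded in Section~\ref{gsubsection}. The candidate derivative is dictated by the Paley--Wiener representation: since $X^j_s = I(\mathds{1}_{[0,s]}e_j)$ and $\mathbf{D} I(h)=h$ on the first chaos, one expects $\mathbf{D}_r X^j_s = \mathds{1}_{[0,s]}(r)e_j$, i.e. $\mathbf{D} X$ ought to be the \emph{deterministic} ``lower-triangular'' kernel $(s,r)\mapsto \mathds{1}_{\{r\le s\}}\mathrm{Id}_d$, regarded as an element of $L_{2,R}(\mathbb{R}^{d\times d})=L_R(\mathbb{R}^d)\otimes L_R(\mathbb{R}^d)$. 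Since $X^1,\dots,X^d$ are i.i.d., every norm below reduces, up to a factor $d$ (coming from the Frobenius norm on $\mathbb{R}^{d\times d}$), to the scalar covariance $R$.

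First I would establish the two finiteness bounds. Using \eqref{Vinner} together with $\mathbb{E}|X_s|^2=d\,R(s,s)$ and $\mathbb{E}|X_{s_1}-X_{s_2}|^2=d\,\text{Var}(W_{s_1}-W_{s_2})$, and using $\|\mathds{1}_{[0,t]}\|^2_{L_R}=R(t,t)$, $\|\mathds{1}_{[s,t]}\|^2_{L_R}=\text{Var}(W_t-W_s)$ from \eqref{cov1}, both $\mathbb{E}\|X\|^2_{L_R(\mathbb{R}^d)}$ and $\|\mathds{1}_{\{r\le s\}}\mathrm{Id}_d\|^2_{2,R}$ equal
\begin{equation*}
d\int_0^\infty R(t,t)\,R(dt,T)-\frac{d}{2}\int_{\mathbb{R}^2_+\setminus D}\text{Var}(W_t-W_s)\,\partial^2R(s,t)\,ds\,dt .
\end{equation*}
The first integral is finite because $t\mapsto R(t,t)$ is bounded and $R(dt,T)$ is a finite measure (Assumption~A). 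The second is non-negative ($\partial^2R\le0$, Assumption~C(i)) and, by Assumption~C(ii) and \eqref{dagrowth}, is dominated by $\int_{[0,T]^2}|s-t|^{\alpha+2}(|s-t|^{\alpha}+\phi(s,t))\,ds\,dt$, which converges since $2\alpha+2>-1$ (as $\alpha>-\tfrac32$) controls the singularity $|s-t|^{2\alpha+2}$ near $D$, while $\phi\in L^1$ and $|s-t|^{\alpha+2}$ is bounded. In particular $X\in L^2(\mathbb{P};L_R(\mathbb{R}^d))$ and the candidate kernel lies in $L_{2,R}(\mathbb{R}^{d\times d})$.

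Next I would run the approximation. For partitions $\Pi_n=\{0=t_0<\dots<t_{N_n}=T\}$ with $\|\Pi_n\|\to0$, set $X^{(n)}_s:=\sum_i X_{t_i}\mathds{1}_{[t_i,t_{i+1})}(s)$. Each coordinate $X^j_{t_i}=I(\mathds{1}_{[0,t_i]}e_j)$ belongs to the first chaos $\subset\mathbb{D}^{1,2}$, each step function $\mathds{1}_{[t_i,t_{i+1})}e_j=(\mathds{1}_{[0,t_{i+1}]}-\mathds{1}_{[0,t_i]})e_j$ lies in $L_R(\mathbb{R}^d)$, so $X^{(n)}\in\mathbb{D}^{1,2}(L_R(\mathbb{R}^d))$ with the \emph{deterministic} derivative $\mathbf{D} X^{(n)}=\big(\sum_i\mathds{1}_{[t_i,t_{i+1})}(\cdot)\,\mathds{1}_{[0,t_i]}(\cdot)\big)\mathrm{Id}_d$. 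One then checks $X^{(n)}\to X$ in $L^2(\mathbb{P};L_R(\mathbb{R}^d))$ and $\mathbf{D} X^{(n)}\to\mathds{1}_{\{r\le s\}}\mathrm{Id}_d$ in $L_{2,R}(\mathbb{R}^{d\times d})$; closability of $\mathbf{D}$ then gives $X\in\mathbb{D}^{1,2}(L_R(\mathbb{R}^d))$, as claimed (with $\mathbf{D} X=\mathds{1}_{\{r\le s\}}\mathrm{Id}_d$).

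The delicate point, and the main obstacle, is this last pair of convergences near the diagonal $D$. The ``diagonal'' contributions to $\|X^{(n)}-X\|^2_{L_R(\mathbb{R}^d)}$ and to $\|\mathbf{D} X^{(n)}-\mathds{1}_{\{r\le s\}}\mathrm{Id}_d\|^2_{2,R}$ integrate increments of $X$ (respectively of the indicator) against the singular measure $\partial^2R\,ds\,dt$, and must be shown to vanish as $\|\Pi_n\|\to0$. I would argue by dominated convergence: the integrands tend to $0$ pointwise off $D$, while $\text{Var}(W_{s}-W_{t})\lesssim|s-t|^{\alpha+2}$ together with \eqref{dagrowth} furnishes an $n$-independent, $|\mu|$-integrable majorant, exactly the bound used in the finiteness estimate. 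In other words, Assumptions~C(i)--(ii) are precisely calibrated so that the lower-triangular indicator belongs to $L_{2,R}(\mathbb{R}^{d\times d})$, which is the heart of the lemma.
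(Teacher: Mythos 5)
Your overall plan---exhibit the lower-triangular kernel $\mathds{1}_{\{r\le s\}}\mathrm{Id}_d$ as the candidate derivative, prove the two finiteness bounds, approximate, and invoke closability---is sound, and your finiteness computation is exactly the paper's identity (\ref{fr1}) (plus the pleasant observation that $\mathbb{E}\|X\|^2_{L_R(\mathbb{R}^d)}$ and $\|\mathds{1}_{\{r\le s\}}\mathrm{Id}_d\|^2_{2,R}$ coincide). The genuine gap is in your last step: the dominated convergence argument with ``the bound used in the finiteness estimate'' as an $n$-independent majorant fails. The estimate $\mathrm{Var}(W_t-W_s)\lesssim|t-s|^{\alpha+2}$ controls increments of $X$, but \emph{not} increments of the error $X^{(n)}-X$ across partition points. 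Concretely, for $t\in[t_{i-1},t_i)$ and $s\in[t_i,t_{i+1})$ one has
\begin{equation*}
\big(X^{(n)}_s-X_s\big)-\big(X^{(n)}_t-X_t\big)=\big(X_{t_i}-X_s\big)-\big(X_{t_{i-1}}-X_t\big)\longrightarrow X_{t_i}-X_{t_{i-1}}
\end{equation*}
as $s\downarrow t_i$, $t\uparrow t_i$, so the integrand stays of order $\|\Pi_n\|^{\alpha+2}$ (a fixed nonzero constant for each $n$) while $|s-t|\to0$; no bound $C|s-t|^{\alpha+2}$ uniform in $n$ can hold near the partition points. Since $\alpha<-1$, $|\mu|$ has infinite mass in every strip around $D$, so a merely bounded integrand is fatal there, and plain dominated convergence with your proposed majorant does not go through.

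The step is repairable, and the paper itself displays the standard device in the analogous situation: in the proof of Proposition \ref{liftingTH}, the convergence $\|Z^\epsilon-Z\|_{\mathbb{D}^{1,2}(L_R(\mathbb{R}^d))}\to0$ is obtained from pointwise convergence \emph{plus} the uniform moment bound (\ref{z6}) involving $|\partial^2R|^p$ for some $p>1$ (possible because $\alpha>-\tfrac32$ gives $\alpha p>-2$ for $p<-2/\alpha$, and $\phi\in L^p$), i.e.\ Vitali rather than dominated convergence. Alternatively a splitting works and even gives a rate: off a $\delta$-strip around $D$ the integrands are $O(\|\Pi_n\|^{\alpha+2})$ against a finite measure; inside the strip, same-cell pairs contribute $O(\delta^{2\alpha+3})$ (using $2\alpha+3>0$), while each straddling corner contributes $O(\|\Pi_n\|^{\alpha+2}\cdot\|\Pi_n\|^{\alpha+2})$ and there are $O(\|\Pi_n\|^{-1})$ of them, totalling $O(\|\Pi_n\|^{2\alpha+3})\to0$. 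Note finally that the paper's own proof takes a genuinely different route that sidesteps the diagonal issue entirely: it expands $X=\sum_iF_iv_i$ in an orthonormal basis of $L_R$, with $F_i=\langle X,v_i\rangle_{L_R}$ in the first chaos by Prop.\ 9.6 of \cite{krukrusso}, so that both $X^n\to X$ in $L^2(\Omega;L_R)$ and the Cauchy property of $(\mathbf{D}X^n)$ reduce to the single bound $\sum_i\mathbb{E}F_i^2=\mathbb{E}\|X\|^2_{L_R}<\infty$ --- precisely your finiteness estimate; it does, however, quote $X\in L_R$ a.s.\ (Corollary 6.49 in \cite{krukrusso}) as an input, which your scheme would instead have to produce as a by-product of the repaired convergence.
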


\begin{lemma}\label{funcINT}
Let $\rho$ be a finite Borel measure on $\mathbb{R}_+, a:\mathbb{R}^2_+\rightarrow \mathbb{R}$ be a Borel function and $Y$ be a $\mathbb{R}^d$-valued stochastic process. We suppose the following.

\begin{enumerate}
  \item $a(s, \cdot)\in L_R$ for a.e.~$s$ w.r.t. $\rho$.
  \item $\int_0^\infty \|a(s,\cdot)\|^2_{L_R} \rho(ds) < \infty$
  \item $t\mapsto Y_t \in \mathbb{D}^{1,2}(\mathbb{R}^d)$ is continuous and bounded on $\text{supp}~\rho$.
\end{enumerate}
Then, the process

$$Z_t = \int_0^\infty a(s,t)Y_s \rho(ds)$$
belongs to $\mathbb{D}^{1,2}(L_R(\mathbb{R}^d))$ and

$$\mathbf{D}_\tau Z_t =  \int_0^\infty a(t,s)\mathbf{D}_\tau Y_s\rho(ds),~\tau \ge 0.$$
\end{lemma}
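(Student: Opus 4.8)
The plan is to realize $Z$ as a Bochner integral in the separable Hilbert space $\mathbb{D}^{1,2}(L_R(\mathbb{R}^d))$ and to exploit that the Gross--Sobolev--Malliavin operator $\mathbf{D}$ is a \emph{bounded} linear map from $\mathbb{D}^{1,2}(L_R(\mathbb{R}^d))$, equipped with its graph norm $\|\cdot\|_{1,2,L_R(\mathbb{R}^d)}$, into $L^2(\Omega;L_{2,R}(\mathbb{R}^{d\times d}))$ (indeed $\|\mathbf{D}F\|_{L^2}\le\|F\|_{1,2,L_R(\mathbb{R}^d)}$ by definition of the norm). Since bounded linear operators commute with Bochner integration, once $Z$ is identified with such an integral the derivative formula follows at once. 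Concretely, I would set $\Phi(s):=a(s,\cdot)Y_s$, viewed as the $L_R(\mathbb{R}^d)$-valued random element $t\mapsto a(s,t)Y_s$, and first check that $\Phi(s)\in\mathbb{D}^{1,2}(L_R(\mathbb{R}^d))$ for $\rho$-a.e.\ $s$.

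For this step the key observation is that $a(s,\cdot)$ is a \emph{deterministic scalar} factor, so the Malliavin derivative acts only on the random vector $Y_s\in\mathbb{D}^{1,2}(\mathbb{R}^d)$ and $\mathbf{D}\Phi(s)=a(s,\cdot)\,\mathbf{D}Y_s$. Because the norm on $L_R(\mathbb{R}^d)$ in (\ref{Vinner}) is built from the Euclidean norm of the $\mathbb{R}^d$-values, this scalar multiplier factors cleanly out of both the value-norm and the derivative-norm (the cross terms in the tensor expansion vanishing by orthogonality of the canonical basis of $\mathbb{R}^d$), giving $\|a(s,\cdot)Y_s\|_{L_R(\mathbb{R}^d)}=|Y_s|\,\|a(s,\cdot)\|_{L_R}$ and $\|\mathbf{D}\Phi(s)\|_{L_{2,R}(\mathbb{R}^{d\times d})}=\|a(s,\cdot)\|_{L_R}\,\|\mathbf{D}Y_s\|_{L_R(\mathbb{R}^d)}$ pathwise. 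Taking expectations yields the clean identity $\|\Phi(s)\|_{1,2,L_R(\mathbb{R}^d)}=\|a(s,\cdot)\|_{L_R}\,\|Y_s\|_{1,2,\mathbb{R}^d}$. For Bochner integrability, strong measurability of $s\mapsto\Phi(s)$ follows from separability of $\mathbb{D}^{1,2}(L_R(\mathbb{R}^d))$ via the Pettis theorem, using the continuity of $s\mapsto Y_s$ into $\mathbb{D}^{1,2}(\mathbb{R}^d)$ from hypothesis (3) and the measurability of $s\mapsto a(s,\cdot)\in L_R$ from (1) and the joint Borel measurability of $a$; and, writing $M:=\sup_{s\in\operatorname{supp}\rho}\|Y_s\|_{1,2,\mathbb{R}^d}<\infty$ (again from (3)), Cauchy--Schwarz against the finite measure $\rho$ gives
$$\int_0^\infty\|\Phi(s)\|_{1,2,L_R(\mathbb{R}^d)}\rho(ds)\le M\Big(\rho(\mathbb{R}_+)\Big)^{1/2}\Big(\int_0^\infty\|a(s,\cdot)\|^2_{L_R}\rho(ds)\Big)^{1/2}<\infty$$
by (2). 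Hence the Bochner integral $\widetilde Z:=\int_0^\infty\Phi(s)\rho(ds)$ exists in $\mathbb{D}^{1,2}(L_R(\mathbb{R}^d))$, and commuting $\mathbf{D}$ with it produces $\mathbf{D}_\tau\widetilde Z_t=\int_0^\infty a(s,t)\mathbf{D}_\tau Y_s\rho(ds)$, which is the asserted identity.

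The last step, which I expect to be the main obstacle, is the identification of the abstract Bochner integral $\widetilde Z$ with the concretely defined process $Z_t=\int_0^\infty a(s,t)Y_s\rho(ds)$. This is a Fubini-type matter: pairing $\widetilde Z$ against continuous linear functionals on $L_R(\mathbb{R}^d)$ (for instance those induced by the Paley--Wiener isometry $I$, or by test elements such as $\mathds{1}_{[0,t]}$, using (\ref{cov1})) and interchanging the resulting scalar integral with $\rho$ shows that $\widetilde Z$ and $Z$ coincide as $L_R(\mathbb{R}^d)$-valued random elements; the delicate points are purely measure-theoretic, namely the continuity of the evaluation maps on $\mathbb{D}^{1,2}(L_R(\mathbb{R}^d))$ and the justification of the interchange, so that no stochastic analysis beyond the closedness/boundedness of $\mathbf{D}$ enters. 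An equivalent route avoids Bochner integration altogether: approximate $Z$ by Riemann--Stieltjes sums $Z^n_t=\sum_i a(s^n_i,t)Y_{s^n_i}\rho(B^n_i)$, each of which lies in $\mathcal{S}_{L_R(\mathbb{R}^d)}$ with explicitly computable derivative $\sum_i a(s^n_i,\cdot)\mathbf{D}Y_{s^n_i}\rho(B^n_i)$, and then invoke the closedness of $\mathbf{D}$ after checking $Z^n\to Z$ and $\mathbf{D}Z^n\to\int_0^\infty a(s,\cdot)\mathbf{D}Y_s\rho(ds)$ in the respective $L^2(\Omega)$-norms; the requisite convergence estimates rest on the same factorization of norms together with the continuity of $Y$ on $\operatorname{supp}\rho$.
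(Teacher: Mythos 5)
Your proposal is correct, but a direct comparison is not possible in the usual sense: the paper does not prove Lemma \ref{funcINT} in the text, deferring (for $d=1$) to Prop.~9.14 of the unpublished manuscript \cite{krukrusso}; the classical argument for statements of this type is exactly your second route — Riemann--Stieltjes sums in $\mathcal{S}_{L_R(\mathbb{R}^d)}$ followed by closedness of $\mathbf{D}$ — and that is very likely what the cited proposition does. Your primary route, realizing $Z$ as a Bochner integral in $\mathbb{D}^{1,2}(L_R(\mathbb{R}^d))$ and commuting $\mathbf{D}$ (bounded for the graph norm) with the integral, is a clean repackaging that yields the derivative formula for free once integrability is checked; the factorization $\|a(s,\cdot)Y_s\|_{1,2,L_R(\mathbb{R}^d)}=\|a(s,\cdot)\|_{L_R}\,\|Y_s\|_{1,2,\mathbb{R}^d}$ is exactly right (the scalar factor pulls out of (\ref{Vinner}) and out of the elementary tensor $a(s,\cdot)\otimes\mathbf{D}Y_s$ directly; no basis orthogonality is needed), and the Cauchy--Schwarz bound against the finite $\rho$ uses hypotheses (2)--(3) as intended. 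The two points you flag as residual are indeed where the care lies: measurability of $s\mapsto a(s,\cdot)\in L_R$ (check $s\mapsto\langle a(s,\cdot),g\rangle_{L_R}$ Borel for $g$ in a countable dense set, via Fubini--Tonelli), and the identification of the abstract integral with the pathwise $Z$ — since evaluation $f\mapsto f(t)$ is not continuous on $L_R$, pairing against a total family only identifies projections onto $L_R$; the clean fix is to embed $\tilde{L}_R$ isometrically into $L^2(R(dt,T))\oplus L^2(|\mu|)$ via $f\mapsto\big(f,\,(t_1,t_2)\mapsto f(t_1)-f(t_2)\big)$, where Bochner and pointwise integrals coincide almost everywhere. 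Finally, your formula with $a(s,t)$ silently corrects the argument-swap typo $a(t,s)$ in the lemma's displayed derivative, which is the correct reading.
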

The proof of Lemma \ref{XL} is given in the Supplementary Material. The proof of Lemma \ref{funcINT} (when $d=1$) is given in Prop. 9.14 in \cite{krukrusso}. The same arguments apply to the multidimensional case. 

Let us now present two assumptions which will be essential in Theorem \ref{mainTH}. 
 
\

\noindent \textbf{Assumption S1:} There exists $\gamma \in (0,1]$ such that

\begin{equation}\label{incrP1}
\|Y_t-Y_s\|^2_{\mathbb{D}^{1,2}(\mathbb{R}^d)}\lesssim |t-s|^{2\gamma},
\end{equation}
where $2\gamma + \alpha +1>0$ and $\alpha \in (-\frac{3}{2},-1)$ is the exponent of Assumption C.

\

\noindent \textbf{Assumption S2}: Let $\alpha \in (-\frac{3}{2},-1)$ be the exponent in Assumption C. Assume there exists $\eta>0$ such that $\eta + \alpha+1 > 0$ and

\begin{equation}\label{COND}
\mathbb{E}\big|\textit{tr}[\mathbf{D}_{r_1}Y_s - \mathbf{D}_{r_2}Y_s]\big|^2\lesssim |r_2-r_1|^{2\eta},
\end{equation}
for every $0\le r_1 < r_2\le s \le T$.

\

In the sequel, we present a technical lemma which play a key role in the approximation scheme for Skorohod integrals. The proof of Lemma \ref{skolemma} is given in Section \ref{appendixsec}. 

\begin{lemma}\label{skolemma}
Let $X= (X_1,\ldots,X_d)$ be a $d$-dimensional Gaussian process satisfying Assumptions A, B and C (i,ii). Let $\alpha \in (-\frac{3}{2},-1)$ be the exponent of Assumption C. Assume $Y\in \mathbb{D}^{1,2}(L_R(\mathbb{R}^d))$ satisfies Assumption S1 with $2\gamma + \alpha +1 >0$. Then, $Y$ satisfies

$$\| Y_{\cdot + r} - Y_\cdot\|^2_{\mathbb{D}^{1,2}(L_R(\mathbb{R}^d))}\lesssim |r|^{2\gamma+ \alpha+1},$$
for every $|r| \in (0,1)$.
\end{lemma}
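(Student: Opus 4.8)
The plan is to expand the target norm of the shifted increment $g := Y_{\cdot+r}-Y_\cdot$, with $g_t = Y_{t+r}-Y_t$, into its value and Malliavin layers and to reduce the whole estimate to two deterministic kernel integrals against $R(\cdot,T)$ and $|\mu|=|\partial^2 R|\,dx$. By definition,
\begin{equation*}
\|g\|^2_{\mathbb{D}^{1,2}(L_R(\mathbb{R}^d))} = \mathbb{E}\|g\|^2_{L_R(\mathbb{R}^d)} + \mathbb{E}\|\mathbf{D}g\|^2_{2,R},
\end{equation*}
and I expand each summand through the explicit formulas (\ref{Vinner}) and $\|\cdot\|_{2,R}$. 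Because $\partial^2 R \le 0$ off the diagonal by Assumption C(i), each of the four resulting integrands is non-negative (the second term of each formula carries a compensating minus sign against the non-positive measure), so Tonelli's theorem lets me interchange $\mathbb{E}$ with all space integrals. This leaves four non-negative contributions: two \emph{value} terms $\int_0^\infty \mathbb{E}|g_s|^2\,R(ds,T)$ and $\int_0^\infty \mathbb{E}\|\mathbf{D}g_t\|^2_{L_R(\mathbb{R}^d)}\,R(dt,T)$, and two \emph{increment} terms $\tfrac12\int \mathbb{E}|g_{s_1}-g_{s_2}|^2\,|\mu|(ds_1ds_2)$ and $\tfrac12\int \mathbb{E}\|\mathbf{D}(g_{s_1}-g_{s_2})\|^2_{L_R(\mathbb{R}^d)}\,|\mu|(ds_1ds_2)$. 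The finiteness I derive below simultaneously certifies $g\in\mathbb{D}^{1,2}(L_R(\mathbb{R}^d))$, where the convention extending $Y$ to $\mathbb{R}$ makes $g$ well defined.

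The probabilistic input is confined to two pointwise bounds, both immediate from Assumption S1 and the triangle inequality in $\mathbb{D}^{1,2}(\mathbb{R}^d)$. Since $\|Y_t-Y_s\|^2_{\mathbb{D}^{1,2}(\mathbb{R}^d)}$ dominates both $\mathbb{E}|Y_t-Y_s|^2$ and $\mathbb{E}\|\mathbf{D}(Y_t-Y_s)\|^2_{L_R(\mathbb{R}^d)}$, the value quantities satisfy $\mathbb{E}|g_s|^2 + \mathbb{E}\|\mathbf{D}g_t\|^2_{L_R(\mathbb{R}^d)}\lesssim |r|^{2\gamma}$; and writing the double increment in the two ways $g_{s_1}-g_{s_2} = (Y_{s_1+r}-Y_{s_2+r})-(Y_{s_1}-Y_{s_2}) = (Y_{s_1+r}-Y_{s_1})-(Y_{s_2+r}-Y_{s_2})$ and applying S1 to each grouping yields
\begin{equation*}
\mathbb{E}|g_{s_1}-g_{s_2}|^2 + \mathbb{E}\|\mathbf{D}(g_{s_1}-g_{s_2})\|^2_{L_R(\mathbb{R}^d)} \lesssim \min\big(|r|^{2\gamma},\,|s_1-s_2|^{2\gamma}\big).
\end{equation*}
The two value terms are then bounded by $|r|^{2\gamma}$ times the finite mass $R(\mathbb{R}_+,T)<\infty$ (Assumption A); since $|r|<1$ and $\alpha+1<0$ one has $|r|^{2\gamma}\le|r|^{2\gamma+\alpha+1}$, already the claimed rate.

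It remains to control the two increment terms, which have identical form, so I treat them together by estimating $\int \min(|r|^{2\gamma},|s_1-s_2|^{2\gamma})\,|\partial^2 R(s_1,s_2)|\,ds_1ds_2$ and inserting the growth bound (\ref{dagrowth}), $|\partial^2 R|\lesssim |s_1-s_2|^\alpha+\phi$. For the regular part I bound the minimum crudely by $|r|^{2\gamma}$ and use that $\phi\in L^p([0,T]^2)$ for some $p>1$, hence $\phi\in L^1([0,T]^2)$, giving a contribution $\lesssim|r|^{2\gamma}\le|r|^{2\gamma+\alpha+1}$. The singular part is the heart of the argument: reducing the double integral to a one-dimensional integral in $u=|s_1-s_2|$, namely $\int_0^T \min(u^{2\gamma},|r|^{2\gamma})\,u^\alpha\,du$, and splitting at $u=|r|$, the near-diagonal piece is $\int_0^{|r|} u^{2\gamma+\alpha}\,du\lesssim|r|^{2\gamma+\alpha+1}$, which converges precisely because Assumption S1 enforces $2\gamma+\alpha+1>0$, while the off-diagonal piece is $|r|^{2\gamma}\int_{|r|}^T u^\alpha\,du\lesssim|r|^{2\gamma}|r|^{\alpha+1}=|r|^{2\gamma+\alpha+1}$, the tail integral concentrating at its lower endpoint because $\alpha<-1$. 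The main obstacle is exactly this balance at the diagonal: the crude bound $\min\le|r|^{2\gamma}$ is fatal for the singular kernel, since it would produce the divergent $\int_0^T u^\alpha\,du$, so one must genuinely exploit the $|s_1-s_2|^{2\gamma}$ branch of the minimum to cancel the non-integrable singularity, and the two constraints $2\gamma+\alpha+1>0$ and $\alpha+1<0$ are what force both halves of the split onto the common exponent $2\gamma+\alpha+1$. Collecting the four contributions completes the estimate.
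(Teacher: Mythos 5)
Your proposal is correct and follows essentially the same route as the paper: the same expansion of $\|Y_{\cdot+r}-Y_\cdot\|^2_{\mathbb{D}^{1,2}(L_R(\mathbb{R}^d))}$ into value and increment terms, the same key bound $\min\{|r|^{2\gamma},|s_1-s_2|^{2\gamma}\}$ derived from Assumption S1 via the two groupings of the double increment (the paper's (\ref{incrP2})), the same reduction to $\phi=0$ by integrability, and the same split of the off-diagonal region at $|t-s|=|r|$ yielding the two estimates (\ref{c4}) and (\ref{c5}). Your treatment is if anything slightly more explicit than the paper's (e.g., spelling out why the crude bound $\min\le|r|^{2\gamma}$ fails for the singular kernel and why $|r|^{2\gamma}\le|r|^{2\gamma+\alpha+1}$), but there is no substantive difference in method.
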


For a given $Y \in \mathbb{D}^{1,2}(L_R(\mathbb{R}^d))$, we denote

\begin{equation}\label{Ybar}
\bar{Y}^\epsilon_u : = \frac{1}{2\epsilon}\int_\epsilon^{T-\epsilon} Y_s \mathds{1}_{[u-\epsilon, u+\epsilon]}(s)ds,
\end{equation}
for $0\le u\le T$ and $2\epsilon < T$.


\begin{proposition}\label{epsilonlemma}
Let $X$ be a Gaussian process satisfying Assumption A, B, C(i,ii,iii) with $-\frac{3}{2} < \alpha < -1$. Assume $Y\in \mathbb{D}^{1,2}\big(L_R(\mathbb{R}^d)\big)$ satisfies Assumptions S1 with $2\gamma + \alpha +1>0$ and $\gamma \le \frac{\alpha}{2} + 1$. Let $\bar{Y}^\epsilon$ be the process defined in (\ref{Ybar}). Then,

$$\mathbb{E}\Bigg|\int_{0}^{T} \big(\bar{Y}^\epsilon_s - Y_s\big)\boldsymbol{\delta} X_u\Bigg|^2\lesssim \epsilon^{2\gamma + \alpha+1},$$
for every $\epsilon < \frac{T}{4}\wedge 1$.
\end{proposition}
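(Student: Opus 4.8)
The plan is to dominate the Skorohod integral by a Malliavin--Sobolev norm of its integrand and then to control that norm by the shift estimate of Lemma~\ref{skolemma}. Writing the integral as the divergence $\boldsymbol{\delta}\big((\bar{Y}^\epsilon-Y)\mathds{1}_{[0,T]}\big)$, the first step is to apply the continuity bound (\ref{cs2}), which reduces the claim to
\[
\big\|(\bar{Y}^\epsilon-Y)\mathds{1}_{[0,T]}\big\|^2_{\mathbb{D}^{1,2}(L_R(\mathbb{R}^d))}\lesssim \epsilon^{2\gamma+\alpha+1}.
\]
That $(\bar{Y}^\epsilon-Y)\mathds{1}_{[0,T]}$ lies in $\mathbb{D}^{1,2}(L_R(\mathbb{R}^d))$ follows from $Y\in\mathbb{D}^{1,2}(L_R(\mathbb{R}^d))$ together with Lemma~\ref{funcINT} applied to the averaging kernel in (\ref{Ybar}).

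Next I would separate the genuine averaging from the boundary truncation built into (\ref{Ybar}). Using the extension convention for $Y$, introduce the clean moving average $\tilde{Y}^\epsilon_u:=\frac{1}{2\epsilon}\int_{-\epsilon}^{\epsilon}Y_{u+r}\,dr$ and decompose
\[
\bar{Y}^\epsilon-Y=(\tilde{Y}^\epsilon-Y)+(\bar{Y}^\epsilon-\tilde{Y}^\epsilon),
\]
where $\tilde{Y}^\epsilon_u-Y_u=\frac{1}{2\epsilon}\int_{-\epsilon}^{\epsilon}(Y_{u+r}-Y_u)\,dr$ and, crucially, $\bar{Y}^\epsilon-\tilde{Y}^\epsilon$ is supported on $u\in[0,2\epsilon)\cup(T-2\epsilon,T]$ and vanishes in the interior. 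For the main summand, $\tilde{Y}^\epsilon-Y$ is the Bochner integral $\frac{1}{2\epsilon}\int_{-\epsilon}^{\epsilon}(Y_{\cdot+r}-Y_\cdot)\,dr$ in $\mathbb{D}^{1,2}(L_R(\mathbb{R}^d))$ (its norm-integrand being controlled by Lemma~\ref{skolemma}), so Minkowski's integral inequality and Lemma~\ref{skolemma} give
\[
\|\tilde{Y}^\epsilon-Y\|_{\mathbb{D}^{1,2}(L_R(\mathbb{R}^d))}\le\frac{1}{2\epsilon}\int_{-\epsilon}^{\epsilon}\|Y_{\cdot+r}-Y_\cdot\|_{\mathbb{D}^{1,2}(L_R(\mathbb{R}^d))}\,dr\lesssim\frac{1}{2\epsilon}\int_{-\epsilon}^{\epsilon}|r|^{\gamma+\frac{\alpha+1}{2}}\,dr\lesssim\epsilon^{\gamma+\frac{\alpha+1}{2}}.
\]
Squaring produces exactly the claimed rate $\epsilon^{2\gamma+\alpha+1}$; this is the term that carries the estimate.

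The main obstacle is the boundary correction $g:=\bar{Y}^\epsilon-\tilde{Y}^\epsilon$, and the difficulty is the diagonal singularity of $\mu=\partial^2R$: since $\alpha<-1$, the kernel $|s_1-s_2|^\alpha$ is not integrable across the diagonal, so one cannot simply bound $|g(s_1)-g(s_2)|^2$ by $|g(s_1)|^2+|g(s_2)|^2$. I would expand $\|g\|^2_{L_R(\mathbb{R}^d)}$ via (\ref{Vinner}) into the boundary--mass term $\int|g(s)|^2R(ds,T)$ and the difference term $\frac12\int|g(s_1)-g(s_2)|^2(-\mu)(ds_1ds_2)$, and treat $\mathbf{D}g$ analogously. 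Since $g$ is supported on two intervals of length $2\epsilon$ and is uniformly bounded in $L^2(\mathbb{P})$ (an average of $Y$, whose $\mathbb{D}^{1,2}$-norm stays bounded on $[0,T]$ by Assumption~S1), Assumption~C(iii) bounds the $R(ds,T)$-mass of an interval of length $\ell$ by $\ell^{\alpha+2}$, giving the mass term $\lesssim\epsilon^{\alpha+2}$. For the difference term I would split according to whether both arguments lie in the boundary strip---where the singularity is absorbed by the H\"older bound $\mathbb{E}|g(s_1)-g(s_2)|^2\lesssim|s_1-s_2|^{2\gamma}$ inherited from S1, so that $|s_1-s_2|^{2\gamma+\alpha}$ is integrable because $2\gamma+\alpha+1>0$ and the contribution is $\lesssim\epsilon^{2\gamma+\alpha+2}$---or only one does, in which case $g(s_1)-g(s_2)=g(s_1)$ and the off-diagonal integrability of $\mu$ leaves a contribution $\lesssim\epsilon$; the $\phi$-part of (\ref{dagrowth}) is absorbed using its $p$-integrability.

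Finally, the role of the standing hypothesis $\gamma\le\frac{\alpha}{2}+1$, i.e. $2\gamma\le\alpha+2$, becomes transparent: it forces each boundary exponent $\alpha+2$, $2\gamma+\alpha+2$ and $1$ to be at least the target exponent $2\gamma+\alpha+1$, so that every boundary contribution is dominated by the main term for $\epsilon$ small. Collecting the two summands and substituting back into (\ref{cs2}) yields the asserted bound.
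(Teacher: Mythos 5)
Your overall architecture coincides with the paper's: reduce via the continuity bound (\ref{cs2}) to estimating $\|\bar{Y}^\epsilon-Y\|^2_{\mathbb{D}^{1,2}(L_R(\mathbb{R}^d))}$, isolate the clean moving average (the paper's $f^\epsilon_t=\frac{1}{2\epsilon}\int_{-\epsilon}^{\epsilon}[Y_{t+r}-Y_t]\,dr$, which agrees with $\bar{Y}^\epsilon-Y$ on $[2\epsilon,T-2\epsilon]$), control it by Jensen/Minkowski plus Lemma~\ref{skolemma} to get the carrying rate $\epsilon^{2\gamma+\alpha+1}$, and handle the boundary truncation separately using Assumption~C(iii) for the $R(ds,T)$-mass. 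All of that is correct and is exactly what the paper does. However, your treatment of the boundary correction $g=\bar{Y}^\epsilon-\tilde{Y}^\epsilon$ contains two concrete errors. First, the claimed bound $\mathbb{E}|g(s_1)-g(s_2)|^2\lesssim|s_1-s_2|^{2\gamma}$ for both points in the strip is \emph{not} inherited from S1 and is false in general: on $[0,2\epsilon)$ one has $g(u)=-\frac{1}{2\epsilon}\int_{u-\epsilon}^{\epsilon}Y_s\,ds$, so for $Y\equiv Y_0\neq 0$ the increment is $\frac{|s_1-s_2|}{2\epsilon}|Y_0|$, which at separation $|s_1-s_2|\sim\epsilon$ is of order one, not $\epsilon^{\gamma}$. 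The hard truncation is only Lipschitz with constant $1/\epsilon$, and the correct estimate (the paper's (\ref{est7}) and the displays preceding (\ref{est3})) is
\begin{equation*}
\mathbb{E}|g(s_1)-g(s_2)|^2\lesssim \frac{(s_1-s_2)^2}{\epsilon^2}+|s_1-s_2|^{2\gamma}.
\end{equation*}
The extra Lipschitz piece, integrated against $|s_1-s_2|^{\alpha}$ over the strip, contributes $\frac{1}{\epsilon^{2}}\int_0^{2\epsilon}\int_0^{t}(t-s)^{\alpha+2}\,ds\,dt\lesssim\epsilon^{\alpha+2}$, which is larger than your claimed $\epsilon^{2\gamma+\alpha+2}$ but still dominated by the target $\epsilon^{2\gamma+\alpha+1}$ precisely because $\gamma\le\frac{\alpha}{2}+1<\frac{1}{2}$.

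Second, in the case where only one argument lies in the strip, your bound ``$\lesssim\epsilon$'' is unjustified and in fact wrong in the optimistic direction: the singular kernel piles up at the corner where the strip meets the interior, giving $\int_0^{2\epsilon}\int_{2\epsilon}^{T}(t-s)^{\alpha}\,dt\,ds\lesssim\epsilon^{\alpha+2}$ (the paper's (\ref{est2})), and $\epsilon^{\alpha+2}>\epsilon$ for small $\epsilon$ since $\alpha+2<1$; an ``area times density'' heuristic does not apply to a non-integrable diagonal singularity. Fortunately $\alpha+2\ge 2\gamma+\alpha+1$ again holds under $\gamma\le\frac{1}{2}$, so the conclusion survives. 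In summary: your decomposition and main term are exactly the paper's, your exponent bookkeeping for why $\gamma\le\frac{\alpha}{2}+1$ saves the boundary terms is sound, but the two strip estimates must be replaced by the corrected bounds above (and the Malliavin-derivative part, which you defer with ``analogously,'' does go through the same way, as in the paper's (\ref{est5})--(\ref{est7})); with those repairs the proof closes with the stated rate.
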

\begin{proof}
Let $-\frac{3}{2} < \alpha < -1$ be the parameter in Assumption C. Assume $Y\in \mathbb{D}^{1,2}\big(L_R(\mathbb{R}^d)\big)$ satisfies Assumptions S1 with $2\gamma + \alpha +1>0$ and $\gamma \le \frac{\alpha}{2} + 1 < \frac{1}{2}$. At first, we notice $t\mapsto Y_t \in \mathbb{D}^{1,2}(\mathbb{R}^d)$ is continuous and hence Lemma \ref{funcINT} allows us to state $\bar{Y}^\epsilon \in \mathbb{D}^{1,2}(L_R(\mathbb{R}^d))$. We may assume $\epsilon < \frac{T}{4} \wedge 1$, where $\epsilon \downarrow 0$. Let us denote

$$A_1(\epsilon):=[2\epsilon, T-2\epsilon],~A_2(\epsilon):=[0,2\epsilon)~\text{and}~A_3(\epsilon):=(T-2\epsilon, T]. $$
By (\ref{cs2}), we have

\begin{eqnarray*}
\mathbb{E}\Bigg|\int_{0}^{T} \big(\bar{Y}^\epsilon_s - Y_s\big)\boldsymbol{\delta} X_u\Bigg|^2&\lesssim&  \| \bar{Y}^\epsilon - Y\|^2_{\mathbb{D}^{1,2}(L_R(\mathbb{R}^d))}\\
&=& \mathbb{E}\| \bar{Y}^\epsilon - Y\|^2_{L_R(\mathbb{R}^d)} + \mathbb{E}\big\| \mathbf{D}\big(\bar{Y}^\epsilon - Y\big)\big\|^2_{L_{2,R}(\mathbb{R}^{d\times d})},
\end{eqnarray*}
for every $\epsilon < \frac{T}{4}\wedge 1$. In order to shorten notation, let us denote
$$f^\epsilon_t = \frac{1}{2\epsilon}\int_{-\epsilon}^\epsilon \big[Y_{t+r} - Y_t\big]dr; 0\le t\le T.$$
We observe $f^\epsilon_t = \bar{Y}^\epsilon_t - Y_t$ for $t \in A_1(\epsilon)$ and by applying Lemma \ref{funcINT}, we have $f^\epsilon \in \mathbb{D}^{1,2}(L_R(\mathbb{R}^d))$ for every $\epsilon  < \frac{T}{4}\wedge 1$. We observe

\begin{equation}\label{div}
\bar{Y}^\epsilon_t = \frac{1}{2\epsilon} \int_\epsilon^{t+\epsilon} Y_r dr;~t\in A_2(\epsilon)\quad \text{and}\quad\bar{Y}^\epsilon_t = \frac{1}{2\epsilon} \int_{t-\epsilon}^{T-\epsilon} Y_r dr;~t\in A_3(\epsilon).
\end{equation}
Clearly,

$$\int_{A^2_1(\epsilon)\setminus D}\mathbb{E} | (\bar{Y}^\epsilon_t - Y_t) - (\bar{Y}^\epsilon_s - Y_s) |^2 |\partial^2 R(s,t)|dsdt$$
$$\le \int_{[0,T]^2\setminus D} \mathbb{E}|f^\epsilon_t - f^\epsilon_s |^2 |\partial^2 R(s,t)|dsdt \lesssim \mathbb{E}\| f^\epsilon\|^2_{L_R(\mathbb{R}^d)}.$$
By using Jensen's inequality on the Bochner integral (see e.g. \cite{perlman1974}) and Lemma \ref{skolemma}, we get

\begin{eqnarray*}
\mathbb{E}\| f^\epsilon\|^2_{L_R(\mathbb{R}^d)} &=& \mathbb{E}\Bigg\| \frac{1}{2\epsilon} \int_{-\epsilon}^\epsilon[Y_{\cdot+r} - Y_\cdot]dr\Bigg\|^2_{L_R(\mathbb{R}^d)}\\
&\le& \frac{1}{2\epsilon}\mathbb{E}\int_{-\epsilon}^\epsilon \|Y_{\cdot+r}  - Y_\cdot\|^2_{L_R(\mathbb{R}^d)} dr \lesssim\epsilon^{2\gamma + \alpha+1}.
\end{eqnarray*}
This shows

\begin{equation}\label{est1}
\int_{A^2_1(\epsilon)\setminus D}\mathbb{E} | (\bar{Y}^\epsilon_t - Y_t) - (\bar{Y}^\epsilon_s - Y_s) |^2 |\partial^2 R(s,t)|dsdt\lesssim \epsilon^{2\gamma + \alpha+1},
\end{equation}
for every $\epsilon < \frac{T}{4}\wedge 1$. Next, we observe

$$
\sup_{\epsilon < \frac{T}{4}\wedge 1}\sup_{(s,t)  \in A^c_1(\epsilon)\times A_1(\epsilon)\setminus D}\mathbb{E} | (\bar{Y}^\epsilon_t - Y_t) - (\bar{Y}^\epsilon_s - Y_s) |^2\lesssim \sup_{0\le r\le T}\mathbb{E}|Y_r|^2 < \infty,
$$
where

$$
\int_0^{2\epsilon} \int_{2\epsilon}^{T-2\epsilon}(t-s)^\alpha dtds + \int_{T-2\epsilon}^T  \int_{2\epsilon}^{T-2\epsilon}(s-t)^\alpha dtds \lesssim \epsilon^{\alpha+2}.
$$
Therefore,

\begin{equation}\label{est2}
\int_{A_1(\epsilon)\times A^c_1(\epsilon)\setminus D}\mathbb{E} | (\bar{Y}^\epsilon_t - Y_t) - (\bar{Y}^\epsilon_s - Y_s) |^2 |\partial^2 R(s,t)|dsdt\lesssim \epsilon^{\alpha+2},
\end{equation}
for every $\epsilon < \frac{T}{4}\wedge 1$. By applying Jensen's inequality, using (\ref{div}) and Assumption S1, we get

\begin{eqnarray*}
\mathbb{E}\big|(\bar{Y}^\epsilon_t - Y_t) - (\bar{Y}^\epsilon_s - Y_s) \big|^2 &\le& (t-s) \mathbb{E} \int_{s+\epsilon}^{t+\epsilon} \Big| Y_r \frac{1}{2\epsilon} - \frac{(Y_t-Y_s)}{t-s}\Big|^2dr\\
&\lesssim& \frac{(t-s)^2}{4\epsilon^2}\sup_{0\le r\le T}\mathbb{E}|Y_r|^2 + (t-s)^{2\gamma}; 0\le s < t < 2\epsilon.
\end{eqnarray*}
Therefore,

\begin{eqnarray*}
\int_{A^2_2(\epsilon)\setminus D}\mathbb{E} | (\bar{Y}^\epsilon_t - Y_t) - (\bar{Y}^\epsilon_s - Y_s) |^2 |\partial^2 R(s,t)|dsdt&\lesssim& \frac{1}{\epsilon^2}\int_0^{2\epsilon}\int_{0}^t (t-s)^{\alpha+2}dsdt\\
&+& \int_0^{2\epsilon}\int_{0}^t (t-s)^{2\gamma + \alpha}dsdt\\
&\lesssim& \epsilon^{\alpha+2},
\end{eqnarray*}
for every $\epsilon < \frac{T}{4}\wedge 1$. Similarly, by applying Jensen's inequality, using (\ref{div}) and Assumption S1, we get

\begin{eqnarray*}
\mathbb{E}\big|(\bar{Y}^\epsilon_t - Y_t) - (\bar{Y}^\epsilon_s - Y_s) \big|^2 &\le& (t-s) \mathbb{E} \int_{s-\epsilon}^{t-\epsilon} \Big| Y_r \frac{1}{2\epsilon} + \frac{(Y_t-Y_s)}{t-s}\Big|^2dr\\
&\lesssim& \frac{(t-s)^2}{4\epsilon^2}\sup_{0\le r\le T}\mathbb{E}|Y_r|^2 + (t-s)^{2\gamma}; T-2\epsilon < s < t \le T.
\end{eqnarray*}
Therefore,

\begin{eqnarray}\label{est3}
\nonumber\int_{A^2_3(\epsilon)\setminus D}\mathbb{E} | (\bar{Y}^\epsilon_t - Y_t) - (\bar{Y}^\epsilon_s - Y_s) |^2 |\partial^2 R(s,t)|dsdt&\lesssim& \frac{1}{\epsilon^2}\int_{T-2\epsilon}^T\int_{T-2\epsilon}^t (t-s)^{\alpha+2}dsdt\\
\nonumber&+& \int_{T-2\epsilon}^T\int_{T-2\epsilon}^t (t-s)^{2\gamma + \alpha}dsdt\\
&\lesssim& \epsilon^{\alpha+2},
\end{eqnarray}
for every $\epsilon < \frac{T}{4}\wedge 1$. By using (\ref{div}), one can easily check

$$\sup_{t\in A_2(\epsilon)\cup A_3(\epsilon)}\mathbb{E}|\bar{Y}^\epsilon_t|^2 \le \sup_{0\le r\le T}\mathbb{E}|Y_r|^2 < \infty.$$
Therefore, by using Assumption C(iii) and Jensen's inequality on the Bochner integral, we have

\begin{eqnarray}
\nonumber\mathbb{E}\int_0^T |\bar{Y}^\epsilon_t - Y_t|^2\partial_t R(t,T)dt &=& \sum_{i=1}^3\mathbb{E}\int_{A_i(\epsilon)} |\bar{Y}^\epsilon_t - Y_t|^2\partial_t R(t,T)dt \\
\nonumber&\lesssim& \mathbb{E}\|f^\epsilon\|^2_{L_R(\mathbb{R}^d)}+ \epsilon^{\alpha+2}\\
\nonumber&\lesssim& \frac{1}{2\epsilon}\int_{-\epsilon}^\epsilon \big\| Y_{\cdot+r} - Y_\cdot\big\|^2_{\mathbb{D}^{1,2}(L_R(\mathbb{R}^d))} dr + \epsilon^{\alpha+2}\\
\label{est4}&\lesssim& \epsilon^{2\gamma+\alpha+1},
\end{eqnarray}
for every $\epsilon < \frac{T}{4}\wedge 1$. Summing up (\ref{est1}), (\ref{est2}), (\ref{est3}) and (\ref{est4}), we get

$$\mathbb{E}\| \bar{Y}^\epsilon - Y\|^2_{L_R(\mathbb{R}^d)}\lesssim \epsilon^{2\gamma + \alpha+2},$$
for every $\epsilon < \frac{T}{4}\wedge 1$. Next, we investigate
\begin{eqnarray*}
\mathbb{E}\big\| \mathbf{D}\big(\bar{Y}^\epsilon - Y\big)\big\|^2_{L_{2,R}(\mathbb{R}^{d\times d})}&=& \mathbb{E}\int_0^T \big\| \mathbf{D}\big(\bar{Y}^\epsilon_t - Y_t\big)\big\|^2_{L_R(\mathbb{R}^d)}\partial_t R(t,T)dt\\
&+&\frac{1}{2} \mathbb{E}\int_{[0,T]^2\setminus D} \big\| \mathbf{D}\big(\bar{Y}^\epsilon_t - Y_t\big) - \mathbf{D}\big(\bar{Y}^\epsilon_s - Y_s\big) \big\|^2_{L_R(\mathbb{R}^d)}|\partial^2 R(s,t)|dsdt.
\end{eqnarray*}
The analysis is similar to the first part so we omit some details. Indeed, by using (\ref{div}) jointly with Jensen's inequality on the Bochner integral and Lemma \ref{skolemma}, we get

$$\mathbb{E}\int_{A^2_1(\epsilon)\setminus D} \big\| \mathbf{D}\big(\bar{Y}^\epsilon_t - Y_t\big) - \mathbf{D}\big(\bar{Y}^\epsilon_s - Y_s\big) \big\|^2_{L_R(\mathbb{R}^d)}|\partial^2 R(s,t)|dsdt
$$
$$\le \| f^\epsilon\|^2_{\mathbb{D}^{1,2}(L_R(\mathbb{R}^d))} = \Bigg\| \frac{1}{2\epsilon} \int_{-\epsilon}^\epsilon[Y_{\cdot+r} - Y_\cdot]dr \Bigg\|^2_{\mathbb{D}^{1,2}(L_R(\mathbb{R}^d))}$$
\begin{equation}\label{est5}
\le \frac{1}{2\epsilon} \int_{-\epsilon}^\epsilon \|Y_{\cdot+r} - Y_\cdot\|^2_{\mathbb{D}^{1,2}(L_R(\mathbb{R}^d))} dr\lesssim \epsilon^{2\gamma+\alpha+1},
\end{equation}
for every $\epsilon < \frac{T}{4}\wedge 1$. Moreover,

\begin{equation}\label{est6}
\sup_{\epsilon < \frac{T}{4}\wedge 1} \sup_{0\le t\le T}\mathbb{E}\|\mathbf{D}\bar{Y}^\epsilon_t\|^2_{L_R(\mathbb{R}^d)} + \sup_{0\le t\le T}\mathbb{E}\|\mathbf{D} Y_t\|^2_{L_R(\mathbb{R}^d)}\lesssim T^{2\gamma} + \|Y_0\|^2_{\mathbb{D}^{1,2}(\mathbb{R}^d)},
\end{equation}

\begin{equation}\label{est7}
\mathbb{E}\big\| \mathbf{D}\big(\bar{Y}^\epsilon_t - Y_t\big) - \mathbf{D}\big(\bar{Y}^\epsilon_s - Y_s\big) \big\|^2_{L_R(\mathbb{R}^d)}\lesssim \frac{(t-s)^2}{\epsilon^2}+ (t-s)^{2\gamma},
\end{equation}
for $0\le s < t < 2\epsilon$ or $T-2\epsilon < s < t\le T$. The estimates (\ref{est5}), (\ref{est6}), (\ref{est7}) and Assumption S1 yield

$$\mathbb{E}\big\| \mathbf{D}\big(\bar{Y}^\epsilon - Y\big)\big\|^2_{L_{2,R}(\mathbb{R}^{d\times d})}\lesssim \epsilon^{2\gamma + \alpha+1},$$
for every $\epsilon < \frac{T}{4}\wedge 1$. This concludes the proof.
\end{proof}

Next, we construct the second-order process which will allow us to connect (stochastic) rough integral with the symmetric-Stratonovich integral.

\begin{proposition}\label{liftingTH}
Assume $X$ is a $d$-dimensional Gaussian process (with iid components), where Assumptions A, B and C (i,ii,iii) are fulfilled. Then, the $\mathbb{R}^{d\times d}$-valued two-parameter process


\begin{equation*}
\mathbb{X}^{ij}_{s,t}=\left\{
\begin{array}{rl}
\int_s^t (X^i_r-X^i_s) d^0 X^j_r; & \hbox{if} \ i\neq j\\
\frac{1}{2}(X^i_t-X^i_s)^2  ;& \hbox{if} \ i=j
\end{array}
\right.
\end{equation*}
is geometric, it satisfies the Chen's relation (\ref{chen}) and we have the following representation outside the diagonal

\begin{equation}\label{skrepX}
\mathbb{X}^{ij}_{s,t}= \boldsymbol{\delta} \Big( (X^i  - X^i_{s})\mathds{1}_{[s,t]}e_j \Big),
\end{equation}
for $i\neq j$.
\end{proposition}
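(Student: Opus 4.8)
The plan is to separate the diagonal and off-diagonal entries, the latter being the substantial part. For $i=j$ there is nothing to integrate: $\mathbb{X}^{ii}_{s,t}=\frac12(X^i_t-X^i_s)^2$ is symmetric by definition and already has the geometric form (\ref{geom}), while Chen's relation for it is the elementary polarization $\frac12[(a+b)^2-a^2-b^2]=ab$ with $a=X^i_u-X^i_s$, $b=X^i_t-X^i_u$. Thus the entire content of the statement concerns the entries $\mathbb{X}^{ij}_{s,t}=\int_s^t(X^i_r-X^i_s)\,d^0X^j_r$ with $i\neq j$. My first and main goal is to show that these symmetric--Stratonovich integrals exist and agree with the Skorohod integral (\ref{skrepX}); geometricity and Chen's relation will then follow from general algebraic properties of the symmetric integral.

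For the representation (\ref{skrepX}) I would start from the symmetric regularization. Writing the increment $X^j_{r+\epsilon}-X^j_{r-\epsilon}=\boldsymbol{\delta}\big(\mathds{1}_{(r-\epsilon,r+\epsilon]}e_j\big)$ as a first--chaos element and applying the multiplication rule (\ref{multfor}) with $F=X^i_r-X^i_s\in\mathbb{D}^{1,2}$ pointwise in $r$, one obtains
$$(X^i_r-X^i_s)\big(X^j_{r+\epsilon}-X^j_{r-\epsilon}\big)=\boldsymbol{\delta}\big((X^i_r-X^i_s)\mathds{1}_{(r-\epsilon,r+\epsilon]}e_j\big)+\big\langle \mathbf{D}(X^i_r-X^i_s),\mathds{1}_{(r-\epsilon,r+\epsilon]}e_j\big\rangle_{L_R(\mathbb{R}^d)}.$$
The crucial point is that the last inner product vanishes identically: since the components of $X$ are i.i.d., the norm (\ref{Vinner}) splits as an orthogonal direct sum $\|f\|^2_{L_R(\mathbb{R}^d)}=\sum_{k}\|f_k\|^2_{L_R}$ over the $d$ coordinates, $\mathbf{D}(X^i_r-X^i_s)$ is supported in the $i$-th coordinate of $L_R(\mathbb{R}^d)$, and $\mathds{1}_{(r-\epsilon,r+\epsilon]}e_j$ lives in the $j$-th coordinate with $i\neq j$. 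This is precisely the mechanism by which the Stratonovich trace correction disappears off the diagonal. Hence, dividing by $2\epsilon$, integrating in $r$ and commuting $\boldsymbol{\delta}$ with the Bochner integral (the operator $\boldsymbol{\delta}$ is closed and the integrand is Bochner integrable in $\mathbb{D}^{1,2}(L_R(\mathbb{R}^d))$), the regularized symmetric integral equals $\boldsymbol{\delta}(h^\epsilon e_j)$, where $h^\epsilon(u)=\frac1{2\epsilon}\int_{(u-\epsilon)\vee s}^{(u+\epsilon)\wedge t}(X^i_r-X^i_s)\,dr$ is the mollification of $Y:=(X^i-X^i_s)\mathds{1}_{[s,t]}$.

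It remains to pass to the limit $\epsilon\downarrow0$. Here I would first record that $Y\in\mathbb{D}^{1,2}(L_R(\mathbb{R}^d))\subset\operatorname{dom}\boldsymbol{\delta}$, using Lemma \ref{XL}, the identity $\|\mathds{1}_{[s',t']}\|^2_{L_R}=\text{Var}(W_{t'}-W_{s'})$ coming from (\ref{cov1}), the growth bound (\ref{dagrowth}) and Assumption C(ii) (finiteness of $\mathbb{E}\|Y\|^2_{L_R(\mathbb{R}^d)}$ needs $\alpha>-\frac32$). Moreover $\mathbf{D}(X^i_{t'}-X^i_{s'})=\mathds{1}_{[s',t']}e_i$ shows $\|Y_{t'}-Y_{s'}\|^2_{\mathbb{D}^{1,2}(\mathbb{R}^d)}\lesssim|t'-s'|^{\alpha+2}$, i.e. $Y$ satisfies Assumption S1 with $\gamma=\frac{\alpha+2}2$, so that $2\gamma+\alpha+1=2\alpha+3>0$ and $\gamma\le\frac{\alpha}2+1$. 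The $L^2(\mathbb{P})$--convergence $\boldsymbol{\delta}(h^\epsilon e_j)\to\boldsymbol{\delta}(Ye_j)=\boldsymbol{\delta}\big((X^i-X^i_s)\mathds{1}_{[s,t]}e_j\big)$ is then exactly the estimate of Proposition \ref{epsilonlemma} applied on $[s,t]$ (the boundary corrections are handled verbatim as there, and the continuity bound (\ref{cs2}) controls the divergence). This proves simultaneously the existence of $\mathbb{X}^{ij}_{s,t}$ and the representation (\ref{skrepX}).

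With existence in hand, the last two properties are purely algebraic. For geometricity only $i\neq j$ needs attention, and since both $\mathbb{X}^{ij}_{s,t}$ and $\mathbb{X}^{ji}_{s,t}$ exist, the integration--by--parts rule for symmetric integrals, $\int_s^t Y\,d^0Z+\int_s^t Z\,d^0Y=Y_tZ_t-Y_sZ_s$ — which follows directly from the symmetric form of the regularization in Remark \ref{russorem}, the cross terms telescoping since $(Y_{u+\epsilon}+Y_u)(Z_{u+\epsilon}-Z_u)+(Z_{u+\epsilon}+Z_u)(Y_{u+\epsilon}-Y_u)=2(Y_{u+\epsilon}Z_{u+\epsilon}-Y_uZ_u)$ — applied to $Y=X^i-X^i_s$ and $Z=X^j-X^j_s$ gives $\mathbb{X}^{ij}_{s,t}+\mathbb{X}^{ji}_{s,t}=(X^i_t-X^i_s)(X^j_t-X^j_s)$, i.e. $\text{Sym}(\mathbb{X}_{s,t})$ has the form (\ref{geom}). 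For Chen's relation (\ref{chen}) I would use additivity (Chasles) of the symmetric integral and linearity to compute, for $i\neq j$, $\mathbb{X}^{ij}_{s,t}-\mathbb{X}^{ij}_{s,u}-\mathbb{X}^{ij}_{u,t}=\int_u^t[(X^i_r-X^i_s)-(X^i_r-X^i_u)]\,d^0X^j_r=(X^i_u-X^i_s)\int_u^t d^0X^j_r=(X^i_u-X^i_s)(X^j_t-X^j_u)$, the constant $(X^i_u-X^i_s)$ factoring out of the integration variable. The main obstacle of the whole argument is thus concentrated in the second and third paragraphs, namely the verification $Y\in\mathbb{D}^{1,2}(L_R(\mathbb{R}^d))$ and the $\mathbb{D}^{1,2}$--convergence $h^\epsilon\to Y$ under the singular covariance; Assumptions C(i,ii,iii) and Proposition \ref{epsilonlemma} are tailored precisely to this, after which the geometric and Chen identities are immediate.
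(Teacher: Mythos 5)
Your core argument coincides with the paper's proof of Proposition \ref{liftingTH}: fix $i\neq j$, set $Z=(X^i-X^i_s)\mathds{1}_{[s,t]}e_j$, invoke the multiplication rule (\ref{multfor}) together with the orthogonality $\langle \mathbf{D}(X^i_u-X^i_s)\mathds{1}_{[s,t]}(u),\mathds{1}_{[u-\epsilon,u+\epsilon]}e_j\rangle_{L_R(\mathbb{R}^d)}=0$ coming from the iid components with $i\neq j$, apply Fubini for Skorohod integrals to identify the $\epsilon$-regularization with $\boldsymbol{\delta}$ of the mollified integrand (your $h^\epsilon$ is exactly the paper's $Z^\epsilon$), and use (\ref{cs2}) to reduce everything to $\|Z^\epsilon-Z\|_{\mathbb{D}^{1,2}(L_R(\mathbb{R}^d))}\to 0$. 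Your closing arguments for geometricity (symmetric integration by parts, with the telescoping identity made explicit) and for Chen's relation (additivity of the regularized integrals plus factoring out $X^i_u-X^i_s$) correctly flesh out what the paper asserts in two sentences.

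The one step that is not licensed as you state it is the limit $\epsilon\downarrow 0$: you cite Proposition \ref{epsilonlemma} ``applied on $[s,t]$'' with $\gamma=\frac{\alpha+2}{2}$. That proposition, through Lemma \ref{skolemma}, requires Assumption S1 globally, i.e.\ $\|Y_{t'}-Y_{s'}\|^2_{\mathbb{D}^{1,2}(\mathbb{R}^d)}\lesssim |t'-s'|^{2\gamma}$ for \emph{all} pairs $s',t'$, and your truncated process $Z=(X^i-X^i_s)\mathds{1}_{[s,t]}$ violates it: it has an $O(1)$ jump in $\mathbb{D}^{1,2}(\mathbb{R}^d)$ at the point $t$ — precisely the content of the paper's estimate (\ref{za}), which yields continuity only ``except at $t$''. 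Increments straddling $t$ are not $O(|t'-s'|^{2\gamma})$, so the boundary analysis of Proposition \ref{epsilonlemma} (whose regions $A_2(\epsilon),A_3(\epsilon)$ concern the one-sided mollifier near $0$ and $T$ under a globally H\"older integrand, not a discontinuity of the integrand itself) cannot be carried over ``verbatim''; in addition, on a strict subinterval the truncation creates cross terms in the $L_{2,R}$-norm between $[s,t]$ and its complement that the proof of Proposition \ref{epsilonlemma} never encounters. This is exactly why the paper abandons any rate at this point and argues qualitatively: pointwise a.e.\ convergence (\ref{z4})--(\ref{z5}) away from the Lebesgue-null set $\{t\}$, bounded convergence for the $R(da,T)$-term, and the $p>1$ uniform moment bound (\ref{z6}) plus a Vitali-type argument for the $|\mu|$-term. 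Your quantitative route is repairable — the straddling window has Lebesgue measure $O(\epsilon)$ and contributes $O(\epsilon^{\alpha+2})$ to the squared norm, which is compatible with the rate $\epsilon^{2\alpha+3}$ since $\alpha+2>2\alpha+3$ for $\alpha<-1$ — but this is an additional estimate you would have to supply, not a consequence of the results you cite.
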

\begin{proof}
Fix $i\neq j$ and $0\le s  <t$. Let us consider $Z = (X^i-X^i_s)\mathds{1}_{[s,t]}e_j$. By Lemma \ref{XL}, $Z \in \mathds{D}^{1,2}(L_R(\mathbb{R}^d))$.  Clearly, $Z_a =0$ for $a < s$ or $a>t$ and one can easily check

\begin{equation}\label{za}
\| Z_a-Z_b\|^2_{\mathbb{D}^{1,2}(\mathbb{R}^d)}\lesssim\left\{
\begin{array}{rl}
\text{Var}(X^i_b - X^i_a); & \hbox{if} \ s\le a,b\le t \\
\text{Var}(X^i_{a\wedge b} - X^i_s)  ;& \hbox{if} s\le a\wedge b\le t < a\vee b \\
\text{Var}(X^i_{a\vee b} - X^i_s)  ;& \hbox{if}  a\wedge b < s\le a\vee b \le t. \
\end{array}
\right.
\end{equation}

The estimate (\ref{za}) shows that $a\mapsto Z_a \in \mathbb{D}^{1,2}(\mathbb{R}^d)$ is continuous except at $t$. Since $i\neq j$, we have $\langle  \mathbf{D} (X^i_u-X^i_s)\mathds{1}_{[s,t]}(u), \mathds{1}_{[u-\epsilon,u+\epsilon]}
e_j \rangle_{L_R(\mathbb{R}^d)} =0; 0\le u\le T$. Then, (\ref{multfor}), Lemma \ref{funcINT} and Fubini's theorem for Skorohod integral (see Prop 10.3 in \cite{krukrusso}) yield

\begin{equation}\label{z1}
\frac{1}{2\epsilon}\int_0^\infty \langle Z_r, X_{r+\epsilon} - X_{r-\epsilon}\rangle dr = \int_0^\infty Z^\epsilon_a\boldsymbol{\delta}X_a,
\end{equation}
where $Z^\epsilon_a = (2\epsilon)^{-1}\int_{a-\epsilon}^{a+\epsilon} (X^i_u-X^i_s)\mathds{1}_{[s,t]}(u)du e_j$. By (\ref{cs2}), we have

\begin{equation}\label{z2}
\mathbb{E}\Bigg|\int_0^\infty \big(Z^\epsilon - Z\big)\boldsymbol{\delta}X \Bigg|^2\lesssim \big\|  Z^\epsilon - Z \big\|^2_{\mathbb{D}^{1,2}(L_R(\mathbb{R}^d))}.
\end{equation}
By definition,
\begin{eqnarray}\label{z3}
\| Z^\epsilon- Z\|^2_{\mathbb{D}^{1,2}(L_R(\mathbb{R}^d))} &=& \int_0^T \| Z^\epsilon_a -Z_a\|^2_{\mathbb{D}^{1,2}(\mathbb{R}^d)}\partial_a R(a,T)da\\
\nonumber&+&\frac{1}{2}\int_{[0,T]^2\setminus D}\|(Z^{\epsilon}-Z)_a - (Z^{\epsilon}-Z)_b\|^2_{\mathbb{D}^{1,2}(\mathbb{R}^d)} |\partial^2 R(a,b)|dadb.
\end{eqnarray}
By using the Jensen's inequality on the Bochner integral (see e.g. \cite{perlman1974}) and the Lebesgue almost everywhere continuity of $a\mapsto Z_a \in \mathbb{D}^{1,2}(\mathbb{R}^d)$, we get pointwise convergence

\begin{equation}\label{z4}
\| Z^\epsilon_a -Z_a\|^2_{\mathbb{D}^{1,2}(\mathbb{R}^d)} = \Bigg\| \frac{1}{2\epsilon}\int_{-\epsilon}^\epsilon [Z_{m+a} - Z_a ]dm\Bigg\|^2_{\mathbb{D}^{1,2}(\mathbb{R}^d)}\le \frac{1}{2\epsilon}\int_{-\epsilon}^\epsilon \| Z_{m+a} - Z_a\|^2_{\mathbb{D}^{1,2}(\mathbb{R}^d)}dm \rightarrow 0,
\end{equation}
for each $a\neq t$ and

\begin{equation}\label{z5}
\| (Z^\epsilon -Z)_a - (Z^\epsilon -Z)_b \|^2_{\mathbb{D}^{1,2}(\mathbb{R}^d)}\le \frac{1}{2\epsilon}\int_{-\epsilon}^\epsilon \| Z_{m+a} - Z_a + Z_b-Z_{b+m}\|^2_{\mathbb{D}^{1,2}(\mathbb{R}^d)}dm \rightarrow 0,
\end{equation}
for each $(a,b) \in \mathbb{R}_+\setminus \{ t\}\times \mathbb{R}_+\setminus \{t\}$, as $\epsilon \downarrow 0$. Assumption C (i,ii), (\ref{za}), (\ref{z4}) and bounded convergence theorem yield

$$\int_0^T \| Z^\epsilon_a -Z_a\|^2_{\mathbb{D}^{1,2}(\mathbb{R}^d)}\partial_a R(a,T)da\rightarrow 0,$$
as $\epsilon \downarrow 0$. For the second term, by using the inequality in (\ref{z5}), Assumption C (i,ii) and (\ref{za}), one can easily check there exists $p >1$ such that

\begin{equation}\label{z6}
\sup_{0 < \epsilon < 1}\int_{[0,T]^2\setminus D} \| (Z^\epsilon -Z)_a - (Z^\epsilon -Z)_b \|^{2p}_{\mathbb{D}^{1,2}(\mathbb{R}^d)}|\partial^2R(a,b)|^pdadb < \infty.
\end{equation}
Then (\ref{z3}), (\ref{z4}), (\ref{z5}) and (\ref{z6}) yield $\|  Z^\epsilon - Z\|^2_{\mathbb{D}^{1,2}(L_R(\mathbb{R}^d))}\rightarrow 0$ as $\epsilon \downarrow 0$. By using (\ref{z1}) and (\ref{z2}), we conclude (\ref{skrepX}). The Chen's relation is obvious because the Stratonovich integral is constructed by regularization via limits of Riemann's integrals. A simple integration by parts argument on the symmetric-Stratonovich integrals yields $\mathbb{X}$ is geometric. This completes the proof.

\end{proof}

\begin{remark} \label{Rforward}

\begin{enumerate}
\item In Proposition \ref{liftingTH} we define
  the two-parameter rough path process  $\mathbb{X}$ in terms
  of symmetric-Stratonovich integrals.
  In general, we remark that those integrals cannot be replaced
  by forward (or backward) integrals via regularization.
  In fact, it is well-known, see  e.g. Lemma 6.1 of \cite{Russo_Vallois_Book},
  given a real process $X$, the forward integral
  $\int_0^\cdot X d^-X$ exists if and only if $X$
  is a finite quadratic variation process.
  If $X$ is such a Gaussian process, then this
  generally happens when the covariance $R_X$
  of the process is associated with a finite measure on $[0,T]^2$,
  see e.g. Proposition 3.1 of \cite{kruk2007}.
\item  On the other hand, the theory expanded in this
  paper could be adjusted to the ''regular'' case,
  i.e. for the case when $R_X$ is associated to a finite measure.
  In that case the definition of the rough path process
  $\mathbb{X}^{ij}$ could be built making use of forward integrals
  $\int_0^\cdot X_i d^- X^j$. In that case $\mathbb X$ still fulfills
  the Chen's relation (\ref{chen}) but it is not a geometric rough path.
  \end{enumerate}

\end{remark}

In order to integrate a controlled rough path in the sense of \cite{gubinelli2004} against $\mathbf{X} = (X,\mathbb{X})$, one has to check $\mathbb{X} \in \mathcal{C}^{2\gamma}_2$ a.s. Next, we give a class of examples in this direction, including a Gaussian process with non-stationary increments. In the sequel, if $g$ is a two-parameter continuous function, $\beta >0$ and $p\ge 1$, we write

$$U_{\beta,p}(g):=\Bigg[\int_0^T \int_0^T \frac{|g_{s,t}|^{p}}{|t-s|^{\beta p +2}}dsdt \Bigg]^{\frac{1}{p}}.$$
By using (\ref{cs1}), one can easily check the following example.


\begin{example}
If $X$ is a bifractional Brownian motion with parameter $\frac{1}{4} < HK< \frac{1}{2}$ with $H \in (0,1)$ and $K \in (0,1]$, then
$\mathbb{X}$ given in Proposition \ref{liftingTH} satisfies $$\mathbb{E}[U^p_{2\gamma,p}(\mathbb{X})]\lesssim \int_{[0,T]^2}\frac{|t-s|^{2pKH}}{|t-s|^{2\gamma p +2}}dsdt < \infty,$$
whenever $0 < \gamma < HK$ and $p > \frac{1}{2HK-2\gamma}$. By Corollary 4 in \cite{gubinelli2004}, this implies $\mathbb{X} \in \mathcal{C}^{2\gamma}_2$ a.s. for every $\gamma < HK$. Other examples of symmetric-Stratonovich-type second-order processes for Gaussian processes can be similarly treated by looking at $U_{\beta,p}$ and using a Skorohod-type representation of the form (\ref{skrepX}).
\end{example}

\section{Main results}\label{resultsection}
This section presents the main results of this paper. The proofs of Theorems \ref{gaussiancase} and \ref{mainTH} are given in sections \ref{proofTh1} and \ref{proofTh2}, respectively.

\begin{theorem}\label{gaussiancase}
Let $X$ be a Gaussian process satisfying assumptions A, B and C with $-\frac{4}{3} < \alpha < -1$. Let $\mathbf{X} = (X,\mathbb{X})$ be the geometric process given by (\ref{skrepX}). Assume that $(Y,Y')\in \mathcal{D}_X (\mathbb{R}^d)$, where $Y'$ satisfies the properties below:

\begin{enumerate}
\item $s\mapsto \mathbf{D}_vY'_s$ is continuous a.s. on $(0,T)\setminus\{v\}$ for Lebesgue almost all $v$.

\item There exists $p >2$ such that $t\mapsto Y^{'}_t$ is a $\mathbb{D}^{1,p}(\mathbb{R}^{d\times d})$-valued continuous function and

\begin{equation}\label{intassump}
\sup_{0\le t\le T}\mathbb{E}|Y^{'}_t|^{p} + \sup_{0\le t,r\le T}\mathbb{E}|\mathbf{D}_tY'_r|^{p} < \infty.
\end{equation}

\item There exists $q>2$ such that 
 \begin{equation}\label{mod1}
 \int_0^T \int_{v_2}^T \sup_{s\ge v_1~\text{or}~s < v_2}\| \mathbf{D}_{v_1}Y^{'}_s - \mathbf{D}_{v_2}Y^{'}_s\|^{q}_{L^q(\mathbb{P})}|\partial^2 R(v_1,v_2)\big|^{\frac{q}{2}} dv_1dv_2 < \infty.
 \end{equation}

\end{enumerate}

Then, $(Y,Y')\in \mathcal{D}_X (\mathbb{R}^d)$ is rough (stochastically) integrable if and only if $Y$ is symmetric-Stratonovich integrable
and, in this case, both integrals coincide

\begin{equation}\label{concl}
\int_0^t Y_s d\mathbf{X}_s = \int_0^t Y_s d^0 X_s; 0\le t \le T.
\end{equation}
\end{theorem}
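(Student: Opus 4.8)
The plan is to reduce the whole statement to a single convergence and then to isolate the genuinely hard piece, which is the vanishing of the antisymmetric contribution \eqref{keyqintr}. First I would work with the trapezoidal form of the symmetric-Stratonovich scheme furnished by Remark \ref{russorem}, namely $\tilde S(\epsilon):=\frac{1}{2\epsilon}\int_0^t\langle Y_{u+\epsilon}+Y_u,X_{u+\epsilon}-X_u\rangle\,du$, whose limit in probability (when it exists) equals $\int_0^t Y\,d^0X$. Writing $Y_{u+\epsilon}+Y_u = 2Y_u+(Y_{u+\epsilon}-Y_u)$ and inserting the controlled expansion \eqref{lineardep}, the quadratic term $\langle Y'_u(X_{u+\epsilon}-X_u),X_{u+\epsilon}-X_u\rangle$ equals $2\,Y'_u\text{Sym}(\mathbb{X}_{u,u+\epsilon})$ by the geometric identity \eqref{geom}. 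This produces the algebraic decomposition
$$\tilde S(\epsilon)-\frac{1}{\epsilon}\int_0^t\big(\langle Y_u,X_{u+\epsilon}-X_u\rangle+Y'_u\text{Sym}(\mathbb{X}_{u,u+\epsilon})\big)du=\frac{1}{2\epsilon}\int_0^t\langle R^Y_{u,u+\epsilon},X_{u+\epsilon}-X_u\rangle\,du,$$
whose right-hand side vanishes in probability by the orthogonality condition \eqref{orth}. Since the rough scheme $G(\epsilon)$ from \eqref{stochroughint} differs from the very same symmetric reference term by exactly $\frac{1}{\epsilon}\int_0^t Y'_u\text{Anti}(\mathbb{X}_{u,u+\epsilon})\,du$, we obtain $\tilde S(\epsilon)-G(\epsilon)\to 0$ in probability as soon as \eqref{keyqintr} holds; the ``if and only if'' and the equality \eqref{concl} then follow at once, since one limit exists precisely when the other does.

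It therefore remains to establish \eqref{keyqintr}. Here the plan is to exploit the Skorohod representation \eqref{skrepX} of $\mathbb{X}^{ij}$ for $i\neq j$, so that $\text{Anti}(\mathbb{X}_{u,u+\epsilon})$ becomes a combination of divergences of the shifted integrands $(X^i-X^i_u)\mathds{1}_{[u,u+\epsilon]}e_j$. Applying the multiplication rule \eqref{multfor} together with the Fubini theorem for Skorohod integrals, I would pull the weight $(Y'_u)_{ij}$ inside the divergence, yielding the canonical ``Skorohod component plus trace term'' decomposition: a single Skorohod integral $\boldsymbol{\delta}\big(\frac{1}{\epsilon}\int_0^t (Y'_u)_{ij}(X^i-X^i_u)\mathds{1}_{[u,u+\epsilon]}e_j\,du\big)$ plus a trace correction built from $\mathbf{D}(Y'_u)_{ij}$ paired, in $L_R(\mathbb{R}^d)$, against the same shifted integrand. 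For the Skorohod component I would invoke \eqref{cs1}--\eqref{cs2}, which bound its $L^2(\mathbb{P})$-norm by $L_R$- and $L_{2,R}$-norms of the integrand and its Malliavin derivative; the shrinking support $\mathds{1}_{[u,u+\epsilon]}$ and the increment $X^i-X^i_u$ make these small, and this is exactly where the hypothesis $-\frac{4}{3}<\alpha<-1$ (equivalently $H>\frac{1}{3}$ in the fractional case) controls the diagonal singularity of $\partial^2R$ and forces decay.

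The crux, and the step I expect to be the main obstacle, is the trace term. After antisymmetrizing in $(i,j)$, the two $L_R$-pairings combine so that the relevant object is a difference of Malliavin derivatives of the form $\mathbf{D}_{v_1}Y'_s-\mathbf{D}_{v_2}Y'_s$, integrated against the singular kernel $\partial^2R(v_1,v_2)$ over the simplex; this is precisely the quantity controlled by the regularity hypothesis \eqref{mod1}, with the $L^q$-integrability against the weight $|\partial^2R|^{q/2}$, the a.s.\ continuity in property (1), and the uniform bounds \eqref{intassump} together driving the trace contribution to zero. The two delicate points will be (i) justifying the interchange of the divergence with the $du$-integration and the pointwise Malliavin computations, which I would handle through Lemma \ref{funcINT} and the Fubini theorem for $\boldsymbol{\delta}$, and (ii) producing the antisymmetric cancellation that converts the raw $L_R$-pairing into the increment $\mathbf{D}_{v_1}Y'-\mathbf{D}_{v_2}Y'$, for which the symmetry of $\partial^2R$ in its two arguments is the key. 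Once both the Skorohod and trace parts are shown to tend to $0$ in $L^2(\mathbb{P})$, hence in probability, \eqref{keyqintr} is established and the proof is complete.
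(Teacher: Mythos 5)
Your global architecture coincides with the paper's: the trapezoidal rewriting via Remark \ref{russorem}, insertion of the controlled expansion \eqref{lineardep}, the geometric identity, and the orthogonality condition \eqref{orth} reduce the whole theorem to the single convergence \eqref{I1}, and the antisymmetric part is then decomposed, via the Skorohod representation \eqref{skrepX}, the multiplication rule \eqref{multfor} and Fubini for $\boldsymbol{\delta}$, into exactly the two terms of \eqref{alSYM} (a Skorohod integral plus an $L_R$-pairing). Up to this point your proposal matches Section \ref{proofTh1} step for step.

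The genuine gap is that you have swapped where the difficulty, and the hypothesis \eqref{mod1}, actually live. In the paper the trace term of \eqref{alSYM} is the \emph{easy} part: the off-diagonal structure of the $L_R$ inner product does produce differences $\mathbf{D}_{r_1}Y'_s-\mathbf{D}_{r_2}Y'_s$ (this comes from the norm \eqref{Vinner} itself, not from antisymmetrizing in $(i,j)$ as you suggest), but these are crudely bounded by the uniform moment bound \eqref{intassump}, and the smallness is supplied by the Gaussian increments $X_{s,\cdot}$ through Assumption C(ii) together with the technical condition C(iv) — the terms $I^1$--$I^4$ never invoke \eqref{mod1}. Conversely, your claim that for the Skorohod component ``the shrinking support $\mathds{1}_{[u,u+\epsilon]}$ and the increment $X^i-X^i_u$ make these small'' is where the plan as written fails: after applying \eqref{cs1}, the term $J_2(\epsilon,t)$ — the $L_{2,R}$-norm of the Malliavin derivative of the integrand — is \emph{not} small by support considerations, because the $1/\epsilon$ normalization exactly offsets the Lebesgue measure of $[r-\epsilon,r]$; moreover the derivative contains, besides $\mathbf{D}_vY'_s$-terms, the non-shrinking contributions $Y'_s\mathds{1}_{[s,r]}(v)$ (Lemma \ref{rm}), all paired against the doubly singular kernel $|\mu|(dv_1dv_2)\,|\mu|(dr_1dr_2)$. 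Closing this step requires the pointwise a.s.\ convergence furnished by property (1) (Lemmas \ref{pointwiseconv1}, \ref{punctual1}, \ref{punctual2}), a Vitali uniform-integrability argument over an intricate partition of regions, and — crucially — hypothesis \eqref{mod1} to obtain uniform integrability of the differences $\mathbf{D}_{v_1}Y'-\mathbf{D}_{v_2}Y'$ in the terms $L_2(\epsilon)$ and $L_4(\epsilon)$. So \eqref{mod1} is indispensable precisely in the component you declared routine and is not needed in the component you declared the crux; without transplanting the Vitali-plus-\eqref{mod1} analysis into your Skorohod step, the argument does not close.
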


\begin{remark}
The reader should be aware of the restriction $-\frac{4}{3} < \alpha < -1$ in Theorem \ref{gaussiancase}. For instance, in case of fractional Brownian motion,   $\alpha=2H-2$ and hence $\frac{1}{3} < H < 1$. Under assumptions 1, 2 and 3 for a pair $(Y,Y')\in \mathcal{D}_X (\mathbb{R}^d)$ in Theorem \ref{gaussiancase}, the symmetric-Stratonovich integral behaves like a stochastic rough integral driven by a reduced geometric process $\mathbb{X}  = (X,\text{Sym}(\mathbb{X}))$. See (\ref{I1}) and (\ref{redu}) for details.
\end{remark}

\begin{example}\label{c2bex}
If $f:\mathbb{R}^d\rightarrow \mathbb{R}^d \in C^2_b$ and $X$ is a Gaussian process satisfying assumptions A, B and C with $-\frac{4}{3} < \alpha < -1$. Then, $(f(X),\nabla f(X)) \in \mathcal{D}_X(\mathbb{R}^d)$ satisfies the assumptions in Theorem \ref{gaussiancase}.
\end{example}

\begin{proposition}\label{rdeex}
Assume that $V\in C_b^3(\mathbb{R}^d, \mathcal{L}(\mathbb{R}^d, \mathbb{R}^d))$, $\xi \in \mathbb{R}^d$ and let $X$ be a Gaussian process satisfying Assumptions A, B and C with $-\frac{4}{3} < \alpha < -1$. In addition, we assume the second order process (\ref{skrepX}) satisfies $\mathbb{X} \in \mathcal{C}^{2\gamma}_2$ a.s. ($\frac{1}{3} < \gamma < \frac{\alpha}{2} +1$) and $R$ has finite two-dimensional $\rho$-variation for $1\le \rho < \frac{3}{2}$ (see e.g. Def. 5.50 in \cite{friz}). Let $Y$ be the solution of the rough differential equation

\begin{equation}\label{rdeEQ}
Y_t = \xi + \int_0^t V(Y_s)d\mathbf{X}_s; 0\le t\le T.
\end{equation}
Then, $Y$ satisfies the assumptions in Theorem \ref{gaussiancase}. In particular, $Y$ is a solution to the Stratonovich differential equation interpreted in the sense of \cite{russo1993forward}


\begin{equation}\label{stratEQ}
Y_t = \xi + \int_0^t V(Y_s)d^0X_s; 0\le t\le T.
\end{equation}
\end{proposition}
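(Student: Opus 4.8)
The plan is to identify the Gubinelli derivative of the solution of (\ref{rdeEQ}) explicitly, verify that the integrand $V(Y)$ together with its derivative fulfils hypotheses (1)--(3) of Theorem \ref{gaussiancase}, and then read off the Stratonovich equation (\ref{stratEQ}) as an immediate corollary. First I would recall from classical rough path theory (see e.g. \cite{lyons,gubinelli2004,friz}) that, since $V\in C^3_b$ and $\mathbf{X}=(X,\mathbb{X})$ is a $\gamma$-H\"older geometric rough path with $\frac{1}{3}<\gamma<\frac{1}{2}$, equation (\ref{rdeEQ}) admits a unique solution $Y$ which is a controlled rough path with Gubinelli derivative $V(Y)$ and remainder in $\mathcal{C}^{2\gamma}_2$ a.s. The integrand $Z:=V(Y)$ is then itself a controlled rough path with $Z'=\nabla V(Y)\,V(Y)$ and $R^Z\in\mathcal{C}^{2\gamma}_2$ a.s.; applying Lemma \ref{existenceGUB} rowwise gives $(Z,Z')\in\mathcal{D}_X(\mathbb{R}^d)$ and the rough stochastic integrability of $V(Y)$. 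This settles the controlled-rough-path structure, and it remains to check conditions (1)--(3) of Theorem \ref{gaussiancase} for $Z'=\nabla V(Y)\,V(Y)$.

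The heart of the matter is the Malliavin differentiability of $Y$. Under the complementary Young regularity encoded by the finite $\rho$-variation of $R$ with $\rho<\frac{3}{2}$ together with $V\in C^3_b$, I would invoke the Malliavin calculus for Gaussian rough differential equations (see \cite{cass2019,cass2021,cassfrizvic,friz}) to conclude that $Y_t\in\mathbb{D}^{1,p}$ for every $p$ and $t$, with the flow representation $\mathbf{D}_vY_s=J_{s\leftarrow v}V(Y_v)$ for $v<s$ and $\mathbf{D}_vY_s=0$ for $v>s$, where $J_{s\leftarrow v}$ is the Jacobian of the solution flow solving the linearized RDE. The Cass--Litterer--Lyons-type integrability of $J$ furnishes moments of all orders, uniformly in the time indices. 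Differentiating $Z'=\nabla V(Y)\,V(Y)$ by the chain rule and using boundedness of $V,\nabla V,\nabla^2 V$, one writes $\mathbf{D}_vZ'_s$ as $\mathbf{D}_vY_s$ multiplied by bounded smooth functions of $Y_s$; this yields the $L^p$-bounds (\ref{intassump}) of condition (2), while continuity in $s$ of the flow and of $J$ away from $s=v$ (the derivative jumping from $0$ as $v$ crosses $s$) gives the a.s.\ continuity of condition (1).

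For the key condition (3), I would first note that when $s<v_2\le v_1$ both Malliavin derivatives vanish by causality, so only $s\ge v_1$ contributes to the supremum. There, using the flow relation $J_{s\leftarrow v_2}=J_{s\leftarrow v_1}J_{v_1\leftarrow v_2}$ and factoring,
\[
\mathbf{D}_{v_1}Y_s-\mathbf{D}_{v_2}Y_s=J_{s\leftarrow v_1}\big[V(Y_{v_1})-J_{v_1\leftarrow v_2}V(Y_{v_2})\big],
\]
and since both $V(Y_{v_1})-V(Y_{v_2})$ and $J_{v_1\leftarrow v_2}-\mathrm{Id}$ are $O(|v_1-v_2|^{\gamma})$ with moments of all orders, the uniform integrability of $J$ gives $\|\mathbf{D}_{v_1}Z'_s-\mathbf{D}_{v_2}Z'_s\|_{L^q(\mathbb{P})}\lesssim|v_1-v_2|^{\gamma}$ uniformly in $s\ge v_1$. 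Invoking the growth (\ref{dagrowth}) of Assumption C, condition (3) reduces to
\[
\int_0^T\!\!\int_{v_2}^T|v_1-v_2|^{\gamma q}\big(|v_1-v_2|^{\alpha}+\phi(v_1,v_2)\big)^{q/2}\,dv_1dv_2<\infty,
\]
which holds for $q>2$ close enough to $2$: the singular part converges because $\gamma+\frac{\alpha}{2}>-\frac{1}{3}>-\frac{1}{q}$ (using $\gamma>\frac13$ and $-\frac43<\alpha<-1$), and the $\phi$-part converges by the $p$-integrability of $\phi$ in Assumption C. Having verified (1)--(3), Theorem \ref{gaussiancase} applies to $V(Y)$ and yields $\int_0^t V(Y_s)\,d\mathbf{X}_s=\int_0^t V(Y_s)\,d^0X_s$; substituting into (\ref{rdeEQ}) produces (\ref{stratEQ}).

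The main obstacle is the second step: rigorously transporting the Malliavin differentiability and the Jacobian representation of Gaussian RDE solutions into the Kruk--Russo framework built on the singular measure $d\mu=\partial^2R\,dx$, and in particular establishing the pointwise-in-$v$ version of $\mathbf{D}_vY_s=J_{s\leftarrow v}V(Y_v)$ together with the uniform (in $s$) $L^q$-H\"older estimate of exponent $\gamma$ on its $v$-increment. Controlling the constants uniformly in $s$ so that the singular kernel $|\partial^2R(v_1,v_2)|^{q/2}$ remains integrable, and carefully handling the jump of $v\mapsto\mathbf{D}_vY_s$ at $v=s$, are the delicate points; everything else is a routine chain-rule and integrability bookkeeping on top of the cited Gaussian rough path estimates.
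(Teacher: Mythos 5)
Your proposal is correct and takes essentially the same route as the paper's proof: the Jacobian representation $\mathbf{D}_vY_s = J_s\circ J_v^{-1}\circ V(Y_v)\mathds{1}_{[0,s]}(v)$, the Cass--Litterer--Lyons moment bounds (via \cite{cass2013int} and \cite{cass2015sm}, using the finite $\rho$-variation of $R$), a $\gamma$-H\"older estimate in the Malliavin-time variable with moments of all orders, and the same exponent count $2<q<\frac{-1}{\gamma+\alpha/2}$ (the threshold exceeding $3$ precisely because $-\frac{4}{3}<\alpha<-1$ and $\gamma>\frac{1}{3}$). The only difference is presentational: you verify hypotheses (1)--(3) of Theorem \ref{gaussiancase} directly for the integrand $V(Y)$ with derivative $\nabla V(Y)\,V(Y)$, while the paper carries out the estimates for the pair $(Y,V(Y))$ and dismisses the transfer to $V(Y)$ as routine chain-rule bookkeeping.
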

\begin{proof}
Let $V = (V^1, \ldots, V^d)$ where $V^i:\mathbb{R}^d\rightarrow \mathbb{R}^d$ are $C^3_b(\mathbb{R}^d;\mathbb{R}^d)$ vector fields. It is known that $Y'_t = V(Y_t)$ (see e.g. Prop 8.3 in \cite{hairerbook}) and hence chain rule yields $\mathbf{D}V(Y_t) = \big( \mathbf{D}V^1(Y_t), \ldots, \mathbf{D}V^d(Y_t)\big) = \big(\nabla V^1(Y_t)\circ \mathbf{D}Y_t, \ldots, \nabla V^d(Y_t)\circ \mathbf{D}Y_t\big)$. It is well-known (see e.g. \cite{cass2015sm}) that $\mathbf{D}_sY_t = J_t\circ J^{-1}_s\circ V(Y_s)\mathds{1}_{[0,t]}(s)$, where $J_t$ denotes the Jacobian of the solution $Y_t$ where $Y_0 = \xi$. Here, $J^{-1}_s$ is the inverse of the matrix-valued Jacobian $J_s$. We fix $\frac{1}{3} < \gamma < \frac{\alpha}{2} +1$. Then,

\begin{eqnarray*}
\sup_{s\ge \max{(v_1, v_2)}~\text{or}~s < \min{(v_1, v_2)}}| \mathbf{D}_{v_1}Y^{'}_s - \mathbf{D}_{v_2}Y^{'}_s|&\le& \max_{1\le i\le d}\|\nabla V^i(Y)\|_\infty\|J_\cdot\|_\infty \|J^{-1}\|_{\gamma}\| V(Y)\|_\infty\\ 
&\times& |v_1-v_2|^\gamma\\
&+& \max_{1\le i\le d}\|\nabla V^i(Y)\|_\infty\|J_\cdot\|_\infty \|J^{-1}_\cdot\|_\infty \|V(Y)\|_{\gamma}\\
&\times& |v_1-v_2|^\gamma,
\end{eqnarray*}
for $\frac{1}{3} < \gamma < \frac{1}{2}$. By invoking \cite{cass2013int}, we know that

$$\big(\|J\|_{p-var}, \|J^{-1}\|_{p-var}\big) \in \bigcap_{q\ge 1}L^q(\mathbb{P})$$
for $2 < p < 3$. Here, $\|\cdot \|_{p-var}$ denotes the $p$-variation norm. If $R$ has finite two-dimensional $\rho$-variation, it is actually possible to prove (see e.g. Remark 7.3 in \cite{cass2015sm})

$$
\Big\{\|J\|_\infty, \|J^{-1}\|_\infty,\|J\|_{\frac{1}{p}}, \|J^{-1}\|_{\frac{1}{p}}\Big\} \subset \bigcap_{q\ge 1}L^q(\mathbb{P}).
$$
Under Assumption C, assumptions (\ref{mod1}) and (\ref{intassump}) hold true. Indeed, since $-\frac{4}{3} < \alpha < -1$ and $\frac{1}{3} < \gamma < \frac{1}{2}$, then $\frac{-1}{\gamma + \frac{\alpha}{2}} > 3$ so that $p\gamma + \alpha \frac{p}{2} + 1 >0$ as long as $2 < p < \frac{-1}{\gamma + \frac{\alpha}{2}}$. The proof of (\ref{stratEQ}) follows by routine
arguments based on chain rule and application of Theorem \ref{gaussiancase} to $V(Y)$, so we omit details.

\end{proof}




Next, we discuss convergence rates of first-order Stratonovich approximation schemes. For simplicity of exposition, we present the results in the case $X$ is the fractional Brownian motion.

\

\begin{theorem}\label{mainTH}
Let $X$ be a $d$-dimensional fractional Brownian motion with $\frac{1}{4} < H < \frac{1}{2}$. Assume $Y\in \mathbb{D}^{1,2}(L_R(\mathbb{R}^d))$ is adapted w.r.t. $X$ and it satisfies the following regularity conditions.

\begin{itemize}
\item There exists $q>2$ such that $\sup_{0\le t\le T}\mathbb{E}|Y_t|^q < \infty.$
  \item $\text{tr}[\mathbf{D}_\cdot Y_s]$ has continuous paths on $[0,s]$ for every $s\le T$ and 
  $$\sup_{0\le t\le T}\mathbb{E}|\textit{tr}[\mathbf{D}_0 Y_t]|^2 < \infty.$$
  \item Assumption S1 is fulfilled for $0 < \gamma\le H$ such that $2\gamma + 2H -1 > 0$.
  \item Assumption S2 is fulfilled for $\eta>0$ such that $\eta + 2H -1 > 0$.

\end{itemize}
Then, $Y$ is symmetric-Stratonovich integrable w.r.t. $X$ and we have the representation

$$
\int_0^T Y_s d^0X_s = \int_0^T Y_s \boldsymbol{\delta}X_s + H \int_0^T \text{tr}[\mathbf{D}_{s-}Y_s]s^{2H-1}ds$$
\begin{equation}\label{skorohod_formula}
+\int_{0 \le r_1 < r_2 \le T}\text{tr}[\mathbf{D}_{r_1}Y_{r_2} - \mathbf{D}_{r_2-}Y_{r_2}]\partial^2R(r_1,r_2)dr_1dr_2.
\end{equation}
In addition, there exists a constant $C$ which depends on (\ref{incrP1}) and (\ref{COND}) such that

\begin{equation}\label{rateabs}
\mathbb{E}\Bigg| \int_0^T Y_sd^0X_s - I^0(\epsilon,Y,dX)(T)  \Bigg|^2\le C \{\epsilon^{2\gamma + 2H-1} + \epsilon^{2(\eta + 2H -1)}\},
\end{equation}
for every $\epsilon>0$ sufficiently small. In particular, when we restrict to the case $\frac{1}{3} < H < \frac{1}{2}$ and $(Y,Y') \in \mathcal{D}_X(\mathbb{R}^d)$ satisfies items 1, 2 and 3 in Theorem \ref{gaussiancase}, then $(Y,Y')\in \mathcal{D}_X(\mathbb{R}^d)$ is rough stochastically integrable, representation (\ref{skorohod_formula}) and the estimate (\ref{rateabs}) hold for the stochastic rough integral as well.
\end{theorem}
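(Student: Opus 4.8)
The plan is to decompose the symmetric-Stratonovich scheme $I^0(\epsilon,Y,dX)(T)$ into a Skorohod integral of a mollified integrand plus a trace correction, and then to estimate each piece separately. For the decomposition I would fix the averaged process $\bar{Y}^\epsilon$ of (\ref{Ybar}) and write each component increment $X^j_{s+\epsilon}-X^j_{s-\epsilon}$ as the Paley-Wiener integral $I(\mathds{1}_{[s-\epsilon,s+\epsilon]}e_j)$. Applying the multiplication rule (\ref{multfor}) with $F=Y^j_s$, summing over $j$ (cross terms vanish since the components of $X$ are iid), and then using the Fubini theorem for Skorohod integrals (Prop. 10.3 in \cite{krukrusso}) on the divergence part, I expect the representation
$$I^0(\epsilon,Y,dX)(T)=\int_0^T \bar{Y}^\epsilon_u\boldsymbol{\delta}X_u+\textit{Tr}\,(\mathbf{D}Y)_\epsilon,\qquad \textit{Tr}\,(\mathbf{D}Y)_\epsilon=\frac{1}{2\epsilon}\int_0^T\big\langle \text{tr}[\mathbf{D}_\cdot Y_s],\mathds{1}_{[s-\epsilon,s+\epsilon]}\big\rangle_{L_R(\mathbb{R}^d)}\,ds.$$
Here adaptedness of $Y$ forces $\mathbf{D}_vY_s=0$ for $v>s$, and the Malliavin-derivative contribution collapses onto the matrix trace $\text{tr}[\mathbf{D}_\cdot Y_s]$ tested against $\mathds{1}_{[s-\epsilon,s+\epsilon]}$ in $L_R$.

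The Skorohod part is handled directly by Proposition \ref{epsilonlemma}. Since $\alpha=2H-2$ for the fractional Brownian motion (Example \ref{FBMexample}), Assumption S1 with $2\gamma+2H-1>0$ yields
$$\mathbb{E}\Bigl|\int_0^T(\bar{Y}^\epsilon_u-Y_u)\boldsymbol{\delta}X_u\Bigr|^2\lesssim \epsilon^{2\gamma+\alpha+1}=\epsilon^{2\gamma+2H-1},$$
which both supplies the first term in (\ref{rateabs}) and shows $\int_0^T\bar{Y}^\epsilon_u\boldsymbol{\delta}X_u\to\int_0^T Y_u\boldsymbol{\delta}X_u$ in $L^2(\mathbb{P})$.

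The heart of the argument, and the step I expect to be the main obstacle, is the analysis of the trace term $\textit{Tr}\,(\mathbf{D}Y)_\epsilon$. Using the explicit $L_R$-inner-product structure for the fractional Brownian motion (the boundary term weighted by $\partial_s R(s,T)$ plus the off-diagonal double integral against $\partial^2R(r_1,r_2)=H(2H-1)|r_1-r_2|^{2H-2}$), I would split the window $[s-\epsilon,s+\epsilon]$ into its near-diagonal and off-diagonal contributions. Because $\text{tr}[\mathbf{D}_\cdot Y_s]$ is only defined for arguments below $s$ by adaptedness, the near-diagonal part concentrates as $\epsilon\downarrow0$ onto the left limit $\mathbf{D}_{s-}Y_s$ weighted by $\tfrac12\,dR(s,s)=Hs^{2H-1}\,ds$, producing $H\int_0^T\text{tr}[\mathbf{D}_{s-}Y_s]s^{2H-1}\,ds$; here the continuity of $\text{tr}[\mathbf{D}_\cdot Y_s]$ and the bound on $\mathbb{E}|\text{tr}[\mathbf{D}_0Y_t]|^2$ are used to pass to the limit. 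The off-diagonal part converges to $\int_{0\le r_1<r_2\le T}\text{tr}[\mathbf{D}_{r_1}Y_{r_2}-\mathbf{D}_{r_2-}Y_{r_2}]\partial^2R(r_1,r_2)\,dr_1dr_2$. The decisive quantitative estimate comes from Assumption S2: the increment bound $\mathbb{E}|\text{tr}[\mathbf{D}_{r_1}Y_s-\mathbf{D}_{r_2}Y_s]|^2\lesssim|r_2-r_1|^{2\eta}$, integrated against the singular kernel $|r_1-r_2|^{2H-2}$ over an $\epsilon$-window, makes the $L^2(\mathbb{P})$-norm of the trace error of order $\int_0^\epsilon x^{\eta+2H-2}\,dx\sim\epsilon^{\eta+2H-1}$, finite precisely because $\eta+2H-1>0$; squaring gives the second term $\epsilon^{2(\eta+2H-1)}$ in (\ref{rateabs}). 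Combining the two estimates yields symmetric-Stratonovich integrability, the representation (\ref{skorohod_formula}), and the rate.

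Finally, for the rough-integral assertion I would invoke Theorem \ref{gaussiancase}. When $\tfrac13<H<\tfrac12$ one has $-\tfrac43<\alpha<-1$, so under items 1--3 of that theorem the stochastic rough integral $\int_0^T Y_s\,d\mathbf{X}_s$ exists and coincides with $\int_0^T Y_s\,d^0X_s$. Consequently the representation (\ref{skorohod_formula}) and the estimate (\ref{rateabs}), already established for the symmetric-Stratonovich integral, transfer verbatim to the rough integral, completing the proof.
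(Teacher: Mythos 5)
Your overall architecture coincides with the paper's: the multiplication rule (\ref{multfor}) plus the Fubini theorem for Skorohod integrals give $I^0(\epsilon,Y,dX)(T)=\int_0^T\bar{Y}^\epsilon_u\,\boldsymbol{\delta}X_u+\textit{Tr}\,(\mathbf{D}Y)_\epsilon$ (modulo an $O_{L^2(\mathbb{P})}(\epsilon^{2H})$ boundary correction near $0$ and $T$ coming from the moment assumption $\sup_t\mathbb{E}|Y_t|^q<\infty$, which you suppress but which is harmless); Proposition \ref{epsilonlemma} handles the Skorohod part with rate $\epsilon^{2\gamma+2H-1}$ exactly as in the paper; the trace is split as in (\ref{secs}) into a part with frozen left limit $\textit{tr}[\mathbf{D}_{s-}Y_s]$ and an increment part controlled by Assumption S2; and Theorem \ref{gaussiancase} transfers the conclusions to the rough integral. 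Your off-diagonal heuristic also matches Lemma \ref{ratepr2} in mechanism and rate, even though the paper's execution requires Taylor expansions of $\partial^2R$ across several regions.

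The genuine gap is in the diagonal term. For the representation (\ref{skorohod_formula}) alone, continuity of $\textit{tr}[\mathbf{D}_\cdot Y_s]$ suffices, but the rate (\ref{rateabs}) requires a quantitative bound on
$$\mathbb{E}\Big|\textit{Tr}_1(\mathbf{D}Y)_\epsilon-H\int_0^T\textit{tr}[\mathbf{D}_{s-}Y_s]\,s^{2H-1}ds\Big|^2,$$
and your argument produces none: ``continuity \ldots to pass to the limit'' yields $o(1)$ only. This estimate is not innocuous. Writing $\textit{Tr}_1(\mathbf{D}Y)_\epsilon=\int\textit{tr}[\mathbf{D}_{s-}Y_s]\,\frac{\boldsymbol{v}(s+\epsilon)-\boldsymbol{v}(s-\epsilon)}{4\epsilon}\,ds$ with $\boldsymbol{v}(s)=s^{2H}$, the symmetric difference quotient deviates from $\boldsymbol{v}^{(1)}(s)$ by $\frac{\epsilon^2}{6}\boldsymbol{v}^{(3)}$ evaluated near $s$, and $\boldsymbol{v}^{(3)}(s)=c_Hs^{2H-3}$ blows up at $s=0$, so a global Taylor bound fails. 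The paper's Lemma \ref{Tr1lemma} splits the time integral at $s=\epsilon^\beta$, balances the two regimes at the optimal $\beta=0.75$, and obtains the rate $\epsilon^{(6H-1)/2}$; one must then verify this extra term is absorbed into the right-hand side of (\ref{rateabs}), which holds because $(6H-1)/2-(2\gamma+2H-1)=H+\tfrac12-2\gamma\ge\tfrac12-H>0$ for $\gamma\le H<\tfrac12$. So your final bound is correct, but as written your proof does not establish it: the diagonal error is never estimated and the domination that removes it is never checked. A smaller omission in the same spirit: Lemma \ref{Tr1lemma} needs $\sup_s\mathbb{E}|\textit{tr}[\mathbf{D}_{s-}Y_s]|^2<\infty$, which should be deduced in one line from your hypothesis $\sup_t\mathbb{E}|\textit{tr}[\mathbf{D}_0Y_t]|^2<\infty$ together with Assumption S2.
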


\begin{remark}\label{TindelCOMP}
The assumption that $Y$ is adapted w.r.t. $X$ in the representation (\ref{skorohod_formula}) in Theorem \ref{mainTH} is not essential. Indeed, in case $Y$ is not necessarily adapted, there will be two additional terms related to $\text{tr}[\mathbf{D}_{s+}Y_{s}]$ and $
\text{tr}[\mathbf{D}_{r_1}Y_{r_2} - \mathbf{D}_{r_2+}Y_{r_2}]\partial^2R(r_1,r_2)$ on $0\le r_2 < r_1\le t$. For simplicity of exposition, we only discuss in detail the case where $Y$ is adapted.
\end{remark}
We now present two classes of significant examples which illustrate Theorem \ref{mainTH} and its relation with Theorem \ref{gaussiancase}.
\subsection{The case $Y = f(X)$}\label{fxsec}

\begin{lemma}\label{TRPlemma}
Fix $\frac{1}{4} < H < \frac{1}{2}$ and let $f:\mathbb{R}^d\rightarrow \mathbb{R}^d$ be a continuously differentiable function such that $f$ and $\nabla f$ are $\theta$-H\"older continuous functions with $\frac{1}{2H}-1 < \theta \le 1$. Then, $f(X) \in \mathbb{D}^{1,2}(L_R(\mathbb{R}^d))$, Assumption S2 is fulfilled with exponent $\eta=1$ and

\begin{equation}\label{fskc}
\| f(X_t) - f(X_s)\|^2_{\mathbb{D}^{1,2}(\mathbb{R}^d)}\lesssim |t-s|^{2H\theta},
\end{equation}
for $s,t\ge 0$. Therefore, $f(X)$ satisfies the Assumptions of Theorem \ref{mainTH} and it is symmetric-Stratonovich integrable. In particular, when we restrict to the case $\frac{1}{3} < H < \frac{1}{2}$ and $\frac{1}{2H}-1 < \theta \le \frac{1}{H}-2$, then $\nabla f(X)$ is $\theta \gamma$-H\"older continuous for every $\gamma < H$ and
\begin{equation}\label{tayfe}
f(X_t) - f(X_s) - \nabla f(X_s)(X_t-X_s) = O(|t-s|^{(\theta +1) \gamma}),
\end{equation}
where $(\theta +1)\gamma + \gamma < 1$ for every $\gamma < H$. In particular, the classical Sewing lemma fails.
\end{lemma}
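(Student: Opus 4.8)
The plan is to verify, for $Y=f(X)$, the four hypotheses of Theorem~\ref{mainTH} and then to extract the ``failure of the Sewing lemma'' from an elementary Taylor estimate. Throughout I use that for fractional Brownian motion $\alpha=2H-2$ (Example~\ref{FBMexample}), so $\tfrac{\alpha+2}{2}=H$ and $|\partial^2R(s_1,s_2)|\lesssim|s_1-s_2|^{2H-2}$, together with the facts $\mathbf{D}_\tau X^i_t=\mathds{1}_{[0,t]}(\tau)e_i$, $\|\mathds{1}_{[0,t]}\|^2_{L_R}=R(t,t)=t^{2H}$ by (\ref{cov1}), and $\|\mathds{1}_{(s,t]}\|^2_{L_R}=\mathbb{E}|X_t-X_s|^2=|t-s|^{2H}$. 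Since $f\in C^1$ with $\nabla f$ of polynomial growth (it is $\theta$-Hölder), the chain rule for the Gross--Sobolev derivative applies and yields $\mathbf{D}_\tau f(X_t)=\mathds{1}_{[0,t]}(\tau)\,\nabla f(X_t)$, with $\nabla f$ the Jacobian.

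First I would prove the Hölder bound~(\ref{fskc}), which simultaneously delivers $f(X)\in\mathbb{D}^{1,2}(L_R(\mathbb{R}^d))$ and Assumption~S1 with $\gamma=H\theta$. For $s<t$ I split $\mathbf{D}\big(f(X_t)-f(X_s)\big)=[\nabla f(X_t)-\nabla f(X_s)]\mathds{1}_{[0,t]}+\nabla f(X_s)\mathds{1}_{(s,t]}$; taking $L_R$-norms and expectations, the first piece is bounded by $\mathbb{E}|\nabla f(X_t)-\nabla f(X_s)|^2\,t^{2H}\lesssim|t-s|^{2H\theta}$ (using $\theta$-Hölderness of $\nabla f$ and $\mathbb{E}|X_t-X_s|^{2\theta}\lesssim|t-s|^{2H\theta}$), and the second by $\mathbb{E}|\nabla f(X_s)|^2\,|t-s|^{2H}\lesssim|t-s|^{2H}\le|t-s|^{2H\theta}$. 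Combined with $\mathbb{E}|f(X_t)-f(X_s)|^2\lesssim|t-s|^{2H\theta}$ this gives~(\ref{fskc}). Membership in $\mathbb{D}^{1,2}(L_R(\mathbb{R}^d))$ then reduces to finiteness of $\int_{[0,T]^2\setminus D}\mathbb{E}|f(X_{s_1})-f(X_{s_2})|^2|\partial^2R|\,ds_1ds_2\lesssim\int_{[0,T]^2}|s_1-s_2|^{2H\theta+2H-2}\,ds_1ds_2$, which converges precisely when $2H\theta+2H-1>0$, i.e. $\theta>\tfrac{1}{2H}-1$; the derivative part is controlled by the same splitting. Note $2\gamma+\alpha+1=2H\theta+2H-1>0$ and $\gamma=H\theta\le H$, as S1 requires.

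The decisive simplification is Assumption~S2. Because $\mathbf{D}_\tau X^i_s=\mathds{1}_{[0,s]}(\tau)e_i$ is independent of $\tau$ on $[0,s]$, the trace equals $\mathrm{tr}[\mathbf{D}_\tau Y_s]=\mathds{1}_{[0,s]}(\tau)\,\mathrm{div}f(X_s)$, so for all $0\le r_1<r_2\le s$ the increment $\mathrm{tr}[\mathbf{D}_{r_1}Y_s-\mathbf{D}_{r_2}Y_s]$ vanishes identically; thus S2 holds (trivially) with $\eta=1$, for which $\eta+\alpha+1=2H>0$. The same formula shows $\mathrm{tr}[\mathbf{D}_\cdot Y_s]$ is continuous on $[0,s]$ with $\sup_t\mathbb{E}|\mathrm{tr}[\mathbf{D}_0Y_t]|^2=\sup_t\mathbb{E}|\mathrm{div}f(X_t)|^2<\infty$, while $\sup_t\mathbb{E}|f(X_t)|^q<\infty$ for every $q$ by polynomial growth of $f$ and Gaussianity of $X_t$. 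All four hypotheses of Theorem~\ref{mainTH} are met, so $f(X)$ is symmetric-Stratonovich integrable.

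Finally I would treat the restricted regime pathwise. Since a.e.\ path $X\in\mathcal{C}^\gamma$ for each $\gamma<H$, the $\theta$-Hölderness of $\nabla f$ gives $|\nabla f(X_t)-\nabla f(X_s)|\lesssim|X_t-X_s|^\theta\lesssim|t-s|^{\theta\gamma}$, and the first-order remainder $f(X_t)-f(X_s)-\nabla f(X_s)(X_t-X_s)=\int_0^1[\nabla f(X_s+u(X_t-X_s))-\nabla f(X_s)](X_t-X_s)\,du$ is bounded by $|X_t-X_s|^{\theta+1}\lesssim|t-s|^{(\theta+1)\gamma}$, which is~(\ref{tayfe}). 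The only arithmetic point is $(\theta+1)\gamma+\gamma=(\theta+2)\gamma<1$ for every $\gamma<H$: the endpoint $\theta\le\tfrac1H-2$ forces $(\theta+2)H\le1$, whence $(\theta+2)\gamma<(\theta+2)H\le1$. Consequently, the germ $\Xi_{s,t}=Y_s\,\delta X_{s,t}+Y'_s\mathbb{X}_{s,t}$ formed from $(Y,Y')=(f(X),\nabla f(X))$ has, by Chen's relation, $\delta\Xi_{s,u,t}=-R^Y_{s,u}\delta X_{u,t}-\delta Y'_{s,u}\mathbb{X}_{u,t}$ of order $|t-s|^{(\theta+2)\gamma}$ with exponent strictly below $1$, so the Sewing lemma cannot produce the rough integral in the Gubinelli sense even though the symmetric-Stratonovich integral exists. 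I expect the main obstacle to be the diagonal singular-integral estimate of the second paragraph---tracking the exact threshold $2H\theta+2H-1$ that separates convergence from divergence and matching both pieces of the derivative splitting against the weight $|\partial^2R|\sim|s_1-s_2|^{2H-2}$; by contrast the S2 step is essentially free once the trace is identified, and the Sewing-failure claim is a short consequence of $\theta\le\tfrac1H-2$.
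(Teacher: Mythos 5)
Your estimates coincide with the paper's proof in every quantitative step: the splitting $\mathbf{D}\big(f(X_t)-f(X_s)\big)=[\nabla f(X_t)-\nabla f(X_s)]\mathds{1}_{[0,t]}+\nabla f(X_s)\big(\mathds{1}_{[0,t]}-\mathds{1}_{[0,s]}\big)$ together with $\sup_s\mathbb{E}|\nabla f(X_s)|^2\lesssim\max\{\|\nabla f\|^2_\theta,|\nabla f(0)|^2\}<\infty$ is exactly how the paper obtains (\ref{fskc}) (the paper records the bound as $|t-s|^{2\theta H}+|t-s|^{2H}$, which on a bounded interval is your $|t-s|^{2H\theta}$); Assumption S2 with $\eta=1$ is obtained in the same trivial way, since $\mathbf{D}_{r_1}f(X_s)-\mathbf{D}_{r_2}f(X_s)=0$ for $0\le r_1<r_2\le s$; and (\ref{tayfe}) is the same first-order Taylor/H\"older computation (the paper invokes Exercise 13.2 in \cite{hairerbook} for the expansion), with the identical arithmetic $(\theta+2)\gamma<(\theta+2)H\le 1$ from the endpoint $\theta\le\frac{1}{H}-2$. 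Your closing computation of $\delta\Xi$ via Chen's relation is a genuine addition: the paper simply asserts that the classical Sewing lemma fails, whereas you exhibit the germ whose coherence exponent drops below $1$.

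The one genuine gap is the membership claim $f(X)\in\mathbb{D}^{1,2}(L_R(\mathbb{R}^d))$. You write that it ``reduces to finiteness'' of $\int_{[0,T]^2\setminus D}\mathbb{E}|f(X_{s_1})-f(X_{s_2})|^2|\partial^2R|\,ds_1ds_2$ and of the analogous derivative integral, but $\mathbb{D}^{1,2}(L_R(\mathbb{R}^d))$ is defined as the completion of smooth $L_R(\mathbb{R}^d)$-valued elements under $\|\cdot\|_{1,2,L_R(\mathbb{R}^d)}$, i.e.\ as the domain of the closure of $\mathbf{D}$; finiteness of the norm of the candidate pair $\big(f(X),\,\nabla f(X_\cdot)\mathds{1}_{[0,\cdot]}\big)$ does not by itself place $f(X)$ in that domain. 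The paper devotes the first half of its proof to precisely this point: it expands $f(X)$ along an orthonormal basis of continuous functions of $L_R(\mathbb{R}^d)$ (Prop.\ 6.2 in \cite{krukrusso}), shows each truncation $F_n$ lies in $\mathbb{D}^{1,2}(L_R(\mathbb{R}^d))$ (via Lemma 9.13 and Props.\ 8.12--8.14 of \cite{krukrusso}, which are also what licenses the chain rule $\mathbf{D}f(X_s)=\nabla f(X_s)\mathds{1}_{[0,s]}$ that you use without justification), proves $F_n\to f(X)$ in $L^2(\Omega;L_R(\mathbb{R}^d))$ together with $\sup_n\mathbb{E}\|\mathbf{D}F_n\|^2_{L_{2,R}(\mathbb{R}^{d\times d})}<\infty$, and concludes by closability. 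Your integrability computation, with the correct threshold $2H\theta+2H-1>0$, is the quantitative heart of that uniform bound, so the repair is routine --- insert the standard criterion that $F_n\to F$ in $L^2$ with $\sup_n\|\mathbf{D}F_n\|_{L^2}<\infty$ implies $F\in\mathbb{D}^{1,2}$ --- but as written the membership step is asserted rather than proved.
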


The proof of Lemma \ref{TRPlemma} is given in Section \ref{appendixsec}. Next, we illustrate Theorem \ref{mainTH} with the almost sure convergence rate.

\begin{corollary}\label{thintr1}
Let $X= (X_1,\ldots,X_d)$ be a $d$-dimensional fractional Brownian motion with parameter $\frac{1}{3} < H < \frac{1}{2}$. Let $\mathbf{X} = (X,\mathbb{X})$ be the geometric rough path given in Proposition \ref{liftingTH}. Assume that $f:\mathbb{R}^d\rightarrow \mathbb{R}^d \in C^2_b$ and fix $\rho >0$ such that $0< \rho < 2H-\frac{1}{2}$. There exists a square-integrable random variable $C$ such that

\begin{equation}\label{asfx}
\Bigg| \int_0^T f(X_s)d\mathbf{X}_s - I^0(2^{-n},f(X),dX)(T)\Bigg|\le C 2^{-n\rho}\rightarrow 0,
\end{equation}
almost surely, as $n\rightarrow +\infty$.
\end{corollary}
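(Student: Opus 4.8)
The plan is to promote the $L^2(\mathbb{P})$ convergence rate furnished by Theorem \ref{mainTH} to an almost sure rate along the dyadic sequence $\epsilon_n = 2^{-n}$ by a square-summability argument in the spirit of Borel--Cantelli. Two preliminary identifications are needed. First, since $X$ is a $d$-dimensional fractional Brownian motion with $\frac{1}{3} < H < \frac{1}{2}$, Assumption C holds with $\alpha = 2H-2 \in (-\frac{4}{3},-1)$ (Example \ref{FBMexample}); by Example \ref{c2bex} the pair $(f(X),\nabla f(X)) \in \mathcal{D}_X(\mathbb{R}^d)$ meets the hypotheses of Theorem \ref{gaussiancase}, so the rough integral coincides with the symmetric-Stratonovich integral and I may work with $\int_0^T f(X_s)\,d^0X_s$ throughout. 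Second, because $f \in C^2_b$ both $f$ and $\nabla f$ are Lipschitz, hence $\theta$-H\"older with $\theta = 1 > \frac{1}{2H}-1$; Lemma \ref{TRPlemma} then shows $f(X)$ satisfies Assumption S1 with $\gamma = H$ (taking $\theta = 1$ in (\ref{fskc})) and Assumption S2 with $\eta = 1$, and the positivity constraints $2\gamma + 2H - 1 = 4H-1 > 0$ and $\eta + 2H - 1 = 2H > 0$ hold since $H > \frac14$.

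With these parameters in hand I would invoke the estimate (\ref{rateabs}) of Theorem \ref{mainTH}. Substituting $\gamma = H$, $\eta = 1$ the two exponents become $2\gamma + 2H - 1 = 4H-1$ and $2(\eta + 2H - 1) = 4H$; as $4H-1 < 4H$ the dominant term as $\epsilon \downarrow 0$ is the first one, so that
\[
\mathbb{E}\Bigg| \int_0^T f(X_s)\,d\mathbf{X}_s - I^0(\epsilon,f(X),dX)(T)\Bigg|^2 \lesssim \epsilon^{4H-1}.
\]
Writing $A_n := \int_0^T f(X_s)\,d\mathbf{X}_s - I^0(2^{-n},f(X),dX)(T)$, this yields $\mathbb{E}|A_n|^2 \lesssim 2^{-n(4H-1)}$.

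For the almost sure statement I would exhibit the random variable of (\ref{asfx}) explicitly by setting $C := \big(\sum_{n\ge 1} 2^{2n\rho}|A_n|^2\big)^{1/2}$. By monotone convergence and the preceding bound,
\[
\mathbb{E}[C^2] = \sum_{n\ge 1} 2^{2n\rho}\,\mathbb{E}|A_n|^2 \lesssim \sum_{n\ge 1} 2^{-n(4H-1-2\rho)},
\]
and this geometric series converges precisely because $0 < \rho < 2H - \frac12$ forces $4H - 1 - 2\rho > 0$. Hence $C \in L^2(\mathbb{P})$, in particular $C < \infty$ almost surely; and since $2^{2n\rho}|A_n|^2 \le C^2$ for every $n$ by construction, we obtain $|A_n| \le C\,2^{-n\rho}$, which is (\ref{asfx}), and letting $n \to \infty$ gives the almost sure convergence.

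I expect the only substantive points to be the correct reading of the exponents $(\gamma,\eta) = (H,1)$ out of Lemma \ref{TRPlemma} and the observation that the summability threshold $4H - 1 - 2\rho > 0$ coincides exactly with the admissible range $\rho < 2H - \frac12$ stated in the corollary; once the quantitative rate $\mathbb{E}|A_n|^2 \lesssim 2^{-n(4H-1)}$ is available, the square-summable majorant $C$ and the resulting Borel--Cantelli-type conclusion are routine. The dyadic restriction $\epsilon = 2^{-n}$ is essential here, as it is the geometric decay of $\mathbb{E}|A_n|^2$ that renders $\sum_n 2^{2n\rho}\mathbb{E}|A_n|^2$ finite.
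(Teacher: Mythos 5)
Your proposal is correct and follows essentially the same route as the paper: verify the hypotheses of Theorems \ref{gaussiancase} and \ref{mainTH} via Example \ref{c2bex} and Lemma \ref{TRPlemma} (yielding $\gamma=H$, $\eta=1$ and the $L^2(\mathbb{P})$-rate $2^{-n(4H-1)}$), then sum the weighted squared errors to build the square-integrable majorant $C$. If anything, your weight $2^{2n\rho}$ in the definition of $C$ is the correct one --- the paper's displayed weight $2^{m\rho}$ is evidently a typo, since only the squared weight yields $|A_n|\le C\,2^{-n\rho}$ and makes the summability condition $4H-1-2\rho>0$ coincide exactly with the stated range $\rho<2H-\tfrac{1}{2}$.
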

\begin{proof}
Since
$$\mathbf{D}_{r_1}f(X_s) - \mathbf{D}_{r_2}f(X_s) = 0,$$
for every $0\le r_1 < r_2 \le s\le T$, we can take any $\eta = 1$ in Assumption S2. A direct application of Lemma \ref{TRPlemma}, Theorems \ref{gaussiancase} and \ref{mainTH} and Example \ref{c2bex} yields

\begin{equation}\label{asfx1}
\mathbb{E}\Bigg| \int_0^T f(X_s)d\mathbf{X}_s - I^0(2^{-n},f(X),dX)(T)\Bigg|^2 \lesssim \max\Big\{\|\nabla f\|^2_1, \|f\|_1^2, |\nabla f(0)|^2\Big\}2^{-n(4H-1)},
\end{equation}
for every $n \ge 1$ sufficiently large. Let us define

$$C = \Bigg(\sum_{m\ge 1} 2^{m\rho} \Big| \int_0^T f(X_s)d\mathbf{X}_s - I^0(2^{-n},f(X),dX)(T)\Big|^2 \Bigg)^{\frac{1}{2}}.$$
Then, (\ref{asfx1}) implies $\mathbb{E}|C|^2<\infty$. Therefore, (\ref{asfx}) holds true.

\end{proof}





\subsection{The case of rough differential equations}\label{sdesec}
In this section, we apply Theorem \ref{mainTH} to the class of rough differential equations of the form (\ref{rdeEQ}), where $\mathbf{X} = (X,\mathbb{X})$ is a $\gamma$-H\"older geometric rough path lift for the fractional Brownian motion $X$ with parameter $\frac{1}{3} < H < \frac{1}{2}$ and $\gamma < H$. Just like in the proof of Proposition \ref{rdeex}, let $J_t$ be the Jacobian of the solution $Y_t$ where $Y_0 = \xi$ is an arbitrary initial condition and let $J^{-1}_t$ be the inverse of $J_t$. We recall the following fundamental result due to \cite{cass2013int} and \cite{cass2015sm}:
\begin{equation}\label{intJ2}
\|Y\|_{\gamma},\| J\|_{\gamma}, \|J^{-1}\|_{\gamma} \in \cap_{q\ge 1}L^q(\mathbb{P}),
\end{equation}
and
\begin{equation}\label{intJ3}
\|Y\|_\infty, \| J\|_{\infty}, \|J^{-1}\|_{\infty} \in \cap_{q\ge 1}L^q(\mathbb{P}).
\end{equation}
See also Remark 2.7 in \cite{cass2015sm}. It is convenient to work with the norms

$$\|f\|_{\infty,\kappa}:= \|f\|_\infty + \|f\|_\kappa,$$
for a one-parameter function $f$ and $0 < \kappa \le 1$.





The following result is an almost immediate consequence of the H\"older-type estimates (\ref{intJ2}) and (\ref{intJ3}). The proof of Lemma \ref{etarde} is given in Section \ref{appendixsec}. 

\begin{lemma}\label{etarde}
For a given $\frac{1}{3} < \gamma < H < \frac{1}{2}$, there exists a constant $C$ which depends on the moments of $\|J\|_{\infty,\gamma}, \|J^{-1}\|_{\infty,\gamma}, \|Y\|_{\infty,\gamma}, \|\nabla V\|_\infty, H$ and $T$ such that

\begin{equation}
\|Y_t - Y_s\|^2_{\mathbb{D}^{1,2}(\mathbb{R}^d)}\le C |t-s|^{2\gamma},
\end{equation}
for every $s,t \ge 0$.

\end{lemma}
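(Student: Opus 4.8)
The plan is to expand the squared Malliavin--Sobolev norm into its two defining pieces and to estimate each separately using the explicit flow representation of the Malliavin derivative. By the definition of the norm $\|\cdot\|_{1,2,\mathbb{R}^d}$, and since we may assume $0\le s\le t\le T$ (the general case reducing to this by the constancy convention on $Y$), I would write
\begin{equation*}
\|Y_t-Y_s\|^2_{\mathbb{D}^{1,2}(\mathbb{R}^d)}=\mathbb{E}|Y_t-Y_s|^2+\mathbb{E}\big\|\mathbf{D}(Y_t-Y_s)\big\|^2_{L_R(\mathbb{R}^d)}.
\end{equation*}
The first term is immediate: $|Y_t-Y_s|\le\|Y\|_\gamma\,|t-s|^\gamma$ and $\|Y\|_\gamma\in\bigcap_{q\ge1}L^q(\mathbb{P})$ by (\ref{intJ2}), so $\mathbb{E}|Y_t-Y_s|^2\le\mathbb{E}[\|Y\|_\gamma^2]\,|t-s|^{2\gamma}$. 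All the work is therefore concentrated in the derivative term.

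Second, I would insert the flow representation $\mathbf{D}_rY_t=J_tJ_r^{-1}V(Y_r)\mathds{1}_{[0,t]}(r)$ already used in the proof of Proposition \ref{rdeex}. Setting $f(r):=\mathbf{D}_r(Y_t-Y_s)$, this identity gives the three-region description $f(r)=(J_t-J_s)J_r^{-1}V(Y_r)$ for $r\le s$, $f(r)=J_tJ_r^{-1}V(Y_r)$ for $s<r\le t$, and $f(r)=0$ for $r>t$. For the fractional Brownian motion (Example \ref{FBMexample}) the measure $\mu$ is non-positive, so by (\ref{Vinner}) the $L_R(\mathbb{R}^d)$-norm is a genuine sum of two non-negative contributions,
\begin{equation*}
\|f\|^2_{L_R(\mathbb{R}^d)}=\int_0^T|f(r)|^2\,\partial_rR(r,T)\,dr+\tfrac12\int_{[0,T]^2\setminus D}|f(s_1)-f(s_2)|^2\,|\partial^2R(s_1,s_2)|\,ds_1ds_2,
\end{equation*}
which I estimate one at a time. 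In the first integral $f$ is bounded by an order-$1$ flow quantity on $(s,t]$ and by an order-$|t-s|^\gamma$ quantity on $[0,s]$; combined with the elementary bound $R((s,t],T)=R(t,T)-R(s,T)\lesssim|t-s|^{2H}$ (subadditivity of $x\mapsto x^{2H}$, $2H<1$) and $\int_0^s\partial_rR(r,T)\,dr\le T^{2H}$, this yields a pathwise bound $\lesssim|t-s|^{2\gamma}$ times products of the flow norms.

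The main obstacle is the singular double integral, and I would treat it by a case analysis on the location of $(s_1,s_2)$ relative to the breakpoints $s$ and $t$. When $s_1,s_2$ lie in the same region, $f(s_1)-f(s_2)$ is controlled by the $\gamma$-Hölder increment of the composite process $r\mapsto J_r^{-1}V(Y_r)$, producing a factor $|s_1-s_2|^\gamma$; the hypothesis $2\gamma+2H-1>0$ (i.e. $2\gamma+2H-2>-1$) then makes the kernel $|s_1-s_2|^{2H-2}$ locally integrable against it, so that a scaling argument over $(s,t]^2$ contributes $|t-s|^{2\gamma+2H}$ while over $[0,s]^2$ the surviving prefactor $|J_t-J_s|^2\lesssim|t-s|^{2\gamma}$ carries the decay. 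The delicate sub-cases are those that straddle $s$ or $t$, where $f$ passes between its small ($\sim|t-s|^\gamma$) and order-$1$ regimes: there I would split $f(s_1)-f(s_2)$ into a Hölder-increment piece of $J^{-1}V(Y)$ plus an order-$1$ boundary piece, and verify, by integrating the homogeneous kernel over the shrinking domains (substituting $u=s-s_1$, $v=s_2-s$, and similarly near $t$), that every straddling contribution is $\lesssim|t-s|^{\min(2H,1)}\le|t-s|^{2\gamma}$, using $\gamma<H<\tfrac12$.

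Finally, all pathwise prefactors appearing above are finite products of $\|J\|_{\infty,\gamma}$, $\|J^{-1}\|_{\infty,\gamma}$, $\|Y\|_{\infty,\gamma}$ and $\|\nabla V\|_\infty$, each belonging to $\bigcap_{q\ge1}L^q(\mathbb{P})$ by (\ref{intJ2})--(\ref{intJ3}). Taking expectations (legitimate by Tonelli, the integrands being non-negative) and applying Hölder's inequality collapses these into a single finite constant $C=C(H,T,\dots)$ depending only on the stated moments, and summing the two norm pieces delivers $\|Y_t-Y_s\|^2_{\mathbb{D}^{1,2}(\mathbb{R}^d)}\le C|t-s|^{2\gamma}$. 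I expect the straddling regions of the singular integral to be the only genuinely delicate step; the rest is bookkeeping with the flow moment estimates.
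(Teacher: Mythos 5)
Your proposal is correct and follows essentially the same route as the paper: the flow representation $\mathbf{D}_rY_t=J_t\circ J_r^{-1}\circ V(Y_r)\mathds{1}_{[0,t]}(r)$, a split of the $L_R(\mathbb{R}^d)$-norm into the boundary integral against $\partial_rR(r,T)$ and the singular double integral against $|\partial^2R|$ with a region-by-region case analysis (your straddling cases are exactly the cross terms treated in the paper's Lemmas \ref{fin1} and \ref{fin2}), and the moment bounds (\ref{intJ2})--(\ref{intJ3}) to close the estimate. The only difference is organizational: the paper packages the interval-restricted estimates into the auxiliary Lemmas \ref{fin1} and \ref{fin2} and then uses the decomposition $(J_t-J_s)\circ J^{-1}\circ V(Y)\mathds{1}_{[0,t]}+J_s\circ J^{-1}\circ V(Y)\{\mathds{1}_{[0,t]}-\mathds{1}_{[0,s]}\}$, whereas you carry out the same estimates inline.
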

Next, we illustrate Theorem \ref{mainTH} with the almost sure convergence rate.
\begin{corollary}\label{thintr2}
Let $X= (X_1,\ldots,X_d)$ be a $d$-dimensional fractional Brownian motion with parameter $\frac{1}{3} < H< \frac{1}{2}$. Let $\mathbf{X} = (X,\mathbb{X})$ be the geometric rough path given in Proposition \ref{liftingTH} and $V \in C^3_b \big(\mathbb{R}^d,\mathcal{L}(\mathbb{R}^d,\mathbb{R}^d)\big)$. Let $Y$ be the solution of the rough differential equation

$$
Y_t = Y_0 + \int_0^t V(Y_s)d\mathbf{X}_s; 0\le t\le T.
$$
Fix $\frac{1}{3} < \eta < H$ and $\rho$ such that $0 < \rho <  \eta + 2H-1$. Then, there exists a square-integrable random variable $C$ such that

\begin{equation}\label{asrde}
\Bigg| \int_0^T Y_sd\mathbf{X}_s - I^0(2^{-n},Y,dX)(T)\Bigg|\le C 2^{-n\rho} \rightarrow 0,
\end{equation}
almost surely, as $n\rightarrow +\infty$.
\end{corollary}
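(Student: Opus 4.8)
The plan is to follow the pattern of Corollary \ref{thintr1}, replacing the integrand $f(X)$ by the rough differential equation solution $Y$, and to upgrade the $L^2(\mathbb{P})$ rate furnished by Theorem \ref{mainTH} to an almost sure one through a summability (Borel--Cantelli type) argument. First I would check that $Y$ lies in the scope of both Theorems \ref{gaussiancase} and \ref{mainTH}. By Proposition \ref{rdeex}, the pair $(Y,V(Y))$ satisfies items 1, 2 and 3 of Theorem \ref{gaussiancase}, so $Y$ is rough stochastically integrable and $\int_0^T Y_s d\mathbf{X}_s = \int_0^T Y_s d^0 X_s$; this also places us in the situation described in the last sentence of Theorem \ref{mainTH}. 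It then remains to verify the hypotheses of Theorem \ref{mainTH} for $Y$ as an integrand. Adaptedness of $Y$ w.r.t. $X$ is immediate from the equation; using the representation $\mathbf{D}_r Y_s = J_s J_r^{-1} V(Y_r)\mathds{1}_{[0,s]}(r)$ together with (\ref{intJ2}) and (\ref{intJ3}), the moment bound $\sup_{t}\mathbb{E}|Y_t|^q < \infty$, the continuity of $r\mapsto \text{tr}[\mathbf{D}_r Y_s]$ on $[0,s]$, and the bound $\sup_t \mathbb{E}|\text{tr}[\mathbf{D}_0 Y_t]|^2 < \infty$ all follow. Assumption S1 is exactly Lemma \ref{etarde}, valid for any $\frac{1}{3} < \gamma < H$, for which $2\gamma + 2H -1 > 0$.

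The substantive step is Assumption S2. For $0\le r_1 < r_2 \le s \le T$ I would write
\[
\mathbf{D}_{r_1}Y_s - \mathbf{D}_{r_2}Y_s = J_s\big(J_{r_1}^{-1}V(Y_{r_1}) - J_{r_2}^{-1}V(Y_{r_2})\big),
\]
so that $|\text{tr}[\mathbf{D}_{r_1}Y_s - \mathbf{D}_{r_2}Y_s]| \lesssim \|J_s\|\,\|J_{r_1}^{-1}V(Y_{r_1}) - J_{r_2}^{-1}V(Y_{r_2})\|$. Since $V\in C^3_b$ and $Y$ is $\gamma$-H\"older, the process $r\mapsto J_r^{-1}V(Y_r)$ is $\gamma$-H\"older with $\|J^{-1}V(Y)\|_\gamma \in \bigcap_{q\ge 1}L^q(\mathbb{P})$ by (\ref{intJ2}) and (\ref{intJ3}); an application of H\"older's inequality in $\mathbb{P}$ then gives $\mathbb{E}|\text{tr}[\mathbf{D}_{r_1}Y_s - \mathbf{D}_{r_2}Y_s]|^2 \lesssim |r_2 - r_1|^{2\gamma}$. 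Hence Assumption S2 holds with $\eta = \gamma$ for any $\eta < H$, which covers the fixed $\frac{1}{3} < \eta < H$; this is the step where the Cass--Litterer--Lyons integrability and the $\gamma$-H\"older estimates are genuinely used, and is the main obstacle. With S1 and S2 in hand, Theorem \ref{mainTH} yields, for $\epsilon = 2^{-n}$,
\[
\mathbb{E}\Bigg|\int_0^T Y_s d\mathbf{X}_s - I^0(2^{-n},Y,dX)(T)\Bigg|^2 \lesssim 2^{-n(2\gamma + 2H-1)} + 2^{-2n(\eta + 2H-1)}.
\]

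Finally I would pass to almost sure convergence. Because $H < \frac{1}{2}$ and $\eta < H$, the exponent $2(\eta+2H-1)$ is strictly smaller than $2\gamma + 2H -1$ for any admissible $\gamma$, so the second term dominates and the $L^2(\mathbb{P})$-error is $\lesssim 2^{-n(\eta+2H-1)}$. Given $0 < \rho < \eta + 2H -1$, set
\[
C := \Bigg(\sum_{n\ge 1} 2^{2n\rho}\Big|\int_0^T Y_s d\mathbf{X}_s - I^0(2^{-n},Y,dX)(T)\Big|^2\Bigg)^{1/2}.
\]
Then $\mathbb{E}|C|^2 = \sum_{n\ge 1} 2^{2n\rho}\,\mathbb{E}|\,\cdots\,|^2 \lesssim \sum_{n\ge 1} 2^{-2n(\eta+2H-1-\rho)} < \infty$, so $C$ is square-integrable; and since each summand is bounded by $C^2$, we obtain $2^{n\rho}\big|\int_0^T Y_s d\mathbf{X}_s - I^0(2^{-n},Y,dX)(T)\big| \le C$, which is precisely (\ref{asrde}). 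This completes the plan.
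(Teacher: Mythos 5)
Your proposal is correct and follows essentially the same route as the paper's own proof: Proposition \ref{rdeex} plus Lemma \ref{etarde} for Assumption S1, the Jacobian representation $\mathbf{D}_rY_s = J_s\circ J_r^{-1}\circ V(Y_r)\mathds{1}_{[0,s]}(r)$ together with the Cass--Litterer--Lyons integrability (\ref{intJ2})--(\ref{intJ3}) for Assumption S2, then Theorems \ref{gaussiancase} and \ref{mainTH} for the $L^2(\mathbb{P})$ rate, and the weighted-sum (Chebyshev/Borel--Cantelli) random constant $C$ for the almost sure upgrade; your weight $2^{2n\rho}$ is in fact the correct one for extracting the rate $2^{-n\rho}$. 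One small slip: your claim that $2(\eta+2H-1) < 2\gamma+2H-1$ holds ``for any admissible $\gamma$'' is false --- it is equivalent to $\gamma > \eta + H - \tfrac{1}{2}$, which fails for $\gamma$ near $\tfrac{1}{3}$ when $\eta$ and $H$ are close to $\tfrac{1}{2}$ (e.g.\ $H=0.45$, $\eta=0.44$, $\gamma=0.34$); but since $\eta + H - \tfrac{1}{2} < H$, you may simply choose $\gamma \in (\max\{\tfrac{1}{3},\,\eta+H-\tfrac{1}{2}\}, H)$, after which both the domination claim and the summability $\sum_n 2^{2n\rho}\,2^{-n(2\gamma+2H-1)} < \infty$ go through, so the argument is repaired by a one-line adjustment.
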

\begin{proof}
First, we observe the solution of the rough differential equation (\ref{rdeEQ}) belongs to $\mathbb{D}^{1,2}(L_R(\mathbb{R}^d))$. Indeed, the proof follows the same lines of Lemma \ref{TRPlemma}, (\ref{intJ2}), (\ref{intJ3}) and the well-known facts $Y_t \in \mathbb{D}^{1,2}(\mathbb{R}^d)$ for every $t\ge 0$, $\mathbf{D}_sY_t = J_t \circ J^{-1}_s \circ V(Y_s) \mathds{1}_{[0,t]}(s)$. Therefore, we omit the details. Moreover,  (\ref{intJ2}) and (\ref{intJ3}) imply

$$\mathbb{E}|\mathbf{D}_{r_1}Y_s - \mathbf{D}_{r_2} Y_s|^2 \lesssim |r_1-r_2|^{2\eta},$$
on $0\le r_1 < r_2\le s\le T$, for any $\eta$ such that $\frac{1}{3} < \eta < H < \frac{1}{2}$. Then, Lemma \ref{etarde} yields Assumption S2 and S1 are fulfilled. By applying Proposition \ref{rdeex}, Theorems \ref{gaussiancase}, \ref{mainTH} and noticing the leading term in the right-hand side of (\ref{rateabs}) is $2^{-n(2(\eta + 2H-1))}$, we get

\begin{equation}\label{l2rde}
\mathbb{E}\Bigg| \int_0^T Y_sd\mathbf{X}_s - I^0(2^{-n},Y,dX)(T)\Bigg|^2 \lesssim 2^{-n\big\{2(\eta + 2H-1)\big\}},
\end{equation}
for every $n \ge 1$ sufficiently large. Let us define

$$C = \Bigg(\sum_{m\ge 1} 2^{m\rho} \Big| \int_0^T Y_sd\mathbf{X}_s - I^0(2^{-n},Y,dX)(T)\Big|^2 \Bigg)^{\frac{1}{2}}.$$
Then, (\ref{l2rde}) implies $\mathbb{E}|C|^2<\infty$. Therefore, (\ref{asrde}) holds true.

\end{proof}

\section{Proof of Theorem \ref{gaussiancase}}\label{proofTh1}
In this section, in order to keep notation simple, we write $f_{s,t}:= f_t - f_s$ for a one-parameter function $f$ defined over $\mathbb{R}_+$.  Before we present the proof of Theorem \ref{gaussiancase}, it is convenient to summarize the main idea. Under the assumptions of Theorem \ref{gaussiancase}, it is enough to prove that

\begin{equation}\label{I1}
\lim_{\epsilon \rightarrow 0^+}\frac{1}{\epsilon} \int_0^t\Big\langle Y'_s, \text{Anti}(\mathbb{X}_{s,s+\epsilon})\Big\rangle_{\mathbf{F}} ds=0
\end{equation}
in probability, where $\langle \cdot, \cdot \rangle_{\mathbf{F}}$ denotes the Frobenius inner product on the space of $d\times d$-matrices.
 Indeed, if $(Y,Y') \in \mathcal{D}_X (\mathbb{R}^d)$, then we can take advantage of decomposition (\ref{lineardep}) and the geometric property of $\mathbb{X}$ to write

\begin{eqnarray}
\nonumber\frac{1}{\epsilon}\Big\langle \frac{Y_s + Y_{s+\epsilon}}{2}, X_{s,s+\epsilon}\Big\rangle &=& \frac{1}{\epsilon}\Big\langle Y_s, X_{s,s+\epsilon}    \Big\rangle + \frac{1}{2\epsilon} \big\langle Y'_s X_{s,s+\epsilon},   X_{s,s+\epsilon}\big\rangle +  \frac{1}{2\epsilon} \big\langle R^Y_{s,s+\epsilon},   X_{s,s+\epsilon}\big\rangle\\
\nonumber&=& \frac{1}{\epsilon}\Big\langle Y_s, X_{s,s+\epsilon}    \Big\rangle+ \frac{1}{2\epsilon} \big\langle Y'_s, X_{s,s+\epsilon} \otimes X_{s,s+\epsilon}\big\rangle_{\mathbf{F}}  +  o_\mathbb{P}(1)\\
&=& \label{redu}\frac{1}{\epsilon}\Big\langle Y_s, X_{s,s+\epsilon}    \Big\rangle+\frac{1}{\epsilon} \big\langle Y'_s, \text{Sym}(\mathbb{X}_{s,s+\epsilon})\big\rangle_{\mathbf{F}}  +  o_\mathbb{P}(1)\\
\nonumber&=& \frac{1}{\epsilon}\Big\langle Y_s, X_{s,s+\epsilon}    \Big\rangle+ \frac{1}{\epsilon} \big\langle Y'_s, \mathbb{X}_{s,s+\epsilon}\big\rangle_{\mathbf{F}}\\
\nonumber&-& \frac{1}{\epsilon} \big\langle Y'_s, \text{Anti}(\mathbb{X}_{s,s+\epsilon})\big\rangle_{\mathbf{F}}  +  o_\mathbb{P}(1),
\end{eqnarray}
where $\langle \cdot, \cdot \rangle$ above denotes the standard inner product on $\mathbb{R}^d$. We will analyze

$$
\epsilon^{-1}\int_0^t \big\langle Y^{'}_{s}, \text{Anti}(\mathbb{X}_{s,s+\epsilon})\rangle_{\mathbf{F}} ds = \epsilon^{-1} \int_0^t  \textit{tr}\Big[  (Y^{'}_{s})^\top \text{Anti}(\mathbb{X}_{s,s+\epsilon})\Big]ds,
$$
where $\top$ denotes the transpose operation. By using Proposition \ref{liftingTH}, (\ref{multfor}) and Fubini's theorem for Skorohod integrals (see Prop. 10.3 in \cite{krukrusso}), we observe the $(i,j)$-th element of the matrix $\epsilon^{-1}\int_0^t (Y^{'}_{s})^\top \text{Anti}(\mathbb{X}_{s,s+\epsilon})ds$ is given by

\begin{eqnarray}
\nonumber\sum_{\ell=1}^d\frac{1}{\epsilon}\int_0^t Y^{',i\ell}_{s} \big(\text{Anti}(\mathbb{X}_{s,s+\epsilon})\big)_{\ell j} ds &=& \frac{1}{2\epsilon}\sum_{\ell=1}^d\int_0^t \Bigg(\int_{r-\epsilon}^r Y^{',i\ell}_{s}\big\{ X^j_{s,r}e_\ell  - X^\ell_{s,r}e_j\big\} ds\Bigg)\boldsymbol{\delta}X_r\\
\label{alSYM}& &\\
\nonumber&+& \frac{1}{2\epsilon}\sum_{\ell=1}^d\int_0^t \Big\langle \mathbf{D}_\cdot Y^{',i \ell}_{s}, [e_\ell X^j_{s,\cdot} - e_j X^\ell_{s,\cdot}  ] \mathds{1}_{[s,s+\epsilon]}(\cdot)\Big \rangle_{L_R(\mathbb{R}^d)}ds,
\end{eqnarray}
for every $t\in [0,T]$, $\epsilon>0$. In the sequel, we are going to fix $i,\ell,j\in \{1, \ldots, d\}$ and $t\in [0,T]$ and prove that the second component in the right-hand side of (\ref{alSYM}) vanishes in $L^1(\mathbb{P})$ as $\epsilon \downarrow 0$.

Let us write $\mathbf{D}_rY^{',i\ell}_{s} = (\mathbf{D}^1_rY^{',i\ell}_{s} , \ldots, \mathbf{D}^d_rY^{',i\ell}_{s} )$ in $L_{R}(\mathbb{R}^d)$. Then, we have

\begin{small}
$$\Big\langle \mathbf{D}_\cdot Y^{',i \ell}_{s}, [e_\ell X^j_{s,\cdot} - e_j X^\ell_{s,\cdot}  ] \mathds{1}_{[s,s+\epsilon]}(\cdot)\Big \rangle_{L_R(\mathbb{R}^d)}
$$
$$ = \int_s^{s+\epsilon} \mathbf{D}^\ell_r Y^{',i\ell}_{s} X^j_{s,r} \partial_r R(r,T)dr$$
$$ - \int_s^{s+\epsilon} \mathbf{D}^j_r Y^{',i\ell}_{s} X^\ell_{s,r} \partial_r R (r,T)dr $$
$$+\frac{1}{2}\int_{[0,T]^2 \setminus D} \Big(\mathbf{D}^\ell_{r_1} Y^{',i\ell}_{s} - \mathbf{D}^\ell_{r_2} Y^{',i\ell}_{s}\Big)  \Big( X^j_{s,r_1}\mathds{1}_{[s,s+\epsilon]}(r_1) - X^j_{s,r_2}\mathds{1}_{[s,s+\epsilon]}(r_2) \Big) |\mu|(dr_1dr_2)$$

$$-\frac{1}{2}\int_{[0,T]^2 \setminus D} \Big(\mathbf{D}^j_{r_1} Y^{',i\ell}_{s} - \mathbf{D}^j_{r_2} Y^{',i\ell}_{s}\Big) \Big(X^\ell_{s,r_1}\mathds{1}_{[s,s+\epsilon]}(r_1) - X^\ell_{s,r_2} \mathds{1}_{[s,s+\epsilon]}(r_2) \Big) |\mu|(dr_1dr_2)$$
$$
 =: I^1_s + I^2_s + I^3_s+ I^4_s \quad a.s.$$
\end{small}
The components $I^1$ and $I^2$ can be estimated as follows. In order to keep notation simple, we set $\beta = \frac{\alpha}{2} + 1 \in \big(\frac{1}{3},\frac{1}{2}\big)$. By using (\ref{intassump}), Assumption C(iii) and H\"older's inequality, we get

$$\mathbb{E}\Big|\frac{1}{\epsilon}\int_0^t I^1_sds\Big|\lesssim \epsilon^{\beta} \int_0^t \Bigg(\frac{1}{\epsilon}\int_{s}^{s+\epsilon}\partial_r R(r,T)dr\Bigg)ds\rightarrow 0$$
as $\epsilon \rightarrow 0^+$. The term $I^2$ is similar. By symmetry, the analysis of the term $I^3$ is similar to $I^4$. Again, by using (\ref{intassump}), Assumption C and H\"older's inequality, we get

$$\mathbb{E}\Big|\frac{1}{\epsilon}\int_0^t I^3_sds\Big|\lesssim \frac{1}{\epsilon}\int_0^t \int_{\{s < r_2 < r_1 < s+\epsilon\}}\Big\{(r_1-r_2)^{\beta + \alpha} + (r_1-r_2)^\beta\phi(r_1,r_2)\Big\}dr_1dr_2ds$$
$$+\frac{1}{\epsilon}\int_0^t \int_{\{ r_2 \le s< r_1 < s+\epsilon\}}\Big\{(r_1-s)^{\beta}(r_1-r_2)^\alpha + (r_1-s)^\beta\phi(r_1,r_2)\Big\}dr_1dr_2ds$$
$$+\frac{1}{\epsilon}\int_0^t \int_{\{ s< r_2 < s+\epsilon\le r_1\}}\Big\{(r_2-s)^{\beta}(r_1-r_2)^\alpha + (r_2-s)^\beta\phi(r_1,r_2)\Big\}dr_1dr_2ds,$$
for $\epsilon >0$. By invoking Assumption C(iv), we observe

\begin{equation}\label{phiep1}
\epsilon^{-1}\int_{\{s < r_2 < r_1 < s+\epsilon\}}(r_1-r_2)^{\beta} \phi(r_1,r_2)dr_1dr_2 \lesssim \epsilon^{\alpha+1 + \beta},
\end{equation}
for every $s \in [0,t]$. Moreover,
\begin{eqnarray}
\nonumber\epsilon^{-1}\int_{\{s < r_2  < s+\epsilon\le r_1\}}(r_2-s)^{\beta} \phi(r_1,r_2)dr_1dr_2 &= &  \epsilon^{- 1} \int_{s}^{s+\epsilon}\int_{s+\epsilon}^T(r_2-s)^\beta\phi(r_1,r_2)dr_1dr_2\\
\nonumber&\lesssim&  \epsilon^{-1}[T^{\frac{\alpha+2}{2}} - (s+\epsilon)^{\frac{\alpha+2}{2}}]\psi(s)\\
\nonumber&\times& \int_s^{s+\epsilon}(r_2-s)^{\beta}dr_2\\
\label{phiep2}&\lesssim&  \epsilon^{\beta}T^{\frac{\alpha+2}{2}}\psi(s),
\end{eqnarray}
and

\begin{eqnarray}
\nonumber\epsilon^{-1}\int_{\{r_2\le s < r_1 < s+\epsilon\}}(r_1-s)^{\beta} \phi(r_1,r_2)dr_1dr_2 &= &  \epsilon^{- 1} \int_{s}^{s+\epsilon}\int_{0}^s(r_1-s)^\beta\phi(r_1,r_2)dr_2dr_1\\
\nonumber&\lesssim&  \epsilon^{-1}\int_s^{s+\epsilon}(r_1-s)^{\beta}s^{\frac{\alpha+2}{2}}\psi(r_1)dr_1\\
\label{phiep3}&\lesssim& s^{\frac{\alpha+2}{2}}\psi(s) \epsilon^{\beta},
\end{eqnarray}
for each $s \in [0,t]$. Moreover,

\begin{equation}\label{syI4Bis}
\int_{\{s < r_2 < r_1 < s+\epsilon\}}(r_1-r_2)^{\alpha + \beta}dr_1dr_2 \lesssim   \epsilon^{\alpha + 2 +\beta},\quad \int_{\{s < r_2 < s+\epsilon\le r_1\}}(r_1-r_2)^{\alpha}dr_1dr_2 \lesssim \epsilon^{\alpha+2},
\end{equation}
and

\begin{equation}\label{syI4.1}
\int_{\{ r_2\le s < r_1< s+\epsilon\}}(r_1-r_2)^{\alpha}dr_1dr_2 \lesssim \big| s^{\alpha+2} + \epsilon^{\alpha+2} - (s+\epsilon)^{\alpha+2} \big|\lesssim \epsilon^{\alpha+2},
\end{equation}
for every $s \in [0,t]$. Then, (\ref{phiep1}), (\ref{phiep2}), (\ref{phiep3}), (\ref{syI4Bis}) and (\ref{syI4.1}), allow us to conclude

$$\mathbb{E}\frac{1}{\epsilon}\Bigg| \int_0^t I^3_sds\Bigg|\lesssim \epsilon^{\beta +\alpha+1}\rightarrow 0,$$
as $\epsilon \downarrow 0$, because $\alpha + 1 + \beta >0 $. This shows that the second part of (\ref{alSYM}) vanishes.

\subsection{Estimating the Skorohod integral in (\ref{alSYM})}\label{skorohodsection}

Let us now devote our attention to the first component in the right-hand side of (\ref{alSYM}), namely the Skorohod integral.
In the sequel, we are going to fix $i,\ell,j\in \{1, \ldots, d\}$ and $t\in [0,T]$ and prove that the first part in the right-hand side of
(\ref{alSYM}) vanishes in $L^1(\mathbb{P})$ as $\epsilon \downarrow 0$.

In the sequel, to keep notation simple, we set

$$u^{i\ell,j}_{r-\epsilon,r}:=\int_{r-\epsilon}^r Y^{',i\ell}_{s}X^j_{s,r}ds = \int_0^\infty X^j_{s,r} \mathds{1}_{(r-\epsilon, r)}(s)Y^{',i\ell}_{s}ds.$$

The following technical result is an almost immediate consequence of the assumptions in Theorem \ref{gaussiancase}. Indeed, it is an application of Lemma \ref{funcINT}.
\begin{lemma}\label{rm}
Suppose that the assumptions of Theorem \ref{gaussiancase} hold true. Then, for every $i, \ell,j\in \{1,\ldots, d\}$ and $\epsilon>0$, we have 

\begin{equation}\label{Daug}
\big( u^{i\ell,j}_{\cdot-\epsilon, \cdot}e_\ell - u^{i\ell,\ell}_{\cdot-\epsilon, \cdot}e_j  \big)\in \mathbb{D}^{1,2}(L_R(\mathbb{R}^d)).
\end{equation}

In particular, the (only) non-null $\ell$-th column of $\mathbf{D}_v u^{i\ell,j}_{r-\epsilon, r}e_\ell$ equals to
\begin{equation}\label{prod1}
\int_{r-\epsilon}^r \Big\{ X^j_{s,r} \mathbf{D}_vY^{',i\ell}_{s} + Y^{',i\ell}_{s}\mathds{1}_{[s,r]}(v)e_j\Big\}
ds
\end{equation}
and the (only) non-null $j$-th column of $\mathbf{D}_v u^{i\ell,\ell}_{r-\epsilon, r}e_j$ equals to
\begin{equation}\label{prod2}
 \int_{r-\epsilon}^r \Big\{ X^\ell_{s,r} \mathbf{D}_vY^{',i\ell}_{s}  + Y^{',i\ell}_{s} \mathds{1}_{[s,r]}(v)e_\ell\Big\}
ds
\end{equation}
a.s. for every $v,r\in [0,T]$ and $\epsilon >0$.
\end{lemma}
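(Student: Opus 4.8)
The plan is to establish (\ref{Daug}) and the two derivative formulas by differentiating under the $ds$-integral, the pointwise differentiation being supplied by the Malliavin product rule and the interchange being supplied by Lemma \ref{funcINT}. I would fix $i,\ell,j$ and first treat the scalar random field $(s,r)\mapsto Y^{',i\ell}_sX^j_{s,r}$. Since $X^j_r=I(\mathds{1}_{[0,r]}e_j)$ one has $\mathbf{D}_vX^j_{s,r}=\mathds{1}_{[s,r]}(v)e_j$ in $L_R(\mathbb{R}^d)$, and by hypothesis~2 of Theorem \ref{gaussiancase} each $Y^{',i\ell}_s\in\mathbb{D}^{1,p}(\mathbb{R})$ with $p>2$. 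As $Y'$ and $\mathbf{D}Y'$ are uniformly bounded in $L^p(\mathbb{P})$ by (\ref{intassump}) while the Gaussian increments $X^j_{s,r}$ are bounded in every $L^q(\mathbb{P})$, Hölder's inequality (with room to spare, since $p>2$) gives $Y^{',i\ell}_sX^j_{s,r}\in\mathbb{D}^{1,2}(\mathbb{R})$ together with the product rule
\[
\mathbf{D}_v\big(Y^{',i\ell}_sX^j_{s,r}\big)=X^j_{s,r}\,\mathbf{D}_vY^{',i\ell}_s+Y^{',i\ell}_s\,\mathds{1}_{[s,r]}(v)e_j,
\]
and the same bounds show that $s\mapsto Y^{',i\ell}_sX^j_{s,r}$ is $\mathbb{D}^{1,2}$-continuous and bounded on compact $s$-ranges.

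Second, I would expose the deterministic-kernel structure required by Lemma \ref{funcINT} by splitting $X^j_{s,r}=X^j_r-X^j_s$, which decomposes the integrand of (\ref{Daug}) into a term $\int_{r-\epsilon}^rY^{',i\ell}_s\big(X^j_se_\ell-X^\ell_se_j\big)ds$ and a term $\big(X^j_re_\ell-X^\ell_re_j\big)\int_{r-\epsilon}^rY^{',i\ell}_s\,ds$. The first is exactly of the form treated by Lemma \ref{funcINT}, with random process $s\mapsto Y^{',i\ell}_s\big(X^j_se_\ell-X^\ell_se_j\big)$ and deterministic kernel $a(s,r)=\mathds{1}_{(r-\epsilon,r)}(s)=\mathds{1}_{(s,s+\epsilon)}(r)$; its hypotheses hold because indicators of intervals lie in $L_R$ with $\|\mathds{1}_{(s,s+\epsilon)}\|^2_{L_R}=\mathrm{Var}(X^j_{s,s+\epsilon})$ uniformly bounded (cf. (\ref{cov1}) and Assumption C(ii)), so that $\int_0^T\|a(s,\cdot)\|^2_{L_R}\,ds<\infty$, while condition (3) is precisely the $\mathbb{D}^{1,2}$-continuity and boundedness secured above. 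The second term is the pointwise (in $r$) product of the canonical element $r\mapsto X^j_re_\ell-X^\ell_re_j\in\mathbb{D}^{1,2}(L_R(\mathbb{R}^d))$ (Lemma \ref{XL}) with the scalar $r\mapsto\int_{r-\epsilon}^rY^{',i\ell}_s\,ds\in\mathbb{D}^{1,2}(L_R)$, the latter again by Lemma \ref{funcINT}.

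Granting membership, the conclusion of Lemma \ref{funcINT} lets me pass $\mathbf{D}_v$ through the $ds$-integral; combined with the pointwise product rule of the first step this yields
\[
\mathbf{D}_vu^{i\ell,j}_{r-\epsilon,r}=\int_{r-\epsilon}^r\Big\{X^j_{s,r}\,\mathbf{D}_vY^{',i\ell}_s+Y^{',i\ell}_s\,\mathds{1}_{[s,r]}(v)e_j\Big\}ds,
\]
whose multiplication by $e_\ell$ is the $\ell$-th column formula (\ref{prod1}); interchanging the roles of $j$ and $\ell$ gives (\ref{prod2}). Note that the sharper integral condition (\ref{mod1}) is not needed at this stage, since only finiteness — that is, membership — is being claimed.

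I expect the only genuine difficulty to be the product term carrying $X^j_r$, because the pointwise product of two $L_R(\mathbb{R}^d)$-valued elements is not automatically in $\mathbb{D}^{1,2}(L_R(\mathbb{R}^d))$. Here it survives because one factor is Gaussian, hence in every $\mathbb{D}^{1,p}(L_R(\mathbb{R}^d))$, the other lies in $\mathbb{D}^{1,p}(L_R)$ for the same $p>2$ furnished by (\ref{intassump}), and --- crucially for the $L_R$-increment seminorm weighted by $|\partial^2R(r_1,r_2)|\lesssim|r_1-r_2|^\alpha$ with $\alpha\in(-\tfrac43,-1)$ --- both factors have Hölder increments of order arbitrarily close to $\tfrac{\alpha}{2}+1$, so the product of increments contributes $|r_1-r_2|^{\alpha+2}$, which is integrable against $|r_1-r_2|^{\alpha}$ exactly because $2(\tfrac{\alpha}{2}+1)+\alpha=2\alpha+2>-1$. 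Verifying these uniform weighted bounds, rather than the differentiation-under-the-integral itself, is the crux of the argument.
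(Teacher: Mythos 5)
Your proposal is correct and follows essentially the route the paper intends: the paper offers no written proof of Lemma \ref{rm} beyond the remark that it is ``an application of Lemma \ref{funcINT}'', and your argument is exactly that application made explicit --- splitting $X^j_{s,r}=X^j_r-X^j_s$ to expose a deterministic kernel for Lemma \ref{funcINT}, handling the remaining term $\big(X^j_re_\ell-X^\ell_re_j\big)\int_{r-\epsilon}^rY^{',i\ell}_s\,ds$ by the Malliavin product rule with the Gaussian factor in every $L^q(\mathbb{P})$ and the $\mathbb{D}^{1,p}$-factor from (\ref{intassump}), and closing the weighted increment estimates via $2\big(\tfrac{\alpha}{2}+1\big)+\alpha=2\alpha+2>-1$. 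Your observation that (\ref{mod1}) is not needed for mere membership is likewise consistent with the paper, which invokes it only later, in the convergence analysis of $J_2(\epsilon,t)$.
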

By (\ref{Daug}) and (\ref{cs1}),

$$\frac{1}{\epsilon}\int_0^t \Bigg(\int_{r-\epsilon}^r Y^{',i\ell}_{s}\big\{ X^j_{s,r} e_\ell  - X^\ell_{s,r} e_j\big\} ds\Bigg)\boldsymbol{\delta}X_r= \boldsymbol{\delta} \Big( \frac{1}{\epsilon} \big(u^{i\ell,j}_{\cdot-\epsilon,\cdot}e_\ell - u^{i\ell,\ell}_{\cdot-\epsilon,\cdot}e_j\big)\mathds{1}_{[0,t]} \Big),$$
where

$$ \Bigg\|  \boldsymbol{\delta} \Big(\frac{1}{\epsilon} \big(u^{i\ell,j}_{\cdot-\epsilon,\cdot}e_\ell - u^{i\ell,\ell}_{\cdot-\epsilon,\cdot}e_j \big)\mathds{1}_{[0,t]}\Big) \Bigg\|_{L^2(\mathbb{P})}\lesssim \Bigg(\Big\| \mathbb{E} \Big[ \frac{1}{\epsilon}\big( u^{i\ell,j}_{\cdot-\epsilon,\cdot}e_\ell - u^{i\ell,\ell}_{\cdot-\epsilon,\cdot}e_j\big)\mathds{1}_{[0,t]}\Big]   \Big\|_{L_R(\mathbb{R}^d)}$$
\begin{equation}\label{skoest}
+ \Big\| \mathbf{D}_\cdot \Big[\frac{1}{\epsilon}\big( u^{i\ell,j}_{\cdot-\epsilon,\cdot}\mathds{1}_{[0,t]}e_\ell - u^{i\ell,\ell}_{\cdot-\epsilon,\cdot}\mathds{1}_{[0,t]}e_j   \big)\Big]   \Big\|_{L^2(\Omega; L_{2,R}(\mathbb{R}^{d\times d}))}\Bigg) =: J_1(\epsilon,t)+J_2(\epsilon,t),
\end{equation}
for every $t\in [0,T]$ and $\epsilon>0$.

\subsection{Analysis of $J_1(\epsilon,t)$}
In the sequel, we set $\beta = \frac{\alpha}{2} +1$, where $-\frac{4}{3} < \alpha < -1$. To shorten notation, we set

$$U^{i\ell,j,\epsilon}_{s_1} := \mathbb{E}[u^{i\ell,j}_{s_1-\epsilon, s_1}] =  \int_{s_1-\epsilon}^{s_1} \mathbb{E}[Y^{',i\ell}_{s}X^j_{s,s_1} ]ds$$ and

$$\Delta_{(\mathbf{s}; t)} U^{i\ell,j,\epsilon} :=  U^{i\ell,j,\epsilon}_{s_1}\mathds{1}_{[0,t]}(s_1)-U^{i\ell,j,\epsilon}_{s_2}\mathds{1}_{[0,t]}(s_2) $$
for $\mathbf{s}=(s_1,s_2)\in [0,T]^2\setminus D$. Then, for $\ell\neq j$, we have

$$\mathbb{E}\Big[\frac{1}{\epsilon} \big(u^{i\ell,j}_{s_1-\epsilon, s_1}e_\ell - u^{i\ell,\ell}_{s_1-\epsilon, s_1}e_j\big) \Big]\mathds{1}_{[0,t]}(s_1) = \frac{1}{\epsilon} \Big(U^{i\ell,j,\epsilon}_{s_1}e_\ell - U^{i\ell,\ell,\epsilon}_{s_1}e_j\Big) \mathds{1}_{[0,t]}(s_1)$$
and

\begin{eqnarray}
\nonumber\Bigg\|\mathbb{E}\Big[ \frac{1}{\epsilon} \big( u^{i\ell,j}e_\ell - u^{i\ell,\ell}e_j\big)\mathds{1}_{[0,t]}\Big]\Bigg\|^2_{L_R(\mathbb{R}^d)}&\lesssim& \int_0^t \Big|\frac{1}{\epsilon} \int_{r-\epsilon}^r \mathbb{E}[Y^{',i\ell}_{s}X^j_{s,r}]ds \Big|^2\big|\partial_r R(r,T)\big|dr\\
\nonumber& &\\
\nonumber& +& \int_0^t \Big| \frac{1}{\epsilon} \int_{r-\epsilon}^r \mathbb{E}[Y^{',i\ell}_{s} X^\ell_{s,r}]ds \Big|^2\Big|\partial_r R(r,T)\big|dr\\
\nonumber& &\\
\nonumber&+& \int_{[0,T]^2\setminus D}\Big|\frac{1}{\epsilon} \Delta_{(\mathbf{s}; t)} U^{i\ell,j,\epsilon} \Big|^2 |\mu|(ds_1ds_2)   \\
\nonumber& &\\
\label{al1}&+&  \int_{[0,T]^2\setminus D}\Big|\frac{1}{\epsilon} \Delta_{(\mathbf{s}; t)} U^{i\ell,\ell,\epsilon} \Big|^2 |\mu|(ds_1ds_2) .
\end{eqnarray}
By H\"older's inequality, assumption (\ref{intassump}) and Assumption C (ii), we have

\begin{eqnarray}
\nonumber\int_0^t \Big| \frac{1}{\epsilon}\int_{r-\epsilon}^r \mathbb{E}[Y^{',i\ell}_{s}X^j_{s,r}]ds \Big|^2\big|\partial_r R (r,T)\big|dr&\le& \epsilon^{-2}\int_0^t \Big(\int_{r-\epsilon}^r(r-s)^\beta ds\Big)^2\big|\partial_r R (r,T)\big|dr\\
\label{al2}& &\\
\nonumber&\lesssim& \epsilon^{-2}\epsilon^{2(\beta+1)}\int_0^T\big|\partial_r R (r,T)\big|dr.
\end{eqnarray}
By symmetry, the estimate (\ref{al2}) also holds for the second term in the right-hand side of (\ref{al1}). Now, we split

$$\int_{[0,T]^2\setminus D}\Big|\frac{1}{\epsilon} \Delta_{(\mathbf{s}; t)} U^{i\ell,j,\epsilon} \Big|^2 |\mu|(ds_1ds_2)  = 2\int_{0 < s_1 < t < s_2 \le T}\Big|\frac{1}{\epsilon} \Delta_{(\mathbf{s}; t)} U^{i\ell,j,\epsilon} \Big|^2 |\mu|(ds_1ds_2)$$
\begin{equation}\label{split1}
+ \int_{[0,t]^2 \setminus D}\Big|\frac{1}{\epsilon} \Delta_{(\mathbf{s}; t)} U^{i\ell,j,\epsilon} \Big|^2 |\mu|(ds_1ds_2).
\end{equation}
In the sequel, we will take advantage of assumption C (i). In case, $s_1 < t < s_2$, mean value theorem, assumption (\ref{intassump}), H\"older's inequality and Assumption C (ii) yield

$$
\Big|\frac{1}{\epsilon}\Delta_{(\mathbf{s}; t)} U^{i\ell,j,\epsilon} \Big|^2 = \Big|\frac{1}{\epsilon}\int_{s_1-\epsilon}^{s_1} \mathbb{E}[Y^{',i\ell}_{s} X^j_{s,s_1}]ds\Big|^2 =  \Big| \mathbb{E}[Y^{',i\ell}_{r_1}X^j_{r_1,s_1}]  \Big|^2
\lesssim   (s_1-r_1)^{2\beta}\le \epsilon^{2\beta},
$$
for some $r_1$ satisfying $s_1-\epsilon < r_1 < s_1 < t < s_2$. Then,

\begin{eqnarray}
\nonumber\int_{0 < s_1 < t < s_2 \le T}\Big|\frac{1}{\epsilon} \Delta_{(\mathbf{s}; t)} U^{i\ell,j,\epsilon} \Big|^2|s_1-s_2|^\alpha ds_1ds_2 &\lesssim&  \epsilon^{2\beta}\int_t^T \int_0^t(s_2-s_1)^{\alpha}ds_1ds_2\\
\nonumber&\lesssim& \epsilon^{2\beta}\int_t^T \{(s_2-t)^{\alpha+1} - s_2^{\alpha+1}\}ds_2\\
\label{bett}&\rightarrow& 0,
\end{eqnarray}
as $\epsilon \downarrow 0$.
In addition,

$$\int_{0 < s_1 < t < s_2 \le T}\Big|\frac{1}{\epsilon} \Delta_{(\mathbf{s}; t)} U^{i\ell,j,\epsilon} \Big|^2 \phi(s_1,s_2) ds_1ds_2\lesssim \epsilon^{2\beta}\int_{0 < s_1 < t < s_2 \le T}\phi(s_1,s_2) ds_1ds_2 \rightarrow 0,$$
as $\epsilon \downarrow 0$.

The case $s_1 < t$ and $s_2 < t$ is trickier.  At first, we observe $a \mapsto \mathbb{E}[Y^{',i\ell}_{a} X^j_{a,b}]$ is continuous for every $b$. Hence,

\begin{equation}\label{fatou1}
\lim_{\epsilon \downarrow 0}\Big|\frac{1}{\epsilon} \Delta_{(\mathbf{s}; t)} U^{i\ell,j,\epsilon} \Big|^2=0,
\end{equation}
for each $\mathbf{s}=(s_1,s_2)\in [0,t]^2\setminus D$. If $s_2 < s_1 < t$, then we shall write

$$\frac{1}{\epsilon}\int_{s_1-\epsilon}^{s_1} \mathbb{E}[Y^{',i\ell}_{s} X^j_{s,s_1}]ds = \frac{1}{\epsilon}\int_{s_2-\epsilon}^{s_2} \mathbb{E}[Y^{',i\ell}_{s}X^j_{s,s_1}]ds$$
$$+\frac{1}{\epsilon}\int_{s_2}^{s_1} \mathbb{E}[Y^{',i\ell}_{s}X^j_{s,s_1}]ds-\frac{1}{\epsilon}\int_{s_2-\epsilon}^{s_1-\epsilon} \mathbb{E}[Y^{',i\ell}_{s}X^j_{s,s_1}]ds,$$
and we arrive at

$$\Big|\frac{1}{\epsilon} \Delta_{(\mathbf{s}; t)} U^{i\ell,j,\epsilon} \Big|^2 = \Bigg|\frac{1}{\epsilon}\int_{s_1-\epsilon}^{s_1} \mathbb{E}[Y^{',i\ell}_{s}X^j_{s,s_1}]ds - \frac{1}{\epsilon}\int_{s_2-\epsilon}^{s_2} \mathbb{E}[Y^{',i\ell}_{s}X^j_{s,s_2}]ds\Bigg|^2
$$
 \begin{equation}\label{al3}
 = \Bigg|\frac{1}{\epsilon}\int_{s_2-\epsilon}^{s_2} \mathbb{E}[Y^{',i\ell}_{s}X^j_{s_2,s_1}]ds + \frac{1}{\epsilon}\int_{s_2}^{s_1} \mathbb{E}[Y^{',i\ell}_{s}X^j_{s,s_1}]ds- \frac{1}{\epsilon}\int_{s_2-\epsilon}^{s_1-\epsilon} \mathbb{E}[Y^{',i\ell}_{s}X^j_{s,s_1}]ds\Bigg|^2.
\end{equation}

Mean value theorem, assumption (\ref{intassump}), H\"older's inequality and Assumption C (ii) yield

\begin{equation}\label{al4}
\Big|\frac{1}{\epsilon}\int_{s_2-\epsilon}^{s_2} \mathbb{E}[Y^{',i\ell}_{s}X^j_{s_2,s_1}]ds\Big|^2\lesssim (s_1-s_2)^{2\beta},
\end{equation}
for every $s_2 < s_1 < t$. In addition, the same argument yields

\begin{equation}\label{al5}
\Big|\frac{1}{\epsilon}\int_{s_2}^{s_1} \mathbb{E}[Y^{',i\ell}_{s}X^j_{s,s_1}]ds\Big|^2\lesssim  \Bigg(\int_{s_2}^{s_1}(s_1-s)^\beta ds \epsilon^{-1}\Bigg)^2\lesssim  (s_1-s_2)^{2\beta},
\end{equation}
whenever $(s_1-s_2) < \epsilon$ and $s_2 < s_1 < t$. Similarly,

\begin{eqnarray}
\nonumber\Big|\frac{1}{\epsilon}\int_{s_2-\epsilon}^{s_1-\epsilon} \mathbb{E}[Y^{',i\ell}_{s}X^j_{s,s_1}]ds\Big|^2&\lesssim& \epsilon^{-2}\Bigg(\int_{s_2-\epsilon}^{s_1-\epsilon}(s_1-s)^\beta ds\Bigg)^2\\
\nonumber&\lesssim& \epsilon^{-2}(s_1-s_2+\epsilon)^{2(\beta+1)}\\
\label{al6}&\lesssim&  (s_1-s_2+\epsilon)^{2\beta},
\end{eqnarray}
whenever $(s_1-s_2) < \epsilon$ and $s_2 < s_1 < t$.

 We observe $|s_1-s_2|^{2\beta}$ is integrable w.r.t. the positive measures $|s_1-s_2|^\alpha ds_1ds_2$ and $\phi(s_1,s_2)ds_1ds_2$ (recall $2\beta+\alpha+1 >0$). Then, (\ref{fatou1}), the estimates (\ref{al3}), (\ref{al4}), (\ref{al5}), (\ref{al6}) and Assumption C(i) allow us to apply bounded convergence theorem to get

\begin{equation}\label{al7}
  \int_{\{\mathbf{s}; s_2 < s_1 < t, (s_1-s_2) < \epsilon\}} \Big|\frac{1}{\epsilon} \Delta_{(\mathbf{s};t)}U^{i\ell,j,\epsilon} \Big|^2 |\mu|(ds_1ds_2)\rightarrow 0,
\end{equation}
as $\epsilon \downarrow 0$. Now, Mean Value theorem yields

$$\Big|\frac{1}{\epsilon} \Delta_{(\mathbf{s}; t)} U^{i\ell,j,\epsilon} \Big|^2\mathds{1}_{\{\mathbf{s}; s_2 < s_1 < t, (s_1-s_2) \ge \epsilon\} }$$
\begin{equation}\label{al8}
= \Big| \mathbb{E}\big[Y^{',i\ell}_{\bar{s}_1(\epsilon)}X^j_{\bar{s}_1(\epsilon),s_1}\big]  - \mathbb{E}\big[Y^{',i\ell}_{\bar{s}_2(\epsilon)} X^j_{\bar{s}_2(\epsilon),s_2}\big]   \Big|^2\mathds{1}_{\{\mathbf{s}; s_2 < s_1 < t, (s_1-s_2) \ge \epsilon\} }
\end{equation}
for some $(\bar{s}_1(\epsilon),\bar{s}_2(\epsilon))$ satisfying $s_1-\epsilon < \bar{s}_1(\epsilon) < s_1$ and $s_2-\epsilon < \bar{s}_2(\epsilon) < s_2$. Jensen's inequality, (\ref{intassump}) and Assumption C (ii) yield

\begin{eqnarray}
\nonumber\Big|\mathbb{E}\big[Y^{',i\ell}_{\bar{s}_1(\epsilon)}X^j_{\bar{s}_1(\epsilon),s_1}\big]\Big|^2 \mathds{1}_{\{\mathbf{s}; s_2 < s_1 < t, (s_1-s_2)\ge \epsilon\} }&\lesssim& (s_1-\bar{s}_1(\epsilon))^{2\beta}\mathds{1}_{\{\mathbf{s}; s_2 < s_1 < t, (s_1-s_2)\ge \epsilon\} }\\
\label{al9}&\lesssim&  (s_1-s_2)^{2\beta}
\end{eqnarray}
and

\begin{eqnarray}
\nonumber\Big|\mathbb{E}\big[Y^{',i\ell}_{\bar{s}_2(\epsilon)}X^j_{\bar{s}_2(\epsilon),s_2}\big]\Big|^2 \mathds{1}_{\{\mathbf{s}; s_2 < s_1 < t, (s_1-s_2)\ge\epsilon\}}&\lesssim&  (s_1-\bar{s}_2(\epsilon))^{2\beta}\mathds{1}_{\{\mathbf{s}; s_2 < s_1 < t, (s_1-s_2)\ge \epsilon\}}\\
\nonumber& \lesssim &  (s_1-s_2+\epsilon)^{2\beta}\mathds{1}_{\{\mathbf{s}; (s_1-s_2)\ge \epsilon\}}\\
\label{al10}&\lesssim& (s_1-s_2)^{2\beta}.
\end{eqnarray}
Summing up (\ref{fatou1}), (\ref{al8}), (\ref{al9}), (\ref{al10}) and invoking bounded convergence theorem and (\ref{dagrowth}), we conclude

\begin{equation}\label{al11}
  \int_{\{\mathbf{s}; s_2 < s_1 < t, (s_1-s_2) \ge \epsilon\}} \Big|\frac{1}{\epsilon} \Delta_{(\mathbf{s};t)}U^{i\ell,j,\epsilon} \Big|^2 |\mu|(ds_1ds_2)\rightarrow 0,
\end{equation}
as $\epsilon \downarrow 0$. Summing up (\ref{al1}), (\ref{al2}), (\ref{split1}), (\ref{bett}), (\ref{al7} and (\ref{al11}) and using symmetry of the terms in (\ref{al1}), we conclude $\lim_{\epsilon \downarrow 0}J_1(\epsilon,t)=0$ in (\ref{skoest}) for each $t\in [0,T]$.

\subsection{Analysis of $J_2(\epsilon,t)$}
This section is devoted to the proof that

$$J_2(\epsilon,t) = \Big\| \mathbf{D}_\cdot \Big[\frac{1}{\epsilon}\big( u^{i\ell,j}_{\cdot-\epsilon,\cdot}\mathds{1}_{[0,t]}e_\ell - u^{i\ell,\ell}_{\cdot-\epsilon,\cdot}\mathds{1}_{[0,t]}e_j   \big)\Big]   \Big\|_{L^2(\Omega; L_{2,R}(\mathbb{R}^{d\times d}))}\rightarrow 0,$$
as $\epsilon\downarrow 0$, for a given $t \in [0,T]$ and $i,\ell,j \in \{1,\ldots, d\}$.

In the sequel, with a slight abuse of notation, when no confusion is possible, we write $|\cdot| = \|\cdot\|_{\mathbb{R}^{d\times d}}$. Let us fix $r\neq v$, $i,\ell,j \in \{1, \ldots, d\}$ with $\ell\neq j$ and $t \in [0,T]$. We recall the notation stated at the beginning of Section \ref{proofTh1}: We write $f_{a,b} = f_b-f_a$ for a one-parameter function $f$. We also recall that $\{e_m\}_{m=1}^d$ is the canonical basis of $\mathbb{R}^d$.


\begin{lemma}\label{pointwiseconv1}
If $Y^{'}$ satisfies the assumptions of Theorem \ref{gaussiancase}, then

\begin{equation}\label{point1}
\lim_{\epsilon\rightarrow 0^+}\frac{1}{\epsilon}\int_{r-\epsilon}^r \Big\{ X^j_{s,r}\mathbf{D}_vY^{',i\ell}_{s} + Y^{',i\ell}_{s}\mathds{1}_{[s,r]}(v)e_j\Big\}
ds=0
\end{equation}
almost surely, for Lebesgue almost all $(r,v)\in [0,T]^2\setminus D$.
\end{lemma}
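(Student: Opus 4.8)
The plan is to split the integrand into its two natural pieces and read the limit $\epsilon\downarrow0$ as a one-sided Lebesgue average over the shrinking interval $[r-\epsilon,r]$. Writing
\[
\frac{1}{\epsilon}\int_{r-\epsilon}^r \Big\{ X^j_{s,r}\mathbf{D}_vY^{',i\ell}_{s} + Y^{',i\ell}_{s}\mathds{1}_{[s,r]}(v)e_j\Big\}ds = A_\epsilon(r,v) + B_\epsilon(r,v)\,e_j,
\]
with $A_\epsilon(r,v):=\frac{1}{\epsilon}\int_{r-\epsilon}^r X^j_{s,r}\mathbf{D}_vY^{',i\ell}_{s}\,ds$ and $B_\epsilon(r,v):=\frac{1}{\epsilon}\int_{r-\epsilon}^r Y^{',i\ell}_{s}\mathds{1}_{[s,r]}(v)\,ds$, I would fix once and for all a pair $(r,v)\in[0,T]^2\setminus D$ with $r\neq v$ and prove that both $A_\epsilon(r,v)\to0$ and $B_\epsilon(r,v)\to0$ as $\epsilon\downarrow0$, almost surely.

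The scalar term $B_\epsilon$ will vanish identically for all sufficiently small $\epsilon$, by an elementary support argument requiring no probabilistic input. Indeed, $\mathds{1}_{[s,r]}(v)=1$ forces $s\le v\le r$; together with $s\in[r-\epsilon,r]$ this is impossible once $\epsilon<|r-v|$, since if $v>r$ the constraint $v\le r$ already fails, whereas if $v<r$ then $s\ge r-\epsilon>v$ rules out $s\le v$. Hence $B_\epsilon(r,v)=0$ whenever $\epsilon<|r-v|$.

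For $A_\epsilon$, the mechanism is that the integrand carries the vanishing weight $X^j_{s,r}=X^j_r-X^j_s$, which tends to $0$ as $s\uparrow r$ by continuity of the paths of $X^j$. I would estimate
\[
|A_\epsilon(r,v)|\le \Big(\sup_{s\in[r-\epsilon,r]}|X^j_r-X^j_s|\Big)\,\frac{1}{\epsilon}\int_{r-\epsilon}^r |\mathbf{D}_vY^{',i\ell}_{s}|\,ds.
\]
On a full-probability event the path $X^j$ is uniformly continuous on $[0,T]$, so the supremum factor tends to $0$. For the averaged factor I invoke assumption (1) of Theorem \ref{gaussiancase}: for Lebesgue almost every $v$ there is a full-probability event on which $s\mapsto\mathbf{D}_vY^{',i\ell}_{s}$ is continuous on $(0,T)\setminus\{v\}$, hence bounded on a neighbourhood of $r$ (recall $r\neq v$), so the average stays bounded as $\epsilon\downarrow0$ (and in fact converges to $|\mathbf{D}_vY^{',i\ell}_{r}|$). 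The product therefore tends to $0$.

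It then remains to assemble the quantifiers: letting $V\subset[0,T]$ be the full-measure set of $v$ supplied by assumption (1), every pair $(r,v)$ with $v\in V$ and $r\neq v$ lies in a full-measure subset of $[0,T]^2\setminus D$, and for such a pair both conclusions hold on the intersection of the path-continuity event for $X$ with the continuity event for $s\mapsto\mathbf{D}_vY'_s$, an event of probability one. I do not expect a genuine analytic obstacle here; the only points demanding care are the precise handling of the ``for Lebesgue almost all $(r,v)$, almost surely'' quantifiers and the explicit use of the continuous version of the Malliavin derivative granted by assumption (1), the substantive content being the self-annihilation of $B_\epsilon$ and the $X^j_{s,r}\to0$ mechanism controlling $A_\epsilon$.
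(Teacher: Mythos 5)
Your proposal is correct and follows essentially the same route as the paper: the paper likewise disposes of the indicator term by noting that $\mathds{1}_{[s,r]}(v)=0$ for all $s\in(r-\epsilon,r)$ once $\epsilon<|r-v|$ (treating $r<v$ and $v<r$ separately), and then asserts that the remaining averaged term $\frac{1}{\epsilon}\int_{r-\epsilon}^r X^j_{s,r}\mathbf{D}_vY^{',i\ell}_{s}\,ds$ vanishes almost surely as $\epsilon\downarrow 0$. Your sup-times-average estimate simply makes explicit, via uniform path continuity of $X^j$ and the continuity granted by assumption (1) of Theorem \ref{gaussiancase}, the convergence the paper states more tersely.
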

\begin{proof}
If $r < v$, then $\mathds{1}_{[s,r]}(v)=0 $ whenever $r-\epsilon < s < r$. Then, for Lebesgue almost all $(r,v)$ with $r < v$, we have

\begin{equation}\label{punctual}
\Big|\frac{1}{\epsilon}\int_{r-\epsilon}^r \Big\{ X^j_{s,r}\mathbf{D}_vY^{',i\ell}_{s} + Y^{',i\ell}_{s}\mathds{1}_{[s,r]}(v)e_j\Big\}
ds\Big|= \Big|\frac{1}{\epsilon}\int_{r-\epsilon}^r X^j_{s,r}\mathbf{D}_vY^{',i\ell}_{s}ds\Big|\rightarrow 0
\end{equation}
almost surely as $\epsilon \downarrow 0$. In case $v < r$, we observe $v< r-\epsilon < r$ for every $\epsilon$ sufficiently small and $\mathds{1}_{[s,r]}(v) = 0$ whenever $v < r-\epsilon < s < r$. Then, for each $(r,v)$ with $v < r$, one can take $\epsilon = \epsilon(r,v)$ sufficiently small such that the estimate (\ref{punctual}) holds true as well. Then, we do have the almost sure convergence (\ref{point1}) pointwise in $[0,T]^2\setminus D$.
\end{proof}

We shall write

$$J_2(\epsilon,t)=\mathbb{E}\| h_\epsilon \|^2_{L_{2,R}(\mathbb{R}^{d\times d}))},$$
where $h_\epsilon$ is given by

$$h_\epsilon(v,r)=\mathbf{D}_v \Big[\frac{1}{\epsilon}\big( u^{i\ell,j}_{r-\epsilon,r}\mathds{1}_{[0,t]}(r)e_\ell - u^{i\ell,\ell}_{r-\epsilon,r}\mathds{1}_{[0,t]}(r)e_j   \big)\Big],$$

\begin{eqnarray*}
\| h_\epsilon\|^2_{L_{2,R}(\mathbb{R}^{d\times d})}&=&\int_0^\infty \int_0^\infty |h_\epsilon(v,r)|^2|\partial_v R(v,T)|ds|\partial_r R(r,T)|dr\\
&+& \frac{1}{2}\int_0^\infty \int_{\mathbb{R}^2_+\setminus D}|h_\epsilon(v_1,r) - h_\epsilon(v_2,r)|^2|\mu|(dv_1dv_2)|\partial_r R(r,T)|dr\\
&+&\frac{1}{2}\int_{\mathbb{R}^2_+\setminus D}\int_0^\infty |h_\epsilon(v,r_1) - h_\epsilon(v,r_2)|^2|\partial_v R(v,T)|dv|\mu|(dr_1dr_2)\\
&+& \frac{1}{4}\int_{\mathbb{R}^2_+\setminus D}\int_{\mathbb{R}^2_+\setminus D}|\Delta \Delta h_\epsilon(\mathbf{v},\mathbf{r})|^2|\mu|(dv_1dv_2)|\mu|(dr_1dr_2)\\
&=:& L_1(\epsilon) + L_2(\epsilon) + L_3(\epsilon) +L_4(\epsilon),
\end{eqnarray*}
and
$$\Delta \Delta h_\epsilon(\mathbf{v},\mathbf{r}) = h_\epsilon(v_1,r_1) - h_\epsilon(v_1,r_2) - h_\epsilon(v_2,r_1) + h_\epsilon(v_2,r_2)$$
for $\mathbf{v} = (v_1,v_2),\mathbf{r} = (r_1,r_2) \in \mathbb{R}^2_+\setminus D$.

\

In the sequel, we will analyze each element $L_1(\epsilon)$, $L_2(\epsilon)$,  $L_3(\epsilon)$ and $L_4(\epsilon)$.

\

\textbf{Analysis of $L_1(\epsilon)$}. By using Jensen's inequality, Lemma \ref{rm}, Gaussian moments of $X$, Assumptions A and (\ref{intassump}), one can easily check there exists $p>1$ such that

$$\sup_{0 < \epsilon < 1}\mathbb{E}\int_0^T\int_0^T |h_\epsilon(r,v)|^{2p}|\partial_r R(r,T)\partial_v R(v,T)|drdv < \infty.$$
Lemma \ref{pointwiseconv1} and Vitali convergence theorem allow us to conclude $\mathbb{E}[L_1(\epsilon)]\rightarrow 0$ as $\epsilon \downarrow 0$.

\

\textbf{Analysis of $L_2(\epsilon)$}. Next, we analyze

\begin{small}
\begin{equation}\label{al14}
\mathbb{E}\int_0^t\int_{[0,T]^2\setminus D}\Big| \mathbf{D}_{v_1} \big[\frac{1}{\epsilon}\big( u^{i\ell,j}_{r-\epsilon,r}e_\ell - u^{i\ell,\ell}_{r-\epsilon,r}e_j   \big)\big]  - \mathbf{D}_{v_2} \big[\frac{1}{\epsilon}\big( u^{i\ell,j}_{r-\epsilon,r}e_\ell - u^{i\ell,\ell}_{r-\epsilon,r}e_j   \big)\big]  \Big|^2|\mu|(dv_1dv_2) |R(dr,T)|.
\end{equation}
\end{small}
For this purpose, by symmetry and Lemma \ref{rm}, it is sufficient to bound

\begin{equation}\label{same}
\Big|\frac{1}{\epsilon}\int_{r-\epsilon}^r X^j_{s,r}\Big(\mathbf{D}^m_{v_1}Y^{',i\ell}_{s} - \mathbf{D}^m_{v_2} Y^{',i\ell}_{s}\Big) ds\Big|^2
\end{equation}
for $m\neq j$ and

\begin{equation}\label{Aterm}
\Big|\frac{1}{\epsilon}\int_{r-\epsilon}^r \Big\{ X^j_{s,r} \Big(\mathbf{D}^j_{v_1}Y^{',i\ell}_{s} - \mathbf{D}^j_{v_2} Y^{',i\ell}_{s}\Big) + Y^{',i\ell}_{s}\big[\mathds{1}_{[s,r]}(v_1) - \mathds{1}_{[s,r]}(v_2)\big] \Big\}ds\Big|^2.
\end{equation}
Clearly, we only need to check (\ref{Aterm}) because the term (\ref{same}) is totally analogous. In the sequel, to shorten notation, we denote $A_\epsilon(r,v_1,v_2)$ as the square root of (\ref{Aterm}). By using the same argument given in the proof of Lemma \ref{pointwiseconv1}, we can safely state that

\begin{equation}\label{tens1}
\lim_{\epsilon\rightarrow 0^+} A_\epsilon(r,v_1,v_2) = 0~a.s,
\end{equation}
for each $v_1 \neq v_2$ and $r\in [0,T]$. In the sequel, let us write

$$A_\epsilon(r,v_1,v_2) =\sum_{i=1}^6 A_\epsilon(r,v_1,v_2)\mathds{1}_{E_i(\epsilon)}$$
for $v_1 < v_2$ (without any loss of generality), where

\begin{itemize}
\item $E_1(\epsilon) = \{(r,v_1,v_2); v_1 <  v_2 < r-\epsilon\}$
\item $E_2(\epsilon) = \{(r,v_1,v_2); r < v_1 < v_2 \}$
\item $E_3(\epsilon) = \{(r,v_1,v_2); v_1 < r -\epsilon< v_2 <r \}$
\item $E_4(\epsilon) = \{(r,v_1,v_2); r -\epsilon< v_1 < v_2 < r \}$
\item $E_5(\epsilon) = \{(r,v_1,v_2); r -\epsilon< v_1 < r <v_2 \}$
\item $E_6(\epsilon) = \{(r,v_1,v_2); v_1 < r -\epsilon< r < v_2 \}$.
\end{itemize}
Here, for each positive small $\epsilon$, $\{E_i(\epsilon); 1\le i\le 6\}$ constitutes a partition of $[0,T]\times \{(v_1,v_2) \in [0,T]^2\setminus D; v_1 < v_2\}$. By using Jensen, H\"older's inequalities and Assumption A, C(ii) and (\ref{mod1}), there exists $q>1$ such that

$$
\mathbb{E}\int_{E_1(\epsilon)}|A_\epsilon(r,v_1,v_2)|^2 |\mu|(dv_1dv_2) |R(dr,T)|$$
$$
 \lesssim \epsilon^{\alpha+2}\int_0^T \int_{v_1} \sup_{s\ge v_2} \|\mathbf{D}_{v_1}Y^{'}_{s} - \mathbf{D}_{v_2} Y^{'}_{s}\|^2_{L^{2q}(\mathbb{P})} |\mu|(dv_1dv_2) \rightarrow 0,
$$
as $\epsilon \downarrow 0$. Similarly, there exists $q>1$ such that

$$
\mathbb{E}\int_{E_2(\epsilon)}|A_\epsilon(r,v_1,v_2)|^2 |\mu|(dv_1dv_2) |R(dr,T)|$$
$$
 \lesssim \epsilon^{\alpha+2}\int_0^T \int_{v_1} \sup_{s < v_1} \|\mathbf{D}_{v_1}Y^{'}_{s} - \mathbf{D}_{v_2} Y^{'}_{s}\|^2_{L^{2q}(\mathbb{P})} |\mu|(dv_1dv_2) \rightarrow 0,
$$
as $\epsilon \downarrow 0$. Similar analysis can be made for $E_i(\epsilon)$ for $3\le i\le 6$. Indeed, one can show that for each $i=3,4,5,6$,

$$\{|A_\epsilon|^2\mathds{1}_{E_i(\epsilon)} |\partial^2R|; 0 < \epsilon < 1\}$$
is uniformly integrable w.r.t $\mathbb{P}\times |R(\cdot,T)|\times Leb$. Vitali convergence theorem combined with (\ref{tens1}) yield $\mathbb{E}[L_2(\epsilon)]\rightarrow 0$ as $\epsilon \downarrow 0$.

\

\noindent \textbf{Analysis of $L_3(\epsilon)$ and $L_4(\epsilon)$.} In order to shorten notation, we now set

$$\Xi^{i\ell,j,\epsilon}_{r,v,t}:=\mathbf{D}_v \Big[\frac{1}{\epsilon}\big( u^{i\ell,j}_{r-\epsilon,r}\mathds{1}_{[0,t]}(r)e_\ell - u^{i\ell,\ell}_{r-\epsilon,r}\mathds{1}_{[0,t]}(r)e_j   \big)\Big],$$

\begin{eqnarray}
\nonumber\Delta_\mathbf{r} \Xi^{i\ell,j,\epsilon}(\mathbf{r},v,t)&:=&\Xi^{i\ell,j,\epsilon}_{r_1,v,t} - \Xi^{i\ell,j,\epsilon}_{r_2,v,t} = \frac{1}{\epsilon}\mathbf{D}_v \Big[\big( u^{i\ell,j}_{r_1-\epsilon,r_1}\mathds{1}_{[0,t]}(r_1)  - u^{i\ell,j}_{r_2-\epsilon,r_2}\mathds{1}_{[0,t]}(r_2)\big)   e_\ell \\
\label{dDeltaR_1}& &\\
\nonumber&-& \big(u^{i\ell,\ell}_{r_1-\epsilon,r_1}\mathds{1}_{[0,t]}(r_1) - u^{i\ell,\ell}_{r_2-\epsilon,r_2}\mathds{1}_{[0,t]}(r_2)\big) e_j   \big)\Big],
\end{eqnarray}

\begin{equation}\label{dDeltaR_2}
\Delta_\mathbf{v} \Delta_\mathbf{r} \Xi^{i\ell,j,\epsilon}(\mathbf{r},\mathbf{v},t):= \Delta_\mathbf{r} \Xi^{i\ell,j,\epsilon}(\mathbf{r},v_1,t) - \Delta_\mathbf{r} \Xi^{i\ell,j,\epsilon}(\mathbf{r},v_2,t),
\end{equation}
for $\mathbf{v} = (v_1,v_2),\mathbf{r} = (r_1,r_2) \in \mathbb{R}^2_+\setminus D$.

Of course, we recall that the above multi-parameter processes take values on the space of $d\times d$-matrices. It remains to estimate

$$ \mathbb{E}\int_{[0,T]^2\setminus D}\Big\| \mathbf{D}_\cdot \Big[\frac{1}{\epsilon}\big( u^{i\ell,j}_{r_1-\epsilon,r_1}\mathds{1}_{[0,t]}(r_1)e_\ell - u^{i\ell,\ell}_{r_1-\epsilon,r_1}\mathds{1}_{[0,t]}(r_1)e_j   \big)\Big]  $$
$$ - \mathbf{D}_\cdot \Big[\frac{1}{\epsilon}\big( u^{i\ell,j}_{r_2-\epsilon,r_2}\mathds{1}_{[0,t]}(r_2)e_\ell - u^{i\ell,\ell}_{r_2-\epsilon,r_2}\mathds{1}_{[0,t]}(r_2)e_j   \big)\Big] \Big\|^2_{L_{R}(\mathbb{R}^{d\times d})}|\mu|(dr_1dr_2)
$$
$$ = \mathbb{E}\int_{[0,T]^2\setminus D}\int_0^T |\Delta_\mathbf{r} \Xi^{i\ell,j,\epsilon}(\mathbf{r},v,t)   |^2 |R(dv,T)| |\mu|(dr_1dr_2)  $$
$$+\frac{1}{2}\mathbb{E}\int_{[0,T]^2\setminus D}\int_{[0,T]^2\setminus D} |  \Delta_\mathbf{v} \Delta_{\mathbf{r}} \Xi^{i\ell,j,\epsilon}(\mathbf{r},\mathbf{v},t) |^2 |\mu|(dv_1dv_2) |\mu|(dr_1dr_2) = L_3(\epsilon) + L_4(\epsilon).$$

\

\noindent \textbf{Analysis of $L_3(\epsilon)$}. Since $\ell\neq j$, by symmetry, Lemma \ref{rm} and the definition of (\ref{dDeltaR_1}), we only need to check convergence to zero in $L^2(\mathbb{P}\times |R(dv, T)|\times d|\mu|)$ of the $\ell$-th column (the only non-null column) of $\frac{1}{\epsilon}\mathbf{D}_v \Big[\big( u^{i\ell,j}_{r_1-\epsilon,r_1}  - u^{i\ell,j}_{r_2-\epsilon,r_2}\big)   e_\ell\Big]$.

\begin{lemma}\label{punctual1}
Assume that $Y^{'}$ satisfies the assumptions in Theorem \ref{gaussiancase}. Then, for each $\ell\neq j$ and $t\in(0,T]$,
$$\lim_{\epsilon\rightarrow 0^+}
\frac{1}{\epsilon}\mathbf{D}_v \Big[\big( u^{i\ell,j}_{r_1-\epsilon,r_1}\mathds{1}_{[0,t]}(r_1)  - u^{i\ell,j}_{r_2-\epsilon,r_2}\mathds{1}_{[0,t]}(r_2)\big)   e_\ell\Big]=0~a.s
$$
for almost all $(v,r_1,r_2)\in [0,T]\times [0,T]^2\setminus D$ w.r.t the product measure $|R(dv,T)|\times d|\mu|$.
\end{lemma}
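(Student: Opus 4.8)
The plan is to reduce the assertion to the pointwise convergence already established in Lemma \ref{pointwiseconv1}, the only genuine work being a measure-theoretic bookkeeping step that transfers the Lebesgue-null exceptional sets of that lemma into null sets for the product measure $|R(dv,T)|\times d|\mu|$. First I would use the linearity of the Malliavin derivative to split
$$\frac{1}{\epsilon}\mathbf{D}_v \Big[\big( u^{i\ell,j}_{r_1-\epsilon,r_1}\mathds{1}_{[0,t]}(r_1)  - u^{i\ell,j}_{r_2-\epsilon,r_2}\mathds{1}_{[0,t]}(r_2)\big)   e_\ell\Big] = \mathds{1}_{[0,t]}(r_1)\frac{1}{\epsilon}\mathbf{D}_v\big[u^{i\ell,j}_{r_1-\epsilon,r_1}e_\ell\big] - \mathds{1}_{[0,t]}(r_2)\frac{1}{\epsilon}\mathbf{D}_v\big[u^{i\ell,j}_{r_2-\epsilon,r_2}e_\ell\big],$$
so that it suffices to show each summand vanishes almost surely as $\epsilon\downarrow 0$. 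By Lemma \ref{rm}, for fixed $r$ the only non-null column of the matrix $\mathbf{D}_v[u^{i\ell,j}_{r-\epsilon,r}e_\ell]$ is the $\ell$-th one, equal to (\ref{prod1}); hence $\epsilon^{-1}$ times this column is exactly the expression appearing in (\ref{point1}), and convergence of this column to $0$ forces the whole matrix to $0$.

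Next I would invoke Lemma \ref{pointwiseconv1}, which asserts that the quantity in (\ref{point1}) tends to $0$ almost surely for Lebesgue-almost all $(r,v)\in[0,T]^2\setminus D$. Let $N^*\subset[0,T]^2$ be the Lebesgue-null set of pairs $(r,v)$ off which this convergence holds. Applying the conclusion at $(r_1,v)$ and at $(r_2,v)$, the first summand vanishes a.s.\ whenever $(r_1,v)\notin N^*$ and the second whenever $(r_2,v)\notin N^*$; off the union of these two bad sets, the whole difference converges to $0$ a.s.

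The main (and essentially only) obstacle is to verify that these two bad sets are negligible for the product measure $|R(dv,T)|\times d|\mu|$, since Lemma \ref{pointwiseconv1} only controls Lebesgue-null sets. Consider $A_1:=\{(v,r_1,r_2): (r_1,v)\in N^*\}$. By Fubini for Lebesgue measure, the slice $N^*_v:=\{r: (r,v)\in N^*\}$ is Lebesgue-null for Lebesgue-a.e.\ $v$; since $R(dv,T)=\partial_v R(v,T)dv\ll\mathrm{Leb}$ by Assumption A, the same holds for $|R(dv,T)|$-a.e.\ $v$. For any such $v$ the set $N^*_v\times\mathbb{R}_+$ is Lebesgue-null in $[0,T]^2\setminus D$, hence $|\mu|$-null because $d\mu=\partial^2R\,dx\ll\mathrm{Leb}$ off the diagonal by Assumption C(i). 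Integrating in $v$ against $|R(dv,T)|$ yields $(|R(\cdot,T)|\times|\mu|)(A_1)=0$, and the symmetric argument gives the same for $A_2:=\{(v,r_1,r_2): (r_2,v)\in N^*\}$. Off $A_1\cup A_2$, a set of full $|R(dv,T)|\times d|\mu|$ measure, both summands tend to $0$ a.s., which is precisely the claim. There is no analytic difficulty beyond Lemma \ref{pointwiseconv1}: the entire content is the transfer of null sets handled by the absolute-continuity hypotheses in Assumptions A and C(i).
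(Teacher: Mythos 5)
Your proposal is correct and follows essentially the same route as the paper: via Lemma \ref{rm} the only non-null column of the matrix is exactly the difference of two expressions of the form (\ref{point1}), and the conclusion then follows by the argument of Lemma \ref{pointwiseconv1} applied at $(r_1,v)$ and $(r_2,v)$. The only difference is that you make explicit the transfer of the Lebesgue-null exceptional set to a $|R(dv,T)|\times d|\mu|$-null set via Fubini and the absolute-continuity hypotheses in Assumptions A and C(i), a step the paper leaves implicit.
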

\begin{proof}
The (only) non-null $\ell$-th column of
$$\epsilon^{-1}\mathbf{D}_v u^{i\ell,j}_{r_1-\epsilon, r_1}e_\ell\mathds{1}_{[0,t]}(r_1) - \epsilon^{-1}\mathbf{D}_v u^{i\ell,j}_{r_2-\epsilon, r_2}e_\ell\mathds{1}_{[0,t]}(r_2)$$
equals to
\begin{equation}\label{pj1}
\frac{1}{\epsilon}\int_{r_1-\epsilon}^{r_1} \Big\{ X^j_{s,r_1}\mathbf{D}_vY^{',i\ell}_{s} + Y^{',i\ell}_{s}\mathds{1}_{[s,r_1]}(v)e_j\Big\}
ds\mathds{1}_{[0,t]}(r_1)
\end{equation}
$$
-\frac{1}{\epsilon}\int_{r_2-\epsilon}^{r_2} \Big\{ X^j_{s,r_2} \mathbf{D}_vY^{',i\ell}_{s} + Y^{',i\ell}_{s}\mathds{1}_{[s,r_2]}(v)e_j\Big\}
ds\mathds{1}_{[0,t]}(r_2)
$$
a.s for Lebesgue almost all $v,r_1, r_2\in [0,T]$ and $\epsilon >0$. Then, the argument is the same as the one applied in the proof of Lemma \ref{pointwiseconv1}.
\end{proof}

We need to investigate convergence to zero of (\ref{pj1}) in $L^2(\mathbb{P}\times |R(dv, T)|\times d|\mu|)$. Again, the idea is to explore almost sure convergence stated in Lemma \ref{punctual1} and uniform integrability. By symmetry, we may restrict $r_2 < r_1\le t$. The case $r_2 \le t < r_1\le T$ is trivial because no singularity appears in $\partial^2 R(r_1,r_2)$. We split $[0,T]\times \{(r_1,r_2); r_2 < r_1 \le t\}$ into three cases

$$F_1 = \{(v,r_1,r_2); 0\le v < r_2 < r_1\le t\},\quad F_2 = \{(v,r_1,r_2); 0\le r_2 < v < r_1\le t\}$$
$$F_3 = \{(v,r_1,r_2); 0\le r_2 < r_1 \le v\le T\}.$$
We will check that

$$\Bigg|\frac{1}{\epsilon}\mathbf{D}_v \Big[\big( u^{i\ell,j}_{r_1-\epsilon,r_1}\mathds{1}_{[0,t]}(r_1)  - u^{i\ell,j}_{r_2-\epsilon,r_2}\mathds{1}_{[0,t]}(r_2)\big)   e_\ell\Big]\Bigg|^2|\partial^2 R(r_1,r_2)|\mathds{1}_{F_z}$$
is uniformly integrable (along the parameter $\epsilon \in (0,1)$) over the measure space $\mathbb{P}\times |R(dv, T)|\times \text{Leb}$, for each $z=1,2,3$.

The process (\ref{pj1}) at the region $F_2$ can be easily estimated by using (\ref{intassump}) and (\ref{dagrowth}), assumption A and the fact that none singularity appears in $\partial^2 R$. Indeed, there exists $p>1$ such that

$$
\mathbb{E}\int_{F_2}\Bigg|\frac{1}{\epsilon}\int_{r_m-\epsilon}^{r_m} \Big\{ X^j_{s,r_m}\mathbf{D}_vY^{',i\ell}_{s} + Y^{',i\ell}_{s}\mathds{1}_{[s,r_m]}(v)e_j\Big\}
ds\Bigg|^{2p} |\partial^2R(r_1,r_2)|^p|\partial_v R(v,T)|d\mathbf{r}dv
$$
$$\lesssim \int_{r_2 < v < r_1\le t}\{(r_1-r_2)^{\alpha p} + \phi(r_1,r_2)^p\}|\partial_v R(v,T)|d\mathbf{r}dv < \infty,$$
for every $\epsilon  \in (0,1)$ and $m=1,2$.
At the region $F_3$ (we may suppose $r_1 < v$), (\ref{pj1}) reduces to

\begin{equation}\label{rclose}
\frac{1}{\epsilon}\int_{r_1-\epsilon}^{r_1}X^j_{s,r_1}\mathbf{D}_vY^{',i\ell}_sds - \frac{1}{\epsilon}\int_{r_2-\epsilon}^{r_2}X^j_{s,r_2}\mathbf{D}_vY^{',i\ell}_sds.
\end{equation}
We split $\{(v,r_1,r_2); 0\le r_2 < r_1 < v\} = \{(v,r_1,r_2); 0\le r_2 < r_1 < v, r_1-r_2 < \epsilon\}\cup \{(v,r_1,r_2); 0\le r_2 < r_1 < v, r_1-r_2\ge \epsilon\} =:K_1\cup K_2$. On $K_1$, we can write (\ref{rclose}) as

$$\frac{1}{\epsilon}\int_{r_1-\epsilon}^{r_2}X^j_{r_2,r_1}\mathbf{D}_vY^{',i\ell}_sds + \frac{1}{\epsilon}\int_{r_2}^{r_1}X^j_{s,r_1}\mathbf{D}_vY^{',i\ell}_sds - \frac{1}{\epsilon}\int_{r_2-\epsilon}^{r_1-\epsilon}X^j_{s,r_2}\mathbf{D}_vY^{',i\ell}_sds$$
and hence Assumption C yield

$$\mathbb{E}\int_{K_1}\Bigg| \frac{1}{\epsilon}\int_{r_2-\epsilon}^{r_1-\epsilon}X^j_{s,r_2}\mathbf{D}_vY^{',i\ell}_sds \Bigg|^2|\mu|(dr_1dr_2)|\partial_v R(v,T)|dv$$
$$+ \mathbb{E}\int_{K_1}\Bigg| \frac{1}{\epsilon}\int_{r_2} ^{r_1}X^j_{s,r_1}\mathbf{D}_vY^{',i\ell}_sds \Bigg|^2|\mu|(dr_1dr_2)|\partial_v R(v,T)|dv$$

$$+\mathbb{E}\int_{K_1}\Bigg| \frac{1}{\epsilon}\int_{r_1-\epsilon} ^{r_2}X^j_{r_2,r_1}\mathbf{D}_vY^{',i\ell}_sds \Bigg|^2|\mu|(dr_1dr_2)|\partial_v R(v,T)|dv$$
$$\lesssim \int_0^T \int_{r_1-\epsilon}^{r_1}(r_1-r_2)^{2\alpha+2}dr_2dr_1\lesssim \epsilon^{2\alpha+3}\rightarrow 0,$$
as $\epsilon\downarrow 0$, because $2\alpha+3>0$. On $K_2$, we estimate (\ref{rclose}) as follows: We take $1 < p < \frac{1}{-2\alpha-2}$ and again by Assumption C, we have

$$\mathbb{E}\int_{K_2}\Bigg|\frac{1}{\epsilon}\int_{r_1-\epsilon}^{r_1}X^j_{s,r_1}\mathbf{D}_vY^{',i\ell}_sds\Bigg|^{2p}|\partial^2 R(r_1,r_2)|^p dr_1dr_2|\partial_v R(v,T)|dv$$
$$\lesssim \int_0^T \int_{r_2}^T (r_1-r_2)^{p(2\alpha+2)}dr_1dr_2 < \infty,$$
for every $\epsilon \in (0,1)$.

For the analysis on $F_1$, we write $F_1 = \cup_{i=1}^7 F_{1,i}$, where

$$F_{1,1} = \{v < r_2-\epsilon <  r_1-\epsilon < r_2 < r_1\},~ F_{1,2} = \{v < r_2-\epsilon < r_2 <  r_1-\epsilon < r_1\},$$
$$F_{1,3} = \{ r_2-\epsilon < v <r_1-\epsilon < r_2 < r_1\},~ F_{1,4} = \{ r_2-\epsilon < v < r_2 < r_1-\epsilon < r_1\},$$
$$F_{1,5} = \{ r_2-\epsilon < r_1-\epsilon < v <  r_2 < r_1\},~F_{1,6}=\{ r_2-\epsilon < v< r_2 < r_1-\epsilon < r_1\},$$
$$F_{1,7}=\{ r_2-\epsilon < v< r_1-\epsilon < r_2 < r_1\}.$$
We observe (\ref{intassump}), Assumption C, Jensen and H\"older'inequality allow us to choose $1 < q < \frac{\alpha+3}{-\alpha}$ such that

$$
\mathbb{E}\int_{F_{1,z}}\Bigg|\frac{1}{\epsilon}\int_{r_m-\epsilon}^{r_m} \Big\{ X^j_{s,r_m}\mathbf{D}_vY^{',i\ell}_{s} + Y^{',i\ell}_{s}\mathds{1}_{[s,r_m]}(v)e_j\Big\}
ds\Bigg|^{2q} |\partial^2R(r_1,r_2)|^q|\partial_v R(v,T)|d\mathbf{r}dv
$$
$$\lesssim \int_{r_2 < r_1}(r_1-r_2)^{\alpha+2+ q\alpha} d\mathbf{r} < \infty$$
for every $\epsilon \in (0,1)$, $m=1,2$ and $z=3,4,6,7$. Next, we analyze the set $F_{1,5}$. In this case, we may write (\ref{pj1}) equals to

$$
\frac{1}{\epsilon}\int_{r_2-\epsilon}^{r_1-\epsilon} Y^{',i\ell}_{s}ds e_j + \frac{1}{\epsilon}\int_v^{r_1} X^j_{s,r_1}\mathbf{D}_vY^{',i\ell}_{s}ds
- \frac{1}{\epsilon} \int_{v}^{r_2} X^j_{s,r_2} \mathbf{D}_vY^{',i\ell}_{s}
ds$$

$$+ \frac{1}{\epsilon}\int_{r_1-\epsilon}^v X^j_{r_2,r_1}\mathbf{D}_vY^{',i\ell}_{s}ds - \frac{1}{\epsilon}\int_{r_2-\epsilon}^{r_1-\epsilon} X^j_{s,r_2}\mathbf{D}_vY^{',i\ell}_{s}ds  $$
on $F_{1,5}$. At this point, we use Assumption C, (\ref{intassump}) and Fubini's theorem to get

$$\mathbb{E}\int_{F_{1,5}}\Bigg|\frac{1}{\epsilon}\int_{r_2-\epsilon}^{r_1-\epsilon} Y^{',i\ell}_{s}ds e_j\Bigg|^2|\partial_v R(v,T)|dv|\partial R(r_1,r_2)|dr_1dr_2$$
$$\lesssim \epsilon^{-2}\int_0^T \int_{r_1-\epsilon}^{r_1} \big(r_2-(r_1-\epsilon)\big)^{\alpha+2}(r_1-r_2)^{\alpha+2} dr_2dr_1\lesssim \epsilon^{2\alpha+3}\rightarrow 0$$
as $\epsilon \downarrow 0$. We can write

\begin{eqnarray*}
\frac{1}{\epsilon}\int_v^{r_1} X^j_{s,r_1}\mathbf{D}_vY^{',i\ell}_{s}ds
- \frac{1}{\epsilon} \int_{v}^{r_2} X^j_{s,r_2} \mathbf{D}_vY^{',i\ell}_{s}
ds &=& \frac{1}{\epsilon}\int_v^{r_2}X^j_{r_2,r_1}\mathbf{D}_vY^{',i\ell}_{s}ds\\
 &+& \frac{1}{\epsilon}\int_{r_2}^{r_1}X^j_{s,r_1}\mathbf{D}_vY^{',i\ell}_{s}ds
\end{eqnarray*}
on $F_{1,5}$. Repeat the same argument used above to conclude

\begin{small}
$$\lim_{\epsilon \rightarrow 0^+}\mathbb{E}\int_{F_{1,5}}\Bigg|\frac{1}{\epsilon}\int_{r_2}^{r_1}X^j_{s,r_1}\mathbf{D}_vY^{',i\ell}_{s}ds\Bigg|^2|\partial_v R(v,T)|dv |\mu|(dr_1dr_2)=0,$$
$$\lim_{\epsilon \rightarrow 0^+}\mathbb{E}\int_{F_{1,5}}\Bigg|\frac{1}{\epsilon}\int_{v}^{r_2}X^j_{r_2,r_1}\mathbf{D}_vY^{',i\ell}_{s}ds\Bigg|^2|\partial_v R(v,T)|dv |\mu|(dr_1dr_2)=0,$$
\end{small}
\begin{small}
$$\lim_{\epsilon \rightarrow 0^+}\mathbb{E}\int_{F_{1,5}}\Bigg|\frac{1}{\epsilon}\int_{r_1-\epsilon}^v X^j_{r_2,r_1}\mathbf{D}_vY^{',i\ell}_{s}ds\Bigg|^2|\partial_v R(v,T)|dv |\mu|(dr_1dr_2)   =0,$$
$$\lim_{\epsilon \rightarrow 0^+}\mathbb{E}\int_{F_{1,5}}\Bigg|\frac{1}{\epsilon}\int_{r_2-\epsilon}^{r_1-\epsilon} X^j_{s,r_2}\mathbf{D}_vY^{',i\ell}_{s}ds\Bigg|^2 |\partial_v R(v,T)|dv |\mu|(dr_1dr_2)  =0.$$
\end{small}

By using Jensen's inequality, Assumptions A, C and (\ref{intassump}), we can repeat the same argument given in the analysis of (\ref{rclose}) to conclude

$$\lim_{\epsilon\rightarrow 0^+}\mathbb{E}\int_{F_{1,1}}\Big|\Delta_\mathbf{r} \Xi^{i\ell,j,\epsilon}(\mathbf{r},v,t) \Big|^2 |\partial_v R(v,T)|dv |\mu|(dr_1dr_2)=0$$
and there exists $p>2$ such that

$$\sup_{0 < \epsilon < 1}\mathbb{E}\int_{F_{1,2}}\Big|\Delta_\mathbf{r} \Xi^{i\ell,j,\epsilon}(\mathbf{r},v,t) \Big|^p |\partial_v R(v,T)|dv |\partial^2R(r_1,r_2)|^{\frac{p}{2}}d\mathbf{r}< \infty.$$
Vitali convergence theorem combined with Lemma \ref{punctual1} allow us to conclude $\mathbb{E}[L_3(t)]\rightarrow 0$ as $\epsilon\rightarrow 0^+$.

\

\noindent \textbf{Analysis of $L_4(\epsilon)$.} In the sequel, in view of assumption (\ref{dagrowth}), we may suppose that $\phi=0$, i.e.,

$$\big| \partial^2 R(r_1,r_2)\big|\lesssim |r_1-r_2|^\alpha; (r_1,r_2)\in [0,T]^2\setminus D.$$
The main difficulty lies on the singularity of the kernel $|r_1-r_2|^\alpha$ on $[0,T]^2\setminus D$. Indeed, by Assumption C, we recall there exists $L>1$ such that $\phi$ is $p$-integrable on $[0,T]^2\setminus D$ for every $p \in (1,L)$. Then, we may restrict the analysis to the case $\phi=0$.

\

Since $\ell\neq j$, by symmetry, Lemma \ref{rm} and the definition of (\ref{dDeltaR_2}), we only need to check convergence to zero in $L^2(\mathbb{P}\times |\mu|\times |\mu|)$ of the $\ell$-th column (the only non-null column) of

\begin{small}
\begin{equation}\label{al25}
\frac{1}{\epsilon}\Bigg\{ \mathbf{D}_{v_1} \Big[\big( u^{i\ell,j}_{r_1-\epsilon,r_1}\mathds{1}_{[0,t]}(r_1)  - u^{i\ell,j}_{r_2-\epsilon,r_2}\mathds{1}_{[0,t]}(r_2)\big)   e_\ell\Big] - \mathbf{D}_{v_2} \Big[\big( u^{i\ell,j}_{r_1-\epsilon,r_1}\mathds{1}_{[0,t]}(r_1)  - u^{i\ell,j}_{r_2-\epsilon,r_2}\mathds{1}_{[0,t]}(r_2)\big)   e_\ell\Big]\Bigg\}.
\end{equation}
\end{small}
Without any loss of generality, we may assume $0\le r_2 < r_1\le t$, $v_2 < v_1\le T$. We also observe the case $r_2 < t < r_1$ can be easily treated because, in this case, no singularity appears in $|r_1-r_2|^{\alpha}$. We can write (\ref{al25}) as

$$ \frac{1}{\epsilon}\int_{r_1-\epsilon}^{r_1}  X^j_{s,r_1}\Big(\mathbf{D}_{v_1}Y^{',i\ell}_s- \mathbf{D}_{v_2}Y^{',i\ell}_s\Big)ds$$
$$ - \frac{1}{\epsilon}\int_{r_2-\epsilon}^{r_2}  X^j_{s,r_2} \Big(\mathbf{D}_{v_1}Y^{',i\ell}_s- \mathbf{D}_{v_2}Y^{',i\ell}_s \Big) ds$$
$$+ \frac{1}{\epsilon}\int_{r_1-\epsilon}^{r_1} Y^{',i\ell}_s\big(\mathds{1}_{[s,r_1]}(v_1) - \mathds{1}_{[s,r_1]}(v_2)\big) e_j ds$$
$$ - \frac{1}{\epsilon}\int_{r_2-\epsilon}^{r_2} Y^{',i\ell}_s\big(\mathds{1}_{[s,r_2]}(v_1) - \mathds{1}_{[s,r_2]}(v_2)\big) e_j ds$$

$$=:a_1(\mathbf{r},\mathbf{v},\epsilon) - a_2(\mathbf{r},\mathbf{v},\epsilon)  + b_1(\mathbf{r},\mathbf{v},\epsilon) - b_2(\mathbf{r},\mathbf{v},\epsilon).$$
To shorten notation, we denote $a(\mathbf{r},\mathbf{v},\epsilon) = a_1(\mathbf{r},\mathbf{v},\epsilon) - a_2(\mathbf{r},\mathbf{v},\epsilon),  b(\mathbf{r},\mathbf{v},\epsilon)=b_1(\mathbf{r},\mathbf{v},\epsilon) - b_2(\mathbf{r},\mathbf{v},\epsilon)$.

\begin{lemma}\label{punctual2}
We have $\lim_{\epsilon\downarrow 0}a_i(\mathbf{r},\mathbf{v},\epsilon) = \lim_{\epsilon\downarrow 0}b_i(\mathbf{r},\mathbf{v},\epsilon) =\mathbf{0}$ a.s for Lebesgue almost all $(\mathbf{r},\mathbf{v})\in [0,T]^2\setminus D\times [0,T]^2\setminus D$, for each $i=1,2$.
\end{lemma}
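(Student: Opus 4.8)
The plan is to treat the four two-parameter processes $a_1,a_2,b_1,b_2$ separately and to obtain the pointwise almost sure limit by exploiting two structural features: the vanishing of the increments of $X$ as the integration window collapses (for the $a$-terms) and the eventual nullity of the indicator differences (for the $b$-terms). Throughout, I fix a generic point $(\mathbf{r},\mathbf{v})\in [0,T]^2\setminus D\times [0,T]^2\setminus D$; by discarding a Lebesgue-null set I may assume $r_1,r_2\notin\{v_1,v_2\}$ and, using assumption (1) in Theorem \ref{gaussiancase}, that the maps $s\mapsto \mathbf{D}_{v_1}Y^{',i\ell}_s$ and $s\mapsto \mathbf{D}_{v_2}Y^{',i\ell}_s$ are almost surely continuous on neighbourhoods of $r_1$ and of $r_2$.

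For $a_1$, I would bound the average directly by
\[
|a_1(\mathbf{r},\mathbf{v},\epsilon)| \le \Big(\sup_{s\in[r_1-\epsilon,r_1]}|X^j_{s,r_1}|\Big)\Big(\sup_{s\in[r_1-\epsilon,r_1]}\big|\mathbf{D}_{v_1}Y^{',i\ell}_s - \mathbf{D}_{v_2}Y^{',i\ell}_s\big|\Big).
\]
Since $X$ has almost surely $\gamma$-H\"older continuous paths for $\gamma<\frac{\alpha}{2}+1$ (a consequence of Assumption C(ii) and the Gaussian property), the first factor is at most $\|X^j\|_\gamma\,\epsilon^\gamma$, while the second factor stays bounded as $\epsilon\downarrow 0$ by the continuity of $s\mapsto \mathbf{D}_{v_k}Y^{',i\ell}_s$ near $r_1$. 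Hence $|a_1|\lesssim \epsilon^\gamma\to 0$ almost surely. The term $a_2$ is identical with $r_1$ replaced by $r_2$. This is precisely the mechanism already used in Lemma \ref{pointwiseconv1}.

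For the $b$-terms the argument is purely deterministic at fixed $(\mathbf{r},\mathbf{v})$. Consider $b_1$, whose integrand involves $\mathds{1}_{[s,r_1]}(v_1)-\mathds{1}_{[s,r_1]}(v_2)$ for $s\in[r_1-\epsilon,r_1]$. If $v_k>r_1$, then $v_k\notin[s,r_1]$ for every such $s$; if $v_k<r_1$, then as soon as $\epsilon<r_1-v_k$ one has $s>v_k$ and again $v_k\notin[s,r_1]$. Thus for all $\epsilon$ small enough (depending on the point) both indicators vanish identically on the integration window, so $b_1(\mathbf{r},\mathbf{v},\epsilon)=\mathbf{0}$; consequently $\lim_{\epsilon\downarrow 0}b_1=\mathbf{0}$ almost surely, and $b_2$ is handled in the same way with $r_2$ in place of $r_1$.

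There is essentially no serious obstacle in this lemma: the content is entirely pointwise, and the hard uniform-integrability estimates are deferred to the subsequent Vitali argument for $L_4(\epsilon)$. The only point requiring care is the selection of the full-measure set of admissible $(\mathbf{r},\mathbf{v})$: one must simultaneously avoid the coincidences $r_m=v_k$ and remain inside the almost sure continuity set furnished by assumption (1) of Theorem \ref{gaussiancase}. This is possible because each exceptional set is Lebesgue-null in the relevant variables, so their union is still negligible with respect to the product measure $|\mu|\times|\mu|$.
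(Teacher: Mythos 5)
Your proof is correct and is essentially the paper's own argument, which disposes of the lemma in one line by invoking the mechanisms of Lemmas \ref{pointwiseconv1} and \ref{punctual1}: the $a$-terms vanish because $\sup_{s\in[r_m-\epsilon,r_m]}|X^j_{s,r_m}|\rightarrow 0$ while the difference $\mathbf{D}_{v_1}Y^{'}-\mathbf{D}_{v_2}Y^{'}$ stays locally bounded by the a.s.\ continuity in assumption (1), and the $b$-terms are identically $\mathbf{0}$ once $\epsilon<\min_k|r_m-v_k|$ so that the indicator differences cancel on the integration window. Your quantitative bound $|a_1|\lesssim \|X^j\|_\gamma\,\epsilon^{\gamma}$ via the $\gamma$-H\"older paths from Assumption C(ii) is a harmless refinement of the plain-continuity argument, and your handling of the exceptional null sets (excluding $r_m\in\{v_1,v_2\}$ and the negligible set of $v$'s where assumption (1) fails) is exactly the care the paper's citation implicitly requires.
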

\begin{proof}
The same argument given in Lemmas \ref{pointwiseconv1} and \ref{punctual1} applies here.
\end{proof}

In the sequel, we will check that

$$
|b(\mathbf{r},\mathbf{v},\epsilon)|^2|\partial^2R(\mathbf{r})|\partial^2R(\mathbf{v})|
$$
is an uniformly integrable family (in $0 < \epsilon < 1$) w.r.t the measure $\mathbb{P}\times \text{Leb}$ and hence Vitali convergence theorem combined with Lemma \ref{punctual2} will imply

$$\lim_{\epsilon \rightarrow 0^+}\mathbb{E}\int_{v_1>v_2, t \ge r_1 > r_2}|b(\mathbf{r},\mathbf{v},\epsilon)|^2|\partial^2R(\mathbf{r})||\partial^2R(\mathbf{v})|d\mathbf{r}d\mathbf{v}=0.$$
We observe $b = \mathbf{0}$ on $\{r_2 < r_1 < v_2 < v_1\}$ so that we only need to analyze $b$ on $r_1 > v_2$. We split $\{(\mathbf{r},\mathbf{v}); 0 \le r_2 < r_1\le t, 0\le v_2 < v_1\le T, r_1 > v_2\}$ in terms of the partition

$$G_1 = \{v_2 < v_1 < r_2 < r_1 \},~G_2 = \{r_2 < v_2 < v_1 < r_1\}$$
$$G_3 =\{v_2 < r_2 < v_1 < r_1\},~G_4 = \{v_2 < r_2 < r_1 < v_1\},~G_5 =\{r_2 < v_2 < r_1 < v_1\}$$
The most delicate cases are $G_1$ and $G_2$. We split $G_1$ in terms of the partition

$$G_{11} = \{r_2-\epsilon < r_1-\epsilon  < v_2 < v_1 < r_2 < r_1\}, G_{12} = \{v_2 < v_1 < r_2-\epsilon < r_1-\epsilon  < r_2 < r_1\}$$
$$G_{13} = \{v_2 < v_1 < r_2-\epsilon < r_2 <  r_1-\epsilon  < r_1\}, G_{14} = \{r_2-\epsilon < v_2 < r_1-\epsilon  < v_1 <  r_2 < r_1\}$$
$$G_{15} = \{v_2 < r_2-\epsilon < v_1 < r_1-\epsilon  < r_2 < r_1\}, G_{16} = \{v_2 < r_2-\epsilon < v_1 < r_2 <  r_1-\epsilon  < r_1\}$$
$$G_{17} = \{r_2-\epsilon < v_2 < v_1 < r_1-\epsilon < r_2 < r_1\},~G_{18} = \{r_2-\epsilon < v_2 < v_1 < r_2 < r_1-\epsilon < r_1\}$$
$$G_{19} = \{v_2 < r_2-\epsilon < r-1-\epsilon < v_1 < r_2 < r_1\}.$$
We observe $b = \mathbf{0}$ on $\cup_{\ell=1}^3G_{1\ell}$. Jensen's inequality and assumption (\ref{intassump}) yield

$$\mathbb{E}\int_{\cup_{\ell=4}^6G_{1\ell}} |b(\mathbf{r},\mathbf{v},\epsilon)|^p |\partial^2 R (\mathbf{r})|^{\frac{p}{2}} |\partial^2 R (\mathbf{v})|^{\frac{p}{2}}d\mathbf{r}d\mathbf{v}\lesssim \int_{\cup_{\ell=4}^6G_{1\ell}} |\partial^2 R (\mathbf{r})|^{\frac{p}{2}} |\partial^2 R (\mathbf{v})|^{\frac{p}{2}}d\mathbf{r}d\mathbf{v}.$$
Next, by using Assumption C and choosing $2 < p < \frac{18}{4}$, we have

$$\int_{G_{14}} |\partial^2 R (\mathbf{r})|^{\frac{p}{2}} |\partial^2 R (\mathbf{v})|^{\frac{p}{2}}d\mathbf{r}d\mathbf{v}\lesssim \int_{r_2 < r_1} \int_{r_1-\epsilon}^{r_2}\int_{r_2-\epsilon}^{r_1-\epsilon}(r_1-r_2)^{\frac{\alpha p}{2}}(v_1-v_2)^{\frac{\alpha p}{2}}dv_2dv_1d\mathbf{r}$$
$$\lesssim \int_{r_2<r_1} (r_1-r_2)^{\frac{\alpha p}{2} + 2}d\mathbf{r} < \infty$$
for every $\epsilon \in (0,1)$. Similar analysis can be made on $G_{15}$ and $G_{16}$.

We can choose $0 < \beta < 1$ such that $0 < - (\alpha+1) < \frac{1}{3} < \beta < \frac{2}{3} < \alpha+2 < 1$. Then, $\epsilon^{-2}(r_1-r_2)^2\le \epsilon^{-\beta}(r_1-r_2)^\beta$ on $G_{19}$. Then,

$$\mathbb{E}\int_{G_{19}} |b(\mathbf{r},\mathbf{v},\epsilon)|^2 |\partial^2 R (\mathbf{r})| |\partial^2 R (\mathbf{v})|d\mathbf{r}d\mathbf{v}\lesssim \frac{1}{\epsilon^2}\int_{G_{19}}(r_1-r_2)^2(r_1-r_2)^\alpha (v_1-v_2)^\alpha d\mathbf{r}d\mathbf{v}.$$
$$\lesssim \epsilon^{-\beta} \int_{r_2 < r_1}\int_0^{r_2-\epsilon}\int_{r_1-\epsilon}^{r_2}(v_1-v_2)^{\alpha}(r_1-r_2)^{\alpha+\beta}d\mathbf{r}$$
$$\lesssim \epsilon^{\alpha + \beta-2}\int_{r_2 < r_1}(r_1-r_2)^{\alpha+\beta}d\mathbf{r} \rightarrow 0$$
as $\epsilon \downarrow 0$. The analysis of the sets $G_{17}$ and $G_{18}$ is easy, so we omit the details. Next, we split the set $G_2$ into

$$G_{21} = \{r_2-\epsilon < r_2 < v_2 < v_1 < r_1-\epsilon < r_1\},~G_{22} = \{r_2-\epsilon < r_2 < v_2 < r_1-\epsilon < v_1 < r_1\}$$
$$G_{23} = \{r_2-\epsilon < r_2 < r_1-\epsilon < v_2 < v_1 < r_1\},~G_{24} = \{r_2-\epsilon < r_1-\epsilon < r_2 < v_2 < v_1 < r_1\}$$
We observe $b = \mathbf{0}$ on $G_{21}$ and, for each $i=2,3,4$, one can easily check we can take $2 < p < \frac{-3}{\alpha}$ such that

$$\mathbb{E}\int_{G_{2i}} |b(\mathbf{r},\mathbf{v},\epsilon)|^p |\partial^2 R (\mathbf{r})|^{\frac{p}{2}} |\partial^2 R (\mathbf{v})|^{\frac{p}{2}}d\mathbf{r}d\mathbf{v}\lesssim \int_{r_2 < r_1}(r_1-r_2)^{\alpha p+2} d\mathbf{r}< \infty,$$
for every $\epsilon \in (0,1)$. The analysis over $G_4$ is similar to $G_2$. The analysis of $G_3$ and $G_5$ is straightforward. By symmetry, we conclude
$$\lim_{\epsilon \rightarrow 0^+}\mathbb{E}\int_{[0,T]^2\setminus D}|b(\mathbf{r},\mathbf{v},\epsilon)|^2  |\partial^2R(\mathbf{r})|\partial^2R(\mathbf{v})|d\mathbf{r}d\mathbf{v}=0.
$$
The analysis of the term $a(\mathbf{r},\mathbf{v},\epsilon)$ is similar to $b$, so we may omit the details. Indeed, we need to combine assumptions C and (\ref{mod1}) to check uniform integrability of
$$|a(\mathbf{r},\mathbf{v},\epsilon)|^2|\partial^2 R(\mathbf{r})| |\partial^2 R(\mathbf{v})|$$
just like we did for the term $b$. For the subset $\{(\mathbf{r},\mathbf{v}); 0\le r_2 < r_1\le t, 0\le v_2 < v_2 \le T, r_1 > v_2\}$, we make the analysis over the same partition $\cup_{z=1}^5 G_z$. For the subset $\{(\mathbf{r},\mathbf{v}); 0\le r_2 < r_1\le t, 0\le r_2 < r_1 < v_2 < v_1 \le T\}$, we decompose just like $G_1$ and use assumptions C and (\ref{mod1}). By using symmetry and Vitali convergence theorem, we conclude

$$\lim_{\epsilon \rightarrow 0^+}\mathbb{E}\int_{[0,T]^2\setminus D}|a(\mathbf{r},\mathbf{v},\epsilon)|^2  |\partial^2R(\mathbf{r})|\partial^2R(\mathbf{v})|d\mathbf{r}d\mathbf{v}=0.
$$
This concludes the proof that $J_2(\epsilon,t)\rightarrow 0$, as $\epsilon\downarrow 0$.

\section{Appendix}\label{appendixsec}
\subsection{Proof of Lemma \ref{XL}}
In this sufficient to check the one-dimensional case $d=1$. Let $(v_n)_{n\ge 0}$ be an orthonormal basis of $L_R$. By Corollary 6.49 in \cite{krukrusso}, we know that
$X \in L_R$ a.s and hence we can write
\[
X = \sum_{i=0}^{\infty} F_i v_i \textrm{ in } L_R \ a.s.,
\]
where
\[
F_i= \left<X,v_i\right>_{L_R}.
\]
By Prop. 9.6 in \cite{krukrusso}, there exists $\phi_i \in L_R$ such that $F_i = \int_0^\infty \phi_idX$ so that $F_i\in \mathbb{D}^{1,2}$ for each $i\ge 0$. This shows that $X^n := \sum_{i=0}^{n} F_i v_i \in \mathbb{D}^{1,2}(L_R)$ for each $n\ge 0$. We recall that
\begin{equation}\label{fr1}
\mathbb{E}\|X \|^2_{L_R}=  \int_0^\infty \text{Var}(X_s) R(ds,\infty)-\frac{1}{2} \int_{\mathbb{R}^2_+\setminus D} \text{Var}(X_{s_1}-X_{s_2})\partial^2R(s_1,s_2)ds_1ds_2 < \infty,
\end{equation}
which is finite due to Assumption C. The estimate (\ref{fr1}) implies $\sum_{n\ge 0}\mathbb{E}F^2_n < \infty$ so that

\begin{equation}\label{fr2}
\lim_{n\rightarrow +\infty}\mathbb{E}\|X^n-X\|^2_{L_R}=0.
\end{equation}
It remains to show that the sequence $\mathbf{D}(\sum_{i=0}^{n}F_i
v_i))_{n \geq 0}$ is Cauchy in $L^2\big(\Omega; L_{2,R}\big)$.

It is enough to show

\begin{equation}\label{E455}
\mathbb{E} \left\| \sum_{i=n}^{\infty} \mathbf{D}F_i \otimes v_i
\right\|^2_{L_{2,R}} \rightarrow 0,
\end{equation}
as $n\rightarrow \infty$. Indeed,

$$\mathbb{E} \left\| \sum_{i=n}^{\infty} \mathbf{D}F_i \otimes v_i
\right\|^2_{L_{2,R}}= \mathbb{E} \sum_{i=n}^{\infty} \big\|\mathbf{D}F_i\big\|^2_{L_R} = \sum_{i=n}^{\infty} \big\|\phi_i\big\|^2_{L_R} = \sum_{i=1}^\infty\mathbb{E}F^2_i\rightarrow 0,$$
as $n\rightarrow +\infty$. The estimates (\ref{fr1}), (\ref{fr2}) and (\ref{E455}) allow us to conclude the proof.

\subsection{Proof of Lemma \ref{skolemma}}
Fix $-\frac{3}{2} < \alpha < -1$. Recall that $ R(dt,T)$ is a finite non-negative measure whose support is $[0,T]$ and $|\mu|$ is a sigma-finite positive measure whose support is $[0,T]^2$. By definition, for a given $-1 < r < 1$, we have

\begin{eqnarray*}
\| Y_{\cdot+r}-Y_{\cdot}\|^2_{\mathbb{D}^{1,2}(L_R(\mathbb{R}^d))} &=& \int_0^\infty \| Y_{t+r}-Y_{t}\|^2_{\mathbb{D}^{1,2}(\mathbb{R}^d)}\partial_t R(t,T)dt\\
&+&\frac{1}{2}\int_{\mathbb{R}^2_+\setminus D}\|(Y_{t+r}-Y_{t}) - (Y_{s+r}-Y_{s})\|^2_{\mathbb{D}^{1,2}(\mathbb{R}^d)} |\partial^2 R(s,t)|dsdt\\
&\lesssim& r^{2\gamma} + \frac{1}{2}\int_{\mathbb{R}^2_+\setminus D}\|(Y_{t+r}-Y_{t}) - (Y_{s+r}-Y_{s})\|^2_{\mathbb{D}^{1,2}(\mathbb{R}^d)} |\partial^2 R(s,t)|dsdt,
\end{eqnarray*}
where $|\mu|(dv_1dv_2) = |\partial^2 R (v_1,v_2)|dv_1dv_2$ and $R(dt,T) = R(dt,\infty) = \partial_t R(t,T)dt$. At first, one can easily check Assumption S1 yields

\begin{equation}\label{incrP2}
\| Y_{t+r}-Y_{t} - Y_{s+r} + Y_s\|^2_{\mathbb{D}^{1,2}(\mathbb{R}^d)}\lesssim \min\Big\{|t-s|^{2\gamma}, |r|^{2\gamma}\Big\},
\end{equation}
for $0\le s < t\le T$ and $|r|\in (0,1)$. Having said that, the idea is to split the region

$$
\{(s,t) \in \mathbb{R}^2_+; 0\le s < t < \infty\} = \{(s,t); 0\le s < t < s+|r|\}\cup \{(s,t); 0\le s < s+|r| \le t\}.
$$
By symmetry and using again Assumption S1 and (\ref{incrP2}), we shall write
\begin{equation}\label{splitF}
\int_{\mathbb{R}^2_+\setminus D}\|(Y_{t+r}-Y_{t}) - (Y_{s+r}-Y_{s})\|^2_{\mathbb{D}^{1,2}(\mathbb{R}^d)} |\partial^2 R(s,t)|dsdt
\end{equation}
$$\lesssim \int_{0\le s < t < s+|r|} |t-s|^{2\gamma} |\partial^2 R(s,t)|dsdt $$
$$+ |r|^{2\gamma} \int_{0\le s < s+|r| \le t} |\partial^2 R(s,t)|dsdt.$$
By assumption C,
$$|\partial^2 R(s,t)|\lesssim |t-s|^\alpha + \phi(s,t);~(s,t) \in [0,T]^2\setminus D,$$
where $\phi$ is integrable over $[0,T]^2\setminus D$. For this reason, without any loss of generality, we may assume $\phi=0$. A direct computation yields

\begin{equation}\label{c4}
\int_{0\le s < t < s+|r|} |t-s|^{2\gamma}|\partial^2 R(s,t)|dsdt\lesssim |r|^{2\gamma + \alpha+1},
\end{equation}
for every $|r| \in (0,1)$. We also have,

\begin{equation}\label{c5}
\int_{0\le s < s+|r|\le t} |\partial^2 R(s,t)|dsdt \lesssim \int_0^{T-|r|}\int_{s+|r|}^T(t-s)^{\alpha}dtds\lesssim |r|^{\alpha+1},
\end{equation}
for every $|r| \in (0,1)$. Summing up, (\ref{splitF}), (\ref{c4}) and (\ref{c5}), we have

$$\| Y_{\cdot + r} - Y\|^2_{\mathbb{D}^{1,2}(L_R(\mathbb{R}^d))}\lesssim |r|^{2\gamma+ \alpha+1},$$
for every $|r| \in (0,1)$ and we conclude the proof.

\subsection{Proof of Lemma \ref{TRPlemma}}

The proof follows from routine arguments as summarized here. Let us fix $\frac{1}{4} < H < \frac{1}{2}$ and let $f:\mathbb{R}^d\rightarrow \mathbb{R}^d$ be a  a continuously differentiable function such that $f$ and $\nabla f$ are $\theta$-H\"{o}lder continuous functions with $\frac{1}{2H}-1< \theta\le 1$. Choose an orthonormal basis $\{v_n; n\ge 1\}$ of $L_R(\mathbb{R}^d)$ of continuous functions (see Prop 6.2 in \cite{krukrusso}). The conditions imposed on $(f,\nabla f)$ yields $f(X)\in L_R(\mathbb{R}^d)~a.s$ and we can define

$$F_n :=\sum_{\ell=1}^n \langle f(X), v_\ell \rangle_{L_R(\mathbb{R}^d)}v_\ell; n\ge 1,$$
in such way that $F_n \rightarrow f(X)$ in $L^2(\Omega,L_R(\mathbb{R}^d))$ as $n\rightarrow +\infty$. By Prop 8.12-8.14 in \cite{krukrusso}, the subexponential behavior of $\nabla f$ and the assumption $f \in C^1$ imply $f(X_s) \in \mathbb{D}^{1,2}(\mathbb{R}^d)$ and $\mathbf{D}f(X_s) = \nabla f (X_s) \mathds{1}_{[0,s]}$ for every $s \in [0,T]$. Moreover, by using Lemma 9.13 in \cite{krukrusso}, one can easily check $\langle f(X), v_n \rangle_{L_R(\mathbb{R}^d)} \in \mathbb{D}^{1,2}$ and hence $F_n \in \mathbb{D}^{1,2}\big(L_R(\mathbb{R}^d)\big)
$ for every $n\ge 1$. By using the $\theta$-H\"older regularity of $\nabla f$, we can check
$$
\sup_{n\ge 1}\mathbb{E}\|\mathbf{D}F_n\|^2_{L_{2,R}(\mathbb{R}^{d\times d})} < \infty.
$$
This shows that $f(X) \in \mathbb{D}^{1,2}(L_R(\mathbb{R}^d))$. Clearly,

$$\mathbb{E}|f(X_t) - f(X_s)|^2\lesssim \|f\|^2_\theta|t-s|^{2\theta H},$$
for every $0\le s,t < \infty$,
and
$$|\nabla f(X_a)|\le \|\nabla f\|_\theta |X_a|^\theta + |\nabla f(0)|;~a\ge 0.$$
Then,


$$
\sup_{s\ge 0}\mathbb{E}\big|\nabla f(X_s)\big|^2\lesssim \max\{\|\nabla f\|^2_\theta, |\nabla f(0)|^2\} < \infty.
$$
By definition,
\begin{eqnarray*}
\|f(X_t) - f(X_s)\|^2_{\mathbb{D}^{1,2}(\mathbb{R}^d)} &=& \mathbb{E}|f(X_t) - f(X_s)|^2\\
 &+& \mathbb{E}\| \nabla f(X_t)\mathds{1}_{[0,t]} - \nabla f(X_s)\mathds{1}_{[0,s]}\|^2_{L_R(\mathbb{R}^{d\times d})},
\end{eqnarray*}
and triangle inequality yields

\begin{eqnarray*}
\mathbb{E}\| \nabla f(X_t)\mathds{1}_{[0,t]} - \nabla f(X_s)\mathds{1}_{[0,s]}\|^2_{L_R(\mathbb{R}^{d\times d})}&\lesssim& \mathbb{E}|\nabla f(X_t) - \nabla f(X_s)|^2\|\mathds{1}_{[0,t]}\|^2_{L_R}\\
&+&  \mathbb{E}|\nabla f(X_s)|^2\|\mathds{1}_{[0,t]} -  \mathds{1}_{[0,s]}\|^2_{L_R}\\
&\lesssim& \|\nabla f\|^2_\theta |t-s|^{2\theta H} + |t-s|^{2H}.
\end{eqnarray*}

Therefore,

$$
\|f(X_t) - f(X_s)\|^2_{\mathbb{D}^{1,2}(\mathbb{R}^d)}\lesssim \big\{|t-s|^{2\theta H} + |t-s|^{2H}\big\},
$$
for every $0\le s,t < \infty$. Then, $f(X)$ satisfies the assumptions of Theorem \ref{mainTH}.
In particular, assumptions S1 and S2 hold with exponents $\theta H$ and $1$, respectively. Now, we set $\frac{1}{3} < H < \frac{1}{2}$, $\frac{1}{2H}-1 < \theta\le \frac{1}{H}-2$ and $0 < \gamma < H$. It is known that (see e.g Exercise 13.2 in \cite{hairerbook})

\begin{equation}\label{tayf}
f(y) = f(x) + \nabla f(x) (y-x) + O(|y-x|^{\theta + 1});~y,x \in \mathbb{R}^d.
\end{equation}
Expansion (\ref{tayf}) immediately implies that $\nabla f (X)$ is $\theta \gamma$-H\"older continuous. This concludes the proof.

\subsection{Proof of Lemma \ref{etarde}}
Next, we devote our attention to the proof of Lemma \ref{etarde} but at first, we need two technical lemmas. In the sequel, $a\lesssim_L b$ means $a\le C b$, where $C$ is a constant which depends on a parameter $L$.

\begin{lemma}\label{fin1}
For a given $\frac{1}{3} < \gamma < H < \frac{1}{2}$,

$$\Big\| J^{-1}_{\cdot}\circ V(Y)\mathds{1}_{[0,M]}\Big\|^2_{L_R(\mathbb{R}^d)}\lesssim_{T,H,\gamma} \max\Big\{ \|J^{-1}\|_{\infty,\gamma} \|\nabla V\|_\infty^2\|Y\|_\gamma^2; \|V(Y)\|_\infty \|J^{-1}\|^2_{\infty,\gamma}\Big\}~a.s,$$
for every $M >0$.
\end{lemma}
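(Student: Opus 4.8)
The plan is to expand the squared norm through its definition (\ref{Vinner}) and to estimate the two resulting integrals separately. Writing $g_s := J^{-1}_s\circ V(Y_s)\mathds{1}_{[0,M]}(s)$ and recalling that for the fractional Brownian motion of Example \ref{FBMexample} one has $\partial^2R(s_1,s_2)=H(2H-1)|s_1-s_2|^{2H-2}$ with $H<\frac12$, the measure $\mu$ is non-positive, so that
$$
\|g\|^2_{L_R(\mathbb{R}^d)}=\int_0^\infty |g_s|^2\,R(ds,T)+\frac12\int_{\mathbb{R}^2_+\setminus D}|g_{s_1}-g_{s_2}|^2\,|\mu|(ds_1ds_2),
$$
where $|\mu|(ds_1ds_2)=H(1-2H)|s_1-s_2|^{2H-2}ds_1ds_2$ is supported on $[0,T]^2$. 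Both summands are non-negative, and each will be bounded by the right-hand side of the claim.

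For the first integral I would use the pointwise bound $|g_s|\le \|J^{-1}\|_\infty\|V(Y)\|_\infty$ together with the finiteness $\int_0^T R(ds,T)=R(T,T)=T^{2H}$, giving a contribution $\lesssim_{T,H}\|J^{-1}\|^2_\infty\|V(Y)\|_\infty^2$, which is absorbed into the second term of the stated maximum.

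The double integral is the substantive step. On $\{s_1,s_2\le M\}$ I would use the telescoping decomposition
$$
g_{s_1}-g_{s_2}=(J^{-1}_{s_1}-J^{-1}_{s_2})V(Y_{s_1})+J^{-1}_{s_2}\big(V(Y_{s_1})-V(Y_{s_2})\big),
$$
together with the mean value inequality $|V(Y_{s_1})-V(Y_{s_2})|\le \|\nabla V\|_\infty |Y_{s_1}-Y_{s_2}|$, so that the Hölder seminorms give
$$
|g_{s_1}-g_{s_2}|^2\lesssim \Big(\|J^{-1}\|_\gamma^2\|V(Y)\|_\infty^2+\|J^{-1}\|_\infty^2\|\nabla V\|_\infty^2\|Y\|_\gamma^2\Big)|s_1-s_2|^{2\gamma}.
$$
Integrating against $|\mu|$ then leaves $\int_{[0,T]^2}|s_1-s_2|^{2\gamma+2H-2}ds_1ds_2$, which is finite precisely because $\gamma+H>\tfrac23>\tfrac12$, i.e. $2\gamma+2H-2>-1$. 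On the cross regions $\{s_1\le M<s_2\}$ (and symmetrically) the increment reduces to $|g_{s_1}|\le\|J^{-1}\|_\infty\|V(Y)\|_\infty$, and $\int_0^M\!\!\int_M^T (s_2-s_1)^{2H-2}ds_2\,ds_1\lesssim_{T,H} M^{2H}$ is finite since $2H-1>-1$; if $M\ge T$ this region is empty. Collecting the three pieces bounds $\|g\|^2_{L_R(\mathbb{R}^d)}$ by a sum of products of the seminorms $\|J^{-1}\|_{\infty,\gamma}$, $\|V(Y)\|_\infty$, $\|\nabla V\|_\infty$, $\|Y\|_\gamma$ of exactly the shape displayed in the statement, up to a constant depending only on $T,H,\gamma$ (the precise powers being immaterial for the subsequent application to (\ref{intJ2})--(\ref{intJ3})).

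The main obstacle is the singularity of the kernel $|s_1-s_2|^{2H-2}$ on the diagonal: it is absorbed exactly by the Hölder factor $|s_1-s_2|^{2\gamma}$ under the standing constraint $\gamma+H>\tfrac12$. A secondary point is the jump of $\mathds{1}_{[0,M]}$ at $s=M$; this is harmless because $|s_1-s_2|^{2H-2}$ is integrable off the diagonal ($2H-1>-1$), so that $\mathds{1}_{[0,M]}$ already lies in $L_R$ and the cross regions contribute only the finite uniform term above.
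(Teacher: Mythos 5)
Your proposal is correct and follows essentially the same route as the paper: the paper likewise splits the $L_R$-norm into the $R(ds,T)$ part (bounded by $\|J^{-1}\|^2_\infty\|V(Y)\|^2_\infty$ via the finiteness of the measure) and the double integral against $|\mu|$, decomposes the latter into the block $\{v_1<v_2\le M\}$ and the cross region $\{v_1<M<v_2\}$, and on the block uses the identical telescoping of $J^{-1}_{v_1}\circ V(Y_{v_1})-J^{-1}_{v_2}\circ V(Y_{v_2})$ with the $\gamma$-H\"older seminorms of $J^{-1}$ and $Y$ and the Lipschitz property of $V$ to absorb the kernel singularity through $\int_{v_1<v_2\le M}(v_2-v_1)^{2\gamma+2H-2}d\mathbf{v}<\infty$. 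The only differences are cosmetic (which factor is frozen in the telescoping, and your correct observation that the exact powers in the stated maximum are immaterial for the application).
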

\begin{proof}
In order to alleviate notation, we write $\mathbf{v} = (v_1,v_2) \in \mathbb{R}^2_+\setminus D$. Fix an arbitrary initial condition $Y_0=x_0$ and $M>0$. By assumption $V \in C^3_b (\mathbb{R}^d; \mathbb{R}^{d\times d})$ so that

$$
\|V(Y)\|^p_\infty\lesssim \|\nabla V\|_\infty^p \big\{\|Y\|^p_\infty + |x_0|^p \big\} +|V(x_0)|^p,
$$
for every $1\le p < \infty $. Since $R(dt,T)$ is a positive finite measure on $[0,T]$, then

$$
\big\| J^{-1}\circ V(Y)\mathds{1}_{[0,M]}\big\|^2_{L_R(\mathbb{R}^d)}\lesssim \|J^{-1}\|^2_\infty \| V(Y)\|^2_\infty
$$
$$+ 2\int\int_{0\le v_1 < v_2} \Big| J^{-1}_{v_1}\circ V(Y_{v_1})\mathds{1}_{[0,M]}(v_1) - J^{-1}_{v_2}\circ V(Y_{v_2})\mathds{1}_{[0,M]}(v_2)\Big|^2|\partial^2R(\mathbf{v})|d\mathbf{v}$$
$$=: Q_1 + Q_2.$$
Let us decompose

$$Q_2 = 2\int\int_{0\le v_1 < v_2\le M} \Big| J^{-1}_{v_1}\circ V(Y_{v_1})\mathds{1}_{[0,M]}(v_1) - J^{-1}_{v_2}\circ V(Y_{v_2})\mathds{1}_{[0,M]}(v_2)\Big|^2|\partial^2R(\mathbf{v})|d\mathbf{v} $$
$$+ 2\int\int_{0\le v_1 < M < v_2} \Big| J^{-1}_{v_1}\circ V(Y_{v_1})\mathds{1}_{[0,M]}(v_1) - J^{-1}_{v_2}\circ V(Y_{v_2})\mathds{1}_{[0,M]}(v_2)\Big|^2|\partial^2R(\mathbf{v})|d\mathbf{v}$$
$$=:Q_{2,1} + Q_{2,2}.$$
Clearly,
$$
Q_{2,2}\lesssim \|J^{-1}\|_\infty^2 \|V(Y)\|_\infty^2 T^{2H}~a.s.
$$
We write
\begin{eqnarray}
\nonumber J^{-1}_{v_1}\circ V(Y_{v_1}) - J^{-1}_{v_2}\circ V(Y_{v_2})&=& J^{-1}_{v_1}\circ V(Y_{v_1}) - J^{-1}_{v_1}\circ V(Y_{v_2})\\
\label{fp1}&+& J^{-1}_{v_1}\circ V(Y_{v_2}) - J^{-1}_{v_2}\circ V(Y_{v_2}).
\end{eqnarray}
By using (\ref{fp1}), the $\gamma$-H\"older property of $(J^{-1},Y)$, the Lipschitz property $V$ and triangle inequality, we have

\begin{eqnarray*}
Q_{2,1}&\lesssim&  \max \Big\{  \|J^{-1}\|^2_\infty\|\nabla V\|_\infty^2 \|Y\|_\gamma^2; \|V(Y)\|_\infty^2 \|J^{-1}\|_\gamma^2\Big\}\\
&\times& \int_{0\le v_1 < v_2\le M}(v_2-v_1)^{2\gamma+2H-2}d\mathbf{v} < \infty.
\end{eqnarray*}
This concludes the proof.
\end{proof}

\begin{lemma}\label{fin2}
For a given $\frac{1}{3} < \gamma < H < \frac{1}{2}$, we have

\begin{eqnarray*}
\Big\| J^{-1}_{\cdot}\circ V(Y)\mathds{1}_{(N,M]}\Big\|^2_{L_R(\mathbb{R}^d)}&\lesssim_{T,H,\gamma}& \max\Big\{ \|J^{-1}\|^2_{\infty,\gamma} \|V(Y)\|^2_\infty; \|\nabla V\|_\infty^2\|Y\|_\gamma^2; \|V(Y)\|^2_\infty \Big\}\\
&\times& \Big\{ |T\wedge M - T\wedge N|^{2H} + |T\wedge M  - T\wedge N|^{2\gamma+2H}\Big\}~a.s,
\end{eqnarray*}
for every $N < M < \infty$.
\end{lemma}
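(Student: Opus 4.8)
The plan is to mirror the proof of Lemma~\ref{fin1}, the only new ingredient being that every estimate must now be tracked against the length of the integration window. Write $g_\cdot := J^{-1}_\cdot\circ V(Y)$ and $f := g\,\mathds{1}_{(N,M]}$, and expand $\|f\|^2_{L_R(\mathbb{R}^d)}$ through (\ref{Vinner}). Since $\mu$ is non-positive we have $-\mu=|\mu|$, so the norm is a sum of two non-negative pieces: a diagonal piece $\int_0^\infty |f(s)|^2 R(ds,T)$ and an off-diagonal piece $\tfrac12\int_{\mathbb{R}^2_+\setminus D}|f(s_1)-f(s_2)|^2|\mu|(ds_1ds_2)$. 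Because $R(\cdot,T)$ and $|\mu|$ are supported on $[0,T]$ and $f$ is cut off by $\mathds{1}_{(N,M]}$, the effective domain is contained in the window $[a,b]$ with $a:=(0\vee N)\wedge T$ and $b:=M\wedge T$, where $b-a\le |T\wedge M-T\wedge N|=:\Delta$ (the case $a\ge b$ being trivial since then $f\equiv 0$). Recall that in the present fractional Brownian setting $\alpha+2=2H$, $\phi=0$, and $|\partial^2R(v_1,v_2)|\lesssim |v_1-v_2|^{2H-2}$.

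For the diagonal piece I would bound $|f|\le\|g\|_\infty\le\|J^{-1}\|_\infty\|V(Y)\|_\infty$ pointwise and use $\int_a^b R(ds,T)=R(b,T)-R(a,T)$, which by Assumption~C(iii) is $\lesssim |b-a|^{2H}\le\Delta^{2H}$. This produces a contribution $\lesssim \|J^{-1}\|^2_{\infty,\gamma}\|V(Y)\|^2_\infty\,\Delta^{2H}$, already of the announced form.

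The off-diagonal piece is the substantive one, and I would split $[0,T]^2\setminus D$ according to whether $s_1,s_2$ lie in the window $(a,b]$. When both are outside, $f$ vanishes and there is no contribution. When both are inside, I use the $\gamma$-H\"older continuity of $g$ obtained exactly as in the decomposition (\ref{fp1}), namely $|g(s_1)-g(s_2)|^2\lesssim\max\{\|J^{-1}\|_\infty^2\|\nabla V\|_\infty^2\|Y\|_\gamma^2;\ \|J^{-1}\|_\gamma^2\|V(Y)\|_\infty^2\}\,|s_1-s_2|^{2\gamma}$, and then integrate the singular kernel: $\int_a^b\int_a^{s_1}(s_1-s_2)^{2\gamma+2H-2}\,ds_2\,ds_1\lesssim(b-a)^{2\gamma+2H}$, the exponent $2\gamma+2H-2>-1$ being integrable precisely because $\gamma,H>\tfrac13$. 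This yields the $\Delta^{2\gamma+2H}$ term. On the mixed region, say $s_1\in(a,b]$ and $s_2\notin(a,b]$, the difference collapses to the single value $g(s_1)$, so $|f(s_1)-f(s_2)|^2\le\|g\|_\infty^2$, and it remains to integrate $|s_1-s_2|^{2H-2}$ over $s_2\in[0,a)\cup(b,T]$ and $s_1\in(a,b]$.

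The main obstacle will be this last mixed-region integral, where the kernel is singular on the diagonal $s_1=s_2$ yet must be integrated over the region where $s_2$ is pushed outside $(a,b]$. The point is that the inner integral $\int_0^a(s_1-s_2)^{2H-2}ds_2+\int_b^T(s_2-s_1)^{2H-2}ds_2$ evaluates, after using $2H-1<0$, to an expression of the form $(s_1-a)^{2H-1}-s_1^{2H-1}$, whose outer integral over $s_1\in(a,b]$ is controlled by the concavity estimate $b^{2H}-a^{2H}\le (b-a)^{2H}$, giving $\lesssim|b-a|^{2H}\le\Delta^{2H}$ and a factor $\|g\|_\infty^2\le\|J^{-1}\|_\infty^2\|V(Y)\|_\infty^2$. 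Collecting the three contributions, bounding $\|J^{-1}\|_\infty\le\|J^{-1}\|_{\infty,\gamma}$ to fold them into the stated maximum, and using that $t\mapsto t^{2H}$ and $t\mapsto t^{2\gamma+2H}$ are increasing so that $b-a$ may be replaced by $\Delta$, completes the estimate.
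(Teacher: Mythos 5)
Your proof is correct and takes essentially the same route as the paper's: expand the $L_R(\mathbb{R}^d)$-norm via (\ref{Vinner}), bound the diagonal term by Assumption C(iii), split the off-diagonal double integral according to the position of $(s_1,s_2)$ relative to the window $(N,M]$, and combine sup-norm bounds off the window with the $\gamma$-H\"older decomposition (\ref{fp1}) inside it against the kernel $|s_1-s_2|^{2H-2}$, whose integrability uses exactly $2\gamma+2H>1$. The only (cosmetic) difference is that you split into regions before applying the triangle inequality to $J^{-1}\circ V(Y)$, so the mixed region contributes $\Delta^{2H}$ directly with $\Delta:=|T\wedge M-T\wedge N|$, whereas the paper's ordering produces an intermediate first-power term $|T\wedge M-N|$ that must afterwards be absorbed into the $\Delta^{2H}$ term using $2H<1$.
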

\begin{proof}
In order to alleviate notation, we write $\mathbf{v} = (v_1,v_2) \in \mathbb{R}^2_+\setminus D$. Fix $N < M$. Triangle inequality yields


$$\Big\| J^{-1}_{\cdot}\circ V(Y)\mathds{1}_{(N,M]}\Big\|^2_{L_R(\mathbb{R}^d)}\lesssim \| J^{-1}\|_\infty^2 \| V(Y)\|_\infty^2 |T\wedge M  - N|^{2H}$$
$$+  \|J^{-1}\|^2_\infty \int_{\mathbb{R}^2_+\setminus D} | V(Y_{v_1})\mathds{1}_{(N,M]}(v_1) - V(Y_{v_2})\mathds{1}_{(N,M]}(v_2)|^2|\partial^2R(\mathbf{v})|d\mathbf{v}$$
$$+   \int_{\mathbb{R}^2_+\setminus D} \Big| \big[J^{-1}_{v_1}\circ V(Y_{v_2}) - J^{-1}_{v_2}\circ V(Y_{v_2})\big]\mathds{1}_{(N,M]}(v_2)\Big|^2|\partial^2R(\mathbf{v})|d\mathbf{v}.$$
We split

$$\int_{\mathbb{R}^2_+\setminus D} | V(Y_{v_1})\mathds{1}_{(N,M]}(v_1) - V(Y_{v_2})\mathds{1}_{(N,M]}(v_2)|^2|\partial^2R(\mathbf{v})|d\mathbf{v}$$
$$ = 2 \int_{0\le N < v_1 < v_2 \le M}|V(Y_{v_1}) - V(Y_{v_2})|^2 |\partial^2R(\mathbf{v})|d\mathbf{v}$$
$$+ 2 \int_{0\le N < v_1 < M < v_2}|V(Y_{v_1})|^2 |\partial^2R(\mathbf{v})|d\mathbf{v} $$
$$+ 2 \int_{0\le v_1 \le N < v_2\le M}|V(Y_{v_2})|^2 |\partial^2R(\mathbf{v})|d\mathbf{v}. $$
We observe
$$\int_{0\le N < v_1 < v_2 \le M}|V(Y_{v_1}) - V(Y_{v_2})|^2 |\partial^2R(\mathbf{v})|d\mathbf{v}
$$
$$\le \| \nabla V\|_\infty^2 \|Y\|_\gamma^2 \int_{N < v_1 < v_2\le M}(v_2-v_1)^{2\gamma}|\partial^2R(\mathbf{v})|d\mathbf{v},$$
where $\int_{N < v_1 < v_2\le M}(v_2-v_1)^{2\gamma}|\partial^2R(\mathbf{v})|d\mathbf{v} = 0$ if $T\le N < M$. Otherwise,

\begin{eqnarray*}
\int_{N < v_1 < v_2\le M}(v_2-v_1)^{2\gamma}|\partial^2R(\mathbf{v})|d\mathbf{v} &=& \int_N^{T\wedge M}\int_{v_1}^{M\wedge T}(v_2-v_1)^{2\gamma + 2H-2}d\mathbf{v}\\
&\lesssim&  |M\wedge T - N|^{2\gamma + 2H}.
\end{eqnarray*}
Therefore,

$$
\int_{0\le N < v_1 < v_2 \le M}|V(Y_{v_1}) - V(Y_{v_2})|^2 |\partial^2R(\mathbf{v})|d\mathbf{v}\lesssim \| \nabla V\|_\infty^2 \|Y\|_\gamma^2 |M\wedge T - N|^{2\gamma + 2H}.
$$
Next, we observe
$$
\int_{0\le N < v_1 < M < v_2}|V(Y_{v_1})|^2 |\partial^2R(\mathbf{v})|d\mathbf{v}\lesssim  \| V(Y)\|^2_\infty (M-N)^{2H},
$$
and

$$
\int_{0\le v_1 \le N  < v_2\le M}|V(Y_{v_1})|^2 |\partial^2R(\mathbf{v})|d\mathbf{v}\lesssim  \| V(Y)\|^2_\infty (M\wedge T-N)^{2H}.
$$
Lastly, we observe

$$\int_{\mathbb{R}^2_+\setminus D} \Big| \big[J^{-1}_{v_1}\circ V(Y_{v_2}) - J^{-1}_{v_2}\circ V(Y_{v_2})\big]\mathds{1}_{(N,M]}(v_2)\Big|^2|\partial^2R(\mathbf{v})|d\mathbf{v}$$
$$\le \|J^{-1}\|_\gamma^2 \| V(Y)\|^2_\infty\int_{\mathbb{R}^2_+ \setminus D}|v_1-v_2|^{2\gamma} \mathds{1}_{(N,M]}(v_2)|\partial^2 R(\mathbf{v})|d\mathbf{v}$$
$$ = 2\|J^{-1}\|_\gamma^2 \| V(Y)\|^2_\infty\int_{0\le v_1 < N < v_2\le M}|v_1-v_2|^{2\gamma} |\partial^2 R(\mathbf{v})|d\mathbf{v} $$
$$
= 2\|J^{-1}\|_\gamma^2 \| V(Y)\|^2_\infty\int_{0\le N\le v_1 < v_2\le M}|v_1-v_2|^{2\gamma} |\partial^2 R(\mathbf{v})|d\mathbf{v}
$$
$$
\lesssim  \|J^{-1}\|_\gamma^2 \| V(Y)\|^2_\infty \big\{|T\wedge M-N|^{2\gamma + 2H} + |T\wedge M-N|\big\}.
$$
This concludes the proof.
\end{proof}

\

\noindent \textbf{Proof of Lemma \ref{etarde}}. Fix $\frac{1}{3} < \gamma < H < \frac{1}{2}$ and $Y_0=x_0$. At first, it is well-known that $V\in C^3_b$ implies that $Y_t \in \mathbb{D}^{1,2}(\mathbb{R}^d)$ for every $t\ge 0$ and

$$\mathbf{D}_sY_t = J_t \circ J^{-1}_s\circ V(Y_s)\mathds{1}_{[0,t]}(s).$$
The H\"older regularity of $Y$ yields

$$\mathbb{E}|Y_t - Y_s|^2\le \mathbb{E}\|Y\|_\gamma^2|t-s|^{2\gamma}.$$
Moreover,

$$\mathbb{E}\|V(Y)\|^p_\infty\lesssim \{\|\nabla V\|^p_\infty\mathbb{E}\|Y\|_\gamma^p  + |V(x_0)|^p\}.$$
We know that 
$$
\mathbf{D} Y_t  - \mathbf{D}Y_s= J_t\circ J^{-1}\circ V(Y)\mathds{1}_{[0,t]} - J_s\circ J^{-1}\circ V(Y)\mathds{1}_{[0,s]}. 
$$
Then, by applying Lemmas \ref{fin1} and \ref{fin2} above, we get

\begin{eqnarray*}
\mathbb{E}\|\mathbf{D}Y_t - \mathbf{D}Y_s\|^2_{L_R(\mathbb{R}^{d\times d})}&\lesssim& \mathbb{E}\Big[ |J_t-J_s|^2\big\| J^{-1}\circ V(Y)\mathds{1}_{[0,t]}\big\|^2_{L_R(\mathbb{R}^{d\times d})} \Big]\\
&+&  \mathbb{E}\Big\| J_s\circ J^{-1}\circ V(Y)\{\mathds{1}_{[0,t]} - \mathds{1}_{[0,s]}\}\Big\|^2_{L_R(\mathbb{R}^{d\times d})}\\
&\lesssim&  |t-s|^{2\gamma} +  |t-s|^{2\gamma+2H}.
\end{eqnarray*}
This concludes the proof of Lemma \ref{etarde}.

\subsection{Proof of Theorem \ref{mainTH}}\label{proofTh2}
This section is devoted to the proof of Theorem \ref{mainTH}. Although it is possible to present a complete analysis under the general assumption C, in order to keep presentation simple, we restrict the analysis to the concrete case of the fractional Brownian motion $\frac{1}{4} < H < \frac{1}{2}$. In the sequel, we assume the hypotheses of Theorem \ref{mainTH} are in force. In this section, we fix $\frac{1}{4} < H < \frac{1}{2}$ and $Y \in \mathbb{D}^{1,2}(L_R(\mathbb{R}^d))$ is adapted.

First, by using (\ref{multfor}), we can write

\begin{eqnarray*}
Y^i_s \big( X^i_{s+\epsilon} - X^i_{s-\epsilon}\big) &=& Y^i_s \int_0^\infty \mathds{1}_{[s-\epsilon,s+\epsilon]}(r) e_i\boldsymbol{\delta} X_r\\
&=& \int_0^\infty Y^i_s \mathds{1}_{[s-\epsilon,s+\epsilon]}(r) e_i\boldsymbol{\delta} X_r + \big \langle \mathbf{D}Y^i_s, \mathds{1}_{[s-\epsilon,s+\epsilon]} e_i  \big \rangle_{L_R(\mathbb{R}^d)},
\end{eqnarray*}
for $1\le i\le d$, $s\ge 0$ and $\epsilon>0$. By applying Fubini's theorem for Skorohod integrals (see Prop. 10.3 in \cite{krukrusso}) and recalling (\ref{Ybar}), we have

\begin{eqnarray*}
\nonumber\frac{1}{2\epsilon}\int_\epsilon^{T-\epsilon} \langle Y_s, X_{s+\epsilon} - X_{s-\epsilon} \rangle ds &=& \int_0^T \bar{Y}^\epsilon_s\boldsymbol{\delta}X_s +\frac{1}{2\epsilon}\int_\epsilon^{T-\epsilon} \sum_{i=1}^d \langle \mathbf{D}Y^i_s, e_i\mathds{1}_{[s-\epsilon, s+\epsilon]} \rangle_{L_R(\mathbb{R}^d)}ds.
\end{eqnarray*}
If, in addition, there exists $q>2$ such that $\sup_{0\le t\le T}\mathbb{E}| Y_t|^q  < \infty$, then Assumption C(ii), Jensen and H\"older's inequality yield

\begin{eqnarray*}
\frac{1}{2\epsilon}\int_0^T \langle Y_s, X_{s+\epsilon} - X_{s-\epsilon} \rangle ds&=& \int_0^T \bar{Y}^\epsilon_u \boldsymbol{\delta} X_u\\ 
&+&  \frac{1}{2\epsilon}\int_\epsilon^{T-\epsilon} \sum_{i=1}^d \langle \mathbf{D}Y^i_s, e_i\mathds{1}_{[s-\epsilon, s+\epsilon]} \rangle_{L_R(\mathbb{R}^d)}ds + O_{L^2(\mathbb{P})}(\epsilon^{\alpha+2}).
\end{eqnarray*}

By Proposition \ref{epsilonlemma}, we know that

$$
\mathbb{E}\Bigg|\int_0^T \big(\bar{Y}^\epsilon_s  - Y_s\big)\boldsymbol{\delta} X_s \Bigg|^2\lesssim \epsilon^{2\gamma + 2H-1},
$$
for $0 < \gamma \le H$ and $2\gamma + 2H-1>0$. The next lemmas will present the precise asymptotic behavior of

\begin{equation*}
\textit{Tr}\big(\mathbf{D}Y\big)_{\epsilon}:=\frac{1}{2\epsilon}\int_\epsilon^{T-\epsilon} \sum_{i=1}^d \langle \mathbf{D}Y^i_s, e_i\mathds{1}_{[s-\epsilon, s+\epsilon]} \rangle_{L_R(\mathbb{R}^d)}ds,
\end{equation*}
as $\epsilon \downarrow 0$ which allows us to prove Theorem \ref{mainTH}.

We may decompose

\begin{eqnarray}
\nonumber \textit{Tr}\big(\mathbf{D}Y\big)_{\epsilon} &=& \frac{1}{2\epsilon}\int_\epsilon^{T-\epsilon} \textit{tr}[\mathbf{D}_{s-}Y_s] \big\langle \mathds{1}_{[0,s]}, \mathds{1}_{[s-\epsilon, s+\epsilon]} \big\rangle_{L_R} ds\\
\nonumber&+&\frac{1}{2\epsilon}\int_\epsilon^{T-\epsilon} \Big\langle \textit{tr}[(\mathbf{D} Y_s - \mathbf{D}_{s-}Y_s)]\mathds{1}_{[0,s]}, \mathds{1}_{[s-\epsilon, s+\epsilon]} \Big\rangle_{L_R}ds\\
\label{secs}&=:& \textit{Tr}_1\big(\mathbf{D}Y\big)_{\epsilon} + \textit{Tr}_2\big(\mathbf{D}Y\big)_{\epsilon}.
\end{eqnarray}

\begin{lemma}\label{Tr1lemma}
Assume that

\begin{equation}\label{trint}
\sup_{0\le s\le T}\mathbb{E}|\text{tr}[\mathbf{D}_{s-}Y_s]|^2 < \infty.
\end{equation}
Then,

\begin{equation}\label{tr1}
\mathbb{E}\Big| \text{Tr}_1 (\mathbf{D}Y)_\epsilon - \frac{1}{2}\int_0^T \text{tr}[\mathbf{D}_{s-}Y_s]d\boldsymbol{v}_s\Big|^2\lesssim  \epsilon^{\frac{6H-1}{2}}
\end{equation}
for every $\epsilon >0$ such that $\epsilon^{0.75} + 2\epsilon < T$, where $\boldsymbol{v}(s) = s^{2H}; s\ge 0$.
\end{lemma}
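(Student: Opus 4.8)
The plan is to reduce the statement to an explicit deterministic computation of the kernel $\langle \mathds{1}_{[0,s]}, \mathds{1}_{[s-\epsilon,s+\epsilon]}\rangle_{L_R}$ and then to control, in $L^2(\mathbb{P})$, a weighted average of $g(s):=\text{tr}[\mathbf{D}_{s-}Y_s]$. First I would use the identity (\ref{cov1}) together with the fractional Brownian covariance $R(s,t)=\frac12(s^{2H}+t^{2H}-|t-s|^{2H})$ and bilinearity to obtain, for $s\in[\epsilon,T-\epsilon]$,
$$\langle \mathds{1}_{[0,s]}, \mathds{1}_{[s-\epsilon,s+\epsilon]}\rangle_{L_R}= R(s,s+\epsilon)-R(s,s-\epsilon)=\tfrac12\big[(s+\epsilon)^{2H}-(s-\epsilon)^{2H}\big]=H\int_{s-\epsilon}^{s+\epsilon}u^{2H-1}\,du,$$
so that $\text{Tr}_1(\mathbf{D}Y)_\epsilon=H\int_\epsilon^{T-\epsilon} g(s)\,\overline{a}_\epsilon(s)\,ds$ with $\overline{a}_\epsilon(s):=\frac{1}{2\epsilon}\int_{s-\epsilon}^{s+\epsilon}u^{2H-1}\,du$. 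Since $d\boldsymbol{v}_s=2H\,s^{2H-1}ds$, the limit $\frac12\int_0^T g(s)\,d\boldsymbol{v}_s$ equals $H\int_0^T g(s)\,s^{2H-1}\,ds$, and the difference splits as an \emph{averaging error} $H\int_\epsilon^{T-\epsilon} g(s)\big(\overline{a}_\epsilon(s)-s^{2H-1}\big)ds$ plus a \emph{boundary mismatch} $-H\int_{[0,\epsilon]\cup[T-\epsilon,T]} g(s)\,s^{2H-1}\,ds$.

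Next I would estimate each piece in $L^2(\mathbb{P})$ by Minkowski's integral inequality, using $\sup_{s}\|g(s)\|_{L^2(\mathbb{P})}<\infty$ from (\ref{trint}). The boundary mismatch is immediate: near $0$ one has $\int_0^\epsilon s^{2H-1}ds\lesssim \epsilon^{2H}$ (here $2H-1>-1$ makes $s^{2H-1}$ locally integrable), and near $T$ the bounded weight gives an extra factor $\epsilon$, so after squaring this contributes $O(\epsilon^{4H})+O(\epsilon^2)$. For the averaging error the decisive quantity is $\overline{a}_\epsilon(s)-s^{2H-1}$, and I would treat it by two estimates: away from the diagonal a mean value bound $|\overline{a}_\epsilon(s)-s^{2H-1}|\lesssim \epsilon\,(s-\epsilon)^{2H-2}$, and close to $s=\epsilon$ only the crude bound $|\overline{a}_\epsilon(s)-s^{2H-1}|\le \overline{a}_\epsilon(s)+s^{2H-1}$.

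Accordingly I would split $[\epsilon,T-\epsilon]$ at the point $\epsilon^{3/4}$; this is precisely the role of the hypothesis $\epsilon^{0.75}+2\epsilon<T$, which guarantees that the cutoff lies strictly inside the interval. On the bulk $[\epsilon^{3/4},T-\epsilon]$ the refined bound produces $\epsilon\int_{\epsilon^{3/4}}^{T-\epsilon}(s-\epsilon)^{2H-2}ds$, and since $2H-2<-1$ the integral is dominated by its lower endpoint $\approx(\epsilon^{3/4})^{2H-1}$, giving a term of order $\epsilon^{(6H+1)/4}$, i.e.\ $\epsilon^{(6H+1)/2}$ after squaring. On the near-diagonal piece $[\epsilon,\epsilon^{3/4}]$ the crude bound integrates (using Fubini for the $\overline{a}_\epsilon$ part) to $\int_\epsilon^{\epsilon^{3/4}}(\overline{a}_\epsilon(s)+s^{2H-1})ds\lesssim (\epsilon^{3/4})^{2H}=\epsilon^{3H/2}$, hence is of order $\epsilon^{3H}$ after squaring. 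Collecting the four contributions, the near-diagonal term $\epsilon^{3H}$ dominates, and since $3H\ge\frac{6H-1}{2}$ it is in particular $\lesssim\epsilon^{(6H-1)/2}$, which is the stated order.

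The main obstacle — and the reason the split is unavoidable — is exactly the non-integrable singularity of the sharp kernel $(s-\epsilon)^{2H-2}$ as $s\downarrow\epsilon$ (recall $2H-2<-1$ for $H<\tfrac12$): one cannot integrate the refined estimate down to the diagonal, so one must sacrifice it on a shrinking neighbourhood of $s=\epsilon$ and balance the crude near-diagonal estimate against the refined bulk estimate at the cutoff $\epsilon^{3/4}$. Everything beyond this balance is a routine application of Minkowski's inequality together with the uniform second-moment bound (\ref{trint}); I note that exploiting the symmetry of the window $[s-\epsilon,s+\epsilon]$ (which annihilates the first-order term and improves the bulk estimate to $\epsilon^2(s-\epsilon)^{2H-3}$) would only sharpen the constants and is not needed to reach the claimed exponent.
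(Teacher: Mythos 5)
Your argument is correct, and it shares the paper's skeleton: the same reduction via (\ref{cov1}) to the symmetric difference quotient of $\boldsymbol{v}(s)=s^{2H}$, the same decomposition into an averaging error on $[\epsilon,T-\epsilon]$ plus boundary mismatches near $0$ and $T$, the same cutoff at $\epsilon^{3/4}$, and the same use of the uniform second-moment bound (\ref{trint}) through Minkowski/Fubini. Where you genuinely diverge is in the estimates on either side of the cutoff. The paper sets $V_\epsilon(s)=\frac{\boldsymbol{v}(s+\epsilon)-\boldsymbol{v}(s-\epsilon)}{2\epsilon}-\boldsymbol{v}^{(1)}(s)=\frac{\epsilon^2}{6}\boldsymbol{v}^{(3)}(a(s,\epsilon))$ (exactly the symmetric third-order Taylor remainder you note is available but unnecessary), bounds it on the bulk by its supremum $\epsilon^{2}|\boldsymbol{v}^{(3)}(\epsilon^{\beta})|$ and on the near-diagonal strip by a sup-times-length bound of order $\epsilon^{\beta(2H+1)-1}$, and then \emph{optimizes} the cutoff exponent over $\beta\in\big(\frac{1}{1+2H},\frac{2}{3-2H}\big)$; the optimum $\beta^{\ast}=0.75$ balances the two terms at $\epsilon^{(6H-1)/4}$ each, which is precisely where both the hypothesis $\epsilon^{0.75}+2\epsilon<T$ and the exponent $\frac{6H-1}{2}$ come from. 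You instead use the first-order mean-value bound $\epsilon(s-\epsilon)^{2H-2}$ and \emph{integrate} it over the bulk rather than sup-bounding, getting $\epsilon^{(6H+1)/4}$, and near the diagonal you integrate the crude bound via Fubini, getting $\epsilon^{3H/2}$ in place of the paper's $\epsilon^{(6H-1)/4}$. The net effect is that your dominant squared contribution is $\epsilon^{3H}$, strictly sharper than the stated $\epsilon^{(6H-1)/2}=\epsilon^{3H-\frac{1}{2}}$, and since $3H\ge 3H-\frac{1}{2}$ the lemma follows; your route also renders the optimization step superfluous, the fixed cutoff $\epsilon^{3/4}$ serving only to match the stated hypothesis rather than to balance the two error terms. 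In short: same decomposition, more elementary Taylor input, integrated rather than sup-norm kernel bounds, and a slightly better rate as a by-product.
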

\begin{proof}
In the sequel, we denote $\boldsymbol{v}(s) := s^{2H}; s\ge 0$. The $n$-th derivative of a function $f$ will be denoted by $f^{(n)}$. We observe $\boldsymbol{v}$ satisfies the following properties: $s\mapsto\boldsymbol{v}(s) $ is a $C^3(0,T)$ non-decreasing map and $s\mapsto |\boldsymbol{v}^{(3)}(s)|$ is non-increasing. In addition, there exists $\beta \in (0,1)$ such that $|\boldsymbol{v}^{(3)}(\epsilon^{\beta})|\epsilon^2\rightarrow 0$ and $\epsilon^{\beta (2H+1)-1} \rightarrow 0$ as $\epsilon\rightarrow 0^+$. Indeed, $\boldsymbol{v}^{(3)}(s) = c_H s^{2H-3}$ for a positive constant $c_{H}$ and notice

$$\frac{1}{1+2H} < \frac{2}{3-2H}$$
for $H > \frac{1}{6}$. Therefore, we can take any $\beta$ realizing

\begin{equation}\label{betap}
0 < \frac{1}{1+2H} < \beta < \frac{2}{3-2H} < 1,
\end{equation}
and for any such choice, we have $\epsilon^{\beta (2H-3)+2}\rightarrow 0$ and $\epsilon^{\beta(2H+1)-1}\rightarrow 0,
$ as $\epsilon \rightarrow 0^+$. Having said that, by using (\ref{cov1}) and elementary computations for the covariance, we can write

$$
\frac{1}{2\epsilon}\int_\epsilon^{T-\epsilon} \Big\langle \textit{tr}[\mathbf{D}_{s-}Y_s]\mathds{1}_{[0,s]}, \mathds{1}_{[s-\epsilon, s+\epsilon]} \Big\rangle_{L_R} ds
=\int_\epsilon^{T-\epsilon} \textit{tr}\big[\mathbf{D}_{s-}Y_s\big] dF_{\epsilon}(s),
$$

where
\begin{eqnarray*}
F_{\epsilon}(x) &=& \frac{1}{2\epsilon}\int_0^x \langle \mathds{1}_{[0,r]}, \mathds{1}_{[r-\epsilon, r+\epsilon]} \rangle_{L_R}dr\\
&=&\frac{1}{2} \int_0^x \Big[\frac{\boldsymbol{v}(r+\epsilon) - \boldsymbol{v}(r-\epsilon)}{2\epsilon}\Big]dr; x\ge 0.
\end{eqnarray*}
We denote

$$V_\epsilon(s) =  \frac{\boldsymbol{v}(s+\epsilon) - \boldsymbol{v}(s-\epsilon)}{2\epsilon} - \boldsymbol{v}^{(1)}(s); \epsilon < s  < T-\epsilon.$$
Taylor formula and mean value theorem yield

\begin{equation}\label{Vepsiest1}
V_\epsilon(s) = \frac{\epsilon^2}{6}\boldsymbol{v}^{(3)}(a(s,\epsilon)),
\end{equation}
where $a(s,\epsilon) \in (s-\epsilon,s+\epsilon)$ and $\epsilon < s  < T-\epsilon$. Fix $0 < \beta < 1$ according to (\ref{betap}). We split

\begin{eqnarray*}
\int_\epsilon^{T-\epsilon} \textit{tr}\big[\mathbf{D}_{s-}Y_s\big]\Big[F^{(1)}_{\epsilon}(s) - \frac{1}{2}\boldsymbol{v}^{(1)}(s)\Big]ds &=&\frac{1}{2}\int_{\epsilon^\beta + \epsilon}^{T-\epsilon}\textit{tr}\big[\mathbf{D}_{s-}Y_s\big]V_\epsilon(s)ds\\
&+& \frac{1}{2}\int_\epsilon^{\epsilon^\beta + \epsilon}\textit{tr}\big[\mathbf{D}_{s-}Y_s\big]V_\epsilon(s)ds,
\end{eqnarray*}
where we may assume $\epsilon^\beta + 2\epsilon < T$. By (\ref{Vepsiest1}), we observe

\begin{equation}\label{Vepsiest2}
\big|V_\epsilon(s)\big|= \frac{\epsilon^2}{6}|\boldsymbol{v}^{(3)}(a(s,\epsilon))|\lesssim \epsilon^2|\boldsymbol{v}^{(3)}(\epsilon^\beta)|,
\end{equation}
for every $s \in (\epsilon^\beta + \epsilon,T-\epsilon)$. By applying Jensen's inequality, using assumption (\ref{trint}) and (\ref{Vepsiest2}), we get

\begin{equation}\label{f1trace1}
\mathbb{E}\Bigg|\frac{1}{2}\int_{\epsilon^\beta + \epsilon}^{T-\epsilon}\textit{tr}[\mathbf{D}_{s-}Y_s]V_\epsilon(s)ds\Bigg|^2\lesssim \big(\epsilon^2|\boldsymbol{v}^{(3)}(\epsilon^\beta)|\big)^2 \rightarrow 0,
\end{equation}
as $\epsilon \rightarrow 0^+$. Fubini's theorem and (\ref{trint}) yield

$$\mathbb{E}\Bigg|\int_\epsilon^{\epsilon^\beta + \epsilon}\textit{tr}[\mathbf{D}_{s-}Y_s]V_\epsilon(s)ds\Bigg|^2
$$
$$=\mathbb{E}\int_\epsilon^{\epsilon^\beta + \epsilon}\int_\epsilon^{\epsilon^\beta + \epsilon} \textit{tr}[\mathbf{D}_{s-}Y_s] \textit{tr}[\mathbf{D}_{t-}Y_t]V_\epsilon(s) V_\epsilon(t)dsdt$$

$$\lesssim \int_\epsilon^{\epsilon^\beta + \epsilon}\int_\epsilon^{\epsilon^\beta + \epsilon}|V_\epsilon(s)| |V_\epsilon(t)|dsdt
$$
\begin{equation}\label{f2trace1}
\lesssim  \epsilon^{2[\beta (2H +1)-1]},
\end{equation}
for every $\epsilon>0$ sufficiently small. Again, by (\ref{trint}), we have
\begin{equation}\label{f4trace1}
\mathbb{E}\Big|\int_{T-\epsilon}^{T} \textit{tr}\big[\mathbf{D}_{s-}Y_s\big]\boldsymbol{v}^{(1)}_sds\Big|^2 + \mathbb{E}\Big|\int_{0}^{\epsilon} \textit{tr}\big[\mathbf{D}_{s-}Y_s\big]\boldsymbol{v}^{(1)}_sds\Big|^2\lesssim \epsilon^{4H},
\end{equation}
for every $\epsilon$ sufficiently small.  Summing up the estimates (\ref{f1trace1}), (\ref{f2trace1}) and (\ref{f4trace1}), we obtain

\begin{equation}\label{fcon}
\mathbb{E}\Big| \textit{Tr}_1 (\mathbf{D}Y)_\epsilon - \frac{1}{2}\int_0^T \textit{tr}[\mathbf{D}_{s-}Y_s]\boldsymbol{v}^{(1)}_sds\Big|^2\lesssim  \Big\{\big(\epsilon^2 \boldsymbol{v}^{(3)}(\epsilon^\beta))^2 + \epsilon^{2[\beta (2H+1)-1]} \Big\} ,
\end{equation}
for every $\epsilon >0$ such that $\epsilon^\beta + 2\epsilon < T$. Now, we will optimize the right-hand side of (\ref{fcon}). Let us consider the following bound for the right-hand side of (\ref{fcon}):

$$\epsilon^{2\big(2\beta H + 2-3\beta\big)} + \epsilon^{2\big(2\beta H + \beta-1\big)}\le 2 \max \Big\{\epsilon^{2\big(2\beta H + 2-3\beta\big)},\epsilon^{2\big(2\beta H + \beta-1\big)}\Big\},$$
where $\beta \in (\frac{1}{1+2H},  \frac{2}{3-2H})$. Next, we aim to compute

\begin{equation}\label{optbeta}
\arg\min_{\beta \in \big(\frac{1}{1+2H},  \frac{2}{3-2H}\big)}\max \Big\{\epsilon^{\big(2\beta H + 2-3\beta\big)},\epsilon^{\big(2\beta H + \beta-1\big)}\Big\}.
\end{equation}
We observe

$$\frac{1}{2} < \frac{1}{1+2H} < \frac{2}{3} < 0.80 < \frac{2}{3-2H} < 1,$$
and

$$
2\beta H + 2-3\beta \ge 2\beta H + \beta-1,
$$
whenever $\frac{1}{1+2H} <  \beta \le 0.75 < \frac{2}{3-2H}$ and

$$
2\beta H + 2-3\beta < 2\beta H + \beta-1,
$$
whenever $ 0.75 < \beta <  \frac{2}{3-2H}$. Moreover,
$$2\beta H -3\beta +2 = 2\beta H +\beta -1\Longleftrightarrow\beta = 0.75.$$
The fact that $\beta \mapsto 2\beta H -3\beta +2$ is strictly decreasing and the constant which appears in the right-hand side of (\ref{fcon}) does not depend on $\beta$ allow us to choose $\beta^*=0.75$ and this is the optimal choice realizing (\ref{optbeta}). Therefore,

$$\epsilon^{2\big(2\beta H + 2-3\beta\big)} + \epsilon^{2\big(2\beta H + \beta-1\big)}\le 2 \epsilon^{2\big(2\times 0.75 H + 0.75-1\big)} = 2\epsilon^{\frac{6H-1}{2}}.$$
This concludes the proof.
\end{proof}

Next, we devote our attention to the component $\textit{Tr}_2\big(\mathbf{D}Y\big)_{\epsilon}$.
\begin{lemma}\label{ratepr2}
Assume that $\text{tr}[\mathbf{D}_\cdot Y_s]$ has continuous paths on $[0,s]$ for every $s\le T$ and Assumption S2 holds true with parameters $\alpha=2H-2, \frac{1}{4} < H < \frac{1}{2}$ and $\eta+\alpha+1>0$. Then,

$$
\mathbb{E}\Big|\text{Tr}_2(\mathbf{D}Y)_\epsilon - \int_{0\le r_1 < r_2\le T}\text{tr}[\mathbf{D}_{r_1}Y_{r_2} - \mathbf{D}_{r_2-}Y_{r_2}]\partial^2R(r_1,r_2)dr_1dr_2 \Big|^2\lesssim \epsilon^{2(\eta + \alpha+1)}\rightarrow 0,
$$
as $\epsilon \rightarrow 0^+$.
\end{lemma}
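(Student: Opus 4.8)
The plan is to expand the inner product defining $\textit{Tr}_2(\mathbf{D}Y)_\epsilon$ in (\ref{secs}) through the explicit bilinear form attached to (\ref{Vinner}) and the covariance identity (\ref{cov1}), then to isolate the single region of the $(r_1,r_2)$-plane that reproduces the limiting kernel integral and treat everything else as a remainder. Writing $g_s(r):=\textit{tr}[\mathbf{D}_rY_s-\mathbf{D}_{s-}Y_s]$, the integrand is $g_s\mathds{1}_{[0,s]}$ paired in $L_R$ with $\mathds{1}_{[s-\epsilon,s+\epsilon]}$, so the boundary part of the pairing contributes $\frac{1}{2\epsilon}\int_\epsilon^{T-\epsilon}\int_{s-\epsilon}^{s}g_s(r)\,\partial_rR(r,T)\,dr\,ds$, while the bulk part is an integral against $\mu=\partial^2R\,dr_1dr_2$ in which the factor $\mathds{1}_{[s-\epsilon,s+\epsilon]}(r_1)-\mathds{1}_{[s-\epsilon,s+\epsilon]}(r_2)$ is nonzero only when exactly one of $r_1,r_2$ lies in $[s-\epsilon,s+\epsilon]$. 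First I would record the two resulting configurations: (A) $r_1\in[s-\epsilon,s]$, $r_2>s+\epsilon$, and (B) $r_1<s-\epsilon$, $r_2\in[s-\epsilon,s+\epsilon]$. Configuration (B), after the normalisation $\frac{1}{2\epsilon}\int_{s-\epsilon}^{s+\epsilon}\partial^2R(r_1,r_2)\,dr_2\to\partial^2R(r_1,s)$, is the one that yields the claimed limit with $r_2$ in the role of $s$.

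The heart of the argument is configuration (B), whose inner $r_2$-integral I would split at $r_2=s$. For $r_2\in(s,s+\epsilon]$ one has $\mathds{1}_{[0,s]}(r_2)=0$, so the integrand reduces to $g_s(r_1)$ and the distance $r_2-r_1$ stays above $\epsilon$, which lets me Taylor-expand $r_2\mapsto\partial^2R(r_1,r_2)$ about $r_2=s$. For $r_2\in[s-\epsilon,s]$ the weight $\mathds{1}_{[0,s]}(r_2)=1$ produces the \emph{difference} $g_s(r_1)-g_s(r_2)=\textit{tr}[\mathbf{D}_{r_1}Y_s-\mathbf{D}_{r_2}Y_s]$, in which the left limit $\mathbf{D}_{s-}Y_s$ cancels; this cancellation is indispensable, since here $r_1,r_2$ may both approach $s-\epsilon$ and $|\partial^2R(r_1,r_2)|\lesssim|r_1-r_2|^{\alpha}$ is not one-variable integrable for $\alpha<-1$. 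On the other side I would decompose the target $M_0:=\int_{0\le r_1<r_2\le T}\textit{tr}[\mathbf{D}_{r_1}Y_{r_2}-\mathbf{D}_{r_2-}Y_{r_2}]\partial^2R(r_1,r_2)\,dr_1dr_2$ by splitting its inner variable at $s-\epsilon$: the near-diagonal slice $r_1\in[s-\epsilon,s]$ is itself a genuine remainder (its $L^2$-norm is $\int_0^\epsilon u^{\eta+\alpha}du\lesssim\epsilon^{\eta+\alpha+1}$), and the far slice $r_1<s-\epsilon$ is matched against configuration (B) through the same kernel expansion.

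All bounds would be taken in $L^2(\mathbb{P})$ by moving the norm inside the deterministic integrals via Minkowski's inequality, then invoking Assumption S2 in the two forms $\|g_s(r)\|_{L^2(\mathbb{P})}\lesssim|s-r|^{\eta}$ and $\|\textit{tr}[\mathbf{D}_{r_1}Y_s-\mathbf{D}_{r_2}Y_s]\|_{L^2(\mathbb{P})}\lesssim|r_1-r_2|^{\eta}$, together with $|\partial^2R(r_1,r_2)|\lesssim|r_1-r_2|^{\alpha}$ (taking $\phi=0$ as in the fractional Brownian case) and, for the boundary term, $|R(s,T)-R(s-\epsilon,T)|\lesssim\epsilon^{\alpha+2}$ from Assumption C(iii). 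The book-keeping is uniform: the boundary term gives $\epsilon^{\eta}\cdot\epsilon^{\alpha+2}/\epsilon=\epsilon^{\eta+\alpha+1}$; configuration (A) gives $\epsilon^{\eta}\cdot\epsilon^{\alpha+1}\cdot\epsilon/\epsilon=\epsilon^{\eta+\alpha+1}$ after integrating $r_2$ over $[s+\epsilon,T]$ (yielding $\epsilon^{\alpha+1}$) and $r_1$ over $[s-\epsilon,s]$ (yielding $\epsilon$); the cancelled slice of (B) and the near-diagonal slice of $M_0$ integrate $|r_1-r_2|^{\eta+\alpha}$ over an $O(\epsilon)$-thick region and produce $\epsilon^{\eta+\alpha+1}$ precisely because $\eta+\alpha+1>0$; and the kernel Taylor remainders, with the distance to the diagonal kept of order $|s-r_1|$, integrate to the same order. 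Squaring delivers the stated $\epsilon^{2(\eta+\alpha+1)}$.

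The main obstacle is exactly the non-integrable diagonal singularity $\partial^2R\sim|r_1-r_2|^{\alpha}$, $\alpha\in(-\tfrac32,-1)$, which forbids estimating configuration (B) term by term: the piece $\frac{1}{2\epsilon}\int_{s-\epsilon}^{s}g_s(r_1)\partial^2R(r_1,r_2)\,dr_2$ taken alone diverges as $r_1\uparrow s-\epsilon$. The remedy, and the point where Assumption S2 is used at full strength, is to retain the increment $\mathbf{D}_{r_1}Y_s-\mathbf{D}_{r_2}Y_s$ so that the integrand behaves like $|r_1-r_2|^{\eta+\alpha}$ with $\eta+\alpha>-1$. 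A closely related nuisance is that the mollification error $\frac{1}{2\epsilon}\int_{s-\epsilon}^{s+\epsilon}\partial^2R(r_1,r_2)\,dr_2-\partial^2R(r_1,s)$, controlled by a second-order Taylor bound, carries a factor $(s-r_1)^{\alpha-2}$ too singular near $r_1=s-\epsilon$; this forces the boundary layer $r_1\in[s-2\epsilon,s-\epsilon]$ to be handled by the crude cancellation bounds rather than by Taylor expansion, and it is this split that ultimately pins the exponent at $\eta+\alpha+1$.
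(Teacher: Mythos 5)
Your proposal is correct and follows essentially the same route as the paper's proof. Your boundary term is the paper's $I_{1,\epsilon}$; your configurations (A) and (B) are exactly the regions $I_{2,\epsilon,3}$ and $I_{2,\epsilon,1}\cup I_{2,\epsilon,2}$ produced by the indicator algebra in (\ref{defsp1}); your near-diagonal/far splitting of $M_0$ is the paper's $J_{3,\epsilon}$/$J_{2,\epsilon}$ decomposition (you leave the trivial edge strips $J_{1,\epsilon}$, $J_{4,\epsilon}$ implicit); and your exponent bookkeeping agrees term by term with (\ref{T1}), (\ref{T4}), (\ref{jr2}), (\ref{T5}) and (\ref{T8})--(\ref{T9}).

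The differences are minor but real, and mostly in your favor. You mollify the kernel over the full symmetric window with a second-order Taylor bound, which is what creates your boundary layer $r_1\in[s-2\epsilon,s-\epsilon]$; the paper needs no layer because it compares each sub-slice one-sidedly, e.g. via $g_\epsilon(r_1,s)=\frac{1}{\epsilon}\int_s^{s+\epsilon}\partial^2R(r_1,r_2)dr_2-\partial^2R(r_1,s)$ with the first-order bound $\epsilon(s-r_1)^{\alpha-1}$, the expansion point staying at distance greater than $\epsilon$ from $r_1$ since $r_1<s-\epsilon$. Also, the paper handles the averaged trace on the lower sub-slice by the pathwise mean value theorem (\ref{sf2}) --- this is where the hypothesis that $\textit{tr}[\mathbf{D}_\cdot Y_s]$ has continuous paths enters --- whereas your route needs only the $L^2$ modulus $\|\textit{tr}[\mathbf{D}_{r}Y_s-\mathbf{D}_{s-}Y_s]\|_{L^2(\mathbb{P})}\lesssim(s-r)^\eta$ (you still need continuity, or $L^2$ left-continuity, to make sense of $\mathbf{D}_{s-}Y_s$ and push Assumption S2 to the endpoint $r=s-$). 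One caution on your bookkeeping sentence: the bound ``integrate $|r_1-r_2|^{\eta+\alpha}$ over an $O(\epsilon)$-thick region'' is valid for the near-diagonal slice of $M_0$ and for the kernel-replacement error, but \emph{not} for the slice $r_2\in[s-\epsilon,s]$ of (B) taken as a whole: there $\int_0^{s-\epsilon}(r_2-r_1)^{\eta+\alpha}dr_1=O(1)$ precisely because $\eta+\alpha>-1$, so after the $\frac{1}{2\epsilon}$ normalization that slice is $O(1)$ --- it carries half of the limit and must be matched against half of the far slice of $M_0$ (the paper's $I_{2,\epsilon,2}-J_{2,\epsilon}$), which is exactly what your full-window normalization implicitly arranges; read as a discardable remainder it would lose a factor $\tfrac{1}{2}$ in the limit. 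Likewise, your ``divergence'' of $\frac{1}{2\epsilon}\int_{s-\epsilon}^{s}g_s(r_1)\partial^2R(r_1,r_2)dr_2$ as $r_1\uparrow s-\epsilon$ is pointwise only: it behaves like $\epsilon^{-1}(s-\epsilon-r_1)^{\alpha+1}$, which remains integrable in $r_1$ since $\alpha+1>-1$; what the retained increment (equivalently, the matching against $J_{2,\epsilon}$) actually buys is the rate $\epsilon^{\eta+\alpha+1}$, not finiteness.
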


\begin{proof}
Let us denote $\Delta_1 =\{(a,b)\in \mathbb{R}^2_+\setminus D; a < b\}$ and we define

$$h_\epsilon(r_1,r_2;s) := \Big\{ \textit{tr}[\mathbf{D}_{r_1}Y_s - \mathbf{D}_{s-} Y_s]\mathds{1}_{[0,s]}(r_1) - \textit{tr}[\mathbf{D}_{r_2}Y_s - \mathbf{D}_{s-} Y_s]\mathds{1}_{[0,s]}(r_2)\Big\} $$
$$\times \Big\{ \mathds{1}_{[s-\epsilon,s+\epsilon]}(r_1)  - \mathds{1}_{[s-\epsilon,s+\epsilon]}(r_2)\Big\},$$
for $(s_1,r_2) \in \Delta_1$ and $\epsilon\le s < T-\epsilon$. By the very definition,

$$\textit{Tr}_2 (\mathbf{D}Y)_\epsilon = \frac{1}{2\epsilon}\int_\epsilon^{T-\epsilon}\int_0^\infty \textit{tr}[\mathbf{D}_rY_s - \mathbf{D}_{s-} Y_s]\mathds{1}_{[0,s]}(r) \mathds{1}_{[s-\epsilon, s+\epsilon]}(r) \partial_r R(r,T)drds $$
\begin{equation}\label{nsplit}
-\frac{1}{2\epsilon}\int_\epsilon^{T-\epsilon}\int_{\Delta_1}h_\epsilon(r_1,r_2;s)\partial^2 R(r_1,r_2)dr_1dr_2ds =: I_{1,\epsilon} + I_{2,\epsilon}.
\end{equation}
We can write


$$I_{1,\epsilon}  =\frac{1}{2\epsilon} \int_\epsilon^{T-\epsilon} \int_{s-\epsilon}^s \textit{tr}[\mathbf{D}_rY_s - \mathbf{D}_{s-} Y_s] \partial_r R(r,T)dr ds.$$
Jensen's inequality and Assumption S2 yield

\begin{eqnarray}
\nonumber\mathbb{E}\Bigg|\frac{1}{2\epsilon} \int_\epsilon^{T-\epsilon} \int_{s-\epsilon}^s \textit{tr}[\mathbf{D}_rY_s - \mathbf{D}_{s-} Y_s] \partial_r R(r,T)dr ds\Bigg|^2&\lesssim& \int_\epsilon^{T-\epsilon} \frac{1}{\epsilon}\int^s_{s-\epsilon}(s-r)^{2 \eta}|\partial_r R(r,T)|^2drds\\
\label{T1}&\lesssim& \epsilon^{2(\eta + \alpha+1)}\rightarrow 0,
\end{eqnarray}
as $\epsilon \rightarrow 0^{+}$.


Now, we deal with the second term in (\ref{nsplit}). In case $\epsilon \le s$, we observe



\begin{equation}\label{defsp1}
h_\epsilon(r_1,r_2;s)=\left\{
\begin{array}{rl}
-\textit{tr}\big[\mathbf{D}_{r_1}Y_s - \mathbf{D}_{r_2}Y_{s}\big];& \hbox{if} \ 0 < r_1 < s-\epsilon, s-\epsilon \le r_2 \le s\\
-\textit{tr}\big[\mathbf{D}_{r_1}Y_s - \mathbf{D}_{s-}Y_{s}\big];& \hbox{if} \ 0 < r_1 < s-\epsilon, s < r_2 \le s+\epsilon\\
\textit{tr}\big[\mathbf{D}_{r_1}Y_s - \mathbf{D}_{s-}Y_{s}\big];& \hbox{if} \ 0 < s+ \epsilon <r_2,s-\epsilon \le r_1 \le s.\\
\end{array}
\right.
\end{equation}
As a result, we can write $I_{2,\epsilon}$ as

$$\frac{-1}{2\epsilon}\int_\epsilon^{T-\epsilon} \int_{\Delta_1}h_\epsilon(r_1,r_2;s) \partial^2 R(r_1,r_2)dr_1dr_2ds $$

$$ =  \frac{1}{2\epsilon}\int_\epsilon^{T-\epsilon} \int_{0}^{s-\epsilon}\int_s^{s+\epsilon} \textit{tr}[ \mathbf{D}_{r_1}Y_s - \mathbf{D}_{s-}Y_s] \partial^2 R(r_1,r_2)dr_2dr_1ds $$
$$+ \frac{1}{2\epsilon}\int_\epsilon^{T-\epsilon}  \int_{0}^{s-\epsilon}\int_{s-\epsilon}^{s} \textit{tr}[\mathbf{D}_{r_1}Y_s - \mathbf{D}_{r_2}Y_s] \partial^2 R(r_1,r_2)dr_2dr_1ds$$
$$ -  \frac{1}{2\epsilon}\int_\epsilon^{T-\epsilon}  \int_{s+\epsilon}^\infty\int_{s-\epsilon}^s  \textit{tr}[\mathbf{D}_{r_1}Y_s - \mathbf{D}_{s-}Y_s] \partial^2 R(r_1,r_2)dr_1dr_2ds$$
$$=:I_{2,\epsilon,1} + I_{2,\epsilon,2} + I_{2,\epsilon,3}.$$
At first, we estimate $I_{2,\epsilon,3}$. By using Assumption S2, Fubini's theorem and Cauchy-Schwarz's inequality, we have

$$
\nonumber\mathbb{E}|I_{2,\epsilon,3}|^2 \lesssim \Bigg(\frac{1}{\epsilon}\int_\epsilon^T\int_{s+\epsilon}^T\int_{s-\epsilon}^s(s-r_1)^\eta (r_2 - r_1)^{\alpha}dr_1dr_2ds\Bigg)^2.
$$
A direct computation shows that

$$ \frac{1}{\epsilon}\int_\epsilon^T\int_{s+\epsilon}^T\int_{s-\epsilon}^s(s-r_1)^\eta (r_2 - r_1)^{\alpha}dr_1dr_2ds\lesssim \epsilon^{\eta + \alpha +1}.$$
Therefore,

\begin{equation}\label{T4}
\mathbb{E}|I_{2,\epsilon,3}|^2\lesssim \epsilon^{2(\eta + \alpha + 1)}.
\end{equation}


We now investigate

$$I_{2,\epsilon,1} - \frac{1}{2}\int_0^T\int_0^{s}\textit{tr}[\mathbf{D}_{r_1}Y_{s} - \mathbf{D}_{s-}Y_{s}]\partial^2R(r_1,s)dr_1ds$$
 $$ + I_{2,\epsilon,2}-\frac{1}{2}\int_0^T\int_0^{s}\textit{tr}[\mathbf{D}_{r_1}Y_{s} - \mathbf{D}_{s-}Y_{s}]\partial^2R(r_1,s)dr_1ds.$$

It is convenient to split it as

$$
\frac{1}{2}\int_0^T\int_0^{s}\textit{tr}[\mathbf{D}_{r_1}Y_{s} - \mathbf{D}_{s-}Y_{s}]\partial^2R(r_1,s)dr_1ds
$$
$$=\frac{1}{2}\int_0^\epsilon \int_0^{s}\textit{tr}[\mathbf{D}_{r_1}Y_{s} - \mathbf{D}_{s-}Y_{s}]\partial^2R(r_1,s)dr_1ds
$$
$$+ \frac{1}{2}\int_\epsilon^{T-\epsilon}\int_0^{s-\epsilon}\textit{tr}[\mathbf{D}_{r_1}Y_{s} - \mathbf{D}_{s-}Y_{s}]\partial^2R(r_1,s)dr_1ds
$$
$$+\frac{1}{2}\int_\epsilon^{T-\epsilon}\int_{s-\epsilon}^{s}\textit{tr}[\mathbf{D}_{r_1}Y_{s} - \mathbf{D}_{s-}Y_{s}]\partial^2R(r_1,s)dr_1ds
$$
$$+\frac{1}{2}\int_{T-\epsilon}^T \int_0^{s}\textit{tr}[\mathbf{D}_{r_1}Y_{s} - \mathbf{D}_{s-}Y_{s}]\partial^2R(r_1,s)dr_1ds$$ 
$$=: J_{1,\epsilon} + J_{2,\epsilon} + J_{3,\epsilon} + J_{4,\epsilon}.
$$
At first, we observe Fubini's theorem, Assumption S2 and Cauchy-Schwartz's inequality yield

\begin{eqnarray*}\label{jr2}
\mathbb{E}|J_{3,\epsilon}|^2&\lesssim& \Bigg(\int_\epsilon^{T-\epsilon}\int_{s-\epsilon}^{s} (s-r_1)^\eta |\partial^2R(r_1,s)|dr_1ds\Bigg)^2\\
&\lesssim& \Big(\int_\epsilon^T \int_{s-\epsilon}^s(s-r_1)^{\eta+\alpha}dr_1ds\Big)^2 \lesssim \epsilon^{2(\eta+\alpha +1)},
\end{eqnarray*}
for every $\epsilon >0$. Similarly,

\begin{equation}\label{jr1}
\mathbb{E}|J_{1,\epsilon}|^2\le \Big( \int_0^\epsilon \int_0^s (s-r_1)^\eta |\partial^2 R(r_1,s)|dr_1ds\Big)^2
\lesssim \epsilon^{2(\alpha+\eta+2)}
\end{equation}
and

\begin{equation}\label{jr3}
\mathbb{E}|J_{4,\epsilon}|^2\le \Big( \int_{T-\epsilon}^T \int_0^s (s-r_1)^\eta |\partial^2 R(r_1,s)|dr_1ds\Big)^2
\lesssim  \epsilon^{2(\alpha+\eta+2)}.
\end{equation}
Now, we observe we can write

\begin{eqnarray*}
I_{2,\epsilon,1} - J_{2,\epsilon} = \frac{1}{2}\int_\epsilon^{T-\epsilon} \int_0^{s-\epsilon}\textit{tr}[\mathbf{D}_{r_1}Y_s - \mathbf{D}_{s-}Y_s]g_\epsilon(r_1,s)
dr_1ds
\end{eqnarray*}
where $g_\epsilon(r_1,s):= \frac{1}{\epsilon} \int_s^{s+\epsilon}\partial^2 R(r_1,r_2)dr_2 - \partial^2R(r_1,s)$ for $0\le r_1 < s-\epsilon$. By Fubini's theorem and using Assumption S2 jointly with Cauchy-Schwartz's inequality, we have

\begin{eqnarray*}
\mathbb{E}|I_{2,\epsilon,1} - J_{2,\epsilon}|^2 &=& \mathbb{E}\int_{Q_\epsilon\times Q_\epsilon}\textit{tr}[\mathbf{D}_{r_1}Y_s - \mathbf{D}_{s-}Y_s]\textit{tr}[\mathbf{D}_{v_1}Y_z - \mathbf{D}_{z-}Y_z] g_\epsilon(r_1,s)\\
&\times& g_\epsilon(v_1,z)dr_1ds dv_1dz \lesssim \Bigg(\int_{Q_\epsilon} (s-r_1)^\eta|g_\epsilon(r_1,s)|dr_1ds\Bigg)^2,
\end{eqnarray*}
where $Q_\epsilon = \{(x,y); 0 \le x < y-\epsilon, \epsilon < y < T-\epsilon\}$. By using the fact that $s\mapsto \frac{\partial^3 R}{\partial s^2\partial r} (r_1,s)$ is continuous on $(r_1,T)$, we can make use of Taylor expansion to estimate $g_\epsilon$. We observe for each $r_1 < s < s+\epsilon$, there exists $\bar{s}_\epsilon$ with $r_1 < s < \bar{s}_\epsilon < s+\epsilon$ realizing

$$g_\epsilon(r_1,s) = \frac{1}{2}\frac{\partial^3 R}{\partial s^2 \partial r_1}(r_1,\bar{s}_\epsilon)\epsilon; \quad r_1 < s < \bar{s}_\epsilon < s+\epsilon < T.$$
The function $(\cdot  - r_1)^{\alpha-1}$ is decreasing and hence

$$|g_\epsilon(r_1,s)|\le \frac{1}{2}\Big|\frac{\partial^3 R}{\partial s^2 \partial r_1}(r_1,\bar{s}_\epsilon)\Big|\epsilon\lesssim (s-r_1)^{\alpha-1}\epsilon, $$
for every $(r_1, s) \in Q_\epsilon$. Therefore,

\begin{equation}\label{T5}
\mathbb{E}| I_{2,\epsilon,1} - J_{2,\epsilon}    |^2\lesssim \Bigg(\epsilon \int_{Q_\epsilon}(s-r_1)^{\eta + \alpha -1}dr_1ds \Bigg)^2
\lesssim \epsilon^{2(\eta + \alpha +1)},
\end{equation}
for every $\epsilon >0$ sufficiently small. In view of (\ref{T1}), (\ref{T4}), (\ref{jr2}), (\ref{jr1}), (\ref{jr3}) and (\ref{T5}), it remains to estimate $I_{2,\epsilon,2}-J_{2,\epsilon}$. For this purpose, we write


$$I_{2,\epsilon,2}  = \frac{1}{2\epsilon}\int_\epsilon^{T-\epsilon}  \int_{0}^{s-\epsilon}\int_{s-\epsilon}^{s} \textit{tr}[\mathbf{D}_{r_1}Y_s - \mathbf{D}_{r_2}Y_s] \big\{\partial^2 R(r_1,r_2)   - \partial^2 R(r_1,s)\big\}dr_2dr_1ds$$
 $$+\frac{1}{2\epsilon}\int_\epsilon^{T-\epsilon}  \int_{0}^{s-\epsilon}\int_{s-\epsilon}^{s} \textit{tr}[\mathbf{D}_{r_1}Y_s - \mathbf{D}_{r_2}Y_s] \partial^2 R(r_1,s)dr_2dr_1ds.$$
Mean value theorem yields

$$\partial^2 R(r_1,s) - \partial^2 R(r_1,r_2) = \frac{\partial^3 R}{\partial s^2 \partial r_1} (r_1, \bar{r})(s-r_2),$$
on $r_1 < s-\epsilon < r_2 < \bar{r} < s$. Therefore,

\begin{equation}\label{deltaSR}
|\partial^2 R(r_1,s) - \partial^2 R(r_1,r_2)| \le  \Big|\frac{\partial^3 R}{\partial s^2 \partial r_1} (r_1, \bar{r})\Big|\epsilon\lesssim \epsilon (r_2-r_1)^{\alpha-1},
\end{equation}
on $r_1 < s-\epsilon < r_2 < \bar{r}< s$. Therefore, Assumption S2 and (\ref{deltaSR}) allow us to apply Fubini's theorem and we get


$$\mathbb{E}\Bigg|\frac{1}{2\epsilon}\int_\epsilon^{T-\epsilon}  \int_{0}^{s-\epsilon}\int_{s-\epsilon}^{s} \textit{tr}[\mathbf{D}_{r_1}Y_s - \mathbf{D}_{r_2}Y_s] \big\{ \partial^2 R(r_1,s) - \partial^2 R(r_1,r_2) \big\}dr_2dr_1ds\Bigg|^2$$
\begin{equation}\label{T8}
\lesssim \Bigg(\int_\epsilon^{T-\epsilon}  \int_{0}^{s-\epsilon}\int_{s-\epsilon}^{s} (r_2-r_1)^{\eta+\alpha-1}dr_2dr_1ds\Bigg)^2
\lesssim \epsilon^{2(\eta +\alpha+1)},
\end{equation}
as $\epsilon \rightarrow 0^+$. Next, we observe
$$\frac{1}{2\epsilon}\int_\epsilon^{T-\epsilon}  \int_{0}^{s-\epsilon}\int_{s-\epsilon}^{s} \textit{tr}[\mathbf{D}_{r_1}Y_s - \mathbf{D}_{r_2}Y_s] \partial^2 R(r_1,s)dr_2dr_1ds - J_{2,\epsilon}$$
\begin{equation}\label{sf1}
=\frac{1}{2}\int_\epsilon^{T-\epsilon} \int_0^{s-\epsilon}\Big\{ \textit{tr}[\mathbf{D}_{s-}Y_s] - \frac{1}{\epsilon}\int_{s-\epsilon}^s \textit{tr}[\mathbf{D}_{r_2}Y_s]dr_2\Big\}\partial^2 R(r_1,s)dr_1ds.
\end{equation}
By mean value theorem, we can write

\begin{equation}\label{sf2}
\textit{tr}[\mathbf{D}_{s-}Y_s]-\frac{1}{\epsilon}\int_{s-\epsilon}^s \textit{tr}[\mathbf{D}_{r_2}Y_s]dr_2 = \textit{tr}[\mathbf{D}_{s-}Y_s]-\textit{tr}[\mathbf{D}_{s'_\epsilon}Y_s],
\end{equation}
on $r_1 < s-\epsilon < s'_\epsilon < s$. By (\ref{sf1}), (\ref{sf2}) and again by using Fubini's theorem and Assumption S2, we arrive at

$$\mathbb{E}\Bigg|\frac{1}{2\epsilon}\int_\epsilon^{T-\epsilon}  \int_{0}^{s-\epsilon}\int_{s-\epsilon}^{s} \textit{tr}[\mathbf{D}_{r_1}Y_s - \mathbf{D}_{r_2}Y_s] \partial^2 R(r_1,s)dr_2dr_1ds - J_{2,\epsilon}\Bigg|^2$$
\begin{equation}\label{T9}
\lesssim \Bigg( \epsilon^\eta \int_\epsilon^{T-\epsilon} \int_{0}^{s-\epsilon}(s-r_1)^\alpha dr_1ds \Bigg)^2 \lesssim \epsilon^{2(\eta + \alpha +1)},
\end{equation}
as $\epsilon\rightarrow 0^+$. The estimates (\ref{T8}) and (\ref{T9}) show

\begin{equation}\label{T10}
\mathbb{E}|I_{2,\epsilon,2} - J_{2,\epsilon}|^2\lesssim \epsilon^{2(\alpha + \eta + 1)}\rightarrow 0,
\end{equation}
as $\epsilon \rightarrow 0^+$. The estimates (\ref{T1}), (\ref{T4}), (\ref{jr2}), (\ref{jr1}), (\ref{jr3}), (\ref{T5}), (\ref{T10}) allow us to conclude the proof.
\end{proof}

\

\textbf{Proof of Theorem \ref{mainTH}:} Fix $\frac{1}{4} < H < \frac{1}{2}$, $0 < \gamma \le H$ and $\eta>0$ such that $2\gamma +2H-1>0$ and $\eta +2H-1>0$. Recall, we can write
\begin{eqnarray}
\label{div0}\frac{1}{2\epsilon}\int_0^T \langle Y_s, X_{s+\epsilon} - X_{s-\epsilon} \rangle ds&=& \int_0^T \bar{Y}^\epsilon_u \boldsymbol{\delta} X_u\\
\nonumber& + & \frac{1}{2\epsilon}\int_\epsilon^{T-\epsilon} \sum_{i=1}^d \langle \mathbf{D}Y^i_s, e_i\mathds{1}_{[s-\epsilon, s+\epsilon]} \rangle_{L_R(\mathbb{R}^d)}ds\\
\nonumber&+&O_{L^2(\mathbb{P})}(\epsilon^{2H}),
\end{eqnarray}
where $\bar{Y}$ is given by (\ref{Ybar}). By applying Proposition \ref{epsilonlemma} and Lemmas \ref{Tr1lemma} and \ref{ratepr2} above, we get

$$
\int_0^T Y_s d^0X_s = \int_0^T Y_s \boldsymbol{\delta}X_s + H \int_0^T \textit{tr}[\mathbf{D}_{s-}Y_s]s^{2H-1}ds$$
$$
+\int_{0 \le r_1 < r_2 \le T}\textit{tr}[\mathbf{D}_{r_1}Y_{r_2} - \mathbf{D}_{r_2-}Y_{r_2}]\partial^2R(r_1,r_2)dr_1dr_2.
$$
In addition,
\begin{equation}\label{rth2}
\mathbb{E}\Bigg| \int_0^T Y_sd^0X_s - I^0(\epsilon,Y,dX)(T)  \Bigg|^2 \lesssim \{\epsilon^{\frac{6H-1}{2}} + \epsilon^{2\gamma + 2H-1} + \epsilon^{2(\eta + 2H -1)}\},
\end{equation}
for every $\epsilon>0$ sufficiently small. The leading term in the right-hand side of (\ref{rth2}) is $\epsilon^{2\gamma + 2H-1} + \epsilon^{2(\eta + 2H -1)}$. In case $\frac{1}{3} < H < \frac{1}{2}$ and $(Y,Y') \in \mathcal{D}_X(\mathbb{R}^d)$ satisfies assumptions 1, 2 and 3 of Theorem \ref{gaussiancase}, then $(Y,Y')\in \mathcal{D}_X(\mathbb{R}^d)$ is rough stochastically integrable and the estimate (\ref{rth2}) holds for the stochastic rough integral as well. This concludes the proof of Theorem \ref{mainTH}.

\

\

\noindent \textbf{Funding}. This research was supported by MATH-AmSud 2018 (grant 88887.197425/2018-00) and Funda\c{c}\~ao de Apoio a Pesquisa do Destrito Federal (FAPDF grant 00193-00000229/2021-21).
The research of the second named author was partially supported by
the ANR-22-CE40-0015-01 (SDAIM).

\bibliographystyle{plain}
\bibliography{../../../../BIBLIO_FILE/ref}       

\end{document}